

\documentclass[11pt,a4paper,oldfontcommands]{memoir}

\usepackage[T1]{fontenc}
\usepackage[ansinew]{inputenc}
\usepackage[brazil]{babel}
\usepackage[all]{xy}
\usepackage{color}
\usepackage[left]{showlabels}
\usepackage{enumerate}
\usepackage{indentfirst}
\usepackage{comment}
\usepackage{nicefrac}
\usepackage{pdfpages}



\OnehalfSpacing

%
%

\chapterstyle{madsen}   
\setsecheadstyle{\Large\bfseries\sffamily\raggedright}
\setsubsecheadstyle{\large\bfseries\sffamily\raggedright}
\setsubsubsecheadstyle{\bfseries\sffamily\raggedright}

\pagestyle{plain}
\makepagestyle{plain}
\makeevenfoot{plain}{\thepage}{}{}
\makeoddfoot{plain}{}{}{\thepage}
\makeevenhead{plain}{}{}{}
\makeoddhead{plain}{}{}{}

\usepackage[total={210mm,297mm},left=20mm,right=20mm,bindingoffset=10mm,top=25mm,bottom=25mm]{geometry}


\usepackage[backref=page]{hyperref}  
                 \hypersetup{ pdfborder={0 0 0}, 
                              colorlinks=true, 
                              citecolor=blue,
                              linktoc=page,
                              pdfauthor={}, 
                              pdftitle={}}
\renewcommand{\ref}{\hyperref}

\usepackage{lmodern} 

\usepackage{graphicx} 


\usepackage{amsmath}
\usepackage{amsthm}
\usepackage{amsfonts}
\usepackage{multicol}

\usepackage{amssymb}
\usepackage{mathdots}

\usepackage{enumerate}
\usepackage{indentfirst}



\newcommand{\Ccal}{\mathcal{C}}
\newcommand{\Lcal}{\mathcal{L}}
\newcommand{\Ocal}{\mathcal{O}}
\newcommand{\Mcal}{\mathcal{M}}

\newcommand{\Hcal}{\mathcal{H}}
\newcommand{\Tcal}{\mathcal{T}}

\newcommand{\N}{\mathbb{N}}
\newcommand{\G}{\mathbb{G}}
\newcommand{\Z}{\mathbb{Z}}

\renewcommand{\P}{\mathbb{P}}
\newcommand{\A}{\mathbb{A}}
\newcommand{\Q}{\mathbb{Q}}
\newcommand{\C}{\mathbb{C}}

\DeclareMathOperator{\euler}{\chi_{top}} 

\newcommand{\csm}{c_{\mathrm{SM}}}
\newcommand{\cma}{c_{\mathrm{Ma}}}
\newcommand{\cf}{c_{\mathrm{F}}}
\newcommand{\milnor}{\Mcal}

\newcommand{\abs}[1]{\left\lvert#1\right\rvert}

\newcommand{\binomci}{\binom{r}{\left[\frac{i}{2}\right]}
\binom{r}{\left[\frac{i+1}{2}\right]}}

\DeclareMathOperator{\sing}{Sing}
\DeclareMathOperator{\grad}{grad}

\DeclareMathOperator{\codim}{codim}

\newtheorem{theorem}{Theorem}[chapter]

\newtheorem{thm}[theorem]{Teorema}
\newtheorem{teo}[theorem]{Teorema}
\newtheorem*{teo*}{Teorema}
\newtheorem{prop}[theorem]{Proposição}
\newtheorem{corol}[theorem]{Corolário}
\newtheorem{cor}[theorem]{Corolário}
\newtheorem{lem}[theorem]{Lema}
\newtheorem{lema}[theorem]{Lema}
\newtheorem{conjectura}[theorem]{Conjectura}
\newtheorem{afirm}[theorem]{Afirmação}

\theoremstyle{definition}
\newtheorem{defi}[theorem]{Definição}
\newtheorem{rmk}[theorem]{Observação}
\newtheorem{obs}[theorem]{Observação}
\newtheorem{exem}[theorem]{Exemplo}
\newtheorem{exemplo}[theorem]{Exemplo}

\DeclareMathOperator{\Image}{Im}

\definecolor{pink}{rgb}{0.89, 0.44, 0.48}
\definecolor{cornflowerblue}{rgb}{0.39, 0.58, 0.93}

\newcommand{\verm}[1]{{\color{red}#1}}

\newcommand{\Sec}{\operatorname{Sec}}
\newcommand{\Tan}{\operatorname{Tan}}
\newcommand{\cat}{\mathcal{C}}
\newcommand{\eddeg}{\operatorname{EDdeg}}
\newcommand{\geddeg}{\operatorname{gEDdeg}}

\newcommand{\Mdois}{\textsf{Macaulay2}}

\hyphenation{ra-cio-nal Teo-re-ma gra-dien-te ra-cio-nais}



\begin{document}

	%
	%
	
	\thispagestyle{empty}
	
	{
		\sffamily
		\centering
		\Large
		
		\includegraphics[scale=0.25]{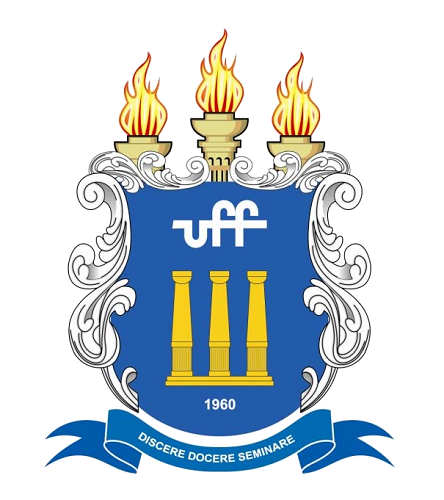}
		
		~\vspace{2cm}
		
		Universidade Federal Fluminense

		\vspace{\fill}
		
		{\huge
			Classes características e secantes de curvas racionais normais
			
		}

		\vspace{3.5cm}
		
		{\LARGE
			
			Jefferson Ribeiro Nogueira
			
		}
		
		\vspace{\fill}

		{\large Niterói }

		{\large  06 de abril de 2020}

	}	
	
	\clearpage
	
	%
	%
	
	\thispagestyle{empty}
	
	{
		
		\sffamily
		\centering
		\Large
		
		~\vspace{\fill}

		{\huge
			
			Classes características e secantes de curvas racionais normais
			
		}

		\vspace{3.5cm}

		{\LARGE
			
			Jefferson Ribeiro Nogueira
			
		}

		\vspace{3.5cm}
		
		{\normalsize
			\raggedleft	
			\begin{minipage}{210pt}
				Tese submetida ao Programa de Pós-Graduação
				em Matemática da Universidade Federal Fluminense
				como requisito parcial para a obtenção do grau de 
				Doutor em Matemática.
			\end{minipage}
			
		}

		\vspace{3.5cm}

		Orientador: Prof. Nivaldo Medeiros
		
		\vspace{\fill} 
		
		{\large Niterói }

		{\large 06 de abril de 2020}
		
	}	
	
	\clearpage

\thispagestyle{empty}

{
\centering

\includepdf{ficha_jefferson}

}
	
	\clearpage

	\thispagestyle{empty}

	{
		\centering

		
	}
	\clearpage
	
	\thispagestyle{empty}
	
	{
		
		\sffamily\centering
		
		\textbf{Tese de Doutorado da Universidade Federal Fluminense}
		
		\vspace{1cm}
		
		por 
		
		\vspace{1cm}
		
		\textbf{Jefferson Ribeiro Nogueira}
		
		\vspace{1.5cm}
		
		apresentada ao Programa de Pós-Graduação em Matemática como requesito parcial para a obtenção do grau de
		
		\vspace{1.5cm}
		
		\textbf{Doutor em Matemática}
		
		\vspace{1.5cm}
		
		Título da tese:  Classes características e secantes de curvas racionais normais

		\hrulefill
		
		\begin{minipage}{0.8\textwidth}
			\centering
			\textbf{ Classes características e secantes de curvas racionais normais}
		\end{minipage}
		~\vspace{5pt}
		\hrule
		
		\vspace{1cm}
		
		\textit{Defendida publicamente em  06 de abril de 2020.}
		
		\vspace{1cm}
		
		{
			\raggedright
			Diante da banca examinadora composta por:
		}
		
		~\vspace{5pt}
		%

		\begin{tabular}{lll}
			Prof. Nivaldo Nunes de Medeiros Júnior& Universidade Federal Fluminense & Orientador\\
			Prof. Thiago Fassarella do Amaral& Universidade Federal Fluminense & Examinador\\
			Profa. Viviana Ferrer Cuadrado& Universidade Federal Fluminense & Examinadora\\
			Profa. Maral Mostafazadehfard& Universidade Federal do Rio de Janeiro & Examinadora\\
			Prof. Rodrigo José Gondim Neves& Universidade Federal Rural de Pernambuco & Examinador\\
			Prof. Marco Pacini& Universidade Federal Fluminense & Examinador\\
		\end{tabular}
		
		~\vspace{\fill}

		~\vspace{\fill}
		
	}
	
	\clearpage

	\thispagestyle{empty}
	
	{
		{\centering
			
			\textbf{DECLARAÇÃO DE CIÊNCIA E CONCORDÂNCIA DO(A) ORIENTADOR(A)}
		}

		~\vspace{2cm}
		
		Autor da Tese: Jefferson Ribeiro Nogueira
		
		Data da defesa: 06/04/2020
		
		\vspace{0.2cm}
		
		Orientador: Nivaldo Nunes de Medeiros Júnior

		\vspace{2cm}
		
		Para os devidos fins, declaro \textbf{estar ciente} do conteúdo desta \textbf{versão corrigida} elaborada em atenção às sugestões  dos membros da banca examinadora na sessão de defesa do trabalho, manifestando-me \textbf{favoravelmente} ao seu encaminhamento e publicação no \textbf{Repositório Institucional da UFF}.
		
		~\vspace{1cm}
		
		{
			\raggedright
			Niteroi, 12/01/2021.
		}
		~\vspace{1cm}
		
		\begin{center}
			\begin{minipage}{200pt}
				\centering 	
				\hrule
				
				Nivaldo Nunes de Medeiros Júnior
			\end{minipage}
		\end{center}

	}
	
	
	
	
	
	
	\clearpage
	
	\thispagestyle{empty}
	
	{
		
		\sffamily
		
		{\Large \centering	
			
			AGRADECIMENTOS
			
		}
		
		~\vspace{1cm}
		
		Aos meus amores, esposa Gláucia, e filhos Alyne e Arthur, por iluminarem meus dias, e por serem meus motivos em querer ser e estar cada vez melhor. O meu amor é de vocês!
		
		\smallskip
		
		Aos meus pais por tudo o que sempre foram e fizeram por mim.
		
		\smallskip
		
		Ao meu orientador Nivaldo Medeiros, sem o qual este trabalho não seria possível. Agradeço pelo início, meio, ... por cada recomeço e por não ter desistido. Serei sempre grato por todo apoio, compreensão, exemplo e toda a matemática.
		
		\smallskip
		
		Aos membros da banca pela participação, leitura e contribuições a este trabalho.
		
		\smallskip
		
		Aos matemáticos Alex Abreu, Giovanni Staglianò, Giuseppe Borrelli, Israel Vainsencher, Juliana Coelho, Maral Mostafazadehfard, Thiago Fassarella e Viviana Ferrer por toda matemática que compartilharam, e pelas contribuições diretas ou indiretas que deram a este trabalho.
		
		\smallskip
		
		A todos os meus familiares, amigos e amigas com quem convivi neste tempo e que, de diversas maneiras, contribuíram para este trabalho. Destaque especial aos amigos Jaqueline Siqueira, Luiz Viana, Reginaldo Demarque e Rômulo Rosa por cada momento nesse processo, vocês foram incríveis!
		
		\smallskip
		
		O presente trabalho foi realizado com apoio da Coordenação de Aperfeiçoamento de Pessoal de Nível Superior - Brasil (CAPES) - Código de Financiamento 001.
		
		\smallskip
		
		Esse trabalho foi apoiado com uma bolsa de doutorado da Fundação de Apoio à Pesquisa do Rio de Janeiro (FAPERJ).

	}
	
	\clearpage
	
	\thispagestyle{empty}
	
	{
		
		\sffamily
		
		{\Large	\centering
			
			RESUMO
			
		}
		
		~\vspace{1cm}
		
		Estudamos classes características de hipersuperfícies no espaço projetivo complexo, com ênfase nas secantes de curvas racionais normais.
		
		
		Para $\Sec_kC \subset \P^n$, a secante de $k$ pontos de uma curva
		racional normal $C\subset\P^n$, calculamos a série de Hilbert e a característica de Euler topológica. 
		
		Quando $n=2r$ e $k=r$, caso
		em que $\Sec_rC\subset\P^{2r}$ é uma hipersuperfície,
		mostramos que a dual $(\Sec_rC)^*$ é isomorfa a variedade
		de Veronese $\nu_2(\P^r)$, donde obtemos, para $\Sec_r C$, 
		fórmulas para a classe de Mather, o grau distância Euclidiana genérica, e seus graus polares. Mais ainda, apresentamos uma fórmula explícita para o grau topológico do mapa gradiente $\phi_r\colon \P^{2r}\dashrightarrow \P^{2r}$ associado à $\Sec_r C$, e como consequência obtemos uma resposta positiva para uma conjectura de M. Mostafazadehfard e A. Simis: \textit{para $r\geq 2$, a hipersuperfície $\Sec_r C\subset\P^{2r}$ não é homaloidal}. 
		
		A partir do cálculo em casos particulares somos levados a uma
		conjectura, a saber, fórmulas explícitas para os graus projetivos
		do mapa gradiente $\phi_r$ e para a classe de Schwartz-MacPherson $\csm(\Sec_rC)\in A_*\P^{2r}$, para todo $r$. Concluímos apresentando evidências que indicam a validade da nossa conjectura.
		\\
		

		\textbf{Palavras-chave:} Classes características. Mapas gradientes. Secantes de curvas racionais normais.
		
	}
	
	\clearpage
	
	\thispagestyle{empty}
	
	{
		
		\sffamily
		
		{\Large\centering
			
			ABSTRACT
			
		}
		
		~\vspace{1cm}
		
		We study characteristic classes of hypersurfaces in the complex projective space, with emphasis on secants to rational normal curves.
		
		For $\Sec_k C\subset\P^{n}$, the secant of $k$ points to a rational normal curve $C\subset \P^n$, we compute the Hilbert series and the topological Euler characteristic.
		
		For $n=2r$ and $k=r$, the case when $\Sec_r C\subset\P^{2r}$ is a hypersurface, we show that the dual $(\Sec_r C)^*$ is isomorphic to the Veronese variety $\nu_2(\P^r)$, from which we obtain, for $\Sec_r C$, formulas for the Mather class, the generic Euclidean distance degree and its polar degrees. Furthermore, we present an explicit formula for the topological degree of the gradient map $\phi_r \colon \P^{2r} \dashrightarrow \P^{2r} $ associated with $\Sec_r C$, and as a consequence we obtain an affirmative answer for a conjecture by M. Mostafazadehfard and A. Simis: \textit{for $r \geq 2$, the hypersurface $\Sec_r C\subset\P^{2r}$ is not homaloidal}.
		
		From computations in particular cases we are led to a conjecture, namely, explicit formulas for the projective degrees of the gradient map $\phi_r$ and the Schwartz-MacPherson class $\csm(\Sec_rC)\in A_*\P^{2r}$, for all $r$. We conclude by presenting evidence that indicates the validity of our conjecture.
		\\
		
		\textbf{Keywords:} Characteristic classes. Gradient maps. Secants to rational normal curves.
		
	}

	%
	%
	
	

	%
	%
	
	\clearpage
	
	\listoftables
	
	\clearpage

\tableofcontents

%
%

\newtheorem{innercustomthm}{Teorema}
\newenvironment{neoteo}[1]
{\renewcommand\theinnercustomthm{#1}\innercustomthm}
{\endinnercustomthm}
\newtheorem{innercustomconj}{Conjectura}
\newenvironment{neoconj}[1]
{\renewcommand\theinnercustomconj{#1}\innercustomconj}
{\endinnercustomconj}
\newtheorem{innercustomcor}{Corolário}
\newenvironment{neocor}[1]
{\renewcommand\theinnercustomcor{#1}\innercustomcor}
{\endinnercustomcor}

\chapter*{Introdu\c{c}\~ao}
\addcontentsline{toc}{chapter}{Introdu\c{c}\~ao}
\markboth{INTRODUÇÃO}{INTRODUÇÃO}      

Para uma variedade alg\'ebrica n\~ao-singular $X$, a classe de Chern de $X$ \'e definida por $c(X) = c(TX)\cap [X]$, onde $TX$ \'e o fibrado tangente. Esta classe carrega importantes informa\c{c}\~oes geom\'etricas e topol\'ogicas da variedade; por exemplo, o grau desta classe \'e a caracter\'istica de Euler topol\'ogica de $X$ (Teorema de Poincar\'e-Hopf). \'E natural buscar por uma analogia para o caso em que $X$ \'e singular. Em 1965, Marie-H\'el\`ene Schwartz estendeu a no\c{c}\~ao de classe de Chern para variedades complexas anal\'iticas estudando campos vetoriais especiais nas singularidades de $X$ (\cite{Sch65a} e \cite{Sch65b}). De maneira independente, R. MacPherson em 1974 também definiu classes de Chern para variedades complexas algébricas \cite{Mac74}. Mais tarde foi demonstrado que de fato essas duas construções coincidem (\cite{BS81} e \cite{AB08}).

Na construção de MacPherson são definidas as classes de Schwartz-MacPherson e de Mather, as quais s\~ao elementos do grupo de Chow de uma variedade alg\'ebrica complexa. Essas defini\c{c}\~oes estendem a no\c{c}\~ao de classe de Chern para variedades singulares. A classe de Schwartz-MacPherson possui boas propriedades funtoriais, e vale que o grau desta classe coincide com a caracter\'istica de Euler da variedade.



Por outro lado, as variedades secantes constituem um assunto clássico em Geometria Algébrica (G. Castelnuovo, G. Scorza, F. Severi, A. Terracini). Além da riqueza geométrica inerente, as secantes são relevantes em diversos campos da matem\'atica e em aplicações, incluindo Combinatória, Teoria da Complexidade, Estatística e Física. Uma excelente apresentação de conexões, problemas e rica lista de referências é encontrada em \cite{4lectures}.   Recorde que
para $X\subset \P^n$, sua $k$-secante $\Sec_k X$ é o fecho
da união dos $(k-1)$-planos gerados por $k$ pontos gerais de $X$. 
\medskip

Nesta tese tratamos destes dois tópicos. Especificamente, 
abordamos o problema de determinar classes características de secantes de curvas racionais normais.  Este objetivo nos leva
a estudar aspectos bastante diversos 
da geometria dessas variedades.

 Aproveitamos 
para anunciar que trabalhamos sobre o corpo dos números complexos.

\bigskip


O ponto de partida é 
um resultado notável devido à P. Aluffi \cite[Theorem~1.4]{Alu99a} que nos permite 
calcular a classe de Schwartz-MacPherson de uma
\emph{hipersuperfície} $X\subset\P^n$
em termos do seu lugar singular $Y=\sing X$:
\begin{equation}
\label{eq-Aluffi-CSM}
\csm(X) = c(T\P^n)\cap \left(
s(X,\P^n) + c(\Ocal(X))^{-1} \left(
s(Y,\P^n)^\vee \otimes \Ocal(X)
\right)
\right) 
\qquad\in A_*\P^n.
\end{equation}
Seja qual for o significado dos símbolos nesta fórmula,
ela nos diz que ``basta'' calcular a classe
de Segre $s(Y,\P^n)$ do lugar singular.
Em princípio isto não ajuda muito. Citando P.~Aluffi \cite{Alu02}:
\emph{``%
	Segre classes are in general extremely hard to compute.
	Why? Because blow-ups are hard to compute.''
}%
Por outro lado, para hipersuperfícies,
o lugar singular de $X$ coincide com o lugar de base do
mapa racional 
$$\grad X\colon \P^n\dashrightarrow\P^n$$
 dado pelas derivadas
parciais do polinômio que define $X$, chamado \emph{o mapa gradiente} de $X$. Por sua vez
a classe de Segre do lugar de base 
está diretamente relacionada com os 
\emph{graus projetivos} (ou \emph{multigrau}) $d_0,\dotsc,d_n$ do mapa:
\textit{grosso modo}, aqui o $i$-ésimo grau projetivo é o grau da pré-imagem de um espaço linear geral de codimensão $i$ 
(veja a Definição~\ref{def-graus-projetivos} para o conceito preciso). A
conexão é que tanto a classe de Segre como estes graus podem
ser calculados via o gráfico do mapa.
Em resumo, podemos calcular a classe $\csm(X)$ a partir
dos graus projetivos do mapa gradiente (Teorema~\ref{Aluffi2.1}).
Somos assim
levados a estudar a geometria destes mapas.

Não é surpresa que mapas gradientes sejam também um tema clássico. 
Um problema interessante é a classifica\c{c}\~ao de hipersuperf\'icies com mapa gradiente de grau topol\'ogico fixado (\cite{Dol00}, \cite{FM12}, \cite{Huh14}), bem como a busca por f\'ormulas para os seus graus projetivos (\cite{DP03}, \cite{FP07}, \cite{Huh12}). Encontra-se aqui o importante
problema da classificação das hipersuperfícies
\emph{homaloidais}, aquelas cujo mapa gradiente é birracional.
Este é o contexto de uma das motivações iniciais deste trabalho,
e que explicamos a seguir.

\bigskip

Seja $C\subset\P^n$ uma curva racional normal. 
Como toda curva irredutível reduzida não-degenerada, $C$ é não-defeituosa \cite[Proposition~10.11, p.\thinspace{}372]{EH16},
o que aqui significa apenas que $\dim \Sec_k C = 2k-1$ sempre que $2k\leq n$. Repare que
só obtemos hipersuperfícies em espaços de dimensão par.
Suponha então $n=2r$ e considere o mapa gradiente
$\phi_r \colon \P^{2r} \dashrightarrow \P^{2r}$
associado à hipersuperfície $\Sec_r C\subset \P^{2r}$, que é dada pelo determinante 
de uma matriz (genérica) de Hankel 
$\Hcal_{r+1,r+1} = (x_{i+j})_{ij}$ (veja \eqref{eq-matrizhankel}).
 Seu ideal jacobiano
foi estudado em detalhe em \cite{Mo14} e \cite{MS}
do ponto de vista da Álgebra Comutativa. 
O mapa $\phi_r$
 é sempre dominante \cite[Proposition~3.1.1]{Mo14} e
 não é birracional para $r=2$ \cite[Proposition~3.2.1]{Mo14} (para $r=1$, é um isomorfismo linear de $\P^2$, nada emocionante). Mais ainda, 
 M. Mostafazadehfard e A. Simis propuseram a seguinte:

\begin{neoconj}{\cite{MS}}
\emph{Para $r\geq2$, o mapa $\phi_r$ não é birracional.}
\end{neoconj}

No Corolário~\ref{thm3} provamos que 
$\deg(\phi_r)=\frac{1}{r+1}\binom{2r}{r}$, os quais formam a famosa sequência dos \emph{números de Catalan}: 1, 1, 2, 5, 14, \dots \ mostrando consequentemente que a Conjectura de Mostafazadehfard e Simis é 
verdadeira. 

No intuito de determinar a classe $\csm(\Sec_rC) \in A_*\P^{2r}$, buscamos calcular os
outros graus projetivos destes mapas, o que conseguimos apenas
em casos particulares. Estes casos são suficientes para conjecturar uma fórmula para o caso geral, como detalharemos adiante.

\bigskip


Este trabalho est\'a dividido em quatro cap\'itulos. O primeiro
é essencialmente independente dos demais.

\medskip 

No Cap\'itulo~\ref{cap1} s\~ao apresentados preliminares sobre as classes caracter\'isticas das quais faremos uso no decorrer do texto, os graus projetivos de um mapa racional, e na sequ\^encia especializamos para o mapa gradiente de uma hipersuperf\'icie projetiva. O principal resultado do cap\'itulo descreve uma rela\c{c}\~ao entre as classes de Schwartz-MacPherson de um subconjunto localmente fechado de um espaço projetivo e de uma se\c{c}\~ao hiperplana gen\'erica:

\begin{neoteo}{\ref{thm1}}
\emph{Sejam $X \subset \mathbb{P}^n$ um conjunto localmente fechado e $H\subset\P^n$ um hiperplano geral. Ent\~ao:}
\[
\csm(X\cap H) = \csm(X) \cdot \dfrac{h}{1+h} \qquad\in A_* \mathbb{P}^{n}.
\]
\end{neoteo}

Obtivemos este resultado, de maneira independente, por volta da mesma época da publicação do preprint de Aluffi \cite{Alu13}; lá os resultados são mais gerais e valem sobre qualquer corpo algebricamente fechado de característica zero. Posteriormente percebemos 
que de fato, sobre os números complexos, 
o teorema já havia sido publicado em 2003 por
T. Ohmoto \cite[Theorem~1.1]{Ohm03}. 

As classes de Milnor (veja Definição~\ref{def-Milnor}) foram introduzidas por P. Aluffi em
\cite{Alu95} e generalizam os números de Milnor 
para singularidades isoladas bem como os números de 
Parusi\'{n}ski \cite{Pa88}. É em termos destas classes que,
com o auxílio do Teorema~\ref{thm1}, 
obtemos uma fórmula para o grau do mapa gradiente de  hipersuperf\'icies projetivas com singularidades arbitr\'arias:

\begin{neoteo}{\ref{thm2}}
\emph{Seja $X \subset \P^n$ uma hipersuperf\'icie de grau $k$. Ent\~ao
}\[
\deg \grad X = (k-1)^n - \mu(X) - \mu(X\cap H), 
\]
\emph{onde $H \subset \P^n$ \'e um hiperplano gen\'erico e $\mu(\cdot)$ denota o grau da classe de Milnor.
}
\end{neoteo}
\noindent{}Esta f\'ormula generaliza a conhecida express\~ao
para singularidades isoladas (Teorema~\ref{milnornum}).

\bigskip

No Cap\'itulo~\ref{cap2} revisamos as variedades secantes em geral, especializando para nosso caso de interesse, as secantes superiores de curvas racionais normais. Em seguida, a partir de uma fórmula devida a Abhyankar,  
deduzimos uma expressão particularmente simples 
para as séries de Hilbert de variedades determinantais gerais dadas por \emph{menores maximais} (Proposi\c{c}\~ao~\ref{prop:serieHilb}), da qual decorre uma expressão similar para a s\'erie de Hilbert para as secantes de uma curva racional normal qualquer (Corol\'ario~\ref{cor-hilb-secrnc}). Em seguida, via uma ação de grupos conveniente, calculamos 
a característica de Euler topológica:
\begin{neoteo}{\ref{teo-Euler_Char_Sec_k}}
Se $C\subset \P^n$ é uma curva racional normal,
então temos 
$\euler(\Sec_k C) = 2k$ sempre que $2k-1\leq n$.	
\end{neoteo}
%
%

Especializamos em seguida para o caso de hipersuperfícies, quando o espaço ambiente tem dimensão par. Tome $C\subset\P^{2r}$
uma curva racional normal.
Na Proposição~\ref{prop-projveronese}
mostramos que a variedade dual $(\Sec_r C)^*\subset (\P^{2r})^*$
é isomorfa, via uma projeção linear, à variedade de Veronese
$\nu_2(\P^r)$.
A partir daí obtemos uma fórmula para a classe de Mather destas hipersuperfícies:
\begin{neoteo}{\ref{teo-Mather_sec}}
	Dado $r\geq 1$, a classe de Mather da hipersuperfície
	$\Sec_rC\subset\P^{2r}$ é 
	\begin{equation*}
	\cma(\Sec_r C)=
	(1+h)^r\sum_{j=0}^{[\frac{r}{2}]} \binom{r+1}{2j+1}h^{2j+1}
	\quad \in A_*\P^{2r}.
	\end{equation*}
\end{neoteo}
Em particular, todos os coeficientes da classe de Mather são positivos. X. Zhang \cite[\S~7.6]{Zhang18} conjecturou que um resultado de positividade similar
deve valer para a classe de Mather de variedades determinantais genéricas.

Como consequência obtemos também o chamado
\emph{grau dist\^ancia Euclidiana gen\'erica} da
$r$-secante (Corolário~\ref{cor-geddeg}):
\[
\geddeg(\Sec_r C) = \dfrac{3^{r+1}-1}{2}.
\]

R. Piene descreve em \cite[Théorèm~3]{Pie88}, uma relação precisa entre os graus polares e as classes de Mather para uma variedade projetiva. A partir desta relação e do Teorema~\ref{teo-Mather_sec}, obtemos então expressões explícitas
\eqref{eq-graus-polares} para os graus polares das secantes $\Sec_rC\subset \P^{2r}$. 


\bigskip

Para o restante da introdução, denote por $C\subset\P^{2r}$
uma curva racional normal e seja
 $\phi_r\colon\P^{2r}\dashrightarrow \P^{2r}$
 mapa gradiente 
 associado à hipersuperfície $\Sec_rC \subset \P^{2r}$.
\medskip

No Cap\'itulo~\ref{cap-grau}, respondemos positivamente \`a Conjectura~\cite{MS} enunciada acima, calculando o grau do
mapa gradiente:
\begin{neocor}{\ref{thm3}}
\(
\displaystyle
	\deg(\phi_r) =
	 \frac{1}{r+1}\binom{2r}{r}.
	\)
\end{neocor}
Nossa abordagem consiste em fatorar o mapa 
$\phi_r$ por um certo mapa birracional $\psi_r$, seguido por uma proje\c{c}\~ao linear da Grassmanianna $\G(r-1, r+1)$:
\begin{equation*}
\xymatrix{ 
& \G(r-1, r+1) \ar@{->}[dr]^{\pi_r}  \\ \P^{2r}\ar@{-->}[ru]^{\psi_r} \ar@{-->}[rr]^{\phi_r}& & \mathbb{P}^{2r} }
\end{equation*}
O mapa $\psi_r$ \'e definido pelos menores maximais 
da matriz de Hankel $\Hcal_{r, r+2} = (x_{i+j})_{ij}$.
Nosso resultado principal aqui é o Teorema~\ref{thm2.9}, que nos diz que os graus projetivos dos mapas $\phi_r$ e $\psi_r$ coincidem. Segue daí a fórmula para $\deg(\phi_r)$. Uma outra consequência (Corol\'ario~\ref{cor-thm2.9}) é
que as classes de Segre de $\Sec_{r-1}C$ e de $\sing\Sec_r C$ coincidem. Isto é surpreendente uma vez que, embora estes
esquemas coincidam como conjuntos
(Proposi\c{c}\~ao~\ref{corol_props_rnc2}), este último em geral não é reduzido.

Finalizamos o capítulo calculando alguns dos outros graus projetivos (Proposi\c{c}\~ao~\ref{prop-graus}), o que servirá de suporte para a parte final da tese.


\bigskip

O assunto do Cap\'itulo~\ref{cap-conjecturas} é uma
conjectura sobre os graus projetivos  $d_0(\phi_r),\dotsc,d_{2r}(\phi_r)$ do mapa gradiente e as classes
$\csm(\Sec_rC)$. É conveniente considerar o aberto complementar $U_r := \P^{2r}\setminus \Sec_r C$.
Para $i=0,1,\dotsc,2r$, denote por 
$c_i(U_r)$ o coeficiente de $[\P^i]$ 
na classe $\csm({U_r})$, ou seja,
\[
\csm(U_r) = c_0(U_r)[\P^0] + c_1(U_r)[\P^1] + \dotsb + c_{2r}(U_r)[\P^{2r}] \qquad \in A_*\P^{2r}.
\] 
Então:
\begin{neoconj}{A}
\label{cA}
\begin{equation*}
\label{eq-cA}
c_i(U_r)=\binom{r}{\left[\frac{i}{2}\right]}
\binom{r}{\left[\frac{i+1}{2}\right]}
\qquad\text{e}\qquad
d_i(\phi_r) = \sum_{k=0}^{i} \frac{1}{k+1}\binom{k}{i-k}\binom{r}{k}\binom{2k}{k}.
\end{equation*}
\end{neoconj}

Este capítulo conta a história de como chegamos a expressões tão precisas.
De fato a Conjectura~\ref{cA} vem como reformulação de uma outra que fizemos inicialmente, e que descrevemos agora. 

Na Proposição~\ref{prop-graus} calculamos 
explicitamente as funções $d_i(\phi_r)$ para
$i\leq 4$.
Em cada um destes casos
vimos que $d_i(\phi_r)$
é polinomial em $r$, de grau $i$.
Entre estas,
a expressão para $d_4(\phi_r)$ é a mais intrigante.
No seu cálculo, necessitamos do gênero
de uma certa curva não-singular, obtido
via a série de Hilbert calculada no Capítulo~\ref{cap2}. Nossa ideia inicial (e infrutífera) era estimar o gênero  
via a desigualdade de Castelnuovo, ou seja,
o Teorema de Riemann-Roch. Dessa 
heurística ficou a ideia fixa \emph{comportamento 
	polinomial}. Isto, juntamente com os resultados empíricos obtidos via \Mdois{} (veja \ref{subsecao-r<=5} no Apêndice), nos conduziu à seguinte:
\begin{neoconj}{A'}
\label{cB}
\emph{Para cada $i\geq 0$ fixado, tem-se:}
\begin{enumerate}[{\rm(a)}]
\item \emph{A fun\c{c}\~ao $c_i(U_r)$ \'e polinomial em $r$, de grau $\leq i$.}
\item\emph{ A fun\c{c}\~ao $d_i(\phi_r)$ \'e polinomial em $r$, de grau $\leq i$.}
\end{enumerate}
\end{neoconj}
Supondo a validade da Conjectura~\ref{cB}, obtemos no Teorema~\ref{algoritmo} um algoritmo que calcula simultaneamente
as funções $c_i$ e $d_i$ para todo $i\geq 0$, caracterizando-as de modo único.
Utilizando os resultados produzidos pelo algoritmo, 
consideramos então as \emph{funções geradoras} associadas,
isto é, séries formais com coeficientes dados pelos valores destas
funções; 
especificamente, olhamos para $p_i(x):=\sum_{r\geq0} d_i(r)x^r$. Estas, por sua vez, se escrevem como funções racionais
$p_i(x)=a_i(x)/b_i(x)$. E aqui ocorre uma conexão inesperada: em todos os exemplos que calculamos, 
os numeradores destas funções racionais, após pequenos ajustes, coincidem com 
os polinômios de \emph{Kazhdan-Lusztig} de 
(matróides associados a) grafos completos tripartidos $K_{1,1,i}$ obtidos por 
 K. Gedeon em
um artigo recente (\cite{Gedeon}). Estes polinômios constituem um invariante importante em Combinatória. Cabe men\-cio\-nar também a relação com os \emph{caminhos de Dyck com semicomprimento e número de subidas longas fixados}; veja a seção~\ref{secao-funcaogeradora} para detalhes.

É a partir desta conexão que obtemos as expressões que aparecem na Conjectura~\ref{cA}. Mostramos no Teorema~\ref{teo-formulas}
que de fato estas são as fórmulas para as sequências produzidas pelo algoritmo do
Teo\-rema~\ref{algoritmo}, demonstrando assim que a Conjectura~\ref{cA} e 
a Conjectura~\ref{cB} são equivalentes.

Apresentamos no Teorema~\ref{teo-equivalencias} uma longa lista
de reformulações equivalentes da nossa  Conjectura \ref{cA}/\ref{cB}, incluindo expressões para as funções geradoras envolvidas.
No Corolário~\ref{cor-conjectura} mostramos que a eventual validade da nossa conjectura tem como consequência a positividade dos coeficientes das classes $\csm(U_r)$ e $\csm(\Sec_rC)$ em $A_*\P^{2r}$. Este
é um tema relevante: X. Zhang \cite[\S~7.6]{Zhang18} conjecturou 
algo similar para variedades determinantais genéricas; J. Huh \cite{Huh16} provou a efetividade de $\csm$ para células de Schubert em Grassmannianas, que havia sido conjecturado por P. Aluffi e L. Mihalcea; e poste\-rior\-mente, em \cite{AMSS17}, os autores estenderam o resultado para células de Schubert em certas variedades bandeira.

Na Proposição~\ref{conj-A'} destacamos ainda uma outra reformulação que acreditamos ser uma estratégia promissora para a demonstração da nossa conjectura. Por fim, concluímos o capítulo apresentando uma lista de evidências que apontam para a validade da conjectura.

\bigskip

O Apêndice~\ref{apA} trata de combinatória, contém demonstrações de identidades binomiais necessárias no Capítulo~\ref{cap-conjecturas}. Finalmente, o Apêndice~\ref{apB} contém diversas porções de código  para o \Mdois{}, com cálculos de exemplos e uma implementação do algoritmo do Teorema~\ref{algoritmo}.

%
%

%
%
%
\chapter*{Nota\c{c}\~ao e conven\c{c}\~oes}
\addcontentsline{toc}{chapter}{Nota\c{c}\~ao e conven\c{c}\~oes}
\markboth{CONVENÇÕES}{}

Trabalhamos sobre o corpo dos n\'umeros complexos.

\bigskip
%
%

Denotamos a caracter\'istica de Euler topol\'ogica de uma variedade $X$ por $\euler(X)$.

\bigskip

Para um mergulho fechado $i\colon X\hookrightarrow M$ e um ciclo $\alpha\in A_*X$, sempre que poss\'ivel omitimos o pushforward indicando o grupo de Chow do contra-dom\'inio, ou  seja, no lugar de $i_*\alpha$ escrevemos $\alpha \in A_*M$.

\bigskip

Na maioria dos resultados e exemplos, as classes que calculamos pertencem ao grupo de Chow de um espa\c{c}o projetivo $\P^n$. Salvo exce\c{c}\~ao, denotamos a classe hiperplana por $h=c_1(\Ocal(1))\cap [\P^n]$. Observe que $A_*\P^n \cong \Z[h]/(h^{n+1})$.

\bigskip

Dizemos que uma variedade $X\subset\P^n$ \'e \emph{n\~ao-degenerada} quando n\~ao est\'a contida em um hiperplano.

\bigskip

Para uma curva racional normal $C \subset \P^n$ fica impl\'{\i}cito que $C$ tem grau $n$ e portanto \'e n\~ao-degenerada.

\bigskip

Dado um mapa racional $\phi\colon\P^n\dashrightarrow\P^N$, denotamos por $\deg(\phi)$ o grau topol\'ogico do mapa, isto \'e, a cardinalidade da pr\'e-imagem de um ponto geral de $\phi(\P^n$). Quando o mapa  n\~ao \'e genericamente finito, definimos $\deg(\phi)=0$.

\bigskip

Dado $d\in \Z$, definimos os \emph{polin\^omios binomiais} em $\Q[x]$ da seguinte maneira:
\[
\binom{x}{d} :=
\begin{cases}
\frac{x(x-1)(x-2)\dotsb(x-d+1)}{d!}, & \text{se $d>0$}\\
1, & \text{se $d=0$}\\
0, & \text{se $d<0$}.
\end{cases}
\]
Este \'e um polin\^omio de grau $d$ sempre que $d\geq 0$.
Finalmente, se $a$ \'e um inteiro, ent\~ao $\binom{a}{d}$ denota o 
polin\^omio binomial avaliado em $a$ e 
em particular se $0\leq a < d$, ent\~ao $\binom{a}{d}=0$.

%
%
%
%
\chapter{Classes características}\label{cap1}

Neste capítulo apresentamos os conceitos básicos sobre classes características que serão necessários para os capítulos seguintes. Iniciamos o capítulo apresentando as diversas classes de que faremos uso: Schwartz-MacPherson, Mather, Segre, Fulton e de Milnor. No caso particular de hipersuperfícies projetivas, mostramos que existem relações entre estas classes e o mapa gradiente dado pela equação que define a hipersuperfície.

As referências básicas para Teoria de Interseção são \cite{Ful84} e \cite{EH16}.

\section{Preliminares} 

Em \cite{Mac74}, R. MacPherson definiu classes de Chern para variedades singulares, as quais concordam com a noção de classe de Chern para variedades não-singulares. Entre outras, encontram-se as classes de Schwartz-MacPherson, de Fulton e de Mather. Destacamos as duas primeiras uma vez que a diferença entre elas, a menos de sinal, define a classe de Milnor, que guarda informação sobre o lugar singular da variedade.

Começamos descrevendo a classe de Schwartz-MacPherson ($\csm$). Como referências indicamos 
\cite[Example~19.1.7, p.\thinspace{}376]{Ful84},
\cite{Alu99a}, \cite{Alu03}, \cite{Zhang18} e particularmente
\cite{Alu02}.

\medskip

Seja $X$ uma variedade algébrica. 


Uma \textbf{função construtível em $X$} é uma soma finita $\sum m_{V}1_{V}$ sobre todas as subvariedades fechadas irredutíveis $V$ de $X$, os coeficientes $m_V$ são números inteiros e
\[
1_{V}(x) := \begin{cases} 1, &\text{se $x \in V$ }\\ 0, &\text{se $x \in X \setminus V$} \end{cases}.
\]

Denotemos por $\Ccal(X)$ o conjunto das funções construtíveis em $X$. Este é um grupo abeliano com a soma de funções, e é livremente gerado pelas funções
\[
\{1_{V};\  V\subset X \text{ é uma subvariedade fechada irredutível}\}.
\]
Dado um morfismo 
$f \colon X \longrightarrow Y$, definimos o 
\emph{pushforward}
\[
f_* \colon \Ccal(X) \longrightarrow \Ccal(Y)
\]
como segue. Para cada subvariedade fechada irredutível $V\subset X$ e $p \in Y$
\[
f_*(1_{V})(p) := \euler (f^{-1}(p) \cap V)
\]
e estendemos linearmente: $f_*(\sum m_V 1_V) = \sum m_V f_*(1_V)$.
Temos assim um funtor (\cite[Proposition~1]{Mac74} ou
\cite[Lemma~2.3]{Alu13})
da categoria das variedades algébricas com mapas regulares para a categoria dos grupos abelianos.

Por outro lado, temos o \textbf{funtor de Chow} $A_{*}$ (veja por exemplo \cite[Chapter 1]{Ful84}), que associa cada variedade $X$ ao grupo abeliano (chamado o grupo dos \emph{ciclos} de $X$)
\[
A_{*}X = \bigoplus_{k} A_{k}X,
\]
onde, para cada $k$, $A_{k}X$ é o grupo abeliano livre gerado 
pelas subvariedades fechadas irredutíveis $k$-dimensionais em $X$, módulo equivalência racional: se $\alpha \in A_{k}X$, então $\alpha = \sum_{i}m_{i}[V_{i}]$ onde ${m_i} \in \Z$ e $V_{i} \subset X$ tem dimensão $k$. Dizemos que $\alpha$ é um \emph{$k$-ciclo} de $X$.
Aqui, para cada um morfismo próprio $f \colon X \longrightarrow Y$, temos o morfismo \emph{pushforward}
\[
f_* \colon A_*X \longrightarrow A_*Y
\]
induzido pela definição $f_*[V] := \deg(V/f(V))[f(V)]$, onde
$V \subset X$ é uma subvariedade irredutível e 
\[
\deg(V/f(V)) = \begin{cases} [R(V): R(f(V))], &\text{se } \dim f(V) = \dim V \\ 0, &\text{se } \dim f(V) < \dim V \end{cases},
\]
sendo $[R(V): R(f(V))]$ o grau da extensão de corpos de funções racionais (veja \cite[Theorem~1.4, p.\thinspace{}11]{Ful84}). 
Observe também que \cite[p.\thinspace13]{Ful84}
%
\[
\displaystyle \int \alpha = \int f_* (\alpha)
\]
para todo ciclo $\alpha \in A_*(X)$.

\medskip

Em \cite[Theorem~1]{Mac74}, R. MacPherson provou, o que havia sido conjecturado por Deligne e Grothendieck, que para 
variedades complexas compactas existe uma transformação natural
\[
\Ccal \rightsquigarrow A_{*}
\]
resultado que foi estendido em \cite{Kennedy} para variedades completas sobre qualquer corpo algebricamente fechado de característica zero.
Isto significa que para cada variedade completa $X$ 
existe um homomorfismo de grupos abelianos $c_*(X)\colon \Ccal(X)\longrightarrow A_*(X)$ (que denotamos simplesmente por $c_*$) tal que, para cada morfismo próprio
$f \colon X \longrightarrow Y$ entre variedades completas tem-se
\[
c_*(f_*(\varphi)) = f_*(c_*(\varphi))
\]
para cada função construtível $\varphi\in \Ccal(X)$. Em outras palavras, vale a comutativade do diagrama
\[
\xymatrix {\Ccal(X) \ar@{->}[d]_{c_*}
\ar@{->}[r]^{f_*} & \Ccal(Y)
 \ar@{->}[d]^{c_*}\\
A_*(X) \ar@{->}[r]_{f_*} &
A_*(Y)}
\]

\begin{defi}
Dada uma variedade completa $X$ possivelmente singular, definimos a \textbf{classe de Schwartz-MacPherson de $X$} como
\[
\csm(X) = c_{*}(1_{X}) \quad \in A_{*}(X).
\]
Mais ainda, se $V$ é uma subvariedade de $X$ (fechada ou não),
por abuso de linguagem definimos $\csm(V)=c_*(1_V) \in A_*(X)$.
\end{defi}

\begin{rmk}\cite[Proposition~3.1]{Alu03} 
\begin{enumerate}
\item
O grau da classe de Schwartz-MacPherson é a característica de Euler da variedade:
\[
\displaystyle \int \csm(X) = \euler(X).
\]

De fato, consideremos o mapa constante $\kappa \colon X \longrightarrow \{\mathrm{ponto}\}$. Então:
\[
\kappa_*(\csm(X)) = \euler(X)\cdot 1_{\{\mathrm{ponto}\}},
\]
pois, pela relação de funtorialidade,
\[
\begin{split}
\kappa_*(\csm(X)) &= \kappa_*(c_*(1_{X})) = c_*(\kappa_*(1_{X})) \\
&= c_*(\euler(X)\cdot 1_{\{\mathrm{ponto}\}}) \\
&= \euler(X)\cdot c_*(1_{\{\mathrm{ponto}\}}) \\
&= \euler(X)\cdot [\mathrm{ponto}].
\end{split}
\]
Logo, tomando graus, obtém-se o afirmado.

\item
A classe de Schwartz-MacPherson satisfaz o princípio de inclusão-exclusão:
se $V_1, V_2$ são subvariedades de uma variedade completa $X$, então
\[
\csm(V_{1}\cap V_{2}) = \csm(V_1) + \csm(V_2) - \csm(V_{1}\cup V_{2}) \quad \in A_*X.
\]
\item
Quando $X$ é não-singular, $\csm(X) = c(TX)\cap [X]$.
\end{enumerate}
\hfill{$\Box$}
\end{rmk}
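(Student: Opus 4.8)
The plan is to obtain all three items as formal consequences of the two structural features of the MacPherson natural transformation $c_*\colon \Ccal \rightsquigarrow A_*$ recalled above: its naturality under proper pushforward and its normalization on nonsingular varieties. For item~(1) I would apply naturality to the constant morphism $\kappa\colon X \to \{\mathrm{ponto}\}$. The single computation needed is the pushforward of the constant function: since $\kappa^{-1}(\mathrm{ponto})\cap X = X$, the defining formula $f_*(1_V)(p)=\euler(f^{-1}(p)\cap V)$ gives $\kappa_*(1_X)=\euler(X)\cdot 1_{\{\mathrm{ponto}\}}$. Naturality then yields $\kappa_*\csm(X)=\kappa_* c_*(1_X)=c_*\kappa_*(1_X)=\euler(X)\,[\mathrm{ponto}]$, and taking degrees gives $\int\csm(X)=\euler(X)$, exactly the chain of equalities displayed in the statement.

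For item~(2) I would use that $c_*$ is a homomorphism of abelian groups together with the elementary identity $1_{V_1\cap V_2}=1_{V_1}+1_{V_2}-1_{V_1\cup V_2}$ in $\Ccal(X)$; applying $c_*$ and invoking linearity produces the inclusion--exclusion formula immediately. For item~(3), the assertion $\csm(X)=c(TX)\cap[X]$ for nonsingular $X$ is precisely the normalization axiom characterizing $c_*$ in MacPherson's theorem \cite{Mac74} (extended to characteristic zero in \cite{Kennedy}), so here there is nothing to do beyond invoking that characterization.

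The real content therefore lies entirely in the existence of $c_*$ with these two properties; once that is granted, as it is in the preceding discussion, no genuine obstacle remains and the three items follow by purely formal manipulation. The only place deserving care is the bookkeeping of the pushforward of constructible functions in item~(1), since it is there that the topological Euler characteristic actually enters the argument.
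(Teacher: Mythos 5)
Your argument for item (1) is exactly the computation carried out in the remark itself: push forward $1_X$ along the constant map, obtain $\euler(X)\cdot 1_{\{\mathrm{ponto}\}}$ from the defining formula for the pushforward of constructible functions, apply naturality of $c_*$, and take degrees. Items (2) and (3) follow from linearity of $c_*$ together with $1_{V_1\cap V_2}=1_{V_1}+1_{V_2}-1_{V_1\cup V_2}$ and from the normalization axiom, respectively, which is the standard justification the paper implicitly relies on; the proposal is correct and matches the paper's approach.
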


\begin{exem} \cite[1.9.2]{Alu10}
Sejam $X$ uma variedade não-singular, $V$ uma subvariedade não-singular de $X$ de codimensão $d$ e $\pi \colon \tilde{X} \longrightarrow X$ o mapa do blow-up de $X$ ao longo de $V$. Da identificação do divisor excepcional com o projetivizado
do fibrado normal de $V$ em $X$ vem que a característica de Euler das fibras é
\[
\euler (\pi^{-1}(q)) := \begin{cases} 1, &\text{se $q \in X\setminus V$ }\\ d, &\text{se $q \in V$} \end{cases},
\]
(note que $\euler(\P^{d-1})=d$, veja o
Exemplo~\ref{Euler_Char_P^n}) e portanto
\[
\pi_* (1_{\tilde{X}}) = 1_X + (d-1)1_V.
\]
Da relação de funtorialidade de MacPherson obtemos
\[
\begin{split}
\pi_* (\csm(\tilde{X}))  &= \pi_*(c_*(1_{\tilde{X}})) \\
 &= c_*(\pi_*(1_{\tilde{X}})) \\
&= c_*(1_X+(d-1)\cdot 1_V)\\
&= c_*(1_X)+(d-1)\cdot c_*(1_V)\\
&= \csm(X) +(d-1)\cdot \csm(V).
\end{split}
\]
Em particular, tomando graus:
\[
\euler(\tilde{X}) = \euler(X)+(d-1)\cdot \euler(V).
\]
\hfill{$\Box$}
\end{exem}

\begin{defi} \cite[\S~2]{Mac74}
Seja $X$ uma subvariedade de uma variedade não-singular $M$, e seja $G(\dim X, TM)$ o fibrado Grassmanniana \cite[\S~9.6, p.\thinspace{}346]{EH16}. 
A correspondência $x \mapsto T_x X$ nos dá um mapa racional
\[
X \dashrightarrow G(\dim X, TM).
\]
O fecho $\tilde{X}$ da imagem é chamado o \textbf{blow-up de Nash de $X$} e vem equipado com um
mapa natural $\upsilon \colon \tilde{X}\to X$.
Definimos a \textbf{classe de Chern-Mather de $X$} como 
\[
\cma(X) = \upsilon_*(c(\Tcal)\cap[\tilde{X}]) \quad \in A_*X
\]
onde 
$\Tcal$ é o fibrado tautológico da Grassmanniana.
\end{defi}

\begin{rmk}\cite[p.\thinspace{}3993-3994]{Alu99a}
\begin{enumerate}
\item
A definição acima independe da escolha de $M$.
\item
Quando $X$ é não-singular, tem-se $\cma(X) = c(TX)\cap [X]$.
\item
Em geral, as classes $\cma(X)$ e $\csm(X)$ são distintas.
\end{enumerate}
Para um exemplo onde estas classes diferem, 
tome $X\subset\P^4$ a secante da quártica racional
normal. Veremos no Exemplo~\ref{classgrafsec} que
\[
\csm(X) = 3h+6h^2+8h^3+4h^4 \quad \in A_*\P^4,
\]
enquanto que pelo Exemplo~\ref{cma_secmax_rnc}
\[
\cma(X) = 3h+6h^2+4h^3+2h^4 \quad \in A_*\P^4.
\]
\hfill{$\Box$}
\end{rmk}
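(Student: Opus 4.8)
The content worth proving here is the worked example at the end of the remark (items 1 and 2 being the standard facts cited from \cite{Alu99a}); the plan is to verify the two displayed identities and then read off $\csm(X)\neq\cma(X)$. Throughout, $X=\Sec_2 C\subset\P^4$ is the cubic hypersurface defined by the determinant of the $3\times3$ Hankel matrix, and its singular locus is the rational normal quartic $C=\Sec_1 C$ itself, since $\sing\Sec_r C=\Sec_{r-1}C$. The key structural point, which makes the example instructive, is that $C\cong\nu_4(\P^1)$ is smooth, so the two sides are attacked by genuinely different machinery and the answers are forced to agree only in the top two dimensions.

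For $\csm(X)$ I would apply Aluffi's formula \eqref{eq-Aluffi-CSM} with $Y=\sing X$. The only nontrivial input is the Segre class $s(Y,\P^4)$, which by Corollary~\ref{cor-thm2.9} equals $s(\Sec_1 C,\P^4)=s(C,\P^4)$ (for $r=2$ one can also just check directly that the Jacobian subscheme is the reduced curve $C$). Since $C$ is smooth, $s(C,\P^4)=c(N_{C/\P^4})^{-1}\cap[C]$, and using that the normal bundle of the degree-$n$ rational normal curve splits as $\Ocal_{\P^1}(n+2)^{\oplus(n-1)}$, here $N_{C/\P^4}\cong\Ocal_{\P^1}(6)^{\oplus3}$, one gets $s(C,\P^4)=4h^3-18h^4$. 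Feeding $\Ocal(X)=\Ocal(3)$, $c(T\P^4)=(1+h)^5$ and $s(X,\P^4)=\tfrac{3h}{1+3h}$ into \eqref{eq-Aluffi-CSM} — that is, forming $s(C,\P^4)^\vee\otimes\Ocal(3)$, dividing by $c(\Ocal(3))$, adding $s(X,\P^4)$, and capping with $(1+h)^5$ — collapses after routine expansion to $3h+6h^2+8h^3+4h^4$.

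For $\cma(X)$ I would specialize the general formula Theorem~\ref{teo-Mather_sec} to $r=2$, which gives $(1+h)^2\bigl(\binom{3}{1}h+\binom{3}{3}h^3\bigr)=3h+6h^2+4h^3+2h^4$. The geometric engine behind that formula, which I would invoke directly in this single case, is Proposition~\ref{prop-projveronese}: the dual variety $X^{*}$ is a linear isomorphic projection of the smooth Veronese surface $\nu_2(\P^2)$. Since the projectivized conormal variety of $X$ coincides (after swapping the two projective factors) with that of $X^{*}$, and $X^{*}$ is smooth with completely explicit Chern classes, one obtains $\cma(X)$ by transporting $c(T\nu_2(\P^2))\cap[\nu_2(\P^2)]$ across the conormal correspondence and pushing forward to $A_*\P^4$.

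Comparing, the degree-$3$ and degree-$4$ parts ($8h^3+4h^4$ versus $4h^3+2h^4$) disagree while the top two parts both equal $3h+6h^2$, so $\csm(X)\neq\cma(X)$; the discrepancy $4h^3+2h^4$ is supported exactly in the dimensions of the singular curve $C$. The main obstacle is not the $\csm$ side, which becomes purely mechanical once the Segre class of the smooth locus $C$ is known, but the $\cma$ side: it requires the identification $X^{*}\cong\nu_2(\P^2)$ of Proposition~\ref{prop-projveronese} together with the conormal transport of Chern classes. Conceptually, the substance of the remark is precisely that the Segre-class correction governing $\csm$ and the conormal/Nash correction governing $\cma$ are computed by unrelated methods and land on different values in dimension $\leq 1$, with no a priori reason to coincide.
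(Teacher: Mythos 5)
Your proposal is correct and follows essentially the same route as the paper: the $\csm$ side rests on the Segre class $s(C,\P^4)=4h^3-18h^4$ of the smooth singular curve fed into Aluffi's hypersurface formula (the paper packages this through the projective degrees $(1,2,4,4,2)$ and Theorem~\ref{Aluffi2.1}, which is the same computation as applying \eqref{eq-Aluffi-CSM} directly, and it obtains the Segre class from the Euler sequence rather than the splitting $N_{C/\P^4}\cong\Ocal_{\P^1}(6)^{\oplus3}$ — only the total degree $18$ matters, so both are fine), while the $\cma$ side is exactly the $r=2$ case of Theorem~\ref{teo-Mather_sec}, whose engine is Proposition~\ref{prop-projveronese} plus Aluffi's conormal involution. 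Both displayed classes check out, so the conclusion $\csm(X)\neq\cma(X)$ stands.
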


\begin{defi}
Seja $X$ um subesquema fechado próprio de uma variedade não-singular $M$, e seja $\tilde{M}$ o blow-up de $M$ ao longo de $X$, com divisor excepcional $\tilde{X}$ e projeção $\eta \colon \tilde{X} \longrightarrow X$. A \textbf{classe de Segre de $X$ em $M$} é definida por
\[
s(X, M) = \sum_{k\geq 1} (-1)^{k-1} \eta_*(\tilde{X}^k) \quad \in A_*X,
\]
onde $\tilde{X}^k$ é a auto-interseção de $\tilde{X}$.
\end{defi}

\begin{rmk}\cite[p.\thinspace{}454]{EH16}
Se $X$ é não-singular, tem-se $s(X, M) = s(N_{X/M})\cap[X]$, onde $s(N_{X/M})$ é a classe de Segre (isto é, o inverso da classe de Chern) do fibrado normal de $X$ em $M$.
\hfill{$\Box$}
\end{rmk}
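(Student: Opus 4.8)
The plan is to reduce the equality to a computation on the exceptional divisor of the blow-up, exploiting that the embedding $X\hookrightarrow M$ is regular. First I would observe that, since $X$ and $M$ are both nonsingular, the inclusion is a regular embedding; hence the normal cone of $X$ in $M$ is an honest vector bundle, namely the normal bundle $N:=N_{X/M}$, of rank $d=\codim(X,M)$. Under this hypothesis the exceptional divisor $\tilde{X}$ of $\tilde{M}=\mathrm{Bl}_X M$ is canonically identified with the projective bundle $\P(N)$, and the map $\eta\colon\tilde{X}\to X$ appearing in the definition of the Segre class is exactly the bundle projection $\P(N)\to X$. I would take this identification from the standard theory of blow-ups along regular embeddings (e.g. the reference cited, or Fulton's appendix).

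Next I would compute the normal bundle of the exceptional divisor. The essential geometric input is the isomorphism $N_{\tilde{X}/\tilde{M}}\cong\Ocal_{\tilde{M}}(\tilde{X})\big|_{\tilde{X}}\cong\Ocal_{\P(N)}(-1)$, the tautological line bundle. Writing $\zeta=c_1(\Ocal_{\P(N)}(1))$ for the hyperplane class of the bundle, this yields $c_1\bigl(\Ocal_{\tilde{X}}(\tilde{X})\bigr)=-\zeta$. By the self-intersection formula for the Cartier divisor $\tilde{X}$, the classes in the definition become $\tilde{X}^{k}=c_1(N_{\tilde{X}/\tilde{M}})^{k-1}\cap[\tilde{X}]=(-\zeta)^{k-1}\cap[\P(N)]$. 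Substituting into the definition of $s(X,M)$ gives
\[
s(X,M)=\sum_{k\geq 1}(-1)^{k-1}\eta_*\bigl((-\zeta)^{k-1}\cap[\P(N)]\bigr)=\sum_{k\geq 1}\eta_*\bigl(\zeta^{k-1}\cap[\P(N)]\bigr),
\]
since the two factors of $(-1)^{k-1}$ cancel. The terms with $k-1<d-1$ vanish automatically, because $\eta_*\bigl(\zeta^{k-1}\cap[\P(N)]\bigr)$ would live in a Chow group $A_{m}X$ with $m>\dim X$, which is zero; the surviving terms are precisely Fulton's definition of the Segre class $s(N)\cap[X]$ of the vector bundle $N$, which finishes the argument.

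The main obstacle I anticipate is the bookkeeping of signs and projective-bundle conventions: whether $\P(N)$ parametrizes sub-lines or quotient-lines, the corresponding normalization of $\Ocal(1)$ and of $\zeta$, and the exact index shift in Fulton's defining formula $s_i(N)\cap[X]=\eta_*\bigl(\zeta^{d-1+i}\cap[\P(N)]\bigr)$. These must be aligned so that the telescoping sum above reproduces the Segre class term by term after the dimension-vanishing is taken into account. A secondary technical point worth checking carefully is the identification $N_{\tilde{X}/\tilde{M}}\cong\Ocal_{\P(N)}(-1)$, which is the single place where the regularity of the embedding (equivalently, the nonsingularity of $X$) is genuinely used; everything else is formal manipulation inside the Chow group $A_*X$.
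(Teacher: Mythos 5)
Your argument is correct. The paper states this remark with only a citation to [EH16, p.~454] and gives no proof, and what you have written is precisely the standard argument found there (and in Fulton, Cor.~4.2.2): regularity of the embedding identifies the exceptional divisor with $\P(N)$ and its normal bundle with $\Ocal_{\P(N)}(-1)$, the signs $(-1)^{k-1}$ cancel, the low-order terms die for dimension reasons, and the remaining sum is Fulton's definition of $s(N)\cap[X]=c(N)^{-1}\cap[X]$. The two convention checks you flag (sub- vs.\ quotient-lines in $\P(N)$, and the index shift $s_i(N)\cap[X]=\eta_*(\zeta^{d-1+i}\cap[\P(N)])$) are indeed the only places where care is needed, and your bookkeeping is consistent.
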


\begin{exem}\label{segre_rnc_4} (Classe de Segre da curva racional normal em $\P^4$): Seja $C \subset \P^4$ a quártica racional normal. Então
\[
s(C, \P^4) = 4h^3 - 18h^4 \quad \in A_*\P^4.
\]

De fato, como $C$ é não-singular, temos que $s(C, \P^4) = s(N_{C/\P^4})\cap [C]$.
Por outro lado, da sequência exata:
\[
0\longrightarrow TC \longrightarrow T\P^4|_{C} \longrightarrow N_{C/\P^4}\longrightarrow 0
\]
segue, via a fórmula de Whitney, que 
\[
c(TC)\cdot c(N_{C/\P^4}) = c(T\P^4|_{C}),
\]
e como $c(T\P^4|_{C})=(1+h)^5$ e $c(TC)\cap[C] = \deg(C)h^3+\euler(C)h^4 = 4h^3+2h^4 \in A_*\P^4$, tem-se
\[
\begin{split}
s(N_{C/\P^4})\cap[C] &= c(T\P^4)^{-1}\cdot c(TC)\cap[C] \\
&= ((1+h)^5)^{-1}\cdot (4h^3+2h^4) \\
&= (1-5h+o(h^2))\cdot (4h^3+2h^4) \\
&= 4h^3-18h^4 \quad \in A_*\P^4.
\end{split}
\]
\hfill{$\Box$}
\end{exem}


\begin{defi}
Seja $X$ um esquema que pode ser mergulhado como um subesquema fechado em uma variedade não-singular $M$. Definimos a \textbf{classe de Chern-Fulton de $X$} como 
\[
\cf(X) = c(TM|_{X}) \cap s(X, M) \quad \in A_{*}(X).
\]
\end{defi}

\begin{rmk}\cite[Example~4.2.6(a), p.\thinspace{}77]{Ful84}
\begin{enumerate}[{\rm(a)}]
\item
A definição acima independe da escolha do mergulho de $X$.
\item
Se $X$ é localmente de interseção completa, então
\[
\cf(X) 
= c(TX)\cap [X],
\]
onde $TX = TM|_{X} - N_{X/M}$ é o fibrado tangente virtual de $X$. 
\item
Se $X$ é não-singular, tem-se $\cf(X) = c(TX)\cap [X]$.
\end{enumerate} 
\hfill{$\Box$}
\end{rmk}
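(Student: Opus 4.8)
The plan is to reduce all three assertions to the behaviour of the Segre class under regular embeddings. The key tool is the comparison formula (see \cite[\S~4.2]{Ful84}): if $X\subseteq M\subseteq P$ with $M\hookrightarrow P$ a regular embedding of normal bundle $N_{M/P}$, then
\[
s(X,P)=c(N_{M/P}|_X)^{-1}\cap s(X,M)\qquad\in A_*X.
\]
Taking $X=M$ recovers $s(M,P)=s(N_{M/P})\cap[M]$, which is a useful check. In particular, when $X\hookrightarrow M$ is itself a regular embedding (e.g.\ $X$ a local complete intersection in a nonsingular $M$), the normal cone coincides with the normal bundle and, taking $P=M$ above,
\[
s(X,M)=s(N_{X/M})\cap[X]=c(N_{X/M})^{-1}\cap[X].
\]

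I would dispatch the two direct items first. For (b), assume $X$ is a local complete intersection in the nonsingular $M$; then $X\hookrightarrow M$ is regular and $N_{X/M}$ is a vector bundle, so substituting the last identity into the definition gives
\[
\cf(X)=c(TM|_X)\cap s(X,M)=\frac{c(TM|_X)}{c(N_{X/M})}\cap[X]=c(TM|_X-N_{X/M})\cap[X]=c(TX)\cap[X],
\]
where $TX=TM|_X-N_{X/M}$ is the virtual tangent bundle and the third equality uses that the total Chern class is multiplicative on $K$-theory. Item (c) is the special case in which $X$ is nonsingular: then $X\hookrightarrow M$ is regular, $N_{X/M}$ is the honest normal bundle, and the exact sequence $0\to TX\to TM|_X\to N_{X/M}\to 0$ forces $TM|_X-N_{X/M}=TX$ as an actual bundle, whence $\cf(X)=c(TX)\cap[X]$.

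The core of the argument is (a), independence of the embedding. First I treat the nested case: if $M\hookrightarrow P$ is a regular embedding of nonsingular varieties and $X\subseteq M$, then Whitney's formula gives $c(TP|_X)=c(TM|_X)\,c(N_{M/P}|_X)$, and combining this with the comparison formula yields
\[
c(TP|_X)\cap s(X,P)=c(TM|_X)\,c(N_{M/P}|_X)\cap c(N_{M/P}|_X)^{-1}\cap s(X,M)=c(TM|_X)\cap s(X,M),
\]
so $\cf$ computed in $P$ agrees with $\cf$ computed in $M$. For two arbitrary embeddings $j_1\colon X\hookrightarrow M_1$ and $j_2\colon X\hookrightarrow M_2$ I would pass to the diagonal embedding $\iota=(j_1,j_2)\colon X\hookrightarrow M_1\times M_2$ and show that $\cf$ via $\iota$ equals $\cf$ via $j_1$ (and, symmetrically, via $j_2$). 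The projection $\pi_1\colon M_1\times M_2\to M_1$ is smooth with $\pi_1\circ\iota=j_1$, and $\iota(X)$ lies in the fibre $\pi_1^{-1}(j_1(X))=X\times M_2$, inside which it is the graph of $j_2$; since $M_2$ is nonsingular this graph is a regular embedding with normal bundle $TM_2|_X$. The aim is the relation
\[
s(\iota(X),\,M_1\times M_2)=c(TM_2|_X)^{-1}\cap s(X,M_1),
\]
which together with $c(T(M_1\times M_2)|_X)=c(TM_1|_X)\,c(TM_2|_X)$ cancels the extra factor and gives $\cf^{M_1\times M_2}(X)=\cf^{M_1}(X)$.

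The main obstacle sits precisely in that last relation: the intermediate embedding $X\times M_2\hookrightarrow M_1\times M_2$ is not regular (since $j_1$ need not be), so the comparison formula does not apply to it verbatim. The route I would take is to combine the flat-pullback compatibility of Segre classes along the smooth projection $\pi_1$, which relates $s(X\times M_2,\,M_1\times M_2)$ to $s(X,M_1)$, with the regular structure of the graph of $j_2$ inside $X\times M_2$, whose normal bundle $TM_2|_X$ supplies exactly the correcting factor; making this interaction precise (for instance through the normal-cone sequence attached to the factorization $X\hookrightarrow X\times M_2\hookrightarrow M_1\times M_2$, or via deformation to the normal cone) is the delicate point. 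Once the relation is established, $\cf^{M_1}(X)=\cf^{M_2}(X)$ follows at once, completing the proof of independence.
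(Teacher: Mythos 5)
Your items (b) and (c) are complete and correct: once one knows that the Segre class of a regularly embedded subscheme is the inverse Chern class of its normal bundle, the computation $\cf(X)=c(TM|_X)\,c(N_{X/M})^{-1}\cap[X]=c(TX)\cap[X]$ is immediate, and (c) is just the case where the virtual tangent bundle is an honest one. The paper itself offers no argument for this remark --- it simply cites Fulton, Example~4.2.6(a) --- and the route you take for (a), reducing two embeddings $j_1,j_2$ to the diagonal embedding in $M_1\times M_2$ and then comparing the embedding $X\hookrightarrow M_1$ with its graph modification $X\hookrightarrow M_1\times M_2$, is exactly Fulton's.

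The gap is that you stop at the one step that carries all the content. The identity $s(\iota(X),M_1\times M_2)=c(j_2^*TM_2)^{-1}\cap s(X,M_1)$, which you correctly isolate and then describe as ``the delicate point'', \emph{is} the theorem: everything before and after it is formal manipulation with Whitney's formula. It cannot be obtained by composing comparison formulas along $X\hookrightarrow j_1(X)\times M_2\hookrightarrow M_1\times M_2$, because the second inclusion is not regular and Segre classes have no Whitney-type functoriality for composites of closed embeddings. The correct argument is a direct computation of the normal cone: locally on $M_2$ choose a regular system of parameters, check that the ideal of $\iota(X)$ in $M_1\times M_2$ is generated by the pullback of the ideal of $j_1(X)$ in $M_1$ together with the equations of the graph of $j_2$ (which restrict to a regular sequence), and deduce $C_X(M_1\times M_2)\cong C_XM_1\oplus j_2^*TM_2$; the identity then follows from the general fact that $s(C\oplus E)=c(E)^{-1}\cap s(C)$ for a cone $C$ and a vector bundle $E$ on $X$. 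This is the computation underlying the very example of Fulton that the paper cites. Note also that the ``comparison formula'' you invoke at the outset for nested embeddings $X\subseteq M\subseteq P$ with $M\hookrightarrow P$ regular is a statement of exactly the same depth --- it is proved by the same normal-cone decomposition --- so as written your argument assumes, in slightly different clothing, the lemma it later declares delicate.
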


\begin{exem}\label{csm_exem} 
Consideremos as curvas planas $C = Z(x_0^3+x_1^3+x_2^3)$ (cúbica de Fermat) e $D = Z(x_0x_1(x_0+x_1))$ (três retas por um ponto). Então $\euler(C)=0$ e $\euler(D)=4$, portanto
\[
\csm(C) = 3h \quad e \quad \csm(D) = 3h+4h^2 \quad \in A_*\P^2.
\]

Por outro lado, para as classes de Fulton:
\[
\cf(C) = 3h = \cf(D) \quad \in A_*\P^2.
\]
\hfill{$\Box$}
\end{exem}

\begin{rmk}\label{cfInv}
Vimos no exemplo acima duas curvas planas de mesmo grau para as quais as classes de Schwartz-MacPherson são distintas, mas as classes de Fulton coincidem. Esta ``insensibilidade'' da classe
de Fulton é um fato geral. Precisamente:
\emph{duas hipersuperfícies de mesmo grau em $\P^n$ têm a mesma classe de Fulton.}

 Com efeito, como uma hipersuperfície $X\hookrightarrow \P^n$ de grau $d$
é localmente de interseção completa,  
sua classe de Segre é dada 
pelo inverso da classe de Chern \cite[p.\thinspace{}454]{EH16} e portanto igual a
$\frac{[X]}{1+[X]}=\frac{dh}{1+dh} = dh-d^2h^2+d^3h^3-\cdots \in A_*\P^n$. A afirmação
segue daí pela fórmula de projeção. 
\hfill{$\Box$}
\end{rmk}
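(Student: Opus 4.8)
O plano é calcular diretamente a classe de Fulton a partir de sua definição, $\cf(X)=c(T\P^n|_X)\cap s(X,\P^n)$, e verificar que o resultado, depois do pushforward para $A_*\P^n$, depende apenas do grau $d$. O primeiro passo é observar que uma hipersuperfície $X\subset\P^n$ de grau $d$ é um divisor de Cartier, logo está regularmente mergulhada e tem fibrado normal $N_{X/\P^n}=\Ocal(X)|_X=\Ocal(d)|_X$. Em particular $X$ é localmente de interseção completa, de modo que sua classe de Segre é o inverso da classe de Chern do fibrado normal:
\[
s(X,\P^n)=c(N_{X/\P^n})^{-1}\cap[X]=(1+dh|_X)^{-1}\cap[X]\quad\in A_*X.
\]

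Em seguida, denotando por $\iota\colon X\hookrightarrow\P^n$ a inclusão, eu aplicaria a fórmula de projeção duas vezes. Como $c(T\P^n|_X)=\iota^*c(T\P^n)$, teria
\[
\iota_*\cf(X)=\iota_*\big(\iota^*c(T\P^n)\cap s(X,\P^n)\big)=c(T\P^n)\cap\iota_*s(X,\P^n);
\]
e, usando que $h|_X=\iota^*h$ junto com $\iota_*[X]=dh$ em $A_*\P^n$, obteria
\[
\iota_*s(X,\P^n)=\iota_*\big((1+dh|_X)^{-1}\cap[X]\big)=(1+dh)^{-1}\cap(dh)=\frac{dh}{1+dh}.
\]

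Combinando as duas igualdades e lembrando que $c(T\P^n)=(1+h)^{n+1}$, chegaria a
\[
\iota_*\cf(X)=(1+h)^{n+1}\cap\frac{dh}{1+dh}=(1+h)^{n+1}\cap\big(dh-d^2h^2+d^3h^3-\cdots\big),
\]
expressão que depende somente de $d$ e de $n$, e não da hipersuperfície particular escolhida. Isso estabeleceria a afirmação.

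Não antecipo aqui um obstáculo sério: a única sutileza é justificar que $X$ é localmente de interseção completa — o que é imediato por se tratar de divisor de Cartier — e manusear corretamente os pushforwards via a fórmula de projeção, suprimindo $\iota_*$ conforme a convenção do texto. Alternativamente, a mesma conclusão segue observando que a classe de Fulton de um subesquema regularmente mergulhado depende apenas do fibrado normal e da restrição do fibrado tangente do ambiente, ambos determinados unicamente por $d$ no caso de uma hipersuperfície.
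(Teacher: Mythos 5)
Sua proposta está correta e segue essencialmente o mesmo caminho da observação do texto: usar que a hipersuperfície é localmente de interseção completa (divisor de Cartier) para escrever $s(X,\P^n)$ como o inverso da classe de Chern do fibrado normal, empurrar para $A_*\P^n$ obtendo $\frac{dh}{1+dh}$, e concluir pela fórmula de projeção. Você apenas explicita os passos (os dois usos da fórmula de projeção e a identificação $N_{X/\P^n}=\Ocal(d)|_X$) que o texto deixa impl\'{\i}citos.
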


\begin{exem}\label{classtangent}
Seja $Y \subset \P^n$ a hipersuperfície não-singular de grau $d$. Vamos calcular a classe do fibrado tangente de $Y$.

De fato, a partir da sequência exata
\[
0\longrightarrow TY \longrightarrow T\P^n|_{Y} \longrightarrow N_{Y/\P^n}\longrightarrow 0
\]
e da fórmula de adjunção (\cite[Examples~3.2.11 e 3.2.12, p.\thinspace{}59]{Ful84}), tem-se
\[
\begin{split}
c(TY)\cap[Y] &= c(T\P^n|_Y)c(N_{Y/\P^n})^{-1}\cap[Y] \\
&= (1+h)^{n+1}(1+dh)^{-1}\cap[dh] \\
&= (1+h)^{n+1}(dh-(dh)^2+(dh)^3-\cdots) \quad \in A_*\P^n.
\end{split}
\]
\hfill{$\Box$}
\end{exem}



\begin{defi}
\label{def-Milnor}
Definimos a \textbf{classe de Milnor} de $X$ como 
\[
\milnor(X) = (-1)^{\dim(X)}\big(\cf(X) - \csm(X)\big) \quad \in A_* X.
\]
\end{defi}

O grau da classe de Milnor, $\mu(X) := \displaystyle \int \milnor(X)$, é chamado \textbf{número de Parusi\'nski} de $X$. No caso em que $X$ tem apenas singularidades isoladas, ele coincide com a soma dos números de Milnor.

Para uma variedade não-singular, a classe de Milnor é nula.

\begin{exem}\label{classdiscrim}
Seja $X \subset \P^5$ a hipersuperfície discriminante que parametriza as cônicas planas singulares. Então $X$ é uma cúbica, dada pelo lugar dos zeros do determinante da matriz simétrica $3\times 3$ (genérica).

Para o cálculo de $\cf(X)$, segue da Observação~\ref{cfInv} que é suficiente calcular a classe de Fulton de uma hipersuperfície cúbica não-singular $Y\subset \P^5$ e, neste caso, $\cf(Y) = c(TY)\cap [Y]$, e pelo Exemplo~\ref{classtangent}:
\[
\cf(X) = 3h + 9h^2 + 18h^3 + 6h^4 + 27h^5 \quad\in A_*\P^5.
\]

Por outro lado, de \cite[Example~4.5]{Alu03}, a classe de Schwartz-MacPherson de $X$ é:
\[
\csm(X) = 3h + 9h^2 + 14h^3 + 12h^4 + 6h^5
 \quad\in A_*\P^5
\]
e logo,  por definição,
\[
\milnor(X) = 4h^3 - 6h^4 + 21h^5 
\quad\in A_*\P^5.
\]
\hfill{$\Box$}
\end{exem}

\section{Graus projetivos de mapas racionais}

\begin{defi}
\label{def-graus-projetivos}
Sejam $M\subset \P^n$ uma variedade projetiva de dimensão $s$ e $\phi \colon M \dashrightarrow \P^N$ uma aplicação racional. Os \textbf{graus projetivos} ou o \textbf{multigrau}%
\footnote{Seguimos a nomenclatura em \cite[Example~19.4]{Ha92}. Aproveitamos para avisar que nossa escolha de índices difere daquela ali estabelecida: o que aqui indicamos por `$d_s$' lá se indica `$d_0$', etc.}
de $\phi$ são os coeficientes  $d_0, d_1, \dotsc, d_s$, da classe do fecho do gráfico $\Gamma$ do mapa $\phi$ em $\P^n\times \P^N$:
\[
[\Gamma] =\displaystyle \sum_{i=0}^{s} d_i\cdot [\P^{s-i}\times \P^i],
\]
portanto
\[
d_i = \deg (\phi^{-1}(\P^{N-i})\cap \P^{n-(s-i)}),
\]
onde $\P^{n-(s-i)}$ e $\P^{N-i}$ são subespaços lineares de $\P^n$ e $\P^N$, respectivamente.
\end{defi}

Note que:
\begin{enumerate}
\item
$d_s = 0$ se, e somente se, a fibra geral de $\phi$ tem dimensão positiva.
\item
Se a fibra geral consiste de $k$ pontos reduzidos, então $d_s = k\cdot \deg \overline{\Image (\phi)}$.
\item
$d_0 = \deg M$, e $d_i = 0$ se, e somente se, $i> \dim \Image(\phi)$.
\item 
Observe que o $i$-ésimo grau projetivo de $\phi$ coincide com o $i$-ésimo grau projetivo da restrição de $\phi$ à interseção de $M$ com um $\P^i\subset\P^n$ geral, ou seja: $d_i(\phi)=d_i(\phi|_{M\cap \P^i})$.
\end{enumerate}

\bigskip

A partir de agora nos restringiremos ao caso em que $M = \P^n$.

\smallskip

Seja $Y$ um subesquema fechado de $\P^n$ dado por um ideal homogêneo $I = (f_{0}, \dotsc, f_{N})$, e suponhamos que os geradores de $I$ têm o mesmo grau. 
Consideremos o mapa racional associado ao ideal $I$
\[
\begin{array}{cccl}
\varphi \ : & \! \P^n & \! \dashrightarrow & \! \P^N \\
& \! p & \! \longmapsto & \! \big( f_{0}(p) : \dots : f_{N}(p) \big).
\end{array}
\]
cujo lugar de base é o esquema $Y$. 
O resultado básico é que (o pushforward de) a classe de Segre do lugar de base pode ser calculada em termos dos graus projetivos $d_0,\dotsc,d_n$ deste mapa e vice-versa. Denotamos por $h$ a classe hiperplana em $A_*\P^n$ e escrevemos $s(Y, \P^n) = \sum_{j=0}^{n}s_{j}h^j \in A_*\P^n$.

\begin{prop} [{\cite[Proposition~3.1]{Alu03}}]
\label{Aluffisegre} 
Mantenha a notação acima e seja $r$ o grau comum dos polinômios que definem o ideal $I$. Então:
\[
s(Y, \P^n) = 1 - c(\Ocal(r h))^{-1} \cap \left(\sum_{i=0}^{n} \dfrac{d_i h^i}{c(\Ocal(r h))^i} \right) 
\qquad \in  A_*\P^n.
\]
Expandindo, obtemos $s_0=0$ e, para $l\in\{1,\dotsc,n\}$,
\begin{equation}
\label{eq-segre-djs}
s_l = - \sum_{i+j=l}(-1)^{j}\binom{l}{i}d_i r^j
\end{equation}
e invertendo estas relações,
\begin{equation}
\label{eq-djs-segre}
d_k = r^k - \sum_{j=1}^{n}\binom{k}{j}s_{j}r^{k-j}.
\end{equation}
para $k=0,\dotsc,n$.
\end{prop}

\section{Mapas gradientes e classes CSM de hipersuperfícies}

O objetivo desta seção é estudar o mapa gradiente associado a uma hipersuperfície projetiva, apresentando exemplos, propriedades, além da importante relação entre os graus projetivos do mapa e a classe de Schwartz-MacPherson da hipersuperfície.

Seja $X$ uma hipersuperfície de grau $k$ em $\P^n$, definida pelos zeros de um polinômio homogêneo $f\in \C[x_0, \dotsc , x_n]$.

\begin{defi}
O \textbf{mapa gradiente (ou polar)} de $X$ (ou de $f$) é a aplicação racional
\[
\begin{array}{cccl}
\grad X \ : & \! \P^n & \! \dashrightarrow & \! \P^{n} \\
& \! p & \! \longmapsto & \! \left(f_{x_{0}}(p):\dots : f_{x_{n}}(p)\right),
\end{array}
\]
onde $f_{x_i} = \dfrac{\partial f}{\partial x_i}$, para $i=0, \dotsc, n$. 
\end{defi}
O mapa gradiente é portanto uma extensão do mapa de Gauss da hipersuperfície para o espaço ambiente. O lugar de base deste mapa é o esquema dos pontos singulares de $X$.

O grau topológico do mapa gradiente de $X$ (ou de $f$) é a cardinalidade de uma fibra genérica (e $0$ se o mapa não for dominante), e este grau coincide com o $n$-ésimo grau projetivo do mapa $\grad X$.
Dizemos que $X$ é \textbf{homaloidal} se o seu mapa gradiente é birracional.

Observamos que, para o cálculo do grau topológico do mapa gradiente de $f$, podemos supor $f$ é reduzido
(\cite[Corollary~2]{DP03}). 

\begin{rmk} Uma justificativa para o estudo de mapas gradiente está no fato que certas propriedades do mapa refletem informações sobre a geometria da hipersuperfície.
\begin{enumerate}
\item
A hipersuperfície $X = Z(f)$ é não-singular se, e somente se, o mapa gradiente associado $\grad X$ é um morfismo.
\item Note que a hessiana de $f$ coincide com o jacobiano do mapa gradiente e logo 
$X$ tem Hessiano não-nulo se, e somente se, $\grad X$ é um mapa dominante. 
\end{enumerate}
\hfill{$\Box$}
\end{rmk}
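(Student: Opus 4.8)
O plano \'e estabelecer as duas equival\^encias separadamente, ambas apoiadas na descri\c{c}\~ao do lugar de base de $\grad X$ como o esquema de zeros comuns das derivadas parciais $f_{x_0},\dotsc,f_{x_n}$, complementada, no segundo item, pelo crit\'erio jacobiano de domin\^ancia.

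Para o item (1), come\c{c}o observando que $\grad X$ \'e um morfismo, isto \'e, est\'a definido em todos os pontos de $\P^n$, se e somente se o seu lugar de base $B=Z(f_{x_0},\dotsc,f_{x_n})$ \'e vazio. Basta ent\~ao identificar $B$ com $\sing X$ como conjuntos. A inclus\~ao $\sing X\subseteq B$ \'e imediata, pois todo ponto singular de $X$ anula as parciais. Para a rec\'{\i}proca empregaria a rela\c{c}\~ao de Euler: sendo $f$ homog\^eneo de grau $k$, vale $\sum_i x_i f_{x_i}=k\,f$, de modo que, se $p\in B$, ent\~ao $k\,f(p)=\sum_i x_i(p)\,f_{x_i}(p)=0$; como trabalhamos sobre $\C$, segue $f(p)=0$ e portanto $p\in\sing X$. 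Logo $B=\sing X$ como conjuntos, e assim $\grad X$ \'e morfismo se e somente se $B=\varnothing$, ou seja, se e somente se $X$ \'e n\~ao-singular.

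Para o item (2), a ideia \'e traduzir a domin\^ancia de $\grad X$ em uma condi\c{c}\~ao sobre o posto gen\'erico da diferencial. Considero o mapa de cones $F=(f_{x_0},\dotsc,f_{x_n})\colon\A^{n+1}\to\A^{n+1}$, homog\^eneo de grau $k-1$, cuja matriz jacobiana \'e precisamente a matriz hessiana $\operatorname{Hess}(f)=(f_{x_ix_j})_{ij}$. Como $F$ \'e homog\^eneo, sua imagem \'e um cone e $\grad X$ \'e dominante se e somente se $F$ o \'e. Sobre $\C$, um morfismo entre variedades irredut\'{\i}veis de mesma dimens\~ao \'e dominante se e somente se sua diferencial tem posto m\'aximo num ponto geral, o que equivale a $\det\operatorname{Hess}(f)\not\equiv 0$, isto \'e, a $X$ possuir hessiano n\~ao-nulo.

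A passagem que espero ser a mais delicada \'e exatamente esta \'ultima equival\^encia. Num sentido, se a diferencial de $F$ tem posto m\'aximo em algum ponto, ent\~ao $F$ \'e submers\~ao a\'{\i} e sua imagem cont\'em um aberto, donde $F$ \'e dominante; no outro, se $F$ \'e dominante, a suavidade gen\'erica em caracter\'{\i}stica zero garante que $F$ \'e genericamente \'etale, e a diferencial tem posto m\'aximo num aberto denso. Resta controlar a homogeneidade: derivando a rela\c{c}\~ao de Euler obt\'em-se $\operatorname{Hess}(f)\cdot(x_0,\dotsc,x_n)^{t}=(k-1)\,(f_{x_0},\dotsc,f_{x_n})^{t}$, o que evidencia que a dire\c{c}\~ao radial n\~ao produz degenera\c{c}\~ao esp\'uria e que o posto gen\'erico da hessiana, como matriz $(n+1)\times(n+1)$, \'e de fato o invariante que governa a domin\^ancia do mapa projetivo. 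Verificado isso, ambas as equival\^encias ficam demonstradas.
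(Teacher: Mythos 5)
Your proof is correct and is the standard argument for this remark, which the paper itself states without proof: the identification of the base locus of $\grad X$ with the singular scheme of $X$ is already asserted in the text immediately before the remark, and your use of the Euler relation $\sum_i x_i f_{x_i}=k f$ to get $B=\sing X$ set-theoretically, plus generic smoothness in characteristic zero to equate dominance of the cone map with $\det\operatorname{Hess}(f)\not\equiv 0$, is exactly how one fills in the details. The only cosmetic point is that your closing paragraph about the radial direction is not needed once generic smoothness has been invoked.
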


Vejamos alguns exemplos simples de hipersuperfícies {homaloidais}.

\begin{exem}
\begin{enumerate}
\item
Para $f = x_{0}^{2}+\dotsb +x_{n}^{2}$, $X = Z(f)$ é uma quádrica não-singular em $\P^n$ cujo mapa gradiente é a aplicação identidade.
\item
Se $f = x_0\dotsb x_n$, então $X=Z(f)$ é a união dos $n+1$ hiperplanos coordenados em $\P^n$, e o mapa gradiente de $X$ é a transformação de Cremona padrão
\[
(x_0: \cdots: x_n) \longmapsto (x_1\dotsb x_n: \dots : x_0\dotsb x_{n-1}),
\]
a qual é uma aplicação birracional com $(\grad X)^{-1} = \grad X$. 
\item
Fixado $n \geq 1$, a hipersuperfície $X\subset \P^{n^{2}-1}$ dada pelo determinante de matrizes $n \times n$ é homaloidal. 

De fato, a menos de uma mudança de coordenadas, o mapa polar do determinante é dado por $A \mapsto A^{-1}$ (regra de Cramer), sendo portanto birracional.
\hfill{$\Box$}
\end{enumerate}
\end{exem}

O próximo teorema relaciona (o {pushforward} de) 
a classe de Schwartz-MacPherson com os graus projetivos do mapa gradiente de uma dada
hipersuperfície, e é fundamental para as aplicações que faremos.

\begin{thm} [{\cite[Theorem~2.1]{Alu03}}]
\label{Aluffi2.1} 
Dada $X$ uma hipersuperfície em $\P^n$, tem-se 
\[
\csm(X) = (1+h)^{n+1} - \sum_{j=0}^{n}d_j(-h)^j(1+h)^{n-j} \qquad \in A_*\P^n
\]
onde os $d_j$'s são os graus projetivos do mapa gradiente de $X$.
\end{thm}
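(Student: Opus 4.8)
The plan is to read the theorem off as the formal combination of two results already available in the chapter: Aluffi's formula \eqref{eq-Aluffi-CSM} for the $\csm$ class of a hypersurface in terms of the Segre class of its singular locus, together with Proposition~\ref{Aluffisegre}, which converts that Segre class into the projective degrees of the gradient map. Once both are substituted, the entire content of the theorem is a single algebraic simplification in $A_*\P^n\cong\Z[h]/(h^{n+1})$.

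First I would fix the data. Write $k=\deg X$, so that $\Ocal(X)=\Ocal(kh)$, $c(\Ocal(X))=1+kh$ and $c(T\P^n)=(1+h)^{n+1}$. A hypersurface is a local complete intersection of codimension one, so its Segre class is $s(X,\P^n)=kh/(1+kh)$. The singular scheme $Y=\sing X$ is precisely the base scheme of $\grad X$, whose coordinates are the partials $f_{x_i}$, forms of degree $k-1$; hence Proposition~\ref{Aluffisegre} applies with common degree $r=k-1$ (and target $\P^n$, so its $d_i$ are the projective degrees of $\grad X$). Substituting these into \eqref{eq-Aluffi-CSM} reduces the statement to evaluating the bracket
\[
s(X,\P^n)+c(\Ocal(X))^{-1}\bigl(s(Y,\P^n)^\vee\otimes\Ocal(X)\bigr)
\]
and then multiplying the result by $(1+h)^{n+1}$.

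The decisive step is to encode Aluffi's two operations as substitutions in a single variable. Writing a class of $A_*\P^n$ as a truncated series in $h$ graded by codimension, the dual $(\,\cdot\,)^\vee$ is the substitution $h\mapsto -h$, while tensoring by $\Ocal(mh)$ is the substitution $h\mapsto h/(1+mh)$. Encoding the projective degrees by $D(t)=\sum_i d_i t^i$, Proposition~\ref{Aluffisegre} reads $s(Y,\P^n)=1-\tfrac{1}{1+rh}\,D\!\bigl(\tfrac{h}{1+rh}\bigr)$ with $r=k-1$ (note the constant term vanishes since $d_0=1$). I would then apply, in order, the dual $h\mapsto -h$ and the tensor $h\mapsto h/(1+kh)$: setting $u=-h/(1+kh)$ and using $r=k-1$, one finds the providential identities $1+ru=(1+h)/(1+kh)$ and $u/(1+ru)=-h/(1+h)$, whence $s(Y,\P^n)^\vee\otimes\Ocal(kh)$ corresponds to $1-\tfrac{1+kh}{1+h}\,D\!\bigl(\tfrac{-h}{1+h}\bigr)$.

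From here the simplification telescopes. Dividing by $c(\Ocal(X))=1+kh$ and adding $s(X,\P^n)=kh/(1+kh)$ merges the two $1/(1+kh)$ contributions into $1$, leaving $1-\tfrac{1}{1+h}D\!\bigl(\tfrac{-h}{1+h}\bigr)$; multiplying by $(1+h)^{n+1}$ gives $(1+h)^{n+1}-(1+h)^nD\!\bigl(\tfrac{-h}{1+h}\bigr)$, and expanding $(1+h)^nD\!\bigl(\tfrac{-h}{1+h}\bigr)=\sum_i d_i(-h)^i(1+h)^{n-i}$ yields the stated formula. I expect the main obstacle to be not this final cancellation, which is clean, but getting Aluffi's dual-and-tensor operations exactly right: one must keep careful track of the codimension-wise sign and denominator, verify that $s(X,\P^n)$ is the local-complete-intersection Segre class, and confirm that all the formal substitutions remain legitimate after truncation modulo $h^{n+1}$.
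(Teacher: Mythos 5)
Your proposal is correct. Note, however, that the paper does not prove Theorem~\ref{Aluffi2.1} at all: it is quoted directly from Aluffi \cite[Theorem~2.1]{Alu03}, so there is no in-text argument to compare against. What you have written is essentially Aluffi's own derivation of that theorem, namely the combination of the hypersurface formula \eqref{eq-Aluffi-CSM} with the dictionary of Proposition~\ref{Aluffisegre} between the Segre class of the base locus and the projective degrees. Your computation checks out: with $r=k-1$ and $u=-h/(1+kh)$ one indeed has $1+ru=(1+h)/(1+kh)$ and $u/(1+ru)=-h/(1+h)$, the constant terms combine as $\frac{kh}{1+kh}+\frac{1}{1+kh}=1$, and multiplying by $(1+h)^{n+1}$ gives the stated expansion $\sum_j d_j(-h)^j(1+h)^{n-j}$. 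You also correctly handle the two points where such a calculation usually goes wrong: the order of the operations (dual first, then tensor by $\Ocal(X)$ --- they do not commute), and the use of $s(X,\P^n)=kh/(1+kh)$ for the hypersurface as a local complete intersection (consistent with Observa\c{c}\~ao~\ref{cfInv}). The only caveat worth recording is that the substitutions $h\mapsto -h$ and $h\mapsto h/(1+mh)$ encode $(\cdot)^\vee$ and $\otimes\Ocal(mh)$ only when the class is graded by \emph{codimension} in $\P^n$, which is how Proposition~\ref{Aluffisegre} presents $s(Y,\P^n)=\sum_j s_jh^j$; with that convention fixed, the argument is complete, and supplying it makes the chapter self-contained where the paper merely cites.
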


\begin{exem}\label{classgrafsec}
(Mapa gradiente da secante da quártica racional normal). Sejam $C \subset \P^4$ a curva racional normal e $X$ a variedade secante de $C$ (fecho do lugar das retas por dois pontos gerais de $C$). Veremos no próximo capítulo (Proposição~\ref{secant_x_hankel}) que $X$ é uma hipersuperfície cúbica de $\P^4$, dada pelos zeros do polinômio
\[
F = \det
\begin{pmatrix}
x_0 & x_1 & x_2 \\
x_1 & x_2 & x_3 \\
x_2 & x_3 & x_4
\end{pmatrix},
\]
e que $\sing X = C$ (veja também a Seção \ref{secao-codigos}), e portanto $C$ é o lugar de base do mapa gradiente de $X$. 

Logo, uma vez conhecida a classe de Segre de $C$ (Exemplo~\ref{segre_rnc_4}), obtemos da Proposição~\ref{Aluffisegre} os graus projetivos do mapa $\grad X$:
\[
(d_0, d_1, d_2, d_3, d_4) = (1, 2, 4, 4, 2)
\]
e, consequentemente, do Teorema~\ref{Aluffi2.1} obtemos a classe $\csm$ da variedade secante de $C$:
\[
\csm(X) = 3h+6h^2+8h^3+4h^4 \qquad \in A_*\P^4.
\]
\hfill{$\Box$}
\end{exem}


\section{Resultados sobre seções hiperplanas}

O objetivo desta seção é a obtenção de uma fórmula para o cálculo do grau do mapa gradiente para hipersuperfícies com singularidades arbitrárias, a qual é dada em termos do seu grau e das classes de Milnor da hipersuperfície e de uma seção hiperplana geral. Este resultado, por sua vez decorrerá da expressão da classe de Schwartz-MacPherson de uma seção hiperplana genérica.

\subsection{CSM de uma seção hiperplana}

Como consequência do Teorema~\ref{Aluffi2.1} e do lema seguinte, conseguimos expressar a classe de Schwartz-MacPherson de uma seção hiperplana genérica de uma hipersuperfície projetiva em termos da classe de Schwartz-MacPherson da hipersuperfície e da classe de Segre do hiperplano.

\begin{lem}\label{lemdegreesection}
Sejam $X\subset \P^n$ uma hipersuperfície e $X' = X\cap H$ uma seção hiperplana genérica. Sejam $d_0,\dotsc,d_n$ e $d'_0, \dots, d'_{n-1}$ os graus projetivos dos mapas
$\grad X$ e $\grad X'$, respectivamente. Então $d_i = d'_i$, para todo $i=0, \dotsc, n-1$. 
\end{lem}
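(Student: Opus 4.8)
The plan is to compare the two gradient maps after restricting both to one and the same general linear subspace, exploiting the fourth remark following Definition~\ref{def-graus-projetivos}: the $i$-th projective degree of a rational map equals the \emph{top} projective degree of its restriction to a general $\P^i$ in the source. First I would fix coordinates so that $H=\{x_n=0\}$ and set $g(x_0,\dots,x_{n-1})=f(x_0,\dots,x_{n-1},0)$, the equation of $X'$ inside $H\cong\P^{n-1}$. By the chain rule $g_{x_j}=f_{x_j}|_H$ for $j=0,\dots,n-1$, so
\[
\grad X|_H = [\,f_{x_0}|_H:\cdots:f_{x_{n-1}}|_H:f_{x_n}|_H\,]
\quad\text{and}\quad
\grad X' = [\,f_{x_0}|_H:\cdots:f_{x_{n-1}}|_H\,].
\]
Thus $\grad X' = \pi\circ(\grad X|_H)$, where $\pi\colon\P^n\dashrightarrow\P^{n-1}$ is the linear projection away from the target point $q=[0:\cdots:0:1]$ corresponding to the discarded partial $f_{x_n}$.

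Fix $i\le n-1$. Since $H$ is general and $i\le n-1$, a general $P\cong\P^i$ contained in $H$ is still general in $\P^n$, so the fourth remark applied to both maps gives $d_i=d_i(\grad X|_P)$ and $d'_i=d_i(\grad X'|_P)$, where both right-hand sides are now \emph{top} projective degrees of maps out of $P\cong\P^i$. Writing $\alpha=\grad X|_P$ and $\beta=\grad X'|_P=\pi\circ\alpha$, the whole statement reduces to the purely projective fact that the linear projection $\pi$ from $q$ preserves the top projective degree of $\alpha$.

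I would settle this last point by multiplicativity of degrees in a tower. Recall that the top projective degree of a map out of $\P^i$ is the product $\deg\overline{\Image}\cdot\mu$, where $\mu$ is the degree of the map onto its image (and it vanishes as soon as the image has dimension $<i$). Provided $q\notin\overline{\Image(\alpha)}$, the projection restricts to a \emph{finite} morphism $\pi\colon\overline{\Image(\alpha)}\to\overline{\Image(\beta)}$ of some degree $\delta$, whence $\deg\overline{\Image(\alpha)}=\delta\cdot\deg\overline{\Image(\beta)}$ and $\mu_\beta=\delta\cdot\mu_\alpha$. Therefore
\[
d_i(\alpha)=\mu_\alpha\cdot\deg\overline{\Image(\alpha)}
=\mu_\alpha\delta\cdot\deg\overline{\Image(\beta)}
=\mu_\beta\cdot\deg\overline{\Image(\beta)}=d_i(\beta),
\]
and the two images have equal dimension because finite projections preserve dimension, so the degenerate case $d_i=0$ is matched as well.

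The main obstacle is the genericity claim $q\notin\overline{\Image(\grad X|_H)}$, which contains every $\overline{\Image(\alpha)}$ that occurs. A point of $H\setminus\sing X$ maps to $q$ exactly when $f_{x_0}=\cdots=f_{x_{n-1}}=0$ there, i.e. when it lies in $\sing X'$ but not in $\sing X$; since for a general hyperplane section one has $\sing X'=\sing X\cap H\subseteq\sing X$, there is no such point, so $q$ lies off the image over $H\setminus\sing X$. The delicate part is excluding $q$ as a \emph{limiting} direction of $\grad X$ over the base locus $\sing X\cap H$. I would handle this by a dimension count on the graph of $\grad X$ over $H$ --- equivalently, by choosing the codimension-$i$ target subspace that computes $d_i$ to be the cone with vertex $q$ over a general codimension-$i$ subspace of $\P^{n-1}$, which contributes no excess precisely because the relevant image avoids $q$ --- and it is exactly here that the genericity of $H$ is essential.
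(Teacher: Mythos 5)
Your proposal is essentially correct, and it is a genuinely different route from the paper's, for the simple reason that the paper offers no argument at all here: it just refers to Dimca--Sticlaru \cite{DS15}. Your reduction is sound: restrict both maps to one and the same general $\Lambda\cong\P^i\subset H$ (legitimate, since for general $H$ a general $\Lambda\subset H$ is still general in $\G(i,n)$, the flag variety $\{\Lambda\subset H\}$ being irreducible and dominant over $\G(i,n)$); observe via the chain rule that $\grad X'|_\Lambda=\pi\circ(\grad X|_\Lambda)$ with $\pi$ the projection from $q=[H]$; and conclude by the multiplicativity of degrees under a finite projection from a point off the image, which also matches the degenerate case $d_i=0$. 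What your approach buys is a self-contained geometric proof; what the citation buys the paper is brevity.

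The one place where you gesture rather than prove is the claim $q\notin\overline{\Image(\grad X|_H)}$, \emph{including} limit directions over the base locus $\sing X\cap H$ --- and note that your second proposed fix (``contributes no excess precisely because the relevant image avoids $q$'') is circular, since avoiding $q$ is exactly what is at stake. Your first proposed fix, the dimension count on the graph, does work and should be written out. Let $\Gamma\subset\P^n\times\P^n$ be the closure of the graph of $\grad X$ over $\P^n\setminus\sing X$ and let $Z=\{(p,y):\textstyle\sum_i p_iy_i=0\}$ be the incidence divisor. By the Euler relation $\sum_i x_if_{x_i}=(\deg f)\,f\not\equiv 0$, so $\Gamma\not\subseteq Z$; as $\Gamma$ is irreducible of dimension $n$, $\dim(\Gamma\cap Z)\leq n-1$ and hence $pr_2(\Gamma\cap Z)$ is a proper closed subset of the target $\P^n$. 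Now if some $(p,q_H)$ lies in the closure of the graph of $\grad X|_H$, then $p\in H$, which is precisely the incidence condition $\langle p,q_H\rangle=0$; thus $(p,q_H)\in\Gamma\cap Z$ and $q_H\in pr_2(\Gamma\cap Z)$. Choosing $H$ so that $q_H$ avoids this proper closed subset therefore excludes $q_H$ from $\overline{\Image(\grad X|_H)}$, limit points over $\sing X\cap H$ included, and your proof goes through.
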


\begin{proof}
Veja \cite[Theorem~2.1]{DS15}.
\end{proof}

O resultado a seguir, crucial para a obtenção do Teorema~\ref{thm2}, foi provado independentemente
em  \cite[Proposition~2.6]{Alu13}. Posteriormente percebemos que de fato já havia sido publicado antes em \cite[Theorem~1.1]{Ohm03}. A ideia central na prova de Aluffi, que se encontra escondida no lema acima, é o cálculo da classe de Segre de uma seção linear geral do lugar singular (veja \cite[(2)]{Alu13}).

\begin{thm} \label{thm1}
Sejam $X \subset  \mathbb{P}^n$ um conjunto localmente fechado e $H\subset\P^n$ um hiperplano geral. Denote por $h$
a classe hiperplana em $\P^n$. Então:
\[
\csm(X\cap H) = \csm(X) \cdot \dfrac{h}{1+h} \quad\in A_* \mathbb{P}^{n}.
\]
\end{thm}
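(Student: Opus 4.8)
The plan is to reduce the statement to the case where $X$ is a hypersurface, and then to settle that case by feeding Lemma~\ref{lemdegreesection} into Theorem~\ref{Aluffi2.1}. The reduction rests on the observation that both sides of the claimed identity are $\Z$-linear in the constructible function $1_X$: indeed $\csm(X)=c_*(1_X)$ with $c_*$ a group homomorphism, and $1_{X\cap H}=1_X\cdot 1_H$, so $1_X\mapsto \csm(X\cap H)=c_*(1_X\cdot 1_H)$ is additive as well. Hence it suffices to check the formula on a $\Z$-spanning set of $\Ccal(\P^n)$, provided a single general $H$ can be taken to work for the finitely many pieces that appear. Writing $1_X=1_{\overline X}-1_{\overline X\setminus X}$ reduces a locally closed $X$ to closed sets; and if $Z=Z(f_1,\dots,f_c)$ is closed then set-theoretically $Z=\bigcap_i Z(f_i)$, so by inclusion--exclusion
\[
1_Z=\sum_{\emptyset\neq S\subseteq\{1,\dots,c\}}(-1)^{|S|-1}\,1_{\bigcup_{i\in S}Z(f_i)},
\]
where each $\bigcup_{i\in S}Z(f_i)$ is a hypersurface. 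Thus $\Ccal(\P^n)$ is $\Z$-spanned by indicator functions of hypersurfaces, and by linearity the theorem will follow once it is known for hypersurfaces.

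For the hypersurface case, let $X\subset\P^n$ be a hypersurface whose gradient map has projective degrees $d_0,\dots,d_n$, and let $X'=X\cap H\subset H\cong\P^{n-1}$ have projective degrees $d'_0,\dots,d'_{n-1}$. Applying Theorem~\ref{Aluffi2.1} in $\P^{n-1}$ and pushing forward along $i\colon\P^{n-1}\hookrightarrow\P^n$, which carries $h^m$ to $h^{m+1}$ and so acts as multiplication by $h$, I would obtain
\[
\csm(X')=h\left[(1+h)^n-\sum_{j=0}^{n-1}d'_j(-h)^j(1+h)^{n-1-j}\right]\in A_*\P^n.
\]
On the other hand, Theorem~\ref{Aluffi2.1} in $\P^n$ gives
\[
\csm(X)\cdot\frac{h}{1+h}=h(1+h)^n-h\sum_{j=0}^{n}d_j(-h)^j(1+h)^{n-1-j}.
\]
The key point is that the $j=n$ summand equals $(-1)^n d_n\,h^{n+1}(1+h)^{-1}$, which vanishes since $h^{n+1}=0$ in $A_*\P^n$, so the sum truncates at $j=n-1$. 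Finally Lemma~\ref{lemdegreesection} supplies $d'_j=d_j$ for $j=0,\dots,n-1$, and the two displayed expressions coincide.

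The formal reductions are routine, and the entire substance of the argument sits in the hypersurface case; there the decisive input is Lemma~\ref{lemdegreesection}, the coincidence of the lower projective degrees of $\grad X$ and $\grad(X\cap H)$, which is precisely the Segre-class computation for a general linear section of the singular locus emphasized just after that lemma. The only genuinely delicate point I anticipate is verifying that a single hyperplane $H$ may be chosen general \emph{simultaneously} for all the finitely many hypersurfaces produced by the inclusion--exclusion, so that the separate hypersurface identities recombine into the identity for $X$; since each genericity requirement (so that Lemma~\ref{lemdegreesection} applies) is a dense open condition on the choice of $H$, their finite intersection is again dense open, and this causes no difficulty.
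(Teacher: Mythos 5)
Your proposal is correct and follows essentially the same route as the paper's proof: reduce to the hypersurface case by additivity of $c_*$ and inclusion--exclusion, then combine Theorem~\ref{Aluffi2.1} applied in $\P^n$ and in $H\cong\P^{n-1}$ with the vanishing of $h^{n+1}$ and the equality $d'_j=d_j$ from Lemma~\ref{lemdegreesection}. Your explicit remark that the finitely many genericity conditions on $H$ can be satisfied simultaneously is a point the paper leaves implicit, but the substance of the argument is identical.
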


\begin{proof} 

Iniciamos reduzindo ao caso em que $X$ é uma hipersuperfície. Como $X$ pode ser escrito como um conjunto diferença de conjuntos fechados, pela aditividade das classes $\csm$, podemos supor que o próprio $X$ é um conjunto fechado. Agora, como todo conjunto fechado pode ser escrito como uma interseção de hipersuperfícies podemos, pelo princípio de inclusão-exclusão, supor que $X$ é uma hipersuperfície.

Portanto, pelo Teorema~\ref{Aluffi2.1},
\[
\csm(X) = (1+h)^{n+1} - \displaystyle \sum_{j=0}^{n} d_j(-h)^j(1+h)^{n-j}
\quad \in A_*\P^n
\]
e, tomando o pushforward via a inclusão $H=\P^{n-1}\hookrightarrow \P^n$ e com a mesma notação
do Lema~\ref{lemdegreesection},
\[
\csm(X\cap H) = h \cdot\left( (1+h)^{n} - \displaystyle \sum_{j=0}^{n-1} d'_j(-h)^j(1+h)^{n-1-j} \right) 
\quad \in A_*\P^n.
\]
Então, usando o fato de que 
$h^{n+1} = 0$ e o Lema~\ref{lemdegreesection}, obtemos
\[
\begin{split}
\csm(X)\cdot \dfrac{h}{1+h} 
&= \Big((1+h)^{n+1} - \displaystyle \sum_{j=0}^{n} d_j(-h)^j(1+h)^{n-j}\Big)\cdot \dfrac{h}{1+h} \\
&= h\cdot \left( (1+h)^{n} - \displaystyle \sum_{j=0}^{n-1} d_j(-h)^j(1+h)^{n-1-j} - (-h)^n(1+h)^{-1}d_n \right) \\
&=h\cdot\left( (1+h)^{n} - \displaystyle \sum_{j=0}^{n-1} d'_j(-h)^j(1+h)^{n-1-j} \right) \\
&= \csm(X\cap H)
\quad \in A_*\P^n.
\end{split}
\]
\end{proof}

Da aplicação sucessiva do Teorema~\ref{thm1} decorre:

\begin{cor}\label{csm_hip_sections}
Para um conjunto localmente fechado $X\subset \P^n$ e hiperplanos gerais $H_1, \dots, H_r \subset \P^n$, tem-se:
\[
\csm(X\cap H_1 \cap \dots \cap H_r) = \csm(X) \cdot \dfrac{h^r}{(1+h)^r} \quad \in A_* \mathbb{P}^{n},
\]
onde $h$ é classe hiperplana em $\P^n$.
\end{cor}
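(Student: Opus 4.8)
The plan is to argue by induction on the number $r$ of hyperplanes, using Theorem~\ref{thm1} as the single-hyperplane case. The base case $r=1$ is exactly Theorem~\ref{thm1} (and the degenerate case $r=0$ is just the identity $\csm(X)=\csm(X)$). For the inductive step I would fix $r\geq 2$ and set $X' = X\cap H_1\cap\dots\cap H_{r-1}$. Since $X'$ is the intersection of the locally closed set $X$ with the closed sets $H_1,\dots,H_{r-1}$, it is again locally closed, so Theorem~\ref{thm1} is available for it.

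Assuming the formula for $r-1$ hyperplanes, I would write $\csm(X') = \csm(X)\cdot \frac{h^{r-1}}{(1+h)^{r-1}}$ and then apply Theorem~\ref{thm1} to the locally closed set $X'$ together with the general hyperplane $H_r$, which yields $\csm(X'\cap H_r) = \csm(X')\cdot\frac{h}{1+h}$. Substituting the inductive expression for $\csm(X')$ and multiplying the two rational factors inside $A_*\P^n \cong \Z[h]/(h^{n+1})$ gives $\csm(X\cap H_1\cap\dots\cap H_r) = \csm(X)\cdot\frac{h^r}{(1+h)^r}$, closing the induction. Throughout, the pushforwards along the closed inclusions $H_1\cap\dots\cap H_j\hookrightarrow\P^n$ are suppressed in accordance with the stated conventions, so every class is read inside $A_*\P^n$, and the formal manipulation of $\frac{h}{1+h}$ is legitimate because $h$ is nilpotent.

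The one point that needs care — and really the only obstacle — is the precise meaning of \emph{general} hyperplanes. Theorem~\ref{thm1} requires $H_r$ to be general \emph{with respect to} $X'$, and $X'$ itself depends on the earlier choices $H_1,\dots,H_{r-1}$. I would handle this by selecting the hyperplanes one at a time: having fixed $H_1,\dots,H_{r-1}$ generically (so that the inductive hypothesis applies to $X'$), the requirement that $H_r$ be general for $X'$ is an open dense condition and hence can be met. Equivalently, each stage imposes only finitely many open conditions on the relevant hyperplane, so a sufficiently general ordered tuple $(H_1,\dots,H_r)$ satisfies all of them simultaneously; this is the content of the phrase ``hiperplanos gerais'' in the statement, and making the order of the choices explicit is what keeps the successive applications of Theorem~\ref{thm1} valid.
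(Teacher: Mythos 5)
Your proof is correct and follows essentially the same route as the paper, which justifies the corollary with the single phrase ``da aplica\c{c}\~ao sucessiva do Teorema~\ref{thm1}'' — that is, exactly the induction on $r$ you carry out. Your extra care about choosing the hyperplanes one at a time so that each $H_j$ is general with respect to the previously formed intersection is a sensible elaboration of what the paper leaves implicit, not a different argument.
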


\begin{rmk} \label{CSMsecao}
Seja $X\subset \P^n$ um conjunto localmente fechado. 
O Teorema~\ref{thm1} nos diz que uma seção hiperplana carrega muita informação da classe original.
Com efeito, escrevendo
\(
\csm(X) = a_1 h + a_2 h^2 + \dotsb + a_n h^n \in A_*\P^n
\)
e
\[
\csm(X\cap H) = b_1 h + \dotsb +  b_{n-1} h^{n-1} \quad \in A_*\P^{n-1},
\]
temos que cada coeficiente $b_k$ é uma soma alternada dos $a_i$'s; precisamente:
\begin{equation}\label{eq:CSMsecao}
b_k = \sum_{i=1}^k (-1)^{k-i} a_i.
\end{equation}

De fato, as características de Euler de
sucessivas seções hiperplanas gerais 
e os coe\-fi\-cientes da
classe $\csm$ carregam exatamente a mesma informação.
Isto é uma conta simples e já foi feita 
em \cite{Alu13}. 
Para hiperplanos gerais $H_1, \dots, H_r \subset \P^n$ ($1\leq r < n$), sejam
\[
\chi_X(t) := \displaystyle \sum_{r\geq 0} (-1)^r \euler(X\cap H_1 \cap \dots \cap H_r)t^r
\qquad \text{e} \qquad
\gamma_X(t) := \displaystyle \sum_{r= 0}^{n-1} a_{r+1} t^{r}.
\]
Do Corolário~\ref{csm_hip_sections} segue que
\[
\euler(X\cap H_1 \cap \dots \cap H_r) = \displaystyle \int \csm(X\cap H_1 \cap \dots \cap H_r) = \displaystyle \int \csm(X) \cdot \dfrac{h^r}{(1+h)^r}.
\]

Então
\cite[Theorem~1.1]{Alu13}
nos dá as seguintes relações:
\[
\chi_X(t) = \mathcal{I}(\gamma_X) \qquad \text{e} \qquad \gamma_X(t) = \mathcal{I}(\chi_X),
\]
onde $\mathcal{I}$ é a involução $p(t) \mapsto \dfrac{tp(-t-1)+p(0)}{t+1}$.

\hfill{$\Box$}
\end{rmk}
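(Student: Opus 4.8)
The plan is to extract \eqref{eq:CSMsecao} directly from Theorem~\ref{thm1} by expanding the rational factor $h/(1+h)$ into a (finite) power series and matching coefficients, the one genuinely delicate point being the shift of degrees induced by the pushforward along the inclusion of the hyperplane.

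First I would record the expansion
\[
\frac{h}{1+h} = h - h^2 + h^3 - \dotsb = \sum_{j\geq 1}(-1)^{j-1}h^j \qquad \in A_*\P^n,
\]
which terminates because $h^{n+1}=0$. Substituting $\csm(X)=\sum_{i=1}^n a_i h^i$ into Theorem~\ref{thm1} gives
\[
\csm(X\cap H)=\Bigl(\sum_{i=1}^n a_i h^i\Bigr)\Bigl(\sum_{j\geq 1}(-1)^{j-1}h^j\Bigr)\qquad\in A_*\P^n,
\]
so that the coefficient of $h^{m}$ on the right is $\sum_{i=1}^{m-1}(-1)^{m-i-1}a_i$, only pairs with $i\geq 1$ and $j=m-i\geq 1$ contributing (recall $\csm(X)$ has no $h^0$ term, since $X$ is a proper subset).

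The crucial step is to compare this with the left-hand side written in the notation of the remark. There $\csm(X\cap H)=\sum_k b_k h^k$ is an element of $A_*\P^{n-1}$, where $h^k=[\P^{n-1-k}]$; under the pushforward by the inclusion $\P^{n-1}\hookrightarrow\P^n$ the cycle $[\P^{n-1-k}]$ of $\P^{n-1}$ maps to the cycle $[\P^{n-1-k}]=h^{k+1}$ of $\P^n$. Hence the left-hand side of Theorem~\ref{thm1} is $\sum_k b_k h^{k+1}$, and identifying the coefficient of $h^{k+1}$ on both sides yields exactly $b_k=\sum_{i=1}^k(-1)^{k-i}a_i$, as claimed.

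I expect the only real obstacle to be this index bookkeeping: the hyperplane class of $\P^{n-1}$ and that of $\P^n$ must be kept apart, since silently identifying them would shift every exponent by one and corrupt the signs. Once the identification $[\P^{n-1-k}]\leftrightarrow h^{k+1}$ is made explicit the whole argument is a one-line coefficient comparison; as a consistency check one may invert the (unipotent, lower-triangular) relation to recover $a_k=b_k+b_{k-1}$, reflecting that passing to a general hyperplane section loses precisely the top coefficient $a_n=\euler(X)$.
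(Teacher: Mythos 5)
Your proposal is correct and is exactly the argument the paper intends for \eqref{eq:CSMsecao}: expand $h/(1+h)$, multiply against $\csm(X)=\sum a_ih^i$, and compare coefficients after the degree shift $[\P^{n-1-k}]\mapsto h^{k+1}$ coming from the pushforward along $\P^{n-1}\hookrightarrow\P^n$ (the paper dismisses this as ``uma conta simples'', and your index bookkeeping, including the inverse relation $a_k=b_k+b_{k-1}$, is right). The only part of the remark you do not treat is the pair of involution identities $\chi_X=\mathcal{I}(\gamma_X)$, $\gamma_X=\mathcal{I}(\chi_X)$, but the paper itself does not prove these either --- it only cites \cite[Theorem~1.1]{Alu13} --- so nothing essential is missing from your account.
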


Naturalmente a hipótese ``hiperplano geral'' no Teorema~\ref{thm1} é essencial para a validade do resultado.

\begin{exem}

Seja $X=Z(x_0^3+x_1^3+x_2^3+x_3^3)\subset \P^3$ a cúbica de Fermat e $H = Z(x_2+x_3)$ o plano tangente a $X$ no ponto $p=(0: 0: 1: -1)$. Então $X\cap H$ é uma união de três retas passando por $p$ e, pelo que vimos no Exemplo~\ref{csm_exem}:
\[
\csm(X \cap H) = 3h^2+4h^3 
\quad\in A_*\P^3
\]

Por outro lado, da sequência de Euler e da fórmula de adjunção, obtemos
\[
c(TX) = c(T\P^3|_X)c(N_{X/\P^3})^{-1} = (1+h)^4\cdot(1+3h)^{-1},
\]
portanto
\[
\csm(X) = c(TX)\cap [X] = (1+h+3h^2-5h^3)\cdot (3h) = 3h+3h^2+9h^3 
\quad\in A_*\P^3
\]
e consequentemente
\[
\csm(X)\cdot \frac{h}{1+h} = (3h + 3h^2 + 9h^3)\cdot(h - h^2 +h^3) = 3h^2 
\quad\in A_*\P^3.
\]
\hfill{$\Box$}
\end{exem}

\subsection{Fórmula para o grau do mapa gradiente}

Nosso próximo objetivo é apresentar uma fórmula para o grau do mapa gradiente
em termos da classe de Milnor. Uma tal expressão já é conhecida para
o caso de singularidades isoladas, a saber:
\begin{thm}
\label{milnornum}
Seja $X \subset \P^n$ uma hipersuperfície de grau $k$ com apenas singularidades isoladas. Então
$$\deg \grad X = (k-1)^{n} - \displaystyle \sum_p \mu_{p}(X)$$
onde $\mu_p(X)$ é o número de Milnor.
\end{thm}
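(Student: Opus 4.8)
The plan is to read off the topological degree $\deg\grad X=d_n$ from the base locus of the gradient map, which for isolated singularities is as small as possible. First I would observe that the base scheme of $\grad X$ is the singular scheme $Y=\sing X$, cut out by the Jacobian ideal $(f_{x_0},\dots,f_{x_n})$ (which contains $f$ by Euler's identity). Since $X$ has only isolated singularities, $Y$ is zero-dimensional, so its Segre class lies entirely in $A_0\P^n$; writing $s(Y,\P^n)=\sum_j s_jh^j$ we then have $s_j=0$ for every $j<n$. The partials have common degree $k-1$, so Proposition~\ref{Aluffisegre} applies with form-degree $k-1$, and the top case of the inversion formula~\eqref{eq-djs-segre} reads $d_n=(k-1)^n-\sum_{j=1}^{n}\binom{n}{j}s_j(k-1)^{n-j}$; as only the $j=n$ term survives this collapses to
\[
\deg\grad X=d_n=(k-1)^n-s_n,\qquad s_n=\int s(Y,\P^n).
\]
As $Y$ is supported at the finite set $\sing X$, the number $s_n=\sum_p\sigma_p$ splits into local contributions, one per singular point $p$.

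The core of the argument is then the purely local identity $\sigma_p=\mu_p(X)$, and I would make it concrete through B\'ezout. Choosing $n$ general hyperplanes $H_1,\dots,H_n\subset\P^n$ in the target and pulling them back produces $n$ general polars $D_l=(\grad X)^{-1}(H_l)$, hypersurfaces of degree $k-1$ whose common intersection is $(\grad X)^{-1}(q)\cup Y$, where $q=H_1\cap\dots\cap H_n$ is a general point. B\'ezout in $\P^n$ then gives $(k-1)^n=\#(\grad X)^{-1}(q)+\sum_p i_p(D_1,\dots,D_n)$, which both recovers $d_n=(k-1)^n-s_n$ and exhibits $\sigma_p=i_p(D_1,\dots,D_n)$, the intersection multiplicity at $p$ of $n$ general polars. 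What remains is the claim $i_p(D_1,\dots,D_n)=\mu_p(X)$: in an affine chart with local equation $g$, the polars are general members of $\langle kg-\sum_j y_jg_{y_j},\,g_{y_1},\dots,g_{y_n}\rangle$, and one must show their intersection multiplicity equals $\dim_\C\Ocal_{\P^n,p}/(g_{y_1},\dots,g_{y_n})=\mu_p(X)$.

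I expect this last identity to be the main obstacle, and the reason is instructive: the scheme-theoretic length of $Y$ at $p$ is $\dim_\C\Ocal_{\P^n,p}/(g,g_{y_1},\dots,g_{y_n})$, the \emph{Tjurina} number, which is strictly smaller than $\mu_p(X)$ whenever $g$ is not quasi-homogeneous. Thus neither the naive length of the singular scheme nor a crude leading-term comparison suffices; one must genuinely use that $n$ \emph{general} polars (equivalently, a generic perturbation of the gradient) see the Milnor number rather than the Tjurina number. This is the classical interpretation of $\mu_p$ as the number of critical points collected in a generic polar deformation (Milnor, Teissier; see also \cite{DP03}), and invoking it finishes the proof: summing over the singular points yields $\deg\grad X=(k-1)^n-\sum_p\mu_p(X)$. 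As a consistency check, this is exactly the isolated case of the general Theorem~\ref{thm2} proved below --- there a general hyperplane section $X\cap H$ is smooth, so $\mu(X\cap H)=0$ and $\mu(X)=\sum_p\mu_p(X)$ --- but since Theorem~\ref{milnornum} is used here as known input it must be established independently, as above.
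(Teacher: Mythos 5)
Your outline is correct, but bear in mind that the paper does not actually prove Theorem~\ref{milnornum}: its proof consists of the single line ``Veja \cite{Ful84}, \cite{Dim01} ou \cite[Proposition~2.3]{FM12}'', so any genuine argument differs from the paper by default. What you reconstruct is essentially the classical intersection-theoretic proof contained in those references: B\'ezout applied to $n$ general polars $D_l$ gives $(k-1)^n=\deg\grad X+\sum_p i_p(D_1,\dotsc,D_n)$ (your Segre-class computation via Proposition~\ref{Aluffisegre} is a correct but redundant second derivation of the same identity, since the B\'ezout step alone already yields it), reducing everything to the local claim $i_p(D_1,\dotsc,D_n)=\mu_p(X)$; and you correctly isolate the crux, namely that the base scheme of $\grad X$ at $p$ has length equal to the Tjurina number $\tau_p$, which is in general strictly smaller than $\mu_p$, so no naive length count of $\sing X$ can succeed.

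The only soft spot is your final invocation. The ``generic perturbation of the gradient'' interpretation of $\mu_p$ is the statement that the parameter ideal $J=(g_{y_1},\dotsc,g_{y_n})$ satisfies $e(J)=\dim_\C\Ocal_p/J=\mu_p$; by itself it says nothing about the extra generator $kg-\sum_j y_jg_{y_j}$ present in your linear system. The precise chain that closes the argument is: (i) $n$ general members of the $\mathfrak{m}_p$-primary Tjurina ideal $T=(g,g_{y_1},\dotsc,g_{y_n})$ generate a reduction of $T$, so $i_p(D_1,\dotsc,D_n)=e(T)$; (ii) $g$ is integral over $J$ (Teissier; also obtainable from the curve selection lemma or from Brian\c{c}on--Skoda), hence $T$ and $J$ have the same integral closure and $e(T)=e(J)$; (iii) $e(J)=\mu_p$ because $J$ is a complete intersection ideal, so multiplicity equals colength. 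Point (ii) is exactly where ``general polars see $\mu_p$ rather than $\tau_p$'' is encoded; your citation of Teissier does cover it, but as written your justification conflates (ii) with (iii). With that sharpening your proof is complete, and it is in substance the argument of the sources the paper cites.
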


\begin{proof}
Veja \cite{Ful84}, \cite{Dim01} ou \cite[Proposition~2.3]{FM12}.
\end{proof}

Como consequência imediata da fórmula acima obtem-se:

\begin{cor}
O mapa gradiente de uma hipersuperfície não-singular em $\mathbb{P}^n$ de grau $k$ tem grau topológico $(k-1)^n$. Em particular, se $k \geq 3$ então a hipersuperfície não é homaloidal (isto é, o mapa gradiente não é birracional).
\hfill{$\Box$}
\end{cor}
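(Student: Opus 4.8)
O plano \'e deduzir ambas as afirma\c{c}\~oes diretamente do Teorema~\ref{milnornum}. Para a primeira, come\c{c}o observando que uma hipersuperf\'icie n\~ao-singular $X\subset\P^n$ tem, por defini\c{c}\~ao, lugar singular vazio; em particular ela se enquadra trivialmente na hip\'otese ``apenas singularidades isoladas'' do teorema, e o somat\'orio $\sum_p \mu_p(X)$ \'e uma soma vazia, portanto nula. Substituindo na f\'ormula $\deg\grad X = (k-1)^n - \sum_p \mu_p(X)$, obtenho de imediato $\deg\grad X = (k-1)^n$, o que estabelece a primeira parte.

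Para a segunda afirma\c{c}\~ao, recordo que, pela conven\c{c}\~ao adotada, o grau topol\'ogico de um mapa racional $\phi\colon\P^n\dashrightarrow\P^n$ \'e a cardinalidade da pr\'e-imagem de um ponto geral. Se $\phi$ for birracional, essa fibra gen\'erica consiste de um \'unico ponto reduzido, de modo que $\deg\phi = 1$. Assim, para concluir que $X$ n\~ao \'e homaloidal basta exibir que $\deg\grad X > 1$ sob a hip\'otese $k\geq 3$. Usando a primeira parte, tenho $\deg\grad X = (k-1)^n$; e como $k\geq 3$ implica $k-1\geq 2$, segue, para $n\geq 1$, que $(k-1)^n \geq 2^n \geq 2 > 1$. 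Logo $\grad X$ n\~ao pode ser birracional.

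Por ser uma consequ\^encia imediata do Teorema~\ref{milnornum}, n\~ao antevejo dificuldades t\'ecnicas: n\~ao h\'a, de fato, um ``passo dif\'icil''. O \'unico ponto que merece ser explicitado \'e a passagem ``n\~ao-singular $\Rightarrow$ soma de Milnor nula'', que se resume a reconhecer que o lugar singular vazio torna o somat\'orio vazio; todo o restante reduz-se \`a desigualdade num\'erica elementar $(k-1)^n>1$ combinada com a observa\c{c}\~ao de que birracionalidade for\c{c}a grau topol\'ogico igual a $1$.
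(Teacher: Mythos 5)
Sua prova est\'a correta e segue exatamente o caminho do texto: o pr\'oprio trabalho apresenta o corol\'ario como consequ\^encia imediata do Teorema~\ref{milnornum}, tomando a soma dos n\'umeros de Milnor como nula no caso n\~ao-singular e observando que $(k-1)^n>1$ quando $k\geq 3$. Nada a corrigir.
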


O corolário nos diz que uma hipersuperfície homaloidal com grau $k \geq 3$ é neces\-sa\-ria\-mente singular e é, portanto, uma exceção no sentido que as variedades não-singulares formam um aberto no espaço que as parametriza.

Para a generalização do Teorema~\ref{milnornum}, nossa contribuição do Capítulo, o substituto da soma dos números de Milnor são os graus das classes de Milnor da hipersuperfície e de uma seção hiperplana genérica. 

\begin{thm}
\label{thm2}
Seja $X \subset \P^n$ uma hipersuperfície de grau $k$. Então
\[
\deg \grad X = (k-1)^n - \mu(X) - \mu(X\cap H),  
\]
onde $H \subset \P^n$ é um hiperplano genérico.
\end{thm}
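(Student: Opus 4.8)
The plan is to first isolate the topological degree $\deg\grad X = d_n$, the top projective degree of the gradient map, from the formula of Theorem~\ref{Aluffi2.1}, and only afterwards translate the resulting expression into Milnor data. Writing $\csm(X)=(1+h)^{n+1}-\sum_{j=0}^{n}d_j(-h)^j(1+h)^{n-j}$ and extracting the zero-dimensional component (i.e.\ taking $\int$, which reads off the coefficient of $h^n$ and equals $\euler(X)$), I obtain
\[
\euler(X)=(n+1)-\sum_{j=0}^{n}(-1)^j d_j.
\]
The same computation applied to the degree-$k$ hypersurface $X\cap H\subset\P^{n-1}$, combined with Lemma~\ref{lemdegreesection} (which identifies the first $n$ projective degrees of $\grad X$ and $\grad(X\cap H)$), yields
\[
\euler(X\cap H)=n-\sum_{j=0}^{n-1}(-1)^j d_j.
\]
Subtracting these two identities cancels every $d_j$ with $j\le n-1$ and leaves the clean formula
\[
\deg\grad X=d_n=(-1)^n\bigl(1-\euler(X)+\euler(X\cap H)\bigr),
\]
valid for an arbitrary hypersurface $X$ (equivalently, one may read this off directly from Theorem~\ref{thm1}).

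Next I would rewrite the two Euler characteristics in terms of Milnor numbers. By definition $\mu(X)=(-1)^{\dim X}\int(\cf(X)-\csm(X))$, and since $\int\csm(X)=\euler(X)$ this reads $\mu(X)=(-1)^{n-1}(\int\cf(X)-\euler(X))$, with the analogous identity $\mu(X\cap H)=(-1)^{n}(\int\cf(X\cap H)-\euler(X\cap H))$ for the $(n-2)$-dimensional section. The key simplification is Remark~\ref{cfInv}: the Fulton class of a hypersurface depends only on its degree, so $\int\cf(X)=\euler(Y_n)=:e_n$ and $\int\cf(X\cap H)=\euler(Y_{n-1})=:e_{n-1}$, where $Y_m$ denotes a \emph{smooth} hypersurface of degree $k$ in $\P^m$ (for which $\cf(Y_m)=c(TY_m)\cap[Y_m]$ integrates to $\euler(Y_m)$). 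Substituting these into the target expression $(k-1)^n-\mu(X)-\mu(X\cap H)$ and simplifying, the terms in $\euler(X)$ and $\euler(X\cap H)$ match exactly those appearing in the formula for $d_n$ above, reducing the whole statement to the purely numerical identity
\[
(k-1)^n=(-1)^n\bigl(1-e_n+e_{n-1}\bigr).
\]

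Finally, this last identity is nothing but the formula for $d_n$ specialized to a smooth hypersurface: applying the displayed expression for $\deg\grad$ to $Y_n$ gives $\deg\grad Y_n=(-1)^n(1-\euler(Y_n)+\euler(Y_n\cap H))=(-1)^n(1-e_n+e_{n-1})$, since a general hyperplane section of a smooth hypersurface is again a smooth degree-$k$ hypersurface in $\P^{n-1}$ by Bertini, while the corollary of Theorem~\ref{milnornum} gives $\deg\grad Y_n=(k-1)^n$ because $Y_n$ has no singular points; comparing the two closes the argument. I expect the main obstacle to be precisely the first step, namely extracting $d_n$ alone from Theorem~\ref{Aluffi2.1}, where all lower projective degrees a priori intervene. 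The role of Lemma~\ref{lemdegreesection} (equivalently, of the hyperplane-section principle in Theorem~\ref{thm1}) is exactly to make those lower degrees cancel, and keeping the signs and index ranges straight in that cancellation is the only delicate point; once $d_n$ is pinned down, the passage to Milnor numbers is bookkeeping resting on the degree-invariance of $\cf$.
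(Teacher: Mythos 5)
Your proposal is correct and follows essentially the same route as the paper's proof: you isolate $d_n=(-1)^n\bigl(1-\euler(X)+\euler(X\cap H)\bigr)$ from Theorem~\ref{Aluffi2.1} and Lemma~\ref{lemdegreesection} (this is exactly the paper's Afirma\c{c}\~ao~\ref{af1}, proved there via Theorem~\ref{thm1}, whose own proof rests on the same lemma), then use the degree-invariance of the Fulton class (Observa\c{c}\~ao~\ref{cfInv}) to trade $\int\cf$ for Euler characteristics of smooth models, and close with B\'ezout in the smooth case — precisely the content of the paper's second claim. The only difference is packaging: the paper states two claims about differences of class degrees and combines them through the Milnor class definition, while you first pin down $d_n$ and then reduce to the numerical identity $(k-1)^n=(-1)^n(1-e_n+e_{n-1})$; the ingredients and their roles coincide.
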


\begin{proof}
Sejam $d_0, d_1, \dots, d_n$ os graus projetivos do mapa $\grad X$.
A fórmula do enunciado seguirá da operação de duas relações auxiliares que enun\-ciamos e provamos abaixo.
\begin{afirm}
\label{af1}
$\displaystyle \int \csm(X) - \displaystyle \int \csm(X\cap H)  = 1-(-1)^n d_n$.
\end{afirm}
De fato, pelos Teoremas~\ref{thm1} e \ref{Aluffi2.1},
\[
\csm(X\cap H) = \csm(X)\cdot \frac{h}{1+h},
\]
e
\[
\csm(X) = (1+h)^{n+1} - \sum_{j=0}^{n}d_j(-h)^j(1+h)^{n-j},
\]
portanto
\[
\begin{split}
\int \csm(X)- \int \csm(X\cap H) &= \int \csm(X)\Big(1-\frac{h}{1+h}\Big) \\
&= \int \csm(X)\frac{1}{1+h} \\
&= \int \Big((1+h)^{n} - \sum_{j=0}^{n}d_j(-h)^{j}(1+h)^{n-j-1}\Big)\\ 
&= 1 - (-1)^{n}d_{n}.
\end{split}
\]
\begin{afirm}
$\displaystyle \int \cf(X) - \displaystyle \int \cf(X\cap H)  = 1-(-1)^n(k-1)^n$.
\end{afirm}
De fato, para hipersuperfícies a classe de Fulton depende apenas do grau (Observação~\ref{cfInv}) e logo podemos supor $X$ não-singular, donde $\cf(X) = \csm(X) = c(TX)\cap[X]$. Neste caso o mapa gradiente é um morfismo e portanto pelo teorema de Bézout seu grau é igual a $(k-1)^n$. O desejado segue então da Afirmação~\ref{af1}.

\medskip

O resultado do enunciado do Teorema seguirá da definição de classe de Milnor e das duas afirmações demonstradas:
\[
\begin{split}
\mu(X) + \mu(X\cap H) &= \int \milnor(X) + \int \milnor(X\cap H) \\
&= (-1)^{n-1}\Big(\int \cf(X) - \int \csm(X)\Big) \\
&+ (-1)^{n-2}\Big(\int \cf(X \cap H) - \int \csm(X\cap H)\Big) \\
&= (-1)^{n-1}\Big(\int \cf(X) - \int \cf(X\cap H)\Big) \\
&+ (-1)^{n-1}\Big(\int \csm(X) - \int \csm(X\cap H)\Big) \\
&= (k-1)^n - d_n.
\end{split}
\]
\end{proof}



\begin{rmk}
\label{obs-DimcaPapadima}
Na demonstração do teorema acima, a Afirmação~\ref{af1} apresenta uma outra prova da fórmula de A. Dimca e S. Papadima sobre o grau topológico do mapa gradiente em termos das características de Euler da hipersuperfície e de uma seção hiperplana genérica: \emph{se $X \subset \mathbb{P}^{n}$ é uma hipersuperfície e $H$ é um hiperplano genérico, então}
\[
\deg(\grad X) = (-1)^{n}(1 - \euler(X\setminus X\cap H)).
\]
Esta fórmula foi obtida originalmente em \cite[Theorem 1]{DP03}, utilizando métodos topológicos.
\hfill{$\Box$}
\end{rmk}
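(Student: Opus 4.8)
A ideia \'e ler a f\'ormula diretamente da Afirma\c{c}\~ao~\ref{af1}, j\'a estabelecida na demonstra\c{c}\~ao do Teorema~\ref{thm2}, combinada com dois fatos fundamentais: o grau de uma classe de Schwartz--MacPherson \'e a caracter\'istica de Euler topol\'ogica, e a classe $\csm$ \'e aditiva. Primeiro recordo que o grau topol\'ogico $\deg(\grad X)$ \'e, por defini\c{c}\~ao, o $n$-\'esimo grau projetivo $d_n$ do mapa gradiente, de modo que a f\'ormula do enunciado equivale \`a identidade
\[
d_n = (-1)^n\bigl(1 - \euler(X\setminus(X\cap H))\bigr).
\]

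Em seguida interpreto o lado esquerdo da Afirma\c{c}\~ao~\ref{af1} como uma caracter\'istica de Euler. Partindo da decomposi\c{c}\~ao $1_X = 1_{X\cap H} + 1_{X\setminus(X\cap H)}$ de fun\c{c}\~oes construt\'iveis e da linearidade da transforma\c{c}\~ao $c_*$, obtenho $\csm(X) = \csm(X\cap H) + \csm(X\setminus(X\cap H))$ em $A_*\P^n$. Como $\int\csm(Z) = \euler(Z)$ para todo $Z$ localmente fechado, tomando graus resulta
\[
\int\csm(X) - \int\csm(X\cap H) = \euler\bigl(X\setminus(X\cap H)\bigr),
\]
onde $X\setminus(X\cap H) = X\setminus H$ \'e o complementar da se\c{c}\~ao hiperplana dentro de $X$, um conjunto localmente fechado ao qual o formalismo de $\csm$ se aplica.

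Por fim, combino isto com a Afirma\c{c}\~ao~\ref{af1}, que diz $\int\csm(X) - \int\csm(X\cap H) = 1 - (-1)^n d_n$. Igualando as duas express\~oes obtenho $\euler(X\setminus(X\cap H)) = 1 - (-1)^n d_n$; isolando $d_n$ e usando $(-1)^n(-1)^n = 1$ chego exatamente \`a f\'ormula de Dimca--Papadima.

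N\~ao espero obst\'aculo s\'erio, pois todo o conte\'udo est\'a empacotado na Afirma\c{c}\~ao~\ref{af1}, cuja demonstra\c{c}\~ao via os Teoremas~\ref{Aluffi2.1} e~\ref{thm1} \'e o trabalho de fato e j\'a est\'a feita. O \'unico ponto que exige cuidado \'e a contabilidade no passo de aditividade: garantir que a interse\c{c}\~ao $X\cap H$ seja tratada como conjunto (reduzido) localmente fechado, de modo que $\int\csm(X\cap H) = \euler(X\cap H)$ valha de fato, e que o hiperplano $H$ seja suficientemente geral para que o Teorema~\ref{thm1} (logo a Afirma\c{c}\~ao~\ref{af1}) se aplique. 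Feito isso, a f\'ormula \'e imediata.
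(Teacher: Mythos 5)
Sua proposta est\'a correta e segue essencialmente o mesmo caminho que o texto tem em mente: ler a Afirma\c{c}\~ao~\ref{af1}, identificar o lado esquerdo como $\int\csm(X)-\int\csm(X\cap H)=\euler(X)-\euler(X\cap H)=\euler(X\setminus(X\cap H))$ via aditividade da classe $\csm$ e o fato de que seu grau \'e a caracter\'istica de Euler, e ent\~ao isolar $d_n=\deg(\grad X)$. O ponto de cuidado que voc\^e menciona (tratar $X\cap H$ como conjunto localmente fechado reduzido, com $H$ gen\'erico para que o Teorema~\ref{thm1} se aplique) \'e exatamente a hip\'otese j\'a usada na demonstra\c{c}\~ao do Teorema~\ref{thm2}, portanto nada falta.
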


\begin{rmk}
Note que, o termo $\mu(X) + \mu(X\cap H)$ na fórmula do Teorema acima é a soma alternada dos coeficientes da classe de Milnor da hipersuperfície, ou equivalentemente, o grau $\displaystyle \int \dfrac{\Mcal(X)}{1+h}$.
\hfill{$\Box$}
\end{rmk}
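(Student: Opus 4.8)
A estrat\'egia \'e estabelecer, no n\'ivel de classes em $A_*\P^n$, a rela\c{c}\~ao
\[
\Mcal(X\cap H) = -\,\Mcal(X)\cdot\frac{h}{1+h},
\]
da qual a igualdade do enunciado decorre imediatamente por integra\c{c}\~ao. Como $X$ \'e uma hipersuperf\'icie, tem-se $\dim X = n-1$ e $\dim(X\cap H)=n-2$; logo, pela Defini\c{c}\~ao~\ref{def-Milnor}, $\cf(X)-\csm(X) = (-1)^{n-1}\Mcal(X)$ e $\cf(X\cap H)-\csm(X\cap H) = (-1)^{n-2}\Mcal(X\cap H)$. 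Basta portanto mostrar que tanto $\csm$ quanto $\cf$ se transformam do mesmo modo sob se\c{c}\~ao hiperplana gen\'erica, a saber, por multiplica\c{c}\~ao pelo fator $\frac{h}{1+h}$.

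Para a classe de Schwartz--MacPherson isto \'e exatamente o Teorema~\ref{thm1}, que fornece $\csm(X\cap H)=\csm(X)\cdot\frac{h}{1+h}$. Para a classe de Fulton, uso a f\'ormula expl\'icita da Observa\c{c}\~ao~\ref{cfInv}: como $X$ tem grau $k$, sua classe de Segre \'e $s(X,\P^n)=\frac{kh}{1+kh}$ e portanto $\cf(X)=(1+h)^{n+1}\frac{kh}{1+kh}$. A se\c{c}\~ao gen\'erica $X\cap H$ \'e uma hipersuperf\'icie de mesmo grau $k$ em $H\cong\P^{n-1}$, donde $\cf(X\cap H)=(1+h')^{n}\frac{kh'}{1+kh'}$ em $A_*\P^{n-1}$, sendo $h'$ a classe hiperplana de $\P^{n-1}$. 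Tomando o pushforward pela inclus\~ao $\P^{n-1}\hookrightarrow\P^n$ --- que multiplica por $h$ e substitui $h'$ por $h$ --- obtenho $\cf(X\cap H)=h(1+h)^{n}\frac{kh}{1+kh}=\cf(X)\cdot\frac{h}{1+h}$ em $A_*\P^n$, que \'e a transforma\c{c}\~ao desejada.

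Combinando as duas igualdades e usando os sinais acima,
\[
(-1)^{n-2}\Mcal(X\cap H) = \big(\cf(X)-\csm(X)\big)\frac{h}{1+h} = (-1)^{n-1}\Mcal(X)\cdot\frac{h}{1+h},
\]
isto \'e, $\Mcal(X\cap H)=-\Mcal(X)\frac{h}{1+h}$. Integrando, e lembrando que $\mu(\cdot)=\int\Mcal(\cdot)$,
\[
\mu(X)+\mu(X\cap H) = \int \Mcal(X)\Big(1-\frac{h}{1+h}\Big) = \int \frac{\Mcal(X)}{1+h},
\]
o que prova a primeira afirma\c{c}\~ao.

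Para a segunda, escrevo $\Mcal(X)=\sum_j m_j h^j$; expandindo $\frac{1}{1+h}=\sum_i(-1)^i h^i$ e extraindo o coeficiente de $h^n$ --- o \'unico que contribui para $\int$ em $A_*\P^n\cong\Z[h]/(h^{n+1})$ --- obtenho $\int\frac{\Mcal(X)}{1+h}=\sum_j(-1)^{n-j}m_j$, que \'e precisamente a soma alternada dos coeficientes da classe de Milnor. O \'unico ponto que requer cuidado \'e a verifica\c{c}\~ao da transforma\c{c}\~ao da classe de Fulton, em particular a conven\c{c}\~ao de pushforward $(h')^{j}\mapsto h^{j+1}$ ao longo de $\P^{n-1}\hookrightarrow\P^n$; tudo o mais \'e manipula\c{c}\~ao formal em $A_*\P^n$.
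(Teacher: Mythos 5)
Sua demonstra\c{c}\~ao est\'a correta, e de fato registra mais do que o texto da tese: a Observa\c{c}\~ao \'e enunciada sem prova, e sua justificativa impl\'icita no texto \'e apenas ao n\'ivel de graus, embutida nas duas Afirma\c{c}\~oes da demonstra\c{c}\~ao do Teorema~\ref{thm2} (que tratam separadamente $\int\csm(X)-\int\csm(X\cap H)$ e $\int\cf(X)-\int\cf(X\cap H)$). Voc\^e, em vez disso, estabelece a identidade mais forte, ao n\'ivel de classes,
\[
\milnor(X\cap H) \;=\; -\,\milnor(X)\cdot\frac{h}{1+h} \qquad\in A_*\P^n,
\]
combinando exatamente as mesmas ferramentas do cap\'itulo: o Teorema~\ref{thm1} para a classe $\csm$, e a ``insensibilidade'' da classe de Fulton (Observa\c{c}\~ao~\ref{cfInv}), que reduz $\cf(X\cap H)$ ao caso de uma hipersuperf\'icie de grau $k$ em $\P^{n-1}$ e fornece $\cf(X\cap H)=\cf(X)\cdot\frac{h}{1+h}$ ap\'os o pushforward --- sua conven\c{c}\~ao $(h')^{j}\mapsto h^{j+1}$ est\'a correta pela f\'ormula de proje\c{c}\~ao, j\'a que $h'=i^*h$ e $[\P^{n-1}]=h\cap[\P^n]$. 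O ajuste de sinais $(-1)^{n-1}$ versus $(-1)^{n-2}$ e a extra\c{c}\~ao do coeficiente de $h^n$ em $\frac{1}{1+h}=\sum_i(-1)^ih^i$, dando $\sum_j(-1)^{n-j}m_j$, tamb\'em est\~ao certos; note ainda que s\'o a parte $\csm$ usa a genericidade de $H$, enquanto a parte $\cf$ requer apenas $H\not\subset X$. O que a vers\~ao em classes compra: ela determina $\milnor(X\cap H)$ inteira a partir de $\milnor(X)$, n\~ao s\'o a soma dos graus --- e \'e consistente com os c\'alculos expl\'icitos da tese (no Exemplo~\ref{ex.thm2}, $\milnor(X)=4h^3-6h^4+21h^5$ fornece, pela sua f\'ormula, $-\milnor(X)\frac{h}{1+h}=-4h^4+10h^5$, exatamente o valor ali computado para $\milnor(X\cap H)$).
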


No exemplo a seguir calculamos de maneira direta o grau do mapa gradiente da secante da curva racional normal em $\P^4$, e como consequência do Teorema~\ref{milnornum}, reobtemos os demais graus projetivos deste mapa. De fato, no Teorema~\ref{thm3} apresentaremos uma fórmula para o grau do mapa gradiente para todas as hipersuperfícies secantes de curvas racionais normais em espaços de dimensão par.

\begin{exem}\label{ex.GdeRNC}
Revisitando o Exemplo~\ref{classgrafsec}: sejam $C \subset \P^4$ a curva quártica racional normal e $X \subset \P^4$ a variedade secante de $C$, a qual é uma hipersuperfície cúbica com $\sing X = C$.
 
Vamos ilustrar como utilizar o Teorema~\ref{thm2} para o cálculo do grau do mapa $\grad X$.

Seguindo o argumento empregado no Exemplo~\ref{classdiscrim}, obtemos
\[
\cf(X) = 3h+6h^2+12h^3-6h^4 \quad \in A_*\P^4,
\]
e, pelo Exemplo~\ref{classgrafsec},
\[
\csm(X) = 3h+6h^2+8h^3+4h^4 \quad \in A_*\P^4,
\]
portanto
\[
\milnor(X) = (-1)^3\left(\cf(X) - \csm(X)\right) = -4h^3 + 10h^4 \quad \in A_*\P^4,
\]
e logo $\mu(X)=10$.

Agora, se $H$ é um hiperplano genérico de $\P^4$, este intersecta $\sing X$ em $4$ pontos com multiplicidade $1$ em cada, portanto $X\cap H$ é uma superfície cúbica de $\P^3$ com $4$ nós ordinários (cúbica de Cayley), e daí concluímos que $\mu(X\cap H)=4$.

Logo, pelo Teorema~\ref{thm2},
\[
d_4 = \deg \grad X = (3-1)^4 - 10 - 4 = 2.
\]

Por fim, vamos calcular os demais graus projetivos do mapa $\grad X$.

Pelo Lema~\ref{lemdegreesection}, $d_3(X) = d_3(X\cap H_1)$, onde $H_1\subset \P^4$ é um hiperplano genérico. Pelo Teorema~\ref{milnornum},
\[
d_3 = (3-1)^3 - \displaystyle \sum \mu_{p} = 8 - 4 = 4.
\]
Pelo mesmo argumento, obtemos $d_2 = \deg\grad(X\cap H_1\cap H_2)$, onde $H_2 \subset \P^4$ é outro hiperplano genérico. Como $X\cap H_1\cap H_2$ é uma curva plana cúbica suave, segue-se que seu mapa gradiente tem grau $(3-1)^2 = 4$, logo $d_2 = 4$. Analogamente, $d_1 = (3-1)^1 = 2$, e $d_0 = 1$.
\hfill{$\Box$}
\end{exem}

\begin{exem}\label{ex.thm2}
Calculamos agora o grau topológico do mapa gradiente da hipersuperfície $X\subset\P^5$ do Exemplo~\ref{classdiscrim}, o discriminante das cônicas de $\P^2$. Utilizamos os dados do mesmo, a saber:
\[
\cf(X) = 3h + 9h^2 + 18h^3 + 6h^4 + 27h^5 \quad\in A_*\P^5,
\]
e 
\[
\csm(X) = 3h + 9h^2 + 14h^3 + 12h^4 + 6h^5 \quad\in A_*\P^5.
\]

Para um hiperplano genérico $H\subset \P^5$, consideremos a inclusão $X\cap H\hookrightarrow H$. Calculemos as expressões das classes para esta seção hiperplana. Da Observação~\ref{cfInv} e do cálculo da classe do tangente de uma cúbica suave, que fizemos no Exemplo~\ref{ex.GdeRNC}, tem-se:
\[
\cf(X\cap H) = 3h + 6h^2 + 12h^3 - 6h^4 \quad \in A_*H.
\]
Por outro lado, do Teorema~\ref{thm1},
\[
\csm(X\cap H) = 3h + 6h^2 + 8h^3 + 4h^4 \quad \in A_*H.
\]
Portanto
\[
\milnor(X) = 4h^3 - 6h^4 + 21h^5 \quad e \quad \milnor(X\cap H) =  -4h^4 + 10h^5 \quad \in A_*\P^5,
\]
e do Teorema~\ref{thm2} obtemos o grau topológico do mapa gradiente de $X$:
\[
\deg \grad X = (3-1)^5 - 21 - 10 = 1.
\]
Logo, $X$ é uma hipersuperfície homaloidal.
\hfill{$\Box$}
\end{exem}

%
%
%
%
\chapter{Secantes de curvas racionais normais}\label{cap2}

\section{Secantes em geral e resultados básicos}


Iniciamos o capítulo expondo definições, observações e resultados conhecidos sobre variedade secantes, dos quais faremos uso posteriormente. Como referências básicas, consulte \cite{Ha92}, \cite{Ru06} e \cite{EH16}.

\begin{defi}
Seja $X\subset \P^n$ uma variedade irredutível não-degenerada, e seja
\[
S_{X}^{0} = \{((x_1, x_2), z);\  z\in \langle x_1, x_2\rangle\} \subset (X\times X\setminus \Delta_X) \times \P^n.
\]
Denotemos por $S_X$ o fecho de $S_{X}^{0}$ em $X\times X\times \P^n$. Esta é uma variedade projetiva irredutível de dimensão $2\dim X +1$, chamada \textbf{variedade secante abstrata para $X$}.

Temos os seguintes mapas de projeção de $S_X$ para $X\times X$ e para $\P^n$:
\[
\xymatrix{& S_X \ar[ld]_{p_1} \ar[dr]^{p_2} & \\
X\times X & & \P^n}
\]
A \textbf{variedade secante de $X$}, denotada $\Sec_2 X$, é a imagem de $S_X$ pela projeção $p_2$:
\[
\Sec_2 X = p_2(S_X) = \overline{\bigcup_{x_1\neq x_2, x_i\in X} \langle x_1, x_2\rangle}.
\] 
Por vezes denotamos $\Sec_2 X$ simplesmente por $\Sec X$. Esta é uma variedade irredutível de dimensão $\leq 2\dim X +1$.
\end{defi}

Agora, para $k\geq 2$ número inteiro, generalizando a construção acima, definimos a \textbf{variedade $k$-secante de $X$} como sendo $\Sec_{k}X = p_2(S^{k}_X)$, onde $S^{k}_X$ é o fecho em $\underbrace{X\times \dots \times X}_{k}\times \P^n$ de
\[
\{((x_0, \dots, x_{k-1}), u); \ \dim \langle x_0, \dots, x_{k-1}\rangle = k-1 \text{ e } u\in \langle x_0, \dots, x_{k-1}\rangle \}.
\]

Em outras palavras, $\Sec_{k}X$ é o fecho do conjunto dos pontos que vivem em subespaços lineares de dimensão ($k-1$) gerados por coleções de $k$ pontos gerais em $X$.

Tome $k_0 = \min \{k\in \mathbb{N};\; \Sec_k X = \P^n\}$.
Então, para cada $0 \leq k \leq k_0$, tem-se que $\Sec_k X$ é uma variedade projetiva irredutível e 
\[
X \subsetneq \Sec_2 X \subsetneq \dots \subsetneq \Sec_k X \subsetneq \dots \subsetneq \Sec_{k_0}X = \P^n.
\]

Nosso interesse reside nas variedades secantes 
de curvas racionais normais, em especial as de grau par.

\smallskip
Uma \emph{curva racional normal} $C\subset \P^d$ (de grau $d$) é uma curva
projetivamente equivalente à imagem do mergulho de 
Veronese $\nu_d\colon \P^1\to\P^d$. Por simplicidade, assumiremos que
$C=\nu_d(\P^1)$ sem maiores comentários. 
Recordamos que  esta curva é de fato 
uma variedade determinantal, realizada como o 
lugar dos pontos $(p_0: \cdots:p_d)\in \P^d$ tais que a matriz
\[
\begin{pmatrix}
p_0 & p_1 & \dots & p_{d-2} & p_{d-1} \\
p_1 & p_2 & \dots & p_{d-1} & p_d
\end{pmatrix}
\]
tem posto $1$.

É curioso que podemos descrever a curva racional normal 
também utilizando outras matrizes. Uma matriz $(x_{ij})$ de 
dimensões $m\times n$ é chamada uma
\emph{matriz de Hankel} (genérica) se $x_{ij}=x_{i+j}$
e a denotamos
\begin{equation}
\label{eq-matrizhankel}
\Hcal_{m,n}(x)=
\begin{pmatrix}
x_0 & x_1 & x_2 & \dots & x_{n-2} & x_{n-1} \\
x_1 & x_2 & \dots & & x_{n-1} & x_{n} \\
\vdots & \vdots & & \vdots & \vdots & x_{m+n-3}\\
x_{m-1} & \dots & \dots & & x_{m-n-3}& x_{m+n-2}
\end{pmatrix}.
\end{equation}
Então,
para qualquer que seja $1< k < d-1$, a curva $C\subset\P^d$ 
é dada pelo lugar dos pontos $p\in \P^d$ tais que 
os $2$-menores de $\Hcal_{k,d+2-k}(p)$ sejam nulos.

Isso se generaliza: a $k$-secante de $C$ é dada pelos zeros de ($k+1$)-menores de \emph{qualquer}
matriz de Hankel nas variáveis $x_0,\dotsc,x_d$ desde que as dimensões da matriz sejam adequadas. 
Mais ainda, tais igualdades valem para os ideais gerados pelos menores,
fato 
conhecido como o \emph{truque de Gruson-Peskine}.
Para enunciá-lo, precisamos de alguma notação:
dada uma matriz $A$ com entradas em
$\C[x_0,\dotsc,x_d]$, seja $I_k(A)$ o
ideal gerado pelos $k$-menores de $A$.
 
\begin{prop}
\label{secant_x_hankel}  Tome inteiros
$m,n,m',n'$ tais que $m+n=m'+n'$. Então, para todo
$k\geq 1$ satisfazendo 
$k\leq\min\{m,n\}$ e $k\leq\min\{m',n'\}$, tem-se
$I_k(\Hcal_{m,n}) = I_k(\Hcal_{m' ,n' })$.

Geometricamente:
seja $C\subset \P^d$ a curva racional normal. Tome $1\leq a\leq d-1$.
Então, para $1\leq k \leq \min\{a, d-a\}$, o ideal
da secante $\Sec_k C\subset\P^d$ é dado pelos ($k+1$)-menores da matriz
de Hankel
\[
\Hcal_{a+1,d-a+1} = 
\begin{pmatrix}
x_0 & x_1 & \dots & x_{d-a} \\
x_1 & x_2 & \dots & x_{d-a +1} \\
\vdots & \vdots & & \vdots \\
x_{a} & x_{a+1} & \dots & x_d
\end{pmatrix}.
\]
\end{prop}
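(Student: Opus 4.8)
The plan is to establish the algebraic equality $I_k(\Hcal_{m,n}) = I_k(\Hcal_{m',n'})$ first, and then read off the geometric statement by reducing to the case of maximal minors. The structural backbone of the algebraic part is the \emph{translation invariance} of Hankel minors. A $k$-minor of $\Hcal_{m,n}$ is recorded by a row set $I=\{i_1<\dots<i_k\}$ and a column set $J=\{j_1<\dots<j_k\}$, and equals $\det(x_{i_a+j_b})_{a,b}$; since the entry in position $(i,j)$ depends only on $i+j$, this determinant is unchanged under the simultaneous shift $(I,J)\mapsto(I+t,J-t)$ as long as the indices remain non-negative. Thus a minor depends only on the gap sequences of $I$ and $J$, on its \emph{span} $(\max I-\min I,\ \max J-\min J)$, and on the base index $\min I+\min J$. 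Using transposition (the transpose of $\Hcal_{m,n}$ is $\Hcal_{n,m}$, with literally the same $k$-minors) together with induction, it suffices to compare two shapes differing by one step, say $\Hcal_{m,n}$ and $\Hcal_{m+1,n-1}$, both with smaller dimension $\geq k$; all intermediate shapes in the induction keep both dimensions $\geq k$. The shift observation alone then already places every minor whose row-span is $\leq m-1$ and column-span is $\leq n-2$ inside the other ideal.

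The main obstacle is the \emph{boundary} family: the minors attaining the extremal span, for instance those of $\Hcal_{m+1,n-1}$ whose row set contains both $0$ and $m$. Such a minor cannot be realized by a shift inside $\Hcal_{m,n}$, since spans are preserved by the shift, so it must instead be rewritten as a combination of fitting minors. I would carry this out with an explicit determinantal identity that trades one unit of row-span for one unit of column-span, exploiting the Hankel relation that the entry in position $(i+1,j)$ equals the entry in position $(i,j+1)$. The prototype is the telescoping already visible for $k=2$: the full-span minor $x_0x_4-x_2^2$ of $\Hcal_{3,3}$ is not a minor of $\Hcal_{2,4}$, yet $x_0x_4-x_2^2=(x_0x_4-x_1x_3)+(x_1x_3-x_2^2)$ exhibits it as a sum of two genuine $2$-minors of $\Hcal_{2,4}$. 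Producing the analogous expansion for general $k$ via a Sylvester/Pl\"ucker-type relation among Hankel minors is the technical heart of the Gruson--Peskine trick; alternatively one may cite it. This completes the algebraic statement.

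For the geometric statement, the algebraic equality lets me reduce to the single balanced shape $a=k$, where $\Hcal_{k+1,d-k+1}$ has size $(k+1)\times(d-k+1)$ and its $(k+1)$-minors are the \emph{maximal} minors. The inclusion $\Sec_k C \subseteq V\big(I_{k+1}(\Hcal_{k+1,d-k+1})\big)$ is the easy direction: the catalecticant map $p\mapsto \Hcal(p)$ is linear and sends each point of $C$ to a rank-one matrix, so a point lying in the span of $k$ points of $C$ has Hankel rank $\leq k$, forcing all $(k+1)$-minors to vanish; by continuity they vanish on the closure $\Sec_k C$. For the converse I would invoke the classical rank characterization: a Hankel matrix built from $(p_0,\dots,p_d)$ has rank $\leq k$ iff the sequence obeys a linear recurrence of order $\leq k$, whence $p$ lies in the span of $k$ points of $C$ (possibly in the limit), i.e. $p\in \Sec_k C$. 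Finally, to upgrade this set-theoretic equality to the claimed \emph{ideal} equality, I would use that the maximal-minor ideal of a $1$-generic (catalecticant) matrix is prime of the expected codimension $(d-k+1)-(k+1)+1=d-2k+1$, which matches $\codim \Sec_k C = d-(2k-1)$ since $\dim\Sec_k C = 2k-1$ in our range. Being prime of the right dimension and cutting out the irreducible variety $\Sec_k C$, it is exactly the homogeneous ideal of $\Sec_k C$; transporting back through the algebraic equality then gives the statement for every admissible $a$.
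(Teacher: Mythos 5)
First, a point of reference: the paper does not actually prove this proposition --- its proof is a pure citation to \cite[Lemma~2.3]{Gruson-Peskine}, \cite[Theorem~1]{Watanabe} and \cite[Proposition~9.7]{Ha92}. Measured against that, your proposal splits into two halves of unequal status. The geometric half is correct and essentially complete: reducing to the balanced shape $a=k$; the inclusion $\Sec_k C\subseteq V\big(I_{k+1}(\Hcal_{k+1,d-k+1})\big)$ from linearity of $p\mapsto \Hcal(p)$ and rank one on $C$; the converse from the recurrence characterization of Hankel rank, with degenerate recurrences producing points of the closure; and the promotion from set-theoretic to ideal-theoretic equality using that the maximal-minor ideal of a $1$-generic matrix of linear forms is prime of the expected codimension $(d-k+1)-(k+1)+1=d-2k+1=\codim \Sec_k C$ (Eisenbud, \cite{eisenbud-linear}). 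This is the standard route, your bookkeeping is right, and it genuinely supplies an argument that the paper only outsources.

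The algebraic half, however, has a gap exactly at its load-bearing step. Translation invariance of Hankel minors, transposition, and the one-step induction $(m,n)\rightsquigarrow(m+1,n-1)$ are all correct, and they do reduce the claim to the boundary family: $k$-minors of $\Hcal_{m+1,n-1}$ whose row set contains both $0$ and $m$. But the identity rewriting such a minor in terms of minors of $\Hcal_{m,n}$ is never produced for $k>2$; the telescoping $x_0x_4-x_2^2=(x_0x_4-x_1x_3)+(x_1x_3-x_2^2)$ is suggestive but carries no general mechanism, and this rewriting is not a routine Pl\"ucker manipulation --- it \emph{is} the content of the Gruson--Peskine lemma. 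Your fallback (``alternatively one may cite it'') is admissible, but then your algebraic half collapses to the same citation the paper makes, so the proposal should be read as: a genuine proof of the geometric statement for the extremal shapes $a\in\{k,\,d-k\}$, plus a citation for everything else. Note that the gap is not cosmetic even for the geometric statement: for intermediate shapes $k<a<d-k$ the $(k+1)$-minors of $\Hcal_{a+1,d-a+1}$ are not maximal, so Eisenbud's primality theorem does not apply to them directly, and the only bridge from the balanced case to those shapes is precisely the unproved shape-independence of the minor ideals.
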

\medskip

\begin{proof}
Para a versão original, veja \cite[Lemma~2.3]{Gruson-Peskine}. Uma prova
independente foi dada por Watanabe em 1985,
em um manuscrito que precede a publicação \cite[Theorem~1]{Watanabe}, em uma versão um pouco mais geral. Consulte também o livro-texto de J.
Harris \cite[Proposition~9.7, p.\thinspace{}103]{Ha92}.
\end{proof}

Em particular, cada secante pode ser obtida como o lugar dos zeros
dos \emph{menores maximais} de uma matriz de Hankel conveniente. 
Por exemplo, o ideal da secante $\Sec_3 C\subset\P^{2r}$ é gerado 
pelos  $4$-menores
da matriz
\[
\mathcal{H}_{4, 2r-2} = \begin{pmatrix}
x_0 & x_1 & \dots & x_{2r-3} \\
x_1 & x_2 & \dots & x_{2r-2} \\
x_2 & x_3 & \dots & x_{2r-1} \\
x_3 & x_4 & \dots & x_{2r}
\end{pmatrix}.
\]

Como pode-se esperar, secantes de curvas racionais normais foram muito estudadas, tanto do ponto de vista algébrico como geométrico.
Abaixo listamos algumas de suas propriedades importantes. Recorde que uma variedade projetiva $X\subset\P^n$ é 
\emph{aritmeticamente Cohen-Macaulay} se seu anel 
de coordenadas homogêneas é Cohen-Macaulay. 

\begin{prop}\label{props_rnc2}
Seja $C\subset \P^{n}$ uma curva racional normal. 
Para $1\leq k \leq [n/2]$, a secante $\Sec_k C$ é uma variedade irredutível, 
reduzida, projetivamente normal e aritmeticamente Cohen-Macaulay.
Seu ideal é dado na Proposição~\ref{secant_x_hankel}.
Mais ainda:
\begin{enumerate}[{\rm(a)}]
\item As secantes tem as dimensões esperadas:
$\dim \Sec_k C = 2k-1$. Em particular temos uma cadeia de inclusões estritas
\[
C \subsetneq \Sec_2 C \subsetneq  \Sec_3 C\subsetneq \cdots \subsetneq \Sec_{[n/2]} C \subsetneq \P^{n}.
\]
\item Quanto aos graus:
$\deg(\Sec_k C) = \binom{n-k+1}{k}$.
\item Em geral, o lugar singular de $\Sec_k C$ não é um esquema reduzido.
Entretanto temos $(\sing \Sec_{k}C)_{red} = \Sec_{k-1}C$ sempre que $2\leq k\leq [n/2]$.
\item Suponha $n=2r$. Sejam $J_r$ o ideal jacobiano do determinante da matriz $\Hcal_{r+1,r+1}$ e $I_r$ o ideal 
gerado pelos menores maximais de $\Hcal_{r,r+2}$. Então $\sqrt{J_r}=I_r$.
\end{enumerate}
\end{prop}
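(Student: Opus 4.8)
The backbone of the whole statement is the determinantal presentation of Proposition~\ref{secant_x_hankel}: each $\Sec_k C$ is the locus where a Hankel matrix $\Hcal_{a+1,\,d-a+1}$ has rank $\leq k$, and the \emph{truque de Gruson-Peskine} frees me to pick whatever shape is most convenient (square when $k=[n/2]$, thin when I want maximal minors). My plan is to read off the structural properties from the theory of $1$-generic matrices: a generic Hankel matrix is $1$-generic, so by the results underlying \cite{Gruson-Peskine} the ideal $I_{k+1}(\Hcal)$ of its $(k+1)$-minors is prime and its quotient ring is Cohen-Macaulay with an explicit resolution. This delivers in one stroke that $\Sec_k C$ is irreducible and reduced (the ideal is prime and equals the minors ideal) and that it is arithmetically Cohen-Macaulay, hence projectively normal.

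For (a) I would avoid any determinantal bookkeeping and simply invoke the non-defectivity of $C$ already recorded in the introduction via \cite{EH16}: for an irreducible nondegenerate curve one has $\dim \Sec_k C = 2k-1$ whenever $2k\leq n$, and the strict chain of inclusions is then forced by the strictly increasing dimensions. For the degree (b) I expect either to feed the Hankel resolution into a Giambelli-Thom-Porteous computation or to appeal to the classical value; in either case $\binom{n-k+1}{k}$ is consistent with the two endpoints I can check by hand (a cubic when $n=4$, $k=2$, and $\det\Hcal_{r+1,r+1}$ of degree $r+1$ when $n=2r$, $k=r$).

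Part (c) I would attack through the Jacobian criterion for determinantal varieties: the singular locus of a rank $\leq k$ locus is, set-theoretically, the next rank stratum rank $\leq k-1$. After checking that this stratification survives the Hankel specialization, I obtain $(\sing\Sec_k C)_{red}=\Sec_{k-1}C$. The non-reducedness of $\sing\Sec_k C$ as a scheme is a genuinely different, scheme-level phenomenon; rather than prove it uniformly I would exhibit it through the explicit low-dimensional computations of this chapter. Finally (d) falls out as the hypersurface case $n=2r$ of (c): here $\Sec_r C = Z(\det\Hcal_{r+1,r+1})$, so its scheme-theoretic singular locus is cut out by the Jacobian ideal $J_r$, while (c) identifies the underlying reduced locus with $\Sec_{r-1}C = Z(I_r)$; since $I_r$ is prime it is its own radical, whence $\sqrt{J_r}=I_r$.

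The hard part will be twofold. First, I must make sure the Hankel specialization of the generic determinantal variety really does retain primality, Cohen-Macaulayness, and the precise singular stratification, since naive specialization of generic determinantal statements can fail and the $1$-genericity argument has to be applied with care. Second, the non-reducedness assertion in (c) is exactly the subtle point that forces (d) to be phrased with a radical, and it is the reason these two items cannot be collapsed into a single clean scheme-theoretic identity.
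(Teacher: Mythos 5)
Your proposal cannot be compared step-by-step with the paper's own proof, because the paper offers none: its ``proof'' of Proposition~\ref{props_rnc2} is a list of references (\cite{Iarrobino-Kanev}, \cite{Watanabe}, \cite{EH16}, \cite{eisenbud-linear} for (a)--(c); \cite{Mo14} and \cite{MS} for (d); an explicit computation in the appendix for the non-reducedness claim). That said, the route you sketch is essentially the route of those references: Eisenbud's $1$-generic theory is precisely \cite{eisenbud-linear}, the non-defectivity statement you invoke for (a) is the one the paper itself quotes from \cite{EH16}, and the Giambelli--Thom--Porteous value $\binom{n-k+1}{k}$ for maximal minors of a $(k+1)\times(n-k+1)$ matrix is the classical degree (your two endpoint checks are correct). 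Your deduction of (d) from (c) is correct and pleasantly short: in characteristic zero the Euler relation puts $f=\det\Hcal_{r+1,r+1}$ inside $J_r$, so $V(J_r)=\sing\Sec_rC$ as sets, and the Nullstellensatz together with primality of $I_r$ gives $\sqrt{J_r}=\I(\Sec_{r-1}C)=I_r$; this replaces the algebraic argument cited from \cite{MS} by a purely geometric one.

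Two caveats. First, the step you defer in (c) --- ``checking that the stratification survives the Hankel specialization'' --- is not a routine verification but the entire content of that item. The inclusion $\Sec_{k-1}C\subseteq\sing\Sec_kC$ is easy: each partial derivative of a $(k+1)$-minor is a combination of $k$-minors, so the Jacobian matrix of the generators of $\I(\Sec_kC)$ vanishes identically along $\Sec_{k-1}C$. But the reverse inclusion, i.e.\ smoothness of $\Sec_kC$ at points where the Hankel matrix has rank exactly $k$, cannot be obtained by specializing the statement for generic determinantal varieties, since a linear section of a variety may well be singular at points where the ambient variety is smooth; one needs a genuine tangent-space (or group-action) argument at Hankel matrices of rank exactly $k$, and supplying it is exactly what the cited references do. Second, ``arithmetically Cohen--Macaulay, hence projectively normal'' is not an implication: normality of the homogeneous coordinate ring requires $R_1$ in addition to the $S_2$ that Cohen--Macaulayness provides. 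Here $R_1$ does hold, but only because of (c): the affine cone is singular only along the cone over $\Sec_{k-1}C$, which has codimension $2$. So the logical order matters --- the identification of the singular locus must come before projective normality, not after.
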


\begin{proof}
As demonstrações podem ser encontradas em diversas referências, por
vezes realizadas com métodos diferentes. Para (a)-(c) indicamos \cite[Theorem~1.56]{Iarrobino-Kanev},
\cite[Proposition~6]{Watanabe}, também \cite[Chapter 10]{EH16}
e \cite[Proposition~4.3]{eisenbud-linear}. Especificamente para (d), consulte
\cite[Proposition~3.3.7]{Mo14} ou \cite[Proposition~3.12]{MS}. 
Finalmente, para um exemplo
onde $J_r$ não é radical, consulte 
\ref{exemplo-nao-radical} no Apêndice.
\end{proof}

Destacamos os casos que necessitaremos adiante.

\begin{corol}
\label{corol_props_rnc2}
Tome $n=2r$. Então:
\begin{enumerate}[{\rm(a)}]
\item $\Sec_r C \subset\P^{2r}$ é uma hipersuperfície, de grau $r+1$.
\item $\Sec_{r-1} C$ tem codimensão $3$ em $\P^{2r}$ e seu grau é $\binom{r+2}{3}$.
\item $(\sing \Sec_{r}C)_{red} = \Sec_{r-1}C$.
\end{enumerate}
\end{corol}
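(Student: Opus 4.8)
The plan is to obtain all three assertions as immediate specializations of Proposition~\ref{props_rnc2}, taking $n=2r$ and choosing the appropriate value of $k$ in each case; the only real work is simplifying a binomial coefficient. The reason no new geometry is needed is that Proposition~\ref{props_rnc2} already supplies the dimension and degree of every secant $\Sec_k C$ with $1\leq k\leq[n/2]$, together with the description of the reduced singular locus, so everything here is just a matter of plugging in.

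For part~(a) I would set $k=r$, which is admissible because $[n/2]=[2r/2]=r$. Part~(a) of Proposition~\ref{props_rnc2} then gives $\dim\Sec_r C = 2r-1$, so $\Sec_r C$ has codimension $1$ in $\P^{2r}$ and is therefore a hypersurface. Its degree follows from part~(b) of the same proposition: $\deg(\Sec_r C)=\binom{n-k+1}{k}=\binom{r+1}{r}=r+1$. For part~(b) I would instead take $k=r-1$. The dimension formula gives $\dim\Sec_{r-1}C = 2(r-1)-1 = 2r-3$, hence codimension $3$ in $\P^{2r}$. The degree formula yields $\binom{n-k+1}{k}=\binom{2r-(r-1)+1}{r-1}=\binom{r+2}{r-1}$, and the symmetry of binomial coefficients $\binom{r+2}{r-1}=\binom{r+2}{(r+2)-(r-1)}=\binom{r+2}{3}$ produces the stated value.

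Finally, part~(c) is exactly part~(c) of Proposition~\ref{props_rnc2} with $k=r$, which is valid for $r\geq2$ since the hypothesis there is $2\leq k\leq[n/2]=r$; this gives $(\sing\Sec_r C)_{red}=\Sec_{r-1}C$ directly. There is no genuine obstacle in this corollary: the entire argument is a substitution $n=2r$ into the preceding proposition, and the single point deserving even a word of care is the binomial identity in part~(b), which is just the standard reflection relation $\binom{m}{j}=\binom{m}{m-j}$.
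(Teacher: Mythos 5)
Your proof is correct and is exactly how the paper treats this statement: the corollary is stated as an immediate specialization of Proposition~\ref{props_rnc2} (setting $n=2r$ and $k=r$ or $k=r-1$), with no separate argument given beyond the substitutions and the reflection identity $\binom{r+2}{r-1}=\binom{r+2}{3}$ that you carry out. Your remark that part~(c) uses the hypothesis $2\leq k\leq[n/2]$, hence $r\geq 2$, is a correct and careful reading of the proposition's conditions.
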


\section{Séries de Hilbert}

Seja $M_k(m,n) \subset\P^{mn-1}$ a \emph{variedade determinantal} (genérica), 
dada pelo lugar dos zeros do ideal 
dos $k$-menores de uma matriz $m\times n$ geral.
Geometricamente, $M_2(m,n)$ é a imagem do mergulho de Segre $\P^{m-1}\times\P^{n-1} \hookrightarrow \P^{mn-1}$.
Aos menores de ordem superior correspondem as secantes
da variedade de Segre, isto é, $M_k(m,n)=\Sec_{k-1}(M_2(m,n))$ (\cite[Example~9.2, p.\thinspace{}99]{Ha92}). Por fim,
$M_n(n,n) \subset \P^{n^2-1}$ é uma hipersuperfície, dada
pelo zeros do determinante.

Dado um subesquema $X\subset\P^{n}$
denotamos por $h_X(t)$ e $P_X(t)$ a sua série e polinômio de Hilbert,
respectivamente. Tome ainda $P_i(t):=\binom{t+i}{i} \in \Q[t]$ para $i\geq 0$.
Observe que $P_0(t),\dotsc, P_n(t)$ formam uma
base para o espaço dos polinômios de grau $\leq n$.

\smallskip
Existe uma expressão determinantal maravilhosa, devida a Abhyankar (\cite{Abhyankar}),
para a série de Hilbert das variedades $M_k(m,n)$,
para quaisquer $m,n,k$.
Nossas referências são \cite[Theorem~6.9]{bruns-conca}
ou \cite[Corollary~1]{conca-herzog} (neste último há um pequeno erro de tipografia,
falta o termo $t^\ell$ na expressão do determinante):
\begin{equation}
\label{eq:Abhyankar0}
h_{M_{k+1}(m,n)}(t)=\frac{ \det 
\left( \sum_\ell \binom{m-i}{\ell}\binom{n-j}{\ell}
t^\ell \right)_{i,j=1,\dots,k}}{ t^{\binom{k}{2}}(1-t)^{k(m+n-k)}}.
\end{equation}
De fato, utilizaremos uma versão levemente diferente desta 
fórmula, provada no lema
logo após o Corolário~1 em \cite{conca-herzog}:
\begin{equation}
\label{eq:Abhyankar}
h_{M_{k+1}(m,n)}(t)=\frac{ \det 
\left( \sum_\ell \binom{m-i}{\ell}\binom{n-j}{\ell+i-j}
t^\ell \right)_{i,j=1,\dots,k}}{(1-t)^{k(m+n-k)}}.
\end{equation}
Embora não sejam amistosas à primeira vista, estas expressões
se tornam extremamente simples
no caso de \emph{menores maximais}, caso que nos será útil 
em aplicações às secantes de curvas racionais normais.

A ideia é que, afortunadamente, as expressões do numerador da série
de Hilbert são, por assim dizer, ``aditivas para menores maximais'' 
quando a diferença entre o número de linhas e colunas é constante.
Tudo não passa de um cálculo elementar com determinantes, algo
trabalhoso de encontrar porém simples de descrever.
\begin{prop}\label{prop:serieHilb}
Sejam $c,k$ inteiros com $c\geq 0$ e $k\geq 1$.
Então a série de Hilbert da variedade determinantal geral
$M_{k+1}(k+1+c,k+1)$ é da forma $q_{k,c}(t)/(1-t)^{k(k+2+c)}$, onde
\begin{equation}
\label{eq:qk}
q_{k,c}(t) = \sum_{j=0}^k 
\textstyle \binom{c+j}{j}t^j 
=
1 + \binom{c+1}{1}t + \binom{c+2}{2}t^2 + \dotsb + \binom{c+k}{k}t^k.
\end{equation}
\end{prop}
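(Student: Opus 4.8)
The plan is to start from Abhyankar's determinantal formula \eqref{eq:Abhyankar} with $m=k+1+c$ and $n=k+1$. First I would record that the $(k+1)$-minors of a $(k+1+c)\times(k+1)$ matrix are its \emph{maximal} minors, and that the denominator exponent becomes $k(m+n-k)=k(k+2+c)$, exactly as claimed. Thus everything reduces to showing that the $k\times k$ numerator determinant
\[
D_{k,c}(t):=\det\Big(\textstyle\sum_{\ell}\binom{k+1+c-i}{\ell}\binom{k+1-j}{\ell+i-j}t^{\ell}\Big)_{i,j=1}^{k}
\]
equals the polynomial $q_{k,c}(t)$ of \eqref{eq:qk}.

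The key preliminary step is to repackage each entry as a single coefficient extraction. A short expansion of the two binomials, followed by reindexing the summation variable by the power of $t$, shows that the $(i,j)$ entry equals
\[
a_{ij}=\big[z^{\,k+1+c-j}\big]\Big((1+z)^{\,k+1-j}(z+t)^{\,k+1+c-i}\Big),
\]
where $[z^{d}]$ denotes the coefficient of $z^{d}$. Reading off this coefficient exhibits $A=(a_{ij})$ as a matrix product $A=VW$ with
\[
V_{i,s}=\binom{k+1+c-i}{c+s}\,t^{\,k+1-i-s},
\qquad
W_{s,j}=\binom{k+1-j}{s},
\qquad s=0,1,\dots,k .
\]
Here $V$ is $k\times(k+1)$ and $W$ is $(k+1)\times k$, so the Cauchy--Binet formula expresses $D_{k,c}(t)$ as a sum over the $k+1$ maximal minors, one for each index $p\in\{0,\dots,k\}$ omitted from $\{0,\dots,k\}$.

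I would then evaluate the two families of minors separately. On the $V$-side, a bookkeeping of exponents shows that every term of the minor omitting $p$ carries the \emph{same} power $t^{p}$, so that minor is a constant multiple of $t^{p}$; hence $D_{k,c}(t)=\sum_{p=0}^{k}\kappa_{p}\,t^{p}$ is automatically a polynomial of degree $\le k$, with each coefficient isolated by one minor-pair. On the $W$-side, the minor is a generalized Vandermonde $\det\big(\binom{k+1-j}{s}\big)_{s\neq p}$, whose rows are the polynomials $\binom{x}{s}$ of degrees $\{0,\dots,k\}\setminus\{p\}$ evaluated at $x\in\{1,\dots,k\}$; the standard polynomial-Vandermonde identity evaluates it, up to an explicit factorial constant, as the elementary symmetric value $e_{k-p}(1,\dots,k)$ (the Schur polynomial attached to the single-column partition $1^{\,k-p}$). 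Multiplying the two evaluations and simplifying the binomial and factorial constants should yield $\kappa_{p}=\binom{c+p}{p}$, which is exactly the coefficient in \eqref{eq:qk}.

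I expect the main obstacle to be this last step: pinning down the two constants arising in the $V$- and $W$-minors and checking that their product collapses to the single binomial $\binom{c+p}{p}$, independently of $k$. This is the elementary-but-laborious computation alluded to in the text. A purely hands-on alternative, avoiding Cauchy--Binet, is to apply the Pascal-type column operations $C_{j}\mapsto C_{j}-C_{j+1}$ (which, in the coefficient-extraction form, simply lower the exponent on $(1+z)$) and set up an induction on $k$ and $c$ driven by the recursion $q_{k,c}=q_{k,c-1}+t\,q_{k-1,c}$, with the immediate base cases $D_{0,c}=1$ and $D_{k,0}=1+t+\dots+t^{k}$. In either route I would cross-check the final answer against the small cases $k=1,2$, where one computes by hand $D_{1,c}=1+(c+1)t$ and $D_{2,c}=1+(c+1)t+\binom{c+2}{2}t^{2}$.
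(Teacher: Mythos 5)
Your reduction to Abhyankar's formula \eqref{eq:Abhyankar} with $m=k+1+c$, $n=k+1$, the coefficient-extraction rewriting of the entries, the factorization $A=VW$, and the Cauchy--Binet expansion are all correct, and your degree count showing that the minor of $V$ omitting the column $s=p$ is a constant multiple of $t^{p}$ is right (I also confirmed your checks for $k=1,2$). This is a genuinely different organization from the paper's argument, which proceeds by induction on $k$: suitable column operations collapse $\abs{A_k}$ to $\abs{A_{k-1}}+\binom{c+k}{c}t^{k}$, whence \eqref{eq:qk}; the paper also gives a second proof deducing \eqref{eq:qk} from the Eagon--Northcott Hilbert-function formula by induction.

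However, as written your argument has a genuine gap exactly where all the difficulty sits. Once Abhyankar's formula is granted, the entire content of Proposition~\ref{prop:serieHilb} is the evaluation of the numerator determinant; your Cauchy--Binet step reorganizes this into the single identity
\[
\det\Bigl(\tbinom{k+1+c-i}{c+s}\Bigr)_{\substack{1\le i\le k\\ s\in\{0,\dots,k\}\setminus\{p\}}}
\cdot
\det\Bigl(\tbinom{k+1-j}{s}\Bigr)_{\substack{s\in\{0,\dots,k\}\setminus\{p\}\\ 1\le j\le k}}
=\binom{c+p}{p},
\]
and this you never prove: ``should yield $\kappa_{p}=\binom{c+p}{p}$'' is an assertion, not a computation. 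Moreover the evaluation is not routine. Because the degree set $\{0,\dots,k\}\setminus\{p\}$ has a hole at $p$, one cannot simply row-reduce the binomials $\binom{x}{s}$ to monomials and quote the ordinary Vandermonde: the leading-coefficient trick needs all lower degrees to be present, so for rows of degree $s>p$ the reduction fails, and both factors genuinely require a factorial-Schur or lattice-path type evaluation, with signs and factorial constants tracked so that everything collapses to one binomial coefficient. The fallback induction you sketch is likewise incomplete: the recursion $q_{k,c}=q_{k,c-1}+t\,q_{k-1,c}$ is a correct identity of the \emph{target} polynomials, but the column operation $C_{j}\mapsto C_{j}-C_{j+1}$ --- which, as you correctly note, lowers the exponent of $(1+z)$ --- transforms the $(k,c)$ matrix toward the $(k-1,c+1)$ pattern, not toward the pair $(k,c-1)$, $(k-1,c)$; you would still need to derive a determinant recursion matching your polynomial recursion (or switch to the recursion $q_{k,c}=q_{k-1,c}+\binom{c+k}{k}t^{k}$, which is the one the paper's column operations actually realize). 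Finally, the base case $D_{k,0}=1+t+\dots+t^{k}$ is not immediate as a determinant identity, though it does follow from the geometric fact that $M_{k+1}(k+1,k+1)$ is a hypersurface of degree $k+1$.
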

\begin{proof} É muito curiosa. 
Aplicaremos a fórmula em \eqref{eq:Abhyankar} para $m=k+1+c$ e $n=k+1$.
Quando $k=1$, é imediato verificar que \eqref{eq:qk} vale para 
qualquer valor de $c$.

Fixamos $c\geq 0$ e fazemos indução em $k$.  
Aplicando \eqref{eq:Abhyankar}, desejamos então calcular o
determinante da matriz
\[
A_k := \begin{pmatrix}
\sum_{\ell\geq 0} \binom{k+1+c-i}{\ell}\binom{k+1-j}{\ell+i-j}t^\ell
\end{pmatrix}_{i,j=1,\dotsc,k}
\]
A observação fundamental é que 
a submatriz $(k-1)\times (k-1)$ inferior à direita, obtida
omitindo-se a primeira linha e a primeira coluna, é exatamente
a matriz $A_{k-1}$. Tendo isto em mente, mostramos que
tomando uma redução conveniente das 
colunas o determinante $\abs{A_k}$ se reduz a
$\abs{A_{k-1}} + \binom{c+k}{c}t^{k}$ e indutivamente
obtemos a fórmula desejada.

Indicamos como proceder no caso em que $c=2$ e $k=3$. Nosso objetivo é mostrar que
\[
\abs{A_3} = \begin{vmatrix}
 10t^3+30t^2+15t+1   & 10t^3+20t^2+5t   & 10t^3+10t^2 \\
    6t^2+12t+3         & 6t^2+8t+1       & 6t^2+4t    \\
     3t+3              & 3t+2           & 3t+1     
\end{vmatrix}
\]
vale $1+3t+6t^2+10t^3$.

Focando na primeira linha,
utilizamos a segunda coluna para anular o termo linear da primeira coluna;
em seguida, a terceira coluna para anular o termo quadrático da primeira coluna; e
assim por diante. Obtemos
\[
\abs{A_3} = \begin{vmatrix}
 10t^3+1   & 10t^3+20t^2+5t   & 10t^3+10t^2 \\
    6t^2         & 6t^2+8t+1       & 6t^2+4t    \\
     3t              & 3t+2           & 3t+1     
\end{vmatrix}
\]
que separamos como:
\[
\begin{vmatrix}
 1   & 10t^3+20t^2+5t   & 10t^3+10t^2 \\
  0         & 6t^2+8t+1       & 6t^2+4t    \\
  0              & 3t+2           & 3t+1     
\end{vmatrix}
+
\begin{vmatrix}
 10t^3   & 10t^3+20t^2+5t   & 10t^3+10t^2 \\
    6t^2         & 6t^2+8t+1       & 6t^2+4t    \\
     3t              & 3t+2           & 3t+1     
\end{vmatrix} 
.
\]
Observe que o primeiro determinante é exatamente $\abs{A_2}$, que por indução 
vale $1+3t+6t^2$. Uma situação similar ocorre no caso geral.

Para calcular o segundo determinante, 
subtraímos a primeira coluna nas restantes, eliminando
os termos líderes. Note que a última coluna fica com ``$1$'' na diagonal:
\[
\begin{vmatrix}
 10t^3   & 20t^2+5t   & 10t^2 \\
    6t^2         & 8t+1       & 4t    \\
     3t              & 2           & 1     
\end{vmatrix} .
\]
Agora, utilizando a última coluna, anulamos todas as entradas da última
linha abaixo da diagonal. A penúltima coluna fica com um ``$1$'' na diagonal.
Utilizamos a penúltima coluna para anular todas as entradas
da penúltima linha abaixo da diagonal, e assim por diante.
No final obtemos uma matriz triangular superior da forma
\[
\begin{vmatrix}
 10t^3   & *  & *  \\
            & 1      &   *  \\
                 &            & 1     
\end{vmatrix}
\]
e portanto $\abs{A_3} = \abs{A_2}+10t^3$, como queríamos.

\smallskip
O caso geral é feito do mesmo modo. 
%
%
\end{proof}

Durante a revisão final deste trabalho, nos foi indicado que o resultado da Proposição~\ref{prop:serieHilb} pode ser obtido
a partir do trabalho seminal de Eagon e Northcott
\cite{EN62a} sobre resoluções de ideais de matrizes
e que de fato a fórmula \eqref{eq:qk} já se encontra,
(em um escopo bem mais geral) em \cite{EN62b}, muito embora em um
formato diferente. Nossa fórmula é portanto apenas uma simplificação daquela obtida por Eagon e Northcott. Chegamos a estas referências graças a M. Mostafazadehfard, a quem agradecemos.

\begin{proof}[Outra demonstração da Proposição~\ref{prop:serieHilb}]
Denote $s=k+1$ e seja $c\geq 0$. Então a série de Hilbert de $M_s(s,s+c)$ 
encontra-se calculada na última fórmula da seção~3
de \cite{EN62b} (tomando-se $p=1$ e $r=s+c$):
\begin{equation}
\label{eq:h0}
h_{M_s(s,s+c)}(t) = \frac{F_{s,c}(t)}{(1-t)^{s(s+c)}} 
\end{equation}
onde 
\begin{equation}
\label{eq:h1}
F_{s,c}(t) = 1-s\binom{s+c}{c}\sum_{q=0}^c(-1)^q\binom{c}{q}\frac{t^{q+s}}{q+s}.
\end{equation}
Comparando os expoentes dos denominadores de \eqref{eq:h0} e do enunciado da Proposição~\ref{prop:serieHilb}, vem que devemos provar:
\begin{equation}
\label{eq:h2}
(1-t)^{c+1}q_{s,c}(t) = F_{s,c}(t)
\end{equation}
onde $q_{s,c}(t)=1+\binom{c+1}{1}t + \dotsb + \binom{c+s-1}{s-1}t^{s-1}$ é a expressão que aparece em \eqref{eq:qk}. 
\smallskip

Faremos indução em $s$. Para $s=1$, devemos mostrar
\[
(1-t)^{c+1} = 1-(c+1)\sum_{q=0}^c (-1)^q  \binom{c}{q}\frac{t^{q+1}}{q+1}
\]
que segue diretamente do fato de que 
$(c+1)\binom{c}{q}=(q+1)\binom{c+1}{q+1}$. Agora, supondo que 
\eqref{eq:h2} vale para $s\geq 1$ dado, devemos então
mostrar que 
\[
(1-t)^{c+1}q_{s+1,c}(t)=F_{s+1,c}(t).
\]
Da expressão de $q_{s,c}(t)$ e da hipótese de indução vem que o que desejamos provar é
\[
\textstyle
F_{s+1,c}(t)-F_{s,c}(t) = \binom{c+s}{s}t^s(1-t)^{c+1}
\]
que por sua vez se segue do fato de que 
\[
\textstyle
(s+1)\binom{s+1+c}{c}\binom{c}{q}+s\binom{s+c}{c}\binom{c}{q+1}=
\binom{s+c}{c}(q+s+1)
\quad \qquad \text{para $q=0,1,\dotsc,c$}.
\]
\end{proof}

\begin{rmk}
	O vetor formado pelos coeficientes do polinômio
$q_{k,c}(t)$ em \eqref{eq:qk} é por
vezes chamado de \textbf{$h$-vetor} 
(de $M_{k+1}(k+1+c,k+1)$) que é portanto
\[
\left(
\textstyle
1,\binom{c+1}{1}, \binom{c+2}{2}, \dotsc, \binom{c+k}{k}  \right).
\] 
Este é um importante invariante em Álgebra Comutativa;
por exemplo, dele se obtém exatamente a partir de qual índice a
função e o polinômio de Hilbert passam a coincidir (\cite[Proposition~4.1.12, p.\thinspace{152}]{BH93}).
\hfill{$\Box$}
\end{rmk}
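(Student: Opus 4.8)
The coefficient identity itself is precisely \eqref{eq:qk}, already proved in Proposition~\ref{prop:serieHilb}, so the plan is not to rederive it but only to justify the two assertions the remark layers on top of it: first, that the vector $(1,\binom{c+1}{1},\dots,\binom{c+k}{k})$ genuinely deserves to be called the \emph{$h$-vector} of the homogeneous coordinate ring $R$ of $M_{k+1}(k+1+c,k+1)$; and second, that from it one recovers the postulation number, i.e. the index beyond which the Hilbert function and the Hilbert polynomial agree.

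For the first assertion, recall that the $h$-vector of a standard graded algebra is the coefficient vector of the numerator $Q(t)$ when the Hilbert series is written in lowest terms as $Q(t)/(1-t)^{\dim R}$ with $Q(1)\neq 0$. Two checks suffice. First, the denominator exponent $k(k+2+c)$ appearing in Proposition~\ref{prop:serieHilb} must equal $\dim R$: since the vanishing of the $(k+1)$-minors of a generic $(k+1+c)\times(k+1)$ matrix cuts out the cone of matrices of rank at most $k$, whose affine dimension is $k\big((k+1+c)+(k+1)-k\big)=k(k+2+c)$, the exponent is indeed $\dim R$. Second, the numerator must be coprime to $(1-t)$: by the hockey-stick identity $q_{k,c}(1)=\sum_{j=0}^k\binom{c+j}{j}=\binom{c+k+1}{k}>0$, so $q_{k,c}(t)$ is already in lowest terms and its coefficients are, by definition, the $h$-vector. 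A cleaner conceptual route is available as well: by Proposition~\ref{props_rnc2}, $R$ is arithmetically Cohen--Macaulay, so a generic linear system of parameters of length $\dim R=k(k+2+c)$ forms a regular sequence, and the resulting Artinian quotient has Hilbert series $(1-t)^{\dim R}h_R(t)=q_{k,c}(t)$; its Hilbert function is exactly the $h$-vector, reconfirming both the displayed entries and their nonnegativity.

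For the second assertion I would invoke \cite[Proposition~4.1.12]{BH93}: writing $h_R(t)=Q(t)/(1-t)^d$ with $\deg Q=s$ and $d=\dim R$, the Hilbert function $H_R(n)$ coincides with the Hilbert polynomial $P_R(n)$ for every $n\geq s-d+1$, with the bound sharp. Substituting $s=\deg q_{k,c}=k$ and $d=k(k+2+c)$ yields the explicit threshold $s-d+1=1-k(k+1+c)$; since this is $\leq 0$ for all $k\geq 1$ and $c\geq 0$, the two functions agree for all $n\geq 0$. This is exactly the sense in which the $h$-vector ``determines'' the index, illustrating the remark.

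The verification is mostly bookkeeping, and the single real obstacle is conventional rather than mathematical: one must pin down the exact normalization used in \cite[Proposition~4.1.12]{BH93} --- whether the threshold is recorded as $s-d+1$ or, equivalently, through the $a$-invariant $a(R)=s-d$ with $H_R=P_R$ for $n>a(R)$ --- so that the reported index is stated consistently. Once that is fixed, the dimension count and the elementary evaluation $q_{k,c}(1)=\binom{c+k+1}{k}$ close the argument.
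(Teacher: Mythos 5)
Your verification is correct, and it fills in exactly what the paper leaves implicit: the remark itself carries no proof, resting on Proposition~\ref{prop:serieHilb} and the citation to \cite[Proposition~4.1.12]{BH93}, and your two checks --- that the denominator exponent $k(k+2+c)$ equals the Krull dimension of the coordinate ring (via the dimension $k(m+n-k)$ of the rank-$\leq k$ locus) and that $q_{k,c}(1)=\binom{c+k+1}{k}\neq 0$ so the series is in lowest terms --- are precisely what legitimizes calling the coefficient vector the $h$-vector. Your threshold computation $s-d+1=1-k(k+1+c)\leq -1$ is also right and gives the pleasant conclusion that Hilbert function and polynomial agree in all degrees $n\geq 0$.

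One citation in your ``cleaner conceptual route'' is off: Proposition~\ref{props_rnc2} asserts the arithmetic Cohen--Macaulayness of the secant varieties $\Sec_k C$ of rational normal curves, not of the \emph{generic} determinantal variety $M_{k+1}(k+1+c,k+1)$ whose coordinate ring $R$ you are reducing by a linear system of parameters; the implication in the paper runs in the opposite direction (the Hankel variety is obtained from the generic one by a regular sequence of linear forms). The fact you need is the Hochster--Eagon theorem that generic determinantal rings are Cohen--Macaulay, e.g.\ \cite[Theorem~7.3.1]{BH93}. Since your primary lowest-terms argument is self-contained and does not use Cohen--Macaulayness at all, this is a mis-attribution rather than a gap.
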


\smallskip

Passamos agora a calcular a série de Hilbert das secantes das curvas
racionais normais. A ideia é simples: geometricamente,
as secantes de curvas racionais normais são obtidas
a partir de variedades determinantais gerais via certas seções hiperplanas.

Necessitamos de um resultado auxiliar.

\begin{lem}
	\label{lema-h1}
	Sejam $X,H\subset\P^n$, com $H=V(\ell)$ um
	hiperplano tal que $\ell$ não é um divisor de zero
	em $\Ocal_X$. Então 
	\begin{equation}
	\label{eq:hf}
	h_{X\cap H}(t) = h_X(t) (1-t).
	\end{equation}
	Em particular, se $P_X(t)=a_0 P_0(t) + \dotsb+ a_d P_d(t)$,
	então $P_{X\cap H}(t)=a_1P_{0}(t) + \dotsb +a_d P_{d-1}(t)$.
\end{lem}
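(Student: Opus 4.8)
O plano é explorar diretamente a hipótese de que $\ell$ é não-divisor de zero para produzir uma sequência exata curta de módulos graduados. Escrevendo $S=\C[x_0,\dotsc,x_n]$ e denotando por $A=\Ocal_X$ o anel de coordenadas homogêneas de $X$, a multiplicação por $\ell$ fornece o diagrama
\[
0 \longrightarrow A(-1) \xrightarrow{\ \cdot\,\ell\ } A \longrightarrow A/\ell A \longrightarrow 0.
\]
A injetividade da flecha à esquerda é \emph{exatamente} a condição de $\ell$ ser não-divisor de zero em $\Ocal_X$, e o conúcleo $A/\ell A$ é o anel de coordenadas homogêneas de $X\cap H$. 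O único ponto que requer cuidado é justamente esta identificação do anel quociente com o anel da interseção esquemática; uma vez garantida, o restante é formal.

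A seguir, tomo séries de Hilbert. Como a série de Hilbert é aditiva em sequências exatas curtas de módulos graduados, e como o deslocamento de grau dá $h_{A(-1)}(t)=t\,h_A(t)$, a sequência acima produz
\[
h_X(t) = t\,h_X(t) + h_{X\cap H}(t),
\]
donde $h_{X\cap H}(t)=(1-t)\,h_X(t)$, que é a primeira afirmação.

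Para a afirmação ``em particular'', traduzo a igualdade de séries ao nível dos coeficientes: se $d_m=\dim A_m$ e $e_m=\dim (A/\ell A)_m$ são as funções de Hilbert, então $h_{X\cap H}(t)=(1-t)h_X(t)$ equivale a $e_m=d_m-d_{m-1}$. Para $m\gg 0$ vale $d_m=P_X(m)$ e $e_m=P_{X\cap H}(m)$, logo, como igualdade de polinômios em $\Q[t]$,
\[
P_{X\cap H}(t)=P_X(t)-P_X(t-1).
\]
Resta aplicar a identidade de Pascal $P_i(t)-P_i(t-1)=\binom{t+i}{i}-\binom{t+i-1}{i}=\binom{t+i-1}{i-1}=P_{i-1}(t)$ (com a convenção $P_{-1}=0$) a cada parcela de $P_X=a_0P_0+\dotsb+a_dP_d$, obtendo
\[
P_{X\cap H}(t)=\sum_{i\geq 1} a_i\,P_{i-1}(t)=a_1P_0(t)+\dotsb+a_dP_{d-1}(t),
\]
como desejado. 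Não vejo aqui um obstáculo genuíno: o conteúdo está concentrado na construção da sequência exata, e a hipótese sobre $\ell$ entrega precisamente a exatidão à esquerda; as demais passagens são contas rotineiras com séries e com o operador diferença.
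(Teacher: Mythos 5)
Sua proposta está correta e segue essencialmente o mesmo caminho da prova do texto: a multiplicação por $\ell$ dá a sequência exata curta $0 \to \Ocal_X(-1) \to \Ocal_X \to \Ocal_{X\cap H} \to 0$ (no texto escrita grau a grau), a aditividade da série de Hilbert fornece \eqref{eq:hf}, e a identidade $P_i(t)-P_i(t-1)=P_{i-1}(t)$ conclui a afirmação sobre os polinômios de Hilbert. Não há diferença substancial entre os dois argumentos.
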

\begin{proof}
	Como $\ell$ não é um divisor de zero, 
	a multiplicação por $\ell$ nos dá um mapa
	injetivo $\Ocal_X(-1)\to\Ocal_X$, donde obtemos uma sequência exata
	\[
	0 \to \Ocal_{X}(t-1) \to \Ocal_X(t) \to \Ocal_{X\cap H}(t) \to 0
	\]
	para todo $t$, obtendo \eqref{eq:hf}. 
	Além disso, 
	\(
	P_{X\cap H}(t) = P_X(t) - P_X(t-1).
	\)
	Para a última afirmação do enunciado, basta observar que
	\(
	P_i(t) - P_{i}(t-1)=P_{i-1}(t).
	\)
\end{proof}

Finalmente obtemos a fórmula desejada.

\begin{corol}\label{cor-hilb-secrnc}
Seja $C\subset\P^{n}$ uma curva racional normal. Então,
para cada $k=1,\dotsc,[n/2]$:
\[
h_{\Sec_k C}(t) = \frac{
 \sum_{j=0}^k 
\textstyle \binom{n-2k+j} {j}t^j
}
{(1-t)^{2k}}
=
\frac{1 + \binom{n-2k+1}{1}t + \binom{n-2k+2}{2}t^2 + \dotsb + \binom{n-k}{k}t^k}
{(1-t)^{2k}}
\quad.
\]  	
\end{corol}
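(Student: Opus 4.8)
The plan is to realize $\Sec_k C$ as a determinantal variety and then relate its Hilbert series to that of a \emph{generic} determinantal variety, whose series is already computed in Proposition~\ref{prop:serieHilb}. By Proposition~\ref{secant_x_hankel} (taking $a=k$), the ideal of $\Sec_k C\subset\P^n$ is generated by the maximal minors of the Hankel matrix $\Hcal_{k+1,\,n-k+1}$, which has size $(k+1)\times(n-k+1)$ and whose entries are the variables $x_0,\dotsc,x_n$ repeated along anti-diagonals. Setting $c:=n-2k\geq 0$ (legitimate since $k\leq[n/2]$), the corresponding generic determinantal variety is $M_{k+1}(k+1,n-k+1)$, which has the same Hilbert series as its transpose $M_{k+1}(k+1+c,k+1)$ via the linear isomorphism of ambient spaces; by Proposition~\ref{prop:serieHilb} this series is $q_{k,c}(t)/(1-t)^{k(n-k+2)}$ with $q_{k,c}(t)=\sum_{j=0}^{k}\binom{n-2k+j}{j}t^{j}$ as in \eqref{eq:qk}. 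Observe that this numerator is already the desired one.

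Next I would pass from the generic variety to the Hankel one by hyperplane sections. The Hankel matrix arises from the generic matrix $(y_{ij})$ by the substitution $y_{ij}\mapsto x_{i+j}$, that is, by identifying all entries lying on a common anti-diagonal $i+j=\text{const}$. On homogeneous coordinate rings this gives a surjection from the generic determinantal ring $R$ onto the coordinate ring of $\Sec_k C$, whose kernel is generated by the linear forms $y_{ij}-y_{i'j'}$ with $i+j=i'+j'$. Keeping one representative per anti-diagonal (there are $n+1$ anti-diagonals), a generating set of these linear forms has cardinality $(k+1)(n-k+1)-(n+1)=k(n-k)$. Each such section, by Lemma~\ref{lema-h1}, multiplies the Hilbert series by $(1-t)$, so after the $k(n-k)$ steps one obtains
\[
h_{\Sec_k C}(t)=\frac{q_{k,c}(t)}{(1-t)^{k(n-k+2)}}\,(1-t)^{k(n-k)}=\frac{q_{k,c}(t)}{(1-t)^{2k}},
\]
which is precisely the claimed formula.

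The crux — the one step requiring genuine care — is to verify that these $k(n-k)$ linear forms constitute a \emph{regular sequence} on $R$, since the hypothesis of Lemma~\ref{lema-h1} is that the cutting form be a non-zero-divisor on the current coordinate ring. Here I would combine two ingredients: first, that the generic determinantal ring $R$ is Cohen--Macaulay (classical, cf.\ \cite{bruns-conca}; the secant rings themselves are Cohen--Macaulay by Proposition~\ref{props_rnc2}); and second, a dimension count. One has $\dim R=k(n-k+2)$, whereas $\dim\Sec_k C=2k-1$ by Proposition~\ref{props_rnc2}(a) forces coordinate-ring dimension $2k$; their difference is exactly $k(n-k)$, the number of linear forms. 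In a Cohen--Macaulay graded ring, a set of homogeneous elements cutting the Krull dimension down by its own cardinality is part of a homogeneous system of parameters, hence a regular sequence; consequently every initial segment is regular and every intermediate quotient is again Cohen--Macaulay, so Lemma~\ref{lema-h1} applies at each of the $k(n-k)$ steps. This legitimizes the iterated computation above and completes the proof.
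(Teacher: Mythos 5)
Your proof is correct and follows essentially the same route as the paper's: realize $\Sec_k C$ by the maximal minors of the Hankel matrix $\Hcal_{k+1,n-k+1}$ (Proposição~\ref{secant_x_hankel}), start from the Hilbert series of the generic determinantal variety given by Proposição~\ref{prop:serieHilb}, and descend by the $k(n-k)$ anti-diagonal linear forms using Lema~\ref{lema-h1} repeatedly. The only point where you diverge is the crucial regularity step: the paper simply cites Watanabe (\cite[Proposition~2]{Watanabe}) for the fact that these linear forms are a regular sequence on the generic determinantal ring, whereas you prove it yourself from Cohen--Macaulayness together with the dimension count $k(n-k+2)-2k=k(n-k)$; this self-contained argument is valid (homogeneous elements cutting the dimension of a graded Cohen--Macaulay ring by their number form part of a homogeneous system of parameters, hence a regular sequence, and each intermediate quotient stays Cohen--Macaulay), and it is a perfectly good substitute for the citation, at the mild cost of invoking $\dim\Sec_k C=2k-1$ from Proposição~\ref{props_rnc2}(a).
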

\begin{proof}	
	A secante $\Sec_k C \subset\P^{n}$ é dada pelos
	menores maximais de uma matriz de Hankel $\Hcal_{k+1,n-k+1}$,
	que por sua vez é obtida a partir de uma matriz genérica $(x_{i,j})$ 
	identificando-se
	as entradas das \emph{anti-diagonais}
	(aquelas cuja soma dos índices da linha e coluna são iguais),
	ou seja,
	impondo-se
	$x_{i,j}-x_{u,v}=0$ sempre que $i+j=u+v$. Ocorre que estas 
	equações formam uma sequência regular no anel de $M_{k+1}(k+1,n-k+1)$
	(veja por exemplo \cite[Proposition 2]{Watanabe})
	e portanto podemos deduzir a série de 
	Hilbert de $\Sec_{k} C$ a partir da série de Hilbert de $M_{k+1}(k+1,n-k+1)$
	via o Lema~\ref{lema-h1}.
O resultado agora é uma consequência direta da Proposição~\ref{prop:serieHilb}.
\end{proof}

Faremos agora uma aplicação
do Corolário~\ref{cor-hilb-secrnc} em uma situação
específica, que nos será útil na demonstração da Proposição~\ref{prop-graus}. 

\begin{exem}\label{ex_seccional_genus}
Seja $C\subset\P^{2r}$ uma curva racional normal e tome
um $\P^4\subset \P^{2r}$ geral. Vem
da Proposição~\ref{props_rnc2} que $\Sec_{r-1}C \subset\P^{2r}$ tem codimensão 3 e seu
lugar singular tem codimensão 5 em $\P^{2r}$. Segue daí pelo Teorema de Bertini que 
$Y_r:= (\Sec_{r-1}C)\cap \P^4$ é uma curva não-singular. Desejamos
calcular o gênero desta curva.
\smallskip

Dada uma variedade projetiva $X$ de dimensão $d$, escreva sua série de Hilbert na forma $q(t)/(1-t)^{d+1}$, onde $q(t)\in \Z[t]$ satisfaz $q(1)\neq 0$. Se $P_X(t)=\sum_{i=0}^d a_{d-i} P_i(t)$ é o polinômio de Hilbert de $X$, então seus os coeficientes são dados por
\begin{equation}
\label{eq-coefsHilbPol}
a_{d-i}=\frac{(-1)^{d-i}q^{(i)}(1)}{i!}
\end{equation}
para $i=0,1,\dotsc,d$ (\cite[Proposition~4.1.9, p.\thinspace{}151]{BH93}). Por outro lado, se $Y\subset X$ é uma
curva obtida por seções hiperplanas genéricas de $X$,
vem então do Lema~\ref{lema-h1} e das identidades em  \eqref{eq-coefsHilbPol}
que o polinômio de Hilbert de $Y$ 
é 
\[
P_Y(t)=a_dP_0(t)+a_{d-1}P_1(t) = a_d + a_{d-1}(t+1) = q(1)-q'(1)(t+1). 
\]
Recorde que o gênero aritmético da curva $Y$ é dado por 
$p_a(Y)=-(p_Y(0)-1)$ (veja por exemplo \cite[Ex.~7.2, p.\thinspace{}54]{Har77}) e logo 
$p_a(Y)=q'(1)-q(1)+1$.
\smallskip

Especializamos para o caso em que $X=\Sec_{r-1} C$.
Como $Y_r$ é não-singular, seu gênero aritmético coincide com 
o seu gênero geométrico $g(Y_r)$. 
Do Corolário~\ref{cor-hilb-secrnc}:
\[
h_{\Sec_{r-1}C}(t)=
 \frac
{\textstyle 1+\binom{3}{2}t+\binom{4}{2}t^2+\dotsb+\binom{r+1}{2} t^{r-1}}
{(1-t)^{2(r-1)}}
\]
e denotando o numerador por $q_r(t)$, temos:
\begin{align*}
g(Y_r) & = q_r'(1)-q_r(1)+1 \\
&= \sum_{j=0}^{r+1}(j-2)\binom{j}{2} - \sum_{j=0}^{r+1}\binom{j}{2} +1 \\
&= \sum_{j=0}^{r+1}(j+1)\binom{j}{2} - 4\sum_{j=0}^{r+1}\binom{j}{2} +1 \\
&= 3\binom{r+3}{4}-4\binom{r+2}{3}+1 \\
&= \frac{1}{24}(r-1)(r-2)(3r^2+11r+12),
\end{align*}
onde a penúltima igualdade decorre das identidades 
\[
\sum_{j=0}^{n} (j+1)\binom{j}{2} = 3\binom{n+2}{4}
\qquad\text{e}\qquad
\sum_{j=0}^{n} \binom{j}{2} = \binom{n+1}{3}
\]
(compare coeficientes em $\frac{(1+x)^{n+1}-1}{(1+x)-1}$ e $(1+x)^n+(1+x)^{n-1}+\dotsb+1$, derive, etc).
\hfill{$\Box$}
\end{exem}

\section{Característica de Euler}

Nesta seção faremos o cálculo da característica de Euler topológica de secantes quaisquer de curvas racionais normais via pontos fixos de uma ação conveniente. O ponto
de partida é:

\begin{prop}\label{C*_action}
{\rm(\cite[Corollary~2]{Bi73})}
Se o grupo multiplicativo $G :=\C^{*}$ age em uma variedade algébrica $X$, e $X^G$ denota o lugar dos pontos fixos de $X$ pela ação, então $\euler (X) = \euler (X^G)$.
\end{prop}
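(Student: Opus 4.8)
The plan is to exploit the two structural properties of the topological Euler characteristic of complex algebraic varieties: it is \emph{additive} over decompositions into a closed subvariety and its open complement, and \emph{multiplicative} over Zariski-locally trivial fiber bundles. (Over $\C$ these are exactly the properties of the compactly supported Euler characteristic, which by a theorem of Laumon coincides with the ordinary one.) The only numerical input I will need is
\[
\euler(\C^*) = \euler(\C) - \euler(\{0\}) = 1 - 1 = 0,
\]
which is itself an instance of additivity applied to $\C = \C^* \sqcup \{0\}$.

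First I would observe that the fixed locus $X^G$ is closed in $X$, being an intersection of the closed subsets $\{x : g\cdot x = x\}$; writing $U := X \setminus X^G$ for its open complement, additivity gives
\[
\euler(X) = \euler(X^G) + \euler(U).
\]
It therefore suffices to prove that $\euler(U) = 0$.

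The key point is that on $U$ every $G$-orbit is one-dimensional, hence isomorphic to $\C^*/F \cong \C^*$ for some finite subgroup $F \leq G$, and so has Euler characteristic $0$. I would organize $U$ into finitely many locally closed $G$-invariant pieces that are genuine bundles with orbit fibers. By Rosenlicht's theorem there is a dense $G$-invariant open $V \subseteq U$ carrying a geometric quotient $\pi \colon V \to V/G$; after shrinking, $\pi$ may be taken Zariski-locally trivial with fiber a single orbit $\cong \C^*$, so multiplicativity yields $\euler(V) = \euler(\C^*)\cdot\euler(V/G) = 0$. The complement $U \setminus V$ is a $G$-invariant closed subvariety of strictly smaller dimension, so Noetherian induction on dimension (keeping everything $G$-invariant) gives $\euler(U \setminus V) = 0$; additivity then forces $\euler(U) = 0$, and combining with the display above yields $\euler(X) = \euler(X^G)$.

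The main obstacle is precisely that middle step: producing the invariant stratification by honest locally trivial bundles. One must invoke Rosenlicht's theorem to obtain a geometric quotient on a dense invariant open, check that after shrinking the quotient map is Zariski-locally trivial so that multiplicativity applies, absorb the finite isotropy using $\C^*/F \cong \C^*$, and arrange the Noetherian induction so that each stage remains $G$-invariant. When $X$ happens to be smooth and complete this can be bypassed altogether by invoking the Bialynicki--Birula decomposition $X = \bigsqcup_i X_i^+$ into attracting cells, each a Zariski-locally trivial affine-space bundle over a connected component of $X^G$; since affine-space bundles preserve the Euler characteristic, one reads off $\euler(X) = \euler(X^G)$ at once.
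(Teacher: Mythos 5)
The paper offers no proof of this proposition at all: it is quoted verbatim from Bialynicki-Birula \cite[Corollary~2]{Bi73}, so there is no internal argument to compare yours against, and supplying a proof already goes beyond what the text does. Your argument is the standard orbit-stratification proof and is essentially sound: additivity of the Euler characteristic (valid in the ordinary sense for complex algebraic varieties, since $\chi = \chi_c$ there) reduces the statement to $\euler(U)=0$ for $U=X\setminus X^G$; every orbit in $U$ is a copy of $\C^*$ because the stabilizer of a non-fixed point is a proper closed subgroup of $\C^*$, hence finite, and $\C^*/F\cong\C^*$; Rosenlicht plus Noetherian induction on dimension then finishes. The induction is correctly arranged — the base case holds because a connected group acts trivially on a finite set, so a zero-dimensional invariant variety without fixed points is empty.

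The one step you should tighten is the claim that, after shrinking, the Rosenlicht quotient $\pi\colon V\to V/G$ can be taken Zariski-locally trivial with fiber a single orbit. As stated this is not automatic: the points of $V$ may carry a nontrivial \emph{constant} finite isotropy group (for instance when the action has kernel $\mu_n$), and then $\pi$ is not a principal $\C^*$-bundle. Two standard repairs: (i) shrink $V$ until the isotropy is a constant finite group $\mu_n$, pass to the induced effective action of $\C^*/\mu_n\cong\C^*$, which is generically free, and invoke Hilbert's Theorem 90 to get Zariski-local triviality of the resulting principal bundle; or (ii) bypass local triviality entirely by using the form of multiplicativity that actually holds for complex algebraic varieties: for any morphism $f\colon V\to B$ all of whose fibers have Euler characteristic $0$, one has $\chi_c(V)=0$ (stratify $B$ so that $f$ is a topological fiber bundle over each stratum). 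With either fix the proof is complete. Your closing remark on the Bialynicki-Birula decomposition is correct but applies only to smooth complete $X$, which is strictly weaker than the general statement being proved, so it is a useful sanity check rather than an alternative proof.
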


\begin{exem}\label{Euler_Char_P^n}
Como aplicação, calculemos a característica de Euler de $\P^n$.
Considere a ação:
\begin{equation}
\label{eq:acao}
\begin{array}{cccl}
& \! \C^*\times \P^n & \! \longrightarrow & \! \P^n \\
& \! (t,x) & \! \longmapsto & \! t\cdot x= (x_0:t x_1:\cdots :t^n x_n).
\end{array}
\end{equation}
Esta ação tem $n+1$ pontos fixos, os pontos coordenados $e_0, e_1, \dots, e_n$. 
De fato, estes são os únicos: nenhum ponto em $\P^n$ com pelo menos duas 
coordenadas não-nulas fica fixado.
Portanto, pela Proposição~\ref{C*_action}, obtemos $\euler(\P^n) = \euler((\P^n)^{\C^{*}}) = n+1$.
\hfill{$\Box$}
\end{exem}

\begin{exem} 
Calculamos agora a característica de Euler da secante da
quártica racional normal
$C\subset\P^4$.
Afirmamos que, sob a mesma ação do exemplo anterior em $\P^4$,
a secante 
$\Sec_2 C$ fica invariante.
De fato, observe que $\Sec_2 C$ é dada pelo lugar dos zeros do determinante da matriz 
$3\times 3$ de Hankel 
\[
\Hcal(x) = 
\begin{pmatrix}
x_0 & x_1 & x_2 \\
x_1 & x_2 & x_3 \\
x_2 & x_3 & x_4
\end{pmatrix},
\]
e que, por sua vez, 
\[
\det \Hcal(t\cdot x)=t^6\det \Hcal(x).
\]
Podemos assim aplicar a Proposição~\ref{C*_action} à nossa variedade $\Sec_2 C$.
Dos 5 pontos coordenados, apenas $e_2$ não pertence a secante,
uma vez que
\[
\det \Hcal(e_2)=
\begin{vmatrix}
0 & 0 & 1 \\
0 & 1 & 0 \\
1 & 0 & 0
\end{vmatrix} \neq 0.
\]
Logo, $\euler(\Sec_2 C) = \euler((\Sec_2 C)^{\C^{*}}) = 5-1 = 4$.
\hfill{$\Box$}
\end{exem}

O cálculo deste exemplo é um caso particular do seguinte resultado:

\begin{teo}
\label{teo-Euler_Char_Sec_k}
Seja $C \subset \P^n$ uma curva racional normal. 
Então a secante $\Sec_k C \subset \P^n$ fica invariante
pela ação $x\mapsto t\cdot x$ dada em \eqref{eq:acao}.
Mais ainda, vale que $\euler(\Sec_k C) = 2k$ sempre que $2k-1<n$.
\end{teo}

\begin{proof}
Vejamos a primeira parte. Suponha que $2k-1<n$. 
Da Proposição~ \ref{secant_x_hankel} temos que
que $\Sec_k C$ é o lugar dos zeros dos $k+1$-menores da 
matriz de Hankel $\mathcal{H}_{k+1, n-k+1}(x) = (x_{i+j})$.

Dado um subconjunto $I\subset\{0,1,\dotsc,n\}$ com $k+1$ elementos, denote por $p_I$ o 
menor correspondente da matriz de Hankel acima.
Então, via a ação de \eqref{eq:acao}, 
\[
p_I(t\cdot x) = t^{|I|}\cdot p_I (x)
\]
onde $\abs{I}$ é a soma dos elementos de $I$.
Portanto, se $x \in \Sec_k C$ tem-se $t\cdot x \in \Sec_k C$ para todo $t\in \C^*$.

Agora a segunda parte do enunciado.  Aplicando a Proposição~\ref{C*_action}
obtemos 
\[
\euler(\Sec_k C)=\euler((\Sec_k C)^{\C^*}) = \# \mathcal{F}
\]
onde $\mathcal{F}$ é o conjunto dos pontos de $\Sec_k C$ fixos pela ação.

Vimos que o conjunto dos pontos de $\P^n$ fixos pela ação é $\{e_0, e_1, \dots, e_n\}$.

Vamos mostrar que $\mathcal{F} = \{e_0, \dots, e_{k-1}, e_{n-k+1}, \dots, e_n\}$.

De fato, para cada $i\in \{0, \dots, k-1, n-k+1, \dots, n\}$, ao avaliar a matriz $\mathcal{H}_{k+1, n-k+1}$ em $e_i$, esta possuirá ao menos $n-2k+1$ colunas nulas, portanto cada menor maximal de $\mathcal{H}_{k+1, n-k+1}$ se anula nesses vetores.

Por outro lado, os pontos $e_k, e_{k+1}, \dots, e_{n-k}$ não anulam, respectivamente, os menores maximais $p_{12\dots(k+1)}, p_{23\dots(k+2)}, \dots, p_{(n-2k+1)\dots(n-k+1)}$.

Logo, $\euler(\Sec_k C) = \# \mathcal{F} = (n+1)-(n-2k+1) = 2k$.
\end{proof}

\section{Dualidade}

Estabelecemos agora a notação que utilizaremos no restante desta seção.

Fixamos $r\geq 1$.
Seja $C=\nu_{2r}(\P^1) \subset \P^{2r}$ a curva racional normal.
Denotamos por $V$ a imagem de $\P^r$ pelo mergulho 
de Veronese $\nu_2\colon \P^r\hookrightarrow \P^N$ onde
$N={\binom{r+2}{2}-1}$. Vemos $\P^N$ como a projetivização
do espaço das matrizes simétricas de tamanho $r+1$.

É fato conhecido que o ideal de $V$ é
dado pelos $2$-menores de uma matriz simétrica, digamos
$(y_{ij})$ com $i,j=0,\dotsc,r$.
Mais ainda \cite[Examples~1.3.6 (5), p.\thinspace{}26]{FOV99}, o ideal de cada secante $\Sec_k V$ é dada
pelos $k+1$-menores de uma tal matriz. 
Em particular $\Sec_r V$ é uma hipersuperfície, 
dada pelos zeros do determinante.

Por outro lado, do truque de Gruson-Peskine também obtemos os ideais das secantes de $C$ como menores
de uma matriz quadrada, agora de Hankel 
(Proposição~\ref{secant_x_hankel}),
de mesmas dimensões.
Como consequência, cada $k$-secante de $C$
é obtida da $k$-secante de $V$ por uma \textit{mesma} seção linear $L\cong\P^{2r}$, 
isto é, $\Sec_k C = L\cap \Sec_k V$ para todo $k$.
O subespaço linear $L$ é obtido
identificando-se as \emph{anti-diagonais}
(entradas cuja soma dos índices da linha e coluna são iguais)
da matriz simétrica; precisamente,
$L$ é dado pela interseção dos hiperplanos
\begin{equation}
\label{eqs-L}
y_{ij}-y_{uv}=0, \qquad 
\text{com }  
\quad
\begin{cases}
i\leq j, \quad (i,j)\neq(u,v) \\
i+j=u+v\\
v=u  \text{ ou } v=u+1
\end{cases}
\qquad .
\end{equation}
Observe que há exatamente
$\binom{r}{2}$ tais hiperplanos e eles são independentes.

Outro resultado conhecido (veja por exemplo
\cite[Example~5.6]{DH+16})
é que tomando duais na cadeia 
\[
V \subsetneq \Sec_2 V \subsetneq  \cdots \subsetneq \Sec_{r-1} V \subsetneq \Sec_r V \subsetneq \P^N
\]
e identificando $\P^N$ com $(\P^N)^*$,
obtemos a exatamente mesma cadeia, porém com índices revertidos
\[
(\Sec_r V)^*
\subsetneq
(\Sec_{r-1} V)^*
\subsetneq  \cdots \subsetneq
(\Sec_2 V)^*  
\subsetneq 
(V)^* 
\subsetneq 
(\P^N)^* 
\]
isto é, temos que para $k=1,\dotsc,r$ vale
\begin{equation}
\label{eq-dualSecV}
(\Sec_k V)^* \cong \Sec_{r-k+1} V. 
\end{equation}
Em particular, a dual da hipersuperfície $\Sec_r V$ é isomorfa à variedade
de Veronese $V$.

Por fim, denotaremos por $\pi_{L^*}\colon (\P^N)^* \dashrightarrow(\P^{2r})^*$
a projeção centrada \emph{no dual} $L^*$ de $L$. 

\medskip

Começamos com um belo resultado, que 
relaciona dualidade, projeções e seções li\-nea\-res. 
O enunciado abaixo, que faz uso implícito 
da bidualidade (\cite{Kle86}, característica zero),
está em um formato não muito usual, porém mais adequado aos nossos propósitos.

\begin{teo}
[{\cite[Proposition~4.1, p.\thinspace{}31]{GKZ}}]
\label{teo-dualproj}
Sejam $X,L\subset \P^n$ uma subvariedade projetiva e um subespaço linear
 tais que $X^*\cap L^* = \emptyset$ e  $\dim X^* < n - \dim L^*$.
Seja $\pi_{L^*}\colon (\P^n)^* \dashrightarrow (\P^n)^*$ a
projeção centrada no dual de $L$. Então vale a inclusão 
\[
(X\cap L)^* \subseteq \pi_{L^*}(X^*).
\]
Mais ainda, se $\pi_{L^*}(X^*)\cong X^*$, então 
$(X\cap L)^* = \pi_{L^*}(X^*)$.
\end{teo}

\begin{prop}
\label{prop-projveronese}
A dual da hipersuperfície $\Sec_r C \subset \P^{2r}$ é uma projeção isomorfa da variedade de Veronese 
$V=\nu_2(\P^r)$.
\end{prop}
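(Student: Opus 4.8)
The plan is to apply Theorem~\ref{teo-dualproj} with $X=\Sec_r V\subset\P^N$ the determinantal hypersurface $\{\det=0\}$ and $L\cong\P^{2r}$ the linear section defined in \eqref{eqs-L}, so that $X\cap L=\Sec_r C$ by the Gruson--Peskine description. From \eqref{eq-dualSecV} with $k=r$ we have $X^*=(\Sec_r V)^*\cong V$; concretely, after identifying $(\P^N)^*$ with $\P^N=\P(\mathrm{Sym})$, the space of symmetric $(r+1)\times(r+1)$ matrices, via the trace pairing $\langle A,B\rangle=\operatorname{tr}(AB)$, the dual of the determinant hypersurface is exactly the rank-one locus, i.e.\ the Veronese $V$ (the Gauss map sends a corank-one matrix to its adjugate). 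Granting the hypotheses of the theorem, its conclusion reads $(\Sec_r C)^*=(X\cap L)^*=\pi_{L^*}(X^*)=\pi_{L^*}(V)$, which is the desired statement, provided the projection $\pi_{L^*}$ restricts to an isomorphism on $V$.

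The first concrete task is to make the center $L^*$ explicit under the trace identification. A symmetric matrix $B=(b_{ij})$ pairs with a Hankel matrix $A=(x_{i+j})$ by $\operatorname{tr}(AB)=\sum_{k=0}^{2r}x_k\big(\sum_{i+j=k}b_{ij}\big)$. Since $L$ is the projectivization of the symmetric Hankel matrices, its annihilator $L^*$ consists precisely of the symmetric matrices all of whose \emph{anti-diagonal sums} $\sum_{i+j=k}b_{ij}$ vanish, for $k=0,\dots,2r$. I would then record the numerical data: $\dim L^*=\binom{r}{2}-1$, so that $N-\dim L^*=2r+1$ and the inequality $\dim V=r<2r+1=N-\dim L^*$ required by Theorem~\ref{teo-dualproj} holds trivially.

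The key step, and the main obstacle, is to show that $L^*$ is disjoint from the secant variety $\Sec_2 V$, i.e.\ from the locus of symmetric matrices of rank $\leq 2$. I would write such a matrix as $B=vv^{\mathsf{T}}+ww^{\mathsf{T}}$ and observe that its anti-diagonal sums are exactly the coefficients of $p(t)^2+q(t)^2$, where $p(t)=\sum_i v_i t^i$ and $q(t)=\sum_i w_i t^i$. Requiring all of them to vanish forces $p^2+q^2=(p+\sqrt{-1}\,q)(p-\sqrt{-1}\,q)=0$ in the domain $\C[t]$, hence $p=\pm\sqrt{-1}\,q$; in either case $vv^{\mathsf{T}}+ww^{\mathsf{T}}=0$. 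Thus $L^*\cap\Sec_2 V=\emptyset$, and in particular (taking $w=0$) $X^*\cap L^*=V\cap L^*=\emptyset$, giving the remaining hypothesis of the theorem.

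Finally I would assemble the pieces. Because the center $L^*$ avoids the entire chordal variety $\Sec_2 V$, which contains the tangent lines of $V$ as limits of secants, the projection $\pi_{L^*}$ is both injective and unramified along $V$, hence a closed immersion, so $\pi_{L^*}(V)\cong V$. With $X^*\cap L^*=\emptyset$, the dimension inequality, and $\pi_{L^*}(X^*)\cong X^*$ all in hand, Theorem~\ref{teo-dualproj} yields $(\Sec_r C)^*=\pi_{L^*}(V)$, an isomorphic projection of the Veronese variety, as claimed. The delicate point worth stating carefully is that it is disjointness from $\Sec_2 V$ (not merely from $V$) that upgrades the projection from a morphism to an embedding; this is exactly the content secured by the rank-$\leq 2$ computation above.
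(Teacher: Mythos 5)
Your proof is correct and shares the paper's skeleton: apply Teorema~\ref{teo-dualproj} to $X=\Sec_r V$ and the linear space $L$ of \eqref{eqs-L}, identify $X^*\cong V$ via \eqref{eq-dualSecV}, and reduce the remaining hypothesis $\pi_{L^*}(V)\cong V$ to the disjointness $L^*\cap\Sec V=\emptyset$, using the separation criterion together with $\Tan V\subseteq\Sec V$ for smooth $V$. Where you genuinely differ is in proving that disjointness, i.e.\ the content of the paper's Lema~\ref{lema-centroproj1}. The paper expands a point of $L^*$ in the generators $e_{ij}-e_{uv}$ and runs an induction on anti-diagonals, extracting from well-chosen $3\times 3$ minors the relations $c_j^2\sum_k c_k=0$ and $c_i^2c_j=0$ that kill one anti-diagonal at a time. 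You instead make the trace pairing explicit, so that $L^*$ becomes the space of symmetric matrices whose anti-diagonal sums vanish, write an arbitrary point of $\Sec_2 V$ as $vv^{T}+ww^{T}$, and observe that its anti-diagonal sums are the coefficients of $p(t)^2+q(t)^2$; since $\C[t]$ is a domain, this forces $p=\pm\sqrt{-1}\,q$ and hence the matrix is zero. Your argument is shorter, induction-free, and treats all anti-diagonals at once; the one step you should spell out is that every complex symmetric matrix of rank $\leq 2$ really is of the form $vv^{T}+ww^{T}$ (a rank $\leq 2$ quadratic form over $\C$ is a sum of at most two squares of linear forms), since that is what covers all of $\Sec_2 V$ and not only the honest secant points. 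Your normalization also has a conceptual advantage: the trace pairing is precisely the identification of $(\P^N)^*$ with $\P^N$ under which \eqref{eq-dualSecV} holds (the Gauss map of $\{\det=0\}$ being the adjugate), whereas the paper's generators $e_{ij}-e_{uv}$ describe the annihilator of $L$ under the coordinate dual-basis identification; the two centers differ by a rescaling of the middle entries of each anti-diagonal (for $r=2$: $e_{02}-2e_{11}$ versus $e_{02}-e_{11}$), and both happen to avoid $\Sec V$, so the paper's conclusion is unaffected --- but your choice is the one under which all the ingredients live in the same coordinates.
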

\begin{proof}
A ideia é aplicar o Teorema~\ref{teo-dualproj}
para $X=\Sec_r V$ e para $L$ descrito por \eqref{eqs-L},
uma vez que daí $X^*=(\Sec_r V)^*=V$ por \eqref{eq-dualSecV}
e que $X\cap L=(\Sec_r V)\cap L=\Sec_r C$.
Para tanto, mostraremos $\pi_{L^*}|_V$ define uma imersão fechada de $V\hookrightarrow (\P^{2r})^*$.
Segue daí que $(\Sec_r C)^*=\pi_{L^*}(V)$. 

Com esse intuito, 
basta mostrar que o sistema linear que define $\pi_{L^*}|_V$ separa pontos 
e separa tangentes. Isso ocorre se, e somente se, o centro de projeção $L^*$
não intersecta a va\-rie\-dade das secantes $\Sec V$ e tampouco intersecta a variedade
das tangentes $\Tan V$: veja \cite[Proposition~3.4, p.\thinspace{}309]{Har77} 
ou \cite[Proposition~1.2.8, p.\thinspace{}12]{Ru06}.
Por outro lado, como $V$ é suave, vale que $\Tan V\subseteq \Sec V$ (e.g. \cite[p.\thinspace{}12]{Ru06})
e portanto é suficiente mostrar que $L^*$ não intersecta $\Sec V$,
o que faremos no lema a seguir.
\end{proof}

\begin{lem}
\label{lema-centroproj1}
Com a notação acima, $L^*\cap \Sec V=\emptyset$.
\end{lem}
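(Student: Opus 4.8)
The plan is to identify both $L^*$ and $\Sec V$ explicitly as loci of symmetric matrices and then reduce the disjointness to the fact that $\C[t]$ is an integral domain. First I would use the self-duality of the Veronese already invoked above: under the identification of $(\P^N)^*$ with the projectivized space of symmetric $(r+1)\times(r+1)$ matrices via the trace pairing $\langle C,Y\rangle=\operatorname{tr}(CY)$, the dual copy of $V$ is again the rank-one locus, so $\Sec V=\Sec_2 V$ is the locus of symmetric matrices of rank $\le 2$. Thus it suffices to show that no nonzero symmetric matrix of rank $\le 2$ belongs to $L^*$.

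Next I would make $L^*$ explicit. The linear space $L$ is exactly the projectivization of the \emph{Hankel} symmetric matrices, that is, those $Y$ for which $Y_{ij}$ depends only on $i+j$; write $Y_{ij}=a_{i+j}$. Since $L^*$ is the span of the linear forms cutting out $L$, a symmetric matrix $C$ gives a point of $L^*$ precisely when $\operatorname{tr}(CY)=0$ for every Hankel $Y$. From
\[
\operatorname{tr}(CY)=\sum_{i,j}C_{ij}\,a_{i+j}=\sum_{k=0}^{2r}a_k\Big(\sum_{i+j=k}C_{ij}\Big),
\]
this holds for all $(a_0,\dots,a_{2r})$ if and only if every \emph{anti-diagonal sum} $\sum_{i+j=k}C_{ij}$ vanishes. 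Hence $L^*$ is the projectivization of the symmetric matrices all of whose anti-diagonal sums are zero.

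The core computation is then short. Every symmetric matrix $C$ of rank $\le 2$ can be written as $C=\tfrac12\bigl(ab^{T}+ba^{T}\bigr)$ for some $a,b\in\C^{r+1}$, its associated quadratic form being the product of the two linear forms attached to $a$ and $b$. Setting $A(t)=\sum_i a_it^i$ and $B(t)=\sum_j b_jt^j$, a direct expansion gives
\[
\sum_{i+j=k}C_{ij}=\sum_{i+j=k}a_ib_j=[t^k]\,A(t)B(t),
\]
so $C\in L^*$ forces $A(t)B(t)=0$ in $\C[t]$. As $\C[t]$ is an integral domain, either $A=0$ or $B=0$, whence $a=0$ or $b=0$ and therefore $C=0$. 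Thus no nonzero rank-$\le 2$ matrix lies in $L^*$, i.e.\ $L^*\cap\Sec V=\emptyset$.

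I expect the only genuine obstacle to be the bookkeeping of the second step: pinning down $L^*$ as the anti-diagonal-sum-zero matrices, and checking that under the \emph{same} trace pairing the dual copy of $\Sec V$ is indeed the rank-$\le 2$ locus. Once this dictionary is in place the vanishing argument via the integral domain $\C[t]$ is immediate, the rank-one case being the special instance $a=b$, where $A(t)^2=0$ already forces $A=0$.
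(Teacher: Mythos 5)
Your proof is correct, and it follows a genuinely different route from the paper's. The paper stays with the explicit generators $e_{ij}-e_{uv}$ of $L^*$ read off from \eqref{eqs-L}, so that each anti-diagonal of a point of $L^*$ has the palindromic shape $(c_1,\dotsc,c_m,-\sum_k c_k,c_m,\dotsc,c_1)$, and proves that all entries vanish by induction on $s=i+j$: once the earlier anti-diagonals vanish, suitably chosen $3\times 3$ minors yield $c_ic_j=0$ for $i\neq j$ and $\sum_k c_k=0$, killing anti-diagonal $s$. You instead fix the identification $(\P^N)^*\cong\P^N$ by the trace pairing, under which $L^*$ becomes the space of symmetric matrices with vanishing anti-diagonal sums, parametrize $\Sec V$ globally as $C=\tfrac12(ab^T+ba^T)$, and observe that the anti-diagonal sums of $C$ are the coefficients of $A(t)B(t)$; the lemma then reduces to $\C[t]$ being an integral domain (geometrically: no rank $\le 2$ quadric, i.e.\ a pair of hyperplanes, can contain the rational normal curve $\nu_r(\P^1)\subset\P^r$). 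This is shorter, needs no induction or ad hoc choice of minors, and isolates the conceptual reason the statement holds. The one substantive divergence is the bookkeeping you yourself flagged: your $L^*$ and the paper's $L^*$ are \emph{not} the same linear subspace of symmetric matrices --- they differ by the rescaling that doubles off-diagonal entries (trace pairing versus naive coordinatewise duality), and that rescaling does not preserve the rank $\le 2$ locus, so the two concrete statements are formally distinct (both happen to be true). Since the identification of $(\Sec_r V)^*$ with the rank-one locus, invoked by you and by the paper alike, is valid precisely with respect to the trace pairing, your version is the internally consistent one and is the statement actually needed for the projection argument; your resolution of that obstacle is sound.
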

\begin{proof}
	Denote por $e_{ij} \in \P^{N}$ a matriz cujas entradas $(i,j)$ e $(j,i)$ valem $1$
	e todas as outras entradas são nulas. Então
	$L^*$ é gerado pelos pontos da forma $e_{ij} - e_{uv}$, de forma que 
	os índices $i,j,u,v$ satisfazem as condições descritas em \eqref{eqs-L}.
	
Suponhamos que $(q_{ij}) \in \P^N$ seja um ponto de $L^*\cap \Sec V$. 
Recorde que estamos identificando $q_{ij}=q_{ji}$. É instrutivo acompanhar
o argumento em um caso concreto, o que faremos no Exemplo~\ref{exemplo-r=4}.

Provaremos que $q_{ij}=0$ para todo $i,j$ e portanto tal ponto não existe. Procedemos por indução sobre a soma $s=i+j$ dos índices, onde $s$ indexa as anti-diagonais. Para $s=0,1$, note que $y_{0,0}$ e $y_{0,1}$ não aparecem em nenhuma das equações 
\eqref{eqs-L} e logo $q_{0,0}=q_{0,1}=0$;
suponhamos $s\geq 2$ e que
todas as entradas das anti-diagonais $\leq s-1$ são nulas.

Quando $s$ é par, a anti-diagonal $s$ da matriz $(q_{ij})$ tem a forma
\[
(c_1,c_2,\dotsc,c_m,-\sum_{i=1}^m c_i, c_m,\dotsc, c_2, c_1)
\]
e para $s$ ímpar
\[
(c_1,c_2,\dotsc,c_m,-\sum_{i=1}^m c_i, -\sum_{i=1}^m c_i, c_m,\dotsc, c_2, c_1)
\]
onde $m=[(s+1)/2]$. Por indução, todas as entradas superiores à esquerda 
da anti-diagonal $s$ são nulas.

Por outro lado, como $\Sec V$ é dada pelo anulamento dos $3$-menores, 
escolhendo adequa\-damente submatrizes $3\times 3$, obtemos relações
\[
\textstyle
c_j^2\sum_k c_k=0, \quad\forall j  \qquad\text{e}\qquad c_i^2c_j=0, \quad \forall i\neq j
\]
das quais se segue que os coeficientes $c_i$ devem satisfazer 
\[
\begin{cases}
\displaystyle
\sum_{k=1}^m c_k =0,\\
 c_ic_j=0, & \quad \forall i\neq j
\end{cases}
\]
e a única solução possível é $c_1=\dotsb=c_m=0$.

\end{proof}


\begin{exem}
\label{exemplo-r=4}
Ilustremos a situação com detalhes em um caso significativo, digamos $r=4$.
Aqui,
$C\subset \Sec_4 C\subset \P^8$ e $V=\nu_2 (\P^4)\subset \P^{14}$.
A secante $\Sec V$ é
dada pelos zeros dos menores de ordem $3$ da
matriz simétrica
\[
\begin{pmatrix}
y_{0,0} & y_{0,1} & y_{0,2} & y_{0,3} & y_{0,4} \\
y_{0,1} & y_{1,1} & y_{1,2} & y_{1,3} & y_{1,4} \\
y_{0,2} & y_{1,2} & y_{2,2} & y_{2,3} & y_{2,4} \\
y_{0,3} & y_{1,3} & y_{2,3} & y_{3,3} & y_{3,4} \\
y_{0,4} & y_{1,4} & y_{2,4} & y_{3,4} & y_{4,4} \\
\end{pmatrix}.
\]
A seção linear $L$ é obtida impondo-se que todas as entradas
nas anti-diagonais sejam iguais, o que nos fornece
seis equações, descritas em \eqref{eqs-L}:
\[
y_{0,2}-y_{1,1}, \quad\ 
y_{0,3}-y_{1,2}, \quad\ 
y_{0,4}-y_{2,2},\ y_{1,3}-y_{2,2},\quad\ 
y_{1,4}-y_{2,3}, \quad\ 
y_{2,4}-y_{3,3}.
\]
Denote por $e_{ij} \in \P^{14}$ a matriz cujas entradas $(i,j)$ e $(j,i)$ valem $1$
e todas as outras entradas são nulas. O dual $L^*$ é gerado pelos seis pontos $p_1,\dotsc,p_6$:
\[
e_{0,2}-e_{1,1}, \quad\ 
e_{0,3}-e_{1,2}, \quad\ 
e_{0,4}-e_{2,2},\ e_{1,3}-e_{2,2},\quad\ 
e_{1,4}-e_{2,3}, \quad\ 
e_{2,4}-e_{3,3}.
\]
Suponha que $q=ap_1+bp_2+c_1p_3+c_2p_4+dp_5+ep_6$ é um ponto em $L^*\cap\Sec V$.
Então todos os menores de ordem $3$ da matriz
\[
q=
\begin{pmatrix}
0 & 0 & a & b & c_1 \\
0 & -a & -b & c_2 & d \\
a & -b & -c_1-c_2 & -d & e \\
b & c_2 & -d & -e & 0 \\
c_1 & d & e & 0 & 0 \\
\end{pmatrix}
\]
se anulam. Imediatamente obtemos $a=0$ e, a fortiori, $b=0$. Procedemos para a próxima
anti-diagonal: escolhendo menores $3\times 3$ adequados,
deduzimos $c_1c_2=0$ e $c_i(c_1+c_2)=0$ para $i=1,2$ e daí que
$c_1c_2=c_1+c_2=0$, donde $c_1=c_2=0$. Também 
é imediato que $d=e=0$. Concluímos portanto que $L^*\cap\Sec V=\emptyset$,
como previsto no Lema~\ref{lema-centroproj1}.
\hfill{$\Box$}
\end{exem}

\begin{obs}
Suponha que $C\subset\P^n$ é uma curva racional normal em um espaço projetivo qualquer, seja $n$ par ou não. Então as duais das variedades $\Sec_k C$ são conhecidas, para qualquer que seja $k$ e são dadas pelo lugar de $n$ pontos de $\P^1$ com certas multiplicidades $\Delta(1^{n-2k},2^k)$ (\cite[Proposition~3.1]{LS16}).
\hfill{$\Box$}
\end{obs}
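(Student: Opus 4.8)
The plan is to compute the dual variety directly, using Terracini's lemma together with the classical identification of hyperplanes in $\P^n$ with binary $n$-forms. Realize $(\P^n)^*$ as the projectivized space of binary forms of degree $n$: a hyperplane $H=\{a_0x_0+\dotsb+a_nx_n=0\}$ corresponds to $f(s,t)=\sum_i a_i\, s^{n-i}t^i$, and since $C=\nu_n(\P^1)$ is parametrized by $[s:t]\mapsto[s^n:s^{n-1}t:\dotsb:t^n]$, the intersection divisor $H\cap C$ is exactly the zero divisor of $f$ on $\P^1$. Thus $H$ passes through $C(p)$ iff $f(p)=0$, and $H$ contains the embedded tangent line $T_pC$ iff $f$ vanishes to order $\geq 2$ at $p$; the latter is the one-line computation $f(p)=f'(p)=0$, where $f'$ differentiates in the affine coordinate.

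First I would describe the tangent space of $\Sec_k C$ at a general point. Such a point lies on the span $\langle C(p_1),\dotsc,C(p_k)\rangle$ of $k$ distinct points of $C$, and by Terracini's lemma the embedded tangent space of $\Sec_k C$ there is $\langle T_{p_1}C,\dotsc,T_{p_k}C\rangle$. Hence a hyperplane $H$ is tangent to $\Sec_k C$ at such a point precisely when $H\supseteq\langle T_{p_1}C,\dotsc,T_{p_k}C\rangle$, which by the translation above means the form $f$ has a root of multiplicity $\geq 2$ at each of $p_1,\dotsc,p_k$. As $\deg f=n$, a general form of this kind has exactly $k$ double roots and $n-2k$ simple roots, i.e.\ multiplicity pattern $(2^k,1^{n-2k})$. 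Taking closures identifies $(\Sec_k C)^*$ with the coincident root locus $\Delta(1^{n-2k},2^k)$: every tangent hyperplane at a smooth point yields a form in $\Delta(1^{n-2k},2^k)$, and conversely a general form with $k$ double roots at $p_1,\dotsc,p_k$ defines a hyperplane containing $\langle T_{p_1}C,\dotsc,T_{p_k}C\rangle$, hence tangent to $\Sec_k C$ along the smooth locus of that span.

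A dimension count confirms the equality of these two closed irreducible varieties: $\Delta(1^{n-2k},2^k)$ is swept out by choosing the $k$ double-root positions and the $n-2k$ simple-root positions on $\P^1$, so $\dim\Delta(1^{n-2k},2^k)=n-k$, matching the expected $\dim(\Sec_k C)^*=n-1-(k-1)=n-k$, where the contact locus has dimension $k-1$ (the secant span). This is consistent with the two cases already understood: for $k=1$ one recovers the discriminant hypersurface $\Delta(1^{n-2},2^1)$ of forms with a double root, and for $n=2r$, $k=r$ the locus $\Delta(2^r)$ consists of the perfect squares $g^2$ of degree-$r$ forms, i.e.\ the image of the squaring map $g\mapsto g^2$, which is the Veronese $\nu_2(\P^r)$, exactly as in Proposition~\ref{prop-projveronese}.

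The main obstacle is upgrading the containment of general tangent hyperplanes in $\Delta(1^{n-2k},2^k)$ to an equality of varieties. The cleanest route is to show both sides are irreducible of the common dimension $n-k$ and that one contains a dense open subset of the other, so that they share a closure. For this one must justify that Terracini's lemma applies, that the general point of $\Sec_k C$ is smooth so the tangent hyperplanes genuinely sweep out the dual (rather than only limiting positions), and that biduality holds, which is available here since we work in characteristic zero.
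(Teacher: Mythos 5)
Your proposal is correct, and it supplies more than the paper does: the statement you were given is an Observa\c{c}\~ao with no proof at all, only a citation to Lee--Sturmfels (Proposition~3.1 of [LS16]), so your Terracini computation essentially reconstructs the classical argument behind that citation. The dictionary hyperplane $\leftrightarrow$ binary $n$-form, the fact that $H\supseteq T_pC$ iff the form has a double root at $p$, and Terracini's lemma at a general point of $\langle C(p_1),\dotsc,C(p_k)\rangle$ give exactly the two inclusions you state, and your closing step (both sides irreducible of the same dimension $n-k$, each containing a dense subset of the other) is the right way to upgrade to equality of closures. Two refinements worth making explicit: biduality is not actually needed in your double-inclusion argument, and the reverse inclusion uses that the parametrization of $\Delta(1^{n-2k},2^k)$ by root positions is dominant, so a \emph{general} member has its $k$ double roots at a general configuration $p_1,\dotsc,p_k$, which is what licenses applying Terracini there. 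Also note the implicit standing hypothesis $2k\le n$ (for $2k-1=n$ the secant fills $\P^n$ and the dual is empty, while $\Delta(1^{n-2k},2^k)$ is not defined), and that your identification of the contact locus with the secant $(k-1)$-plane is only a consistency check, not load-bearing. Finally, it is instructive to contrast your route with how the paper handles the one case it actually proves: for $n=2r$, $k=r$ the paper shows $(\Sec_rC)^*\cong\nu_2(\P^r)$ not via Terracini on the curve, but by dualizing the chain of secants of the Veronese variety $V=\nu_2(\P^r)$, using the determinantal description and the GKZ theorem on duals of linear sections to pass through the projection $\pi_{L^*}$; your method is more uniform (all $k$, all $n$) and more elementary, while the paper's method yields the extra information (the explicit isomorphic projection of the Veronese) needed later for the Mather class computation.
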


\section{Classe de Mather}

Em \cite{Alu18}, Aluffi mostra como obter a classe de Mather de uma variedade projetiva a partir da classe de Mather da sua variedade dual. Consequentemente obtemos uma fórmula para a classe de Mather
da $r$-secante de uma curva racional normal $C\subset\P^{2r}$.
\begin{teo}
	\label{teo-Mather_sec}
	Dado $r\geq 1$, a classe de Mather da hipersuperfície
	$\Sec_rC\subset\P^{2r}$ é 
	\begin{equation}
		\label{eq:cmaSecr}
		\cma(\Sec_r C)=
		(1+h)^r\sum_{j=0}^{[\frac{r}{2}]} \binom{r+1}{2j+1}h^{2j+1}
		\quad \in A_*\P^{2r}.
	\end{equation}
	Em particular, todos os coeficientes da classe de Mather são positivos.
\end{teo}
\begin{proof}
	Como vimos na Proposição~\ref{prop-projveronese},
	a dual $(\Sec_r C)^*$  \'e isomorfa a $V:=\nu_2(\P^r)$ via a projeção linear $\pi_{L^*}$. Em particular
	\[
	\cma((\Sec_rC)^*) = \cma(\pi_{L^*}(V)) = \cma(V) \quad\in A_*(\P^{2r})^*.
	\]
	Como $V$ é não-singular, sua classe de Mather
	coincide com a sua classe de Chern, cujo cálculo é simples (omitimos o pushforward por $\pi_{L^*}$)
	\begin{align}
		c(TV)\cap [V] &= \nu_{2*} (c(T\P^r)\cap [\P^r]) \tag*{ } \\
		&= \nu_{2*} ((1+h)^{r+1})  \tag*{ }  \\
		&= \nu_{2*} \left(\displaystyle \sum_{j=0}^{r} {\binom{r+1}{j}}[\P^{r-j}]\right)  \tag*{ }  \\
		&= \displaystyle \sum_{j=0}^{r} {\binom{r+1}{j}}2^{r-j}[\P^{r-j}]  \tag*{ }  \\
		&= \displaystyle \sum_{j=0}^{r} {\binom{r+1}{r-j}}2^{j}[\P^j]  
		\quad\quad\in A_*(\P^{2r})^*. 
		\label{eq-cTV}
	\end{align}
	onde na penúltima igualdade usamos 
	$\deg\nu_2(\P^i)=2^i$, o grau da imagem de $\P^i$ pelo
	mergulho de Veronese
	(\cite[Proposition~2.8, p.\thinspace{}49]{EH16}).
	Reescrevendo \eqref{eq-cTV} em termos da classe hiperplana,
	temos portanto que
	\[
	q(h) := \sum_{i=r}^{2r} {\binom{r+1}{i-r}}2^{2r-i}h^i = 
	2^r h^r \left( \big(1+\frac{h}{2}\big)^{r+1} - \big(\frac{h}{2}\big)^{r+1} \right)
	\qquad \in A_*(\P^{2r})^* 
	\]
	é a classe de Mather de  $(\Sec_rC)^*$. Aplicando a fórmula
	de involução de Aluffi em \cite[Theorem~1.3]{Alu18}, obtemos que
	\[
	\cma(\Sec_r C)=
	(-1)^{r+1}
	\Big(
	q(-1-h)-q(-1)\big((1+h)^{2r+1}-h^{2r+1}\big)
	\Big) 
	\qquad \in A_*\P^{2r}
	\] 
	é a classe de Mather da $r$-secante da curva $C$: o ajuste do sinal vem do fato de que $\dim (\Sec_r C)^*=\dim  \nu_2(\P^r)=r$.
	\medskip
	
	Se $r$ é ímpar, então $q(-1)=0$ e  $q(-1-h)=-\frac{1}{2}(1+h)^r((1-h)^{r+1}-(1+h)^{r+1})$,
	donde obtemos a identidade \eqref{eq:cmaSecr}. Por outro lado,
	se $r$ é par, então $q(-1)=1$ e 
	\(
	q(-1-h)=\frac{1}{2}(1+h)^r((1-h)^{r+1}+(1+h)^{r+1})
	\) e logo
	\begin{align*}
		\cma(\Sec_r C) & =   -\frac{1}{2}(1+h)^r\big((1-h)^{r+1}+(1+h)^{r+1}\big)+ \big((1+h)^{2r+1}-h^{2r+1}\big) \\
		&=\frac{1}{2}(1+h)^r\big(-(1-h)^{r+1}+(1+h)^{r+1}\big) - h^{2r+1}
	\end{align*}
	obtendo novamente \eqref{eq:cmaSecr}. A última afirmação do enunciado agora é imediata.
\end{proof}

Expandindo a fórmula~\eqref{eq:cmaSecr} obtemos explicitamente
o coeficiente $\cma(\Sec_r C)_j$ da componente de dimensão $j$ da classe $\cma(\Sec_r C)\in A_*\P^{2r}$:
\begin{equation}\label{cma_j_par}
	\cma(\Sec_r C)_j = \sum_{i=0}^{r-\frac{j+2}{2}} \binom{r}{2i+1}\binom{r+1}{2(r-i)-j-1}, \text{ se $j$ é par}
\end{equation}
e
\begin{equation}\label{cma_j_impar}
	\cma(\Sec_r C)_j = \sum_{i=0}^{r-\frac{j+1}{2}} \binom{r}{2i}\binom{r+1}{2(r-i)-j}, \text{ se $j$ é ímpar}.
\end{equation}

\begin{rmk}
	Para variedades determinantais genéricas, um resultado de positividade análogo ao do Teorema~\ref{teo-Mather_sec} é esperado:  
	X. Zhang conjecturou (\cite[\S 7.6]{Zhang18})
	que os coeficientes da classe de Mather $\cma(M_k(m,n)) \in A_*\P^{mn-1}$ são todos não-negativos.
	De fato, X. Zhang obteve fórmulas para a classe de Mather destas variedades, e consequentemente os respectivos graus polares e a distância euclidiana genérica \cite[Theorem~4.3, Proposition~5.4, Proposition~5.5]{Zhang18}. 
	As fórmulas ali obtidas dependem das classes de
	Chern de certos fibrados sobre Grassmaniannas para
	as quais não existem fórmulas explícitas,
	embora possam ser calculadas em cada caso específico.
	\hfill{$\Box$}
\end{rmk}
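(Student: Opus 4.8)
O plano \'e adaptar ao contexto determinantal a estrat\'egia da demonstra\c{c}\~ao do Teorema~\ref{teo-Mather_sec}, substituindo o papel do mergulho de Veronese por uma resolu\c{c}\~ao de Nash expl\'icita. Escrevo $r=k-1$ e $D_r=M_k(m,n)\subset\P^{mn-1}$ para a variedade das matrizes $m\times n$ de posto $\leq r$. A ideia \'e calcular $\cma(D_r)$ diretamente a partir do blow-up de Nash, que para variedades determinantais gen\'ericas \'e n\~ao-singular e admite descri\c{c}\~ao expl\'icita: \'e a resolu\c{c}\~ao de Springer-Tjurina
\[
\upsilon\colon \widetilde{D}_r=\{([\phi],W)\colon W\in\grass(r,\C^m),\ \Image\phi\subseteq W\}\longrightarrow D_r,
\]
que \'e um fibrado projetivo sobre o Grassmanniano $\grass(r,\C^m)$ e cuja proje\c{c}\~ao \'e um isomorfismo sobre o aberto das matrizes de posto exatamente $r$. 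O primeiro passo \'e verificar que $\widetilde{D}_r$ realiza de fato o blow-up de Nash de $D_r$, o que equivale a checar que o mapa de Gauss de $D_r$ se estende a $\widetilde{D}_r$ e que o fibrado tautol\'ogico de Nash $\Tcal$ restringe-se ao tangente $TD_r$ sobre o aberto de suavidade.

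Em seguida descrevo $\Tcal$ em termos dos fibrados tautol\'ogicos. Em um ponto $\phi$ de posto $r$, com imagem $W$ e n\'ucleo $K$, o espa\c{c}o tangente a $D_r$ \'e $\{\psi\colon \psi(K)\subseteq W\}$, ou seja, o n\'ucleo do mapa $\Hom(\C^n,\C^m)\to\Hom(K,\C^m/W)$. Passando, se necess\'ario, \`a variedade de incid\^encia que registra simultaneamente imagem e n\'ucleo (um espa\c{c}o de bandeiras sobre $\grass(r,\C^m)\times\grass(n-r,\C^n)$, o que torna o n\'ucleo universal $\mathcal{K}$ localmente livre), o fibrado $\Tcal$ encaixa-se na sequ\^encia exata
\[
0\longrightarrow \Tcal\longrightarrow \Hom(\C^n,\C^m)\longrightarrow \mathcal{K}^{\vee}\otimes\mathcal{Q}\longrightarrow 0,
\]
onde $\mathcal{Q}=\C^m/\mathcal{S}$ \'e o quociente tautol\'ogico de posto $m-r$ e $\Hom(\C^n,\C^m)$ \'e trivial. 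Da f\'ormula de Whitney vem $c(\Tcal)=c(\mathcal{K}^{\vee}\otimes\mathcal{Q})^{-1}$, e o c\'alculo de $\cma(D_r)=\upsilon_*\big(c(\Tcal)\cap[\widetilde{D}_r]\big)$ reduz-se a empurrar para $\P^{mn-1}$ uma express\~ao polinomial nas classes de Chern de $\mathcal{S}$, $\mathcal{Q}$ e $\mathcal{K}$, via a f\'ormula do fibrado projetivo seguida de integra\c{c}\~ao ao longo do(s) Grassmanniano(s) por c\'alculo de Schubert.

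Uma rota alternativa, ainda mais pr\'oxima do Teorema~\ref{teo-Mather_sec}, usa a dualidade projetiva: a dual de $D_r$ \'e novamente uma variedade determinantal, de posto complementar. No caso da hipersuperf\'icie do determinante ($m=n$, $r=n-1$) a dual \'e a variedade de Segre $\P^{n-1}\times\P^{n-1}$, que \'e \emph{n\~ao-singular}; logo sua classe de Mather \'e a classe de Chern, e a f\'ormula de involu\c{c}\~ao de Aluffi \cite[Theorem~1.3]{Alu18} produz $\cma$ da hipersuperf\'icie exatamente como fizemos com a Veronese, donde a positividade seguiria do mesmo tipo de conta. Para as demais $M_k(m,n)$, por\'em, a dual \'e uma variedade determinantal \emph{singular}, de sorte que a involu\c{c}\~ao n\~ao se ancora em uma variedade lisa e seria preciso uma indu\c{c}\~ao sobre o posto.

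O obst\'aculo principal \'e a positividade. Uma vez obtida a express\~ao de $\cma(D_r)$, resta mostrar que todos os seus coeficientes s\~ao n\~ao-negativos, e aqui reaparece exatamente a dificuldade apontada na Observa\c{c}\~ao: o empurra-adiante ao longo do Grassmanniano produz somas de produtos de classes de Schubert, sem forma fechada manifestamente positiva, de modo que a n\~ao-negatividade n\~ao decorre de inspe\c{c}\~ao direta. Espero que uma demonstra\c{c}\~ao completa exija um modelo combinat\'orio positivo para esses coeficientes (por exemplo, via a positividade de Littlewood-Richardson das constantes de estrutura de Schubert) ou um argumento recursivo no posto que preserve positividade, \`a maneira do que ocorre com c\'elulas de Schubert em \cite{Huh16} e \cite{AMSS17}. \'E precisamente a aus\^encia de tal estrutura positiva expl\'icita que mant\'em o enunciado no estado de conjectura.
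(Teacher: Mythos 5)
O enunciado que voc\^e atacou \'e uma observa\c{c}\~ao de car\'ater expositivo, sem demonstra\c{c}\~ao no texto: ela apenas registra a conjectura de positividade de X.~Zhang e remete a \cite[Theorem~4.3, Proposition~5.4, Proposition~5.5]{Zhang18} para as f\'ormulas da classe de Mather das variedades $M_k(m,n)$. N\~ao h\'a, portanto, ``prova do paper'' com a qual comparar; o que cabe avaliar \'e se o seu plano reconstr\'oi corretamente o m\'etodo citado e o estado da quest\~ao --- e nisso voc\^e acerta no essencial. Seu esbo\c{c}o (resolu\c{c}\~ao por incid\^encia sobre Grassmannianas, sequ\^encia exata para o fibrado tautol\'ogico de Nash, pushforward via c\'alculo de Schubert) \'e precisamente a rota seguida em \cite{Zhang18}, e o seu diagn\'ostico final coincide com o conte\'udo da observa\c{c}\~ao: as f\'ormulas resultantes dependem de classes de Chern de fibrados sobre Grassmannianas sem express\~ao fechada, de modo que a n\~ao-negatividade dos coeficientes n\~ao decorre de inspe\c{c}\~ao e permanece conjectural. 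Voc\^e reconhece explicitamente que n\~ao prova a positividade, o que \'e a postura correta aqui.

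Um reparo t\'ecnico no primeiro passo: a incid\^encia $\{([\phi],W)\colon \Image\phi\subseteq W\}$, que registra apenas a imagem, \'e o transformado de Tjurina e \emph{n\~ao} \'e, em geral, o blow-up de Nash de $M_k(m,n)$. Como voc\^e mesmo anota ao calcular o tangente, $T_{[\phi]}$ \'e $\{\psi\colon \psi(K)\subseteq W\}$ e depende do \emph{par} $(K,W)$; dois pontos de posto $r$ com a mesma imagem e n\'ucleos distintos t\^em tangentes distintos, logo o mapa de Gauss n\~ao fatora pela incid\^encia simples. O objeto certo \'e a incid\^encia dupla sobre $\grass(n-r,\C^n)\times\grass(r,\C^m)$, que domina birracionalmente o blow-up de Nash e pela qual, via f\'ormula de proje\c{c}\~ao, se empurra $c(\Tcal)\cap[\widetilde{D}_r]$ --- o ajuste que voc\^e oferece ``se necess\'ario'' \'e, na verdade, necess\'ario. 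Falta tamb\'em no esbo\c{c}o a passagem do cone afim \`a variedade projetiva (as tor\c{c}\~oes por $\Ocal(1)$), tratada em \cite{Zhang18}. Sua rota alternativa por dualidade reproduz, para a hipersuperf\'icie do determinante (cuja dual \'e a Segre, lisa), o argumento do Teorema~\ref{teo-Mather_sec} com a Veronese, mas, como voc\^e observa, n\~ao se estende aos postos intermedi\'arios, cujas duais s\~ao singulares. Em suma: seu texto \'e um plano fiel ao trabalho citado e identifica corretamente o obst\'aculo; ele n\~ao estabelece nada al\'em do que a observa\c{c}\~ao j\'a reporta, e nem o enunciado pede mais do que isso.
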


\section{Distância euclidiana e graus polares}

Seja $X \subsetneq \P^n$ uma subvariedade fechada própria.
Para um ponto geral $y\in\P^n$, defina $d_y(x) =\sum_{i=0}^n (y_i-x_i)^2$
como o quadrado da distância euclidiana entre $x$ e $y$. 
 O \textbf{grau distância euclidiana de $X$}, denotado por $\eddeg(X)$, 
é definido como o número de pontos críticos de $d_y$ restrita à $X_{\text{suave}}$.
Este número depende da geometria do lugar de interseção
de $X$ com a \emph{quádrica isotrópica} $Q=Z(x_0^2+\dotsb+x_n^2)$.

Consideremos a variedade de incidência
\[
\Phi_X = \overline{\{(p, H);\  p \text{ é um ponto não-singular e } H\supset T_p X\}} \subset  \P^n\times (\P^n)^*
\]
chamada a \textbf{variedade conormal} de $X$ e cuja classe denotamos
\[
[\Phi_X] = \delta_0(X)H^nh+\dots +\delta_{n-1}(X)Hh^n \quad \in  A_*(\P^n\times (\P^n)^*)
\]
onde $H$ e $h$ são, respectivamente, o pull-back das classes hiperplanas de $\P^n$ e de ($\P^n)^*$. Os números $\delta_i(X)$ são chamados os \textbf{graus polares} de $X$.
Aos interessados em aprender mais, indicamos as referências \cite{Pie78} e \cite{Pie15}.

O \textbf{grau distância euclidiana genérica} de $X$ é definido em \cite{AH17} como a soma
dos seus graus polares, isto é,
\begin{equation*}
\label{eq:somadeltas}
\geddeg(X) := \delta_0(X) + \dotsb + \delta_{n-1}(X).
\end{equation*}
A nomenclatura se justifica pois $\eddeg(X)=\geddeg(X)$ se $X$ está em 
posição geral; isto se segue da discussão após Lemma~2.3
em \cite{AH17} e como consequência 
de \cite[Theorem~5.4]{DH+16}:
\emph{se a variedade conormal $\Phi_X$ não intersecta a diagonal 
$\Delta(\P^n) \subset \P^n\times \P^n$, então $\eddeg(X)$
é a soma dos graus polares de $X$}.

Mais ainda, do Teorema de Reflexividade (\cite[Theorem~10.20, p.\thinspace381]{EH16}) decorre que
\[
\Phi_X = \Phi_{X^*} 
\]
desde que tomemos o cuidado de 
trocar os fatores para $\Phi_{X^*}\subset (\P^n)^*\times \P^n$. Logo a soma dos graus polares de $X$ coincide com a soma dos graus da sua variedade dual, ou seja,
\begin{equation}
\label{eq:geddual}
\geddeg(X)=\geddeg(X^*).
\end{equation}

A relação entre os graus polares e a classe de Mather é dada como segue.

\begin{prop}[{\cite[Proposition~2.9]{Alu18}}] 
\label{prop-aluffi-somadeltas}
Seja $X$ uma subvariedade própria de $\P^n$ de dimensão $m$. Então $\delta_i(X)=0$ para $i>m$, e
\[
\geddeg(X)=\delta_0(X)+\dotsb + \delta_m(X) = \displaystyle \sum_{j=0}^{m} (-1)^{m+j} \cma(X)_j (2^{j+1}-1),
\]
onde $\cma(X)_j$ é o coeficiente da componente de dimensão $j$ na classe $\cma(X)\in A_* \P^n$, isto é, $\cma(X)=\sum_j \cma(X)_j[\P^j]$.
\end{prop}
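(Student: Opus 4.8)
The plan is to reduce everything to the known relationship between the polar degrees $\delta_i(X)$ and the coefficients $\cma(X)_j$ of the Chern-Mather class, and then to carry out a short binomial computation. Concretely, Piene's theorem \cite{Pie88} expresses each polar degree as an alternating sum of Chern-Mather coefficients; in the indexing used here this takes the form
\[
\delta_i(X) = \sum_{j=i}^{m} (-1)^{m+j}\binom{j+1}{i+1}\,\cma(X)_j,
\qquad i=0,1,\dots,n-1.
\]
I would first record this identity and pin down its sign and index conventions against a baseline example, say a smooth plane curve $X\subset\P^2$ of degree $d$, where $\cma(X)_1=\deg X=d$ and $\cma(X)_0=\euler(X)=2-(d-1)(d-2)$: the formula then returns $\delta_1=d=\deg X$ and $\delta_0=d(d-1)=\deg X^*$, the classical values. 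This check is worthwhile precisely because the conventions for polar/projective degrees here differ from the standard ones.

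Granting this relationship, part (a) is immediate: the class $\cma(X)\in A_*\P^n$ has no component of dimension exceeding $m=\dim X$, so for $i>m$ the sum on the right is empty and $\delta_i(X)=0$. Geometrically this reflects the fact that the conormal variety $\Phi_X$ projects onto $X$, so the incidence conditions defining $\delta_i$ become unsatisfiable once $i$ exceeds $\dim X$.

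For part (b) I would sum the displayed formula over $i=0,\dots,m$ (the only nonzero terms, by part (a)) and interchange the order of summation:
\[
\geddeg(X) = \sum_{i=0}^{m}\delta_i(X)
= \sum_{i=0}^{m}\sum_{j=i}^{m}(-1)^{m+j}\binom{j+1}{i+1}\cma(X)_j
= \sum_{j=0}^{m}(-1)^{m+j}\cma(X)_j\sum_{i=0}^{j}\binom{j+1}{i+1}.
\]
The inner sum is the elementary identity $\sum_{i=0}^{j}\binom{j+1}{i+1}=\sum_{k=1}^{j+1}\binom{j+1}{k}=2^{j+1}-1$, and substituting it yields exactly $\sum_{j=0}^{m}(-1)^{m+j}(2^{j+1}-1)\,\cma(X)_j$, as claimed.

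The genuinely substantive step is the first one: establishing Piene's relationship with the correct signs. If one declines to cite it, the real work lies in the geometry of the conormal variety $\Phi_X$ — computing the pushforward of its class under the projection to $X$ in terms of the Nash blow-up data defining $\cma(X)$, and invoking reflexivity $\Phi_X=\Phi_{X^*}$. Once that bridge is in place, the remainder is the purely combinatorial identity above; I therefore expect the binomial bookkeeping to be routine and the conormal-to-Mather dictionary to be the main obstacle.
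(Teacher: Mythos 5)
Your proposal is correct. A preliminary remark: the paper gives no proof of Proposition \ref{prop-aluffi-somadeltas} at all — it is quoted from \cite[Proposition~2.9]{Alu18}, with the remark that the nonsingular case was proved in \cite[Theorem~5.8]{DH+16} — so what you have written is a self-contained derivation of a result the paper treats as a black box. Your route is, however, exactly the one the paper sets up immediately after the statement: it records Piene's formula
\[
[\Phi_X] = \sum_{j=0}^{n-1} (-1)^{\dim X + j}\, \cma(X)_j\, (H+h)^{j+1} H^{n-j}
\qquad \in A_*(\P^n\times(\P^n)^*),
\]
and since in the paper's convention $\delta_i(X)$ is the coefficient of $H^{n-i}h^{i+1}$ in $[\Phi_X]$, extracting that coefficient gives precisely your identity
$\delta_i(X)=\sum_{j\geq i}(-1)^{m+j}\binom{j+1}{i+1}\cma(X)_j$, which is the paper's equation \eqref{eq-graus-polares} in the case $X=\Sec_rC$ (there $m=2r-1$, so $(-1)^{m+j}=(-1)^{j+1}$). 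Granting this, your two steps are sound: for $i>m$ every potentially nonzero term has $j\leq m<i$, whence $\binom{j+1}{i+1}=0$ and $\delta_i(X)=0$; and summing over $i=0,\dotsc,m$, interchanging the order of summation, and using $\sum_{i=0}^{j}\binom{j+1}{i+1}=2^{j+1}-1$ yields the stated expression for $\geddeg(X)$. Your sanity check on a smooth plane curve of degree $d$ (giving $\delta_1=d$, $\delta_0=d(d-1)$, hence $\geddeg=d^2$) is also correct and is a sensible guard against the nonstandard indexing. The only caveat is the one you flag yourself: this is a reduction to Piene's theorem (equivalently \cite[Corollary~2.3]{Alu18}), not an independent proof of that bridge; since the paper likewise takes Piene's formula as given, this is entirely adequate.
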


O caso em que a variedade $X$ é não-singular foi provado em \cite[Theorem~5.8]{DH+16}.

	
\begin{cor}
\label{cor-geddeg}
Seja $C\subset\P^{2r}$ uma curva racional normal. Então 
\[
\geddeg(\Sec_r C) = \dfrac{3^{r+1}-1}{2}.
\]\end{cor}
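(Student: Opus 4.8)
The plan is to pass to the dual variety. By the reflexivity identity \eqref{eq:geddual} we have $\geddeg(\Sec_r C)=\geddeg((\Sec_r C)^*)$, so it suffices to compute the generic Euclidean distance degree of the dual. This is the convenient move, because by Proposition~\ref{prop-projveronese} the dual $(\Sec_r C)^*\subset(\P^{2r})^*$ is a \emph{smooth} variety of dimension $r$, isomorphic to the Veronese $V=\nu_2(\P^r)$, and its Mather class was in effect already produced in the course of proving Theorem~\ref{teo-Mather_sec}.

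First I would invoke Proposition~\ref{prop-aluffi-somadeltas} with $X=(\Sec_r C)^*$ and $m=r$, reducing everything to the coefficients $\cma((\Sec_r C)^*)_j$ of the Mather class. These are read off from \eqref{eq-cTV}: since $\pi_{L^*}$ restricts to an isomorphism and $V$ is smooth, $\cma((\Sec_r C)^*)$ equals the Chern class computed there, so $\cma((\Sec_r C)^*)_j=\binom{r+1}{r-j}2^j$ for $0\le j\le r$. Substituting into the formula of Proposition~\ref{prop-aluffi-somadeltas} yields
\[
\geddeg(\Sec_r C)=\sum_{j=0}^r (-1)^{r+j}\binom{r+1}{r-j}2^j\,(2^{j+1}-1).
\]

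The remaining step is purely combinatorial. Reindexing by $k=r-j$ turns the expression into $\sum_{k=0}^r(-1)^k\binom{r+1}{k}2^{r-k}(2^{r-k+1}-1)$, which I would split as $\sum_k(-1)^k\binom{r+1}{k}2^{2r-2k+1}-\sum_k(-1)^k\binom{r+1}{k}2^{r-k}$. Writing $2^{2r-2k+1}=\tfrac12\,4^{\,r+1-k}$ and $2^{r-k}=\tfrac12\,2^{\,r+1-k}$, each sum becomes a truncated binomial expansion: completing the range to $k=r+1$ and subtracting the single missing term gives $\tfrac12(3^{r+1}+(-1)^r)$ and $\tfrac12(1+(-1)^r)$, using $(4-1)^{r+1}=3^{r+1}$ and $(2-1)^{r+1}=1$ respectively. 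The parity terms $(-1)^r$ then cancel in the difference, leaving $\tfrac12(3^{r+1}-1)$, as claimed.

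There is no genuine obstacle here; the one point deserving a word of justification is that $\cma((\Sec_r C)^*)$ has exactly the coefficients in \eqref{eq-cTV}. This is immediate, since $(\Sec_r C)^*$ is a smooth isomorphic linear projection of $V$, so its Mather class coincides with its Chern class, already expanded in that equation. Everything else is the bookkeeping of the two alternating binomial sums.
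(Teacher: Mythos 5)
Sua demonstração está correta e segue essencialmente o mesmo caminho da prova do texto: passar à dual via \eqref{eq:geddual}, identificar a classe de Mather de $(\Sec_r C)^*$ com a classe de Chern da Veronese calculada em \eqref{eq-cTV}, aplicar a Proposição~\ref{prop-aluffi-somadeltas} e avaliar as somas binomiais alternadas completando-as até expansões de $(1-4)^{r+1}=-3^{r+1}$ e $(1-2)^{r+1}=-1$. A única diferença é de contabilidade no passo combinatório final (sua reindexação $k=r-j$ versus a reescrita $\binom{r+1}{r-j}=\binom{r+1}{j+1}$ do texto), que produz exatamente o mesmo cancelamento dos termos de paridade.
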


\begin{proof}
É uma aplicação direta da 
Proposição~\ref{prop-aluffi-somadeltas} e dos cálculos
que fizemos no Teorema~\ref{teo-Mather_sec}.
Por \eqref{eq:geddual} podemos considerar a dual $(\Sec_r C)^*$, o que se torna mais conveniente. Como vimos em \eqref{eq-cTV}:
\[
\cma((\Sec_rC)^*) = 
\displaystyle \sum_{j=0}^{r} {\binom{r+1}{r-j}}2^{j}[\P^j]  
\quad\quad\in A_*(\P^{2r})^*. 
\]
Portanto, pela Proposição~\ref{prop-aluffi-somadeltas},
\[
\begin{split}
\geddeg(\Sec_r C) &= \displaystyle \sum_{j=0}^{r} (-1)^{r+j} {\binom{r+1}{r-j}}2^j(2^{j+1}-1)  \\
&= \dfrac{(-1)^r}{2}\left[\displaystyle \sum_{j=0}^{r} (-1)^j {\binom{r+1}{j+1}}4^{j+1} - \displaystyle \sum_{j=0}^{r} (-1)^j {\binom{r+1}{j+1}}2^{j+1}\right] \\
&= \dfrac{(-1)^r}{2}\left[- \displaystyle \sum_{j=0}^{r+1} (-1)^j {\binom{r+1}{j}}4^j +1 + \displaystyle \sum_{j=0}^{r+1} (-1)^j {\binom{r+1}{j}}2^j - 1\right] \\
 &= \dfrac{(-1)^{r+1}}{2}\left[ (1-4)^{r+1} - (1-2)^{r+1}\right] \\
&= \dfrac{3^{r+1}-1}{2}.
\end{split}
\]
\end{proof}

\medskip

R. Piene obteve, em \cite[Théorèm~3]{Pie88}, uma relação precisa entre os graus polares e as classes de Mather para uma variedade projetiva. Esta relação foi recuperada por P.~Aluffi em \cite[Corollary~2.3]{Alu18}, utilizando outros métodos.
Para uma subvariedade fechada própria $X\subset \P^n$, tem-se:
\[
[\Phi_X] = \displaystyle \sum_{j=0}^{n-1} (-1)^{\dim X +j} \cma(X)_j (H+h)^{j+1}H^{n-j} \quad \in A_*(\P^n\times (\P^n)^*),
\]
onde $\cma(X)_j$ é o coeficiente da componente de dimensão $j$ na classe $\cma(X)\in A_*\P^n$ (aqui $H$ e $h$ são como na definição da variedade conormal).
Portanto os graus polares da hipersuperfície $\Sec_r C \subset \P^{2r}$ são obtidos a partir das equações \eqref{cma_j_par} e \eqref{cma_j_impar}:
\begin{equation}
\label{eq-graus-polares}
\delta_i(\Sec_r C) = \sum_{j=i}^{2r-1} (-1)^{j+1} \binom{j+1}{i+1} \cma(\Sec_r C)_j 
\end{equation}
para $i=0, \dotsc , 2r-1$. Eis alguns exemplos calculados explicitamente:

\medskip


\begin{exem}
\label{cma_secmax_rnc} 
Sejam $C\subset \P^{2r}$ a curva racional normal e $X:=\Sec_r C\subset \P^{2r}$ a hipersuperfície $r$-secante. Denotemos os seus graus polares $\delta_i(X)$ simplesmente por $\delta_i$.
\begin{enumerate}
\item
Para $r=2$ temos
\[
\cma(X) = 3h+6h^2+4h^3+2h^4 \quad \in A_*\P^4,
\]
\[
\delta_0 = 0, \delta_1 = 4, \delta_2 = 6, \delta_3 = 3.
\]
\item
Para $r=3$ temos
\[
\cma(X) = 4h+12h^2+16h^3+16h^4+12h^5+4h^6 \quad \in A_*\P^6,
\]
\[
\delta_0 = 0, \delta_1 = 0, \delta_2 = 8, \delta_3 = 16, \delta_4 = 12, \delta_5 = 4.
\]
\item
Para $r=4$ temos
\[
\cma(X) = 5h+20h^2+40h^3+60h^4+66h^5+44h^6+16h^7+4h^8 \quad \in A_*\P^8,
\]
\[
\delta_0 = 0, \delta_1 = 0, \delta_2 = 0, \delta_3 = 16, \delta_4 = 40, \delta_5 = 40, \delta_6 = 20, \delta_7 = 5.
\]
\end{enumerate}
\ \hfill{$\Box$}
\end{exem}

%
%

%
%
\chapter{Mapa gradiente da secante maximal} 
\label{cap-grau}

Denote por $C\subset \P^{2r}$ uma curva racional normal.
Buscamos calcular a classe de Schwartz-MacPherson da hipersuperfície $\Sec_r C$. Como vimos no Capítulo 1, essa classe codifica informações importantes
sobre a geometria e topologia da variedade mergulhada, como a característica de Euler
de seções hiperplanas e a classe de Milnor. Nossa abordagem,
usando o Teorema \ref{Aluffi2.1} de P. Aluffi, é usar a geometria do mapa gradiente associado, o que na prática significa calcular os graus projetivos do mapa. 

Acontece aqui algo peculiar: o mapa gradiente se fatora via uma projeção por uma certa Grassmanianna. Via este caminho somos capazes de calcular os graus projetivos apenas em situações particulares, porém 
suficientes para inferir uma fórmula conjectural para 
o caso geral, como veremos no 
capítulo seguinte.

\section{Fatorando o mapa gradiente}
Seja $\Sec_r C \subset\P^{2r}$ a $r$-secante de $C$. Da Proposição~\ref{secant_x_hankel} vem que esta  hipersuperfície
é dada  pelo determinante 
de uma matriz de Hankel, a saber 
$f=\det(\Hcal_{r+1,r+1})$, que é portanto  um polinômio de grau $r+1$. Seja $\phi_r\colon \P^{2r}\dashrightarrow\P^{2r}$ 
o mapa gradiente correspondente, cujas coordenadas são dadas pelas derivadas parciais $f_{x_0},\dotsc,f_{x_{2r}}$ 
que geram o ideal jacobiano $J_r$ de $f$. Observe 
que o lugar de base do mapa gradiente coincide com 
o lugar singular de $\Sec_rC$, como esquemas.

O caso em que $r=1$ não é emocionante: aqui o mapa gradiente
define um isomorfismo linear $\P^2\rightarrow \P^2$. Na 
discussão que se segue assumimos implicitamente $r\geq 2$.

Em geral $\sing(\Sec_r C)$ não é reduzido, 
mas está suportado na secante
de $r-1$ pontos de $C$. 
Mais ainda, pela Proposição~\ref{props_rnc2} (d)
temos a igualdade de ideais
\(
\sqrt{J_r}=I_r
\)
e em particular vale a inclusão  
\begin{equation}
\label{eq-inclusao}
J_r \subset I_r 
\end{equation}
onde $I_r$ é o ideal gerado pelos  
menores maximais ${p_I}$ da matriz
de Hankel 
\begin{equation}
\label{eq:Hr,r+2}
\Hcal_{r,r+2} = 
\begin{pmatrix}
x_0 & x_1 & \dots & x_{r+1} \\
x_1 & x_2 & \dots & x_{r+2} \\
\vdots & & & \vdots \\
x_{r-1} & x_{r} & \dots & x_{2r}
\end{pmatrix}
\end{equation}
onde $I$ percorre todos $r$-subconjuntos 
de 
$\{1,2,\dotsc,r+2\}$. Cada um destes menores tem grau $r$, o mesmo grau das derivadas parciais $f_{x_i}$; segue-se
por \eqref{eq-inclusao} que cada derivada parcial pode ser escrita 
como uma combinação linear dos $p_I$'s, de maneira única, uma vez que estes
menores são linearmente independentes%
\footnote{Isto é um fato geral: considere uma matriz de Hankel qualquer $\Hcal=\Hcal_{m,n}(x)$
como em \eqref{eq-matrizhankel}. Então, na ordem lexicográfica, os termos líderes
dos menores maximais de $\Hcal$ são todos distintos entre si e portanto
os menores maximais são linearmente independentes sobre $\C$.
}.
Isto nos leva naturalmente a  considerar o mapa 
\begin{align}
\label{lala}
\psi_r\colon \P^{2r} &\dashrightarrow \P^N\\
q &\mapsto (p_I(q)) \tag*{}
\end{align}
cujas coordenadas são dadas pelos menores maximais
de \eqref{eq:Hr,r+2}, sendo $N=\binom{r+2}{r}-1$. 
Note que o lugar de base deste mapa é exatamente a
secante $\Sec_{r-1} C$ e portanto um esquema reduzido.

Uma vez que é dado por menores maximais, 
a imagem do mapa $\psi_r$ está contida em $\G(r-1,r+1)\hookrightarrow\P^N$, a imagem da Grassmanianna
de $(r-1)$-planos em $\P^{r+1}$ via o mergulho de Pl\"ucker. Como pode-se esperar, tal mapa 
foi considerado classicamente. A propriedade
que necessitamos já aparece em um artigo de J. Semple
de 1931 (\cite{Sem31}); veja também \cite[Proposition~4.4]{RS01}. 
\begin{teo}
\label{teo-Semple}
Para todo $r\geq 1$, o mapa 
$\psi_r\colon\P^{2r}\dashrightarrow \G(r-1,r+1)$ é birracional.
\end{teo} 

Voltando à nossa construção: escrevemos 
as derivadas parciais $f_{x_i}=\sum_{I}a_{i,I}p_I$ em função
dos menores maximais de \eqref{eq:Hr,r+2}  e consideramos o mapa $\P^N\dashrightarrow \P^{2r}$
dado por estas equações: ou seja, a projeção $\pi_{L_r}$ com centro no
subespaço $L_r\subset\P^N$ dado pelos zeros dos $2r+1$ hiperplanos
\begin{equation}
\label{eq-lineares}
\sum_I a_{i,I}x_I=0 \qquad\qquad i=0,1,\dotsc,2r
\end{equation}
onde $x_I$ denotam as coordenadas de $\P^N$.
\begin{obs}
\label{obs-dimLr}
É oportuno observar que $\codim L_r=2r+1$: de fato,
conhecemos os coeficientes $a_{i,I}$ (veja-os em
\eqref{eq:Lr}; é conveniente adiar sua apresentação)
e um momento de reflexão mostra que os hiperplanos
em \eqref{eq-lineares}
são independentes. Em particular, 
a projeção $\pi_{L_r}$ é sobrejetora sobre $\P^{2r}$.
\hfill{$\Box$}
\end{obs}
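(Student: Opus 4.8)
The plan is to read off the linear independence of the $2r+1$ forms in \eqref{eq-lineares} from the isobaric (weight) grading on $\C[x_0,\dotsc,x_{2r}]$ in which the variable $x_\ell$ carries weight $\ell$. First I would record that $f=\det(\Hcal_{r+1,r+1})$ is isobaric of weight $r(r+1)$: every monomial $x_{0+\sigma(0)}\cdots x_{r+\sigma(r)}$ occurring in the determinant has weight $\sum_{i=0}^{r}(i+\sigma(i))=2\binom{r+1}{2}=r(r+1)$. Hence each partial derivative $f_{x_k}$ is isobaric of weight $r(r+1)-k$. On the other hand, a maximal minor $p_I$ of $\Hcal_{r,r+2}$ (with $I$ a choice of $r$ columns) is isobaric of weight $\binom{r}{2}+|I|$, where $|I|$ is the sum of the column indices in $I$.

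The next step uses \eqref{eq-inclusao}: since $J_r\subset I_r$ and the degree-$r$ graded piece of $I_r$ is exactly the linear span of the $p_I$, each $f_{x_k}$ lies in that span. Because the $p_I$ are linearly independent (recall that the maximal minors of a Hankel matrix are, as noted above) and each is isobaric, comparing weights in $f_{x_k}=\sum_I a_{k,I}p_I$ forces only those $p_I$ with $\binom{r}{2}+|I|=r(r+1)-k$ to appear, that is, with $|I|=w_k:=\binom{r}{2}+2r-k$. The promised ``moment of reflection'' is then simply that $w_0,\dotsc,w_{2r}$ are $2r+1$ distinct integers. Consequently the linear forms $\ell_k:=\sum_I a_{k,I}x_I$ have pairwise disjoint supports, the set of variables $x_I$ they involve being separated by the value $|I|$. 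Linear forms with pairwise disjoint supports are linearly independent as soon as each is nonzero, and this yields $\codim L_r=2r+1$, whence $\pi_{L_r}$ is surjective onto $\P^{2r}$.

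The one point genuinely needing care is therefore that no $f_{x_k}$ vanishes identically, i.e.\ that the cofactors along a fixed anti-diagonal of $\Hcal_{r+1,r+1}$ do not cancel. I would settle this by noting that $\Sec_r C$ is not a cone: by Proposition \ref{prop-projveronese} its dual is the projected Veronese $\nu_2(\P^r)$, which (being a surjective linear projection of a nondegenerate variety) is nondegenerate in $(\P^{2r})^*$, so $f$ involves every variable $x_k$ and $f_{x_k}\neq 0$ for all $k$. An alternative that bypasses the nonvanishing entirely is available: since $\phi_r=\pi_{L_r}\circ\psi_r$ with $\psi_r$ dominant onto $\G(r-1,r+1)$ (Theorem \ref{teo-Semple}) and $\phi_r$ itself dominant, any relation $\sum_k c_k\ell_k=0$ would push $\phi_r(\P^{2r})$ into the hyperplane $\{\sum_k c_k y_k=0\}\subset\P^{2r}$, contradicting dominance. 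I expect the weight bookkeeping to be the heart of the matter, since it is precisely what exhibits the disjoint supports; the dominance argument then serves only as a clean fallback for the nonvanishing of the individual forms.
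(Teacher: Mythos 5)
Your argument is correct, and it lands on the same structural fact that the paper's one-line "moment of reflection" rests on: the $2r+1$ forms in \eqref{eq-lineares} have pairwise disjoint supports, the variables $x_I$ being grouped by the anti-diagonal (equivalently, by the weight $|I|$) on which they sit. But you reach this by a genuinely different route. The paper simply quotes the explicit coefficients $a_{i,I}$ from \eqref{eq:Lr}, computed by Mostafazadehfard (\cite{Mo14}, \cite{MS}): there each equation involves only the $q_{ij}$ with $i+j=p$ for a fixed $p$, with visibly nonzero coefficients $j-i$, so both the disjointness of supports and the nonvanishing of each form are read off by inspection. You instead derive the disjoint-support structure \emph{a priori} from the isobaric grading (your weight bookkeeping $w_k=\binom{r}{2}+2r-k$ checks out, and it is exactly the grouping $i+j=p$ of \eqref{eq:Lr} in disguise), which requires no knowledge of the coefficients whatsoever; the price is that you must separately establish $f_{x_k}\neq 0$, which you do via the nondegeneracy of $(\Sec_r C)^*\cong\nu_2(\P^r)$ from Proposition~\ref{prop-projveronese} --- a legitimate, non-circular use of Chapter 2 material, since that proposition is proved before and independently of this chapter. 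What each approach buys: the paper's is essentially free, because the formulas \eqref{eq:Lr} are needed anyway in the proof of Lemma~\ref{intersecao-vazia}; yours is self-contained, explains conceptually \emph{why} the supports must be disjoint, and would survive even if one did not have the explicit coefficients at hand.

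One caveat on your fallback. Within this thesis, dominance of $\phi_r$ (Corollary~\ref{cor-grad-dominante}) is \emph{deduced from} this very remark together with Lemma~\ref{intersecao-vazia} and Theorem~\ref{teo-Semple}, so invoking "$\phi_r$ itself dominant" to prove the remark is circular unless you cite the independent Hessian argument of \cite[Proposition~3.3.11]{Mo14}, which the paper does mention. Note also that this fallback, once legitimized, proves more than nonvanishing: a relation $\sum_k c_k\ell_k=0$ is the same as $\sum_k c_k f_{x_k}=0$, which confines the image of $\phi_r$ to a hyperplane, so it yields the full independence directly and makes the weight argument unnecessary --- all the more reason to keep the weight argument plus duality as your primary, circularity-free proof.
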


Fixemos a notação $\G_r := \G(r-1,r+1)$. Temos então um diagrama comutativo
\begin{equation}
\label{eq-fatoracao}
\xymatrix{ 
& \G_r \ar@{-->}[dr]^{\pi_r} \\ \P^{2r}\ar@{-->}[ru]^{\psi_r} \ar@{-->}[rr]^{\phi_r}& & \mathbb{P}^{2r} }
\end{equation}
onde $\pi_r :=\pi_{L_r}|_{\G_r}$ é a restrição da projeção. Em outras palavras, o mapa gradiente se fatora pela Grassmanniana. 

Do Lema~\ref{intersecao-vazia} abaixo segue-se que $\pi_r$ é dominante e em particular que
o mapa $\phi_r$ é sempre dominante (M. Mostafazadehfard 
\cite[Proposition~3.3.11]{Mo14} provou diretamente que $\phi_r$ é dominante via
um argumento sobre a Hessiana da equação que define $\Sec_rC$).
Aqui é natural indagar: \emph{o mapa gradiente
 $\phi_r$ é birracional?} Para $r=1$ é um isomorfismo linear e logo a resposta é sim. Para $r=2$,
M. Mostafazadehfard e A. Simis mostraram, algebricamente,
que $\phi_2$
\underline{não é} birracional (\cite{MS}). Mais ainda, 
propuseram a seguinte:

\begin{conjectura}[{\cite[Conjecture~3.18]{MS}}]
\label{conj-Maral-Aron}
Para $r\geq 2$, a hipersuperfície $\Sec_r C$ não
é homaloidal. Em outras palavras, o mapa  $\phi_r\colon \P^{2r}\dashrightarrow\P^{2r}$ não é birracional.
\end{conjectura}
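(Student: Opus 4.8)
O plano \'e reduzir a afirma\c{c}\~ao ao c\'alculo do grau topol\'ogico de $\phi_r$, verificando que esse grau \'e o $r$-\'esimo n\'umero de Catalan $\frac{1}{r+1}\binom{2r}{r}$, que \'e estritamente maior do que $1$ para todo $r\geq 2$. Como uma aplica\c{c}\~ao racional dominante $\phi_r\colon\P^{2r}\dashrightarrow\P^{2r}$ \'e birracional se, e somente se, seu grau topol\'ogico \'e $1$, isso estabelece a conjectura. Observo de sa\'ida que, para este prop\'osito, basta calcular o grau topol\'ogico, de modo que n\~ao preciso da for\c{c}a total do Teorema~\ref{thm2.9} sobre a coincid\^encia de \emph{todos} os graus projetivos.

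O primeiro passo \'e explorar a fatora\c{c}\~ao $\phi_r=\pi_r\circ\psi_r$ do diagrama~\eqref{eq-fatoracao}, onde $\psi_r\colon\P^{2r}\dashrightarrow\G_r$ \'e dado pelos menores maximais de $\Hcal_{r,r+2}$ e $\pi_r=\pi_{L_r}|_{\G_r}$ \'e a restri\c{c}\~ao de uma proje\c{c}\~ao linear \`a Grassmanianna $\G_r=\G(r-1,r+1)$. Pelo Teorema~\ref{teo-Semple}, o mapa $\psi_r$ \'e birracional, portanto $\deg(\psi_r)=1$; como o grau topol\'ogico \'e multiplicativo ao longo de composi\c{c}\~oes de mapas racionais dominantes, segue que $\deg(\phi_r)=\deg(\pi_r)$.

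O segundo passo reduz o c\'alculo ao grau da Grassmanianna. Como $\dim\G_r=2r=\dim\P^{2r}$, o mapa $\pi_r$ \'e uma proje\c{c}\~ao linear entre variedades de mesma dimens\~ao. Provo que o centro de proje\c{c}\~ao $L_r\subset\P^N$ n\~ao intersecta $\G_r$ (este \'e o conte\'udo do Lema~\ref{intersecao-vazia}); disso decorre que $\pi_r$ \'e um morfismo finito e sobrejetor cujo grau topol\'ogico coincide com o grau de $\G_r$ na imers\~ao de Pl\"ucker. Por dualidade em $\P^{r+1}$ temos $\G(r-1,r+1)\cong\G(1,r+1)$, cujo grau \'e classicamente o n\'umero de Catalan; logo $\deg(\pi_r)=\deg\G_r=\frac{1}{r+1}\binom{2r}{r}$, e portanto $\deg(\phi_r)=\frac{1}{r+1}\binom{2r}{r}$, que \'e precisamente a f\'ormula do Corol\'ario~\ref{thm3}. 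Como $\frac{1}{r+1}\binom{2r}{r}\geq 2$ sempre que $r\geq 2$, conclui-se que $\phi_r$ n\~ao \'e birracional.

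A principal dificuldade reside na disjun\c{c}\~ao $L_r\cap\G_r=\emptyset$. O ponto delicado \'e que $L_r$ \'e um centro de proje\c{c}\~ao \emph{especial}, e n\~ao gen\'erico: caso ele encontrasse $\G_r$, o grau de $\pi_r$ poderia cair abaixo de $\deg\G_r$ e toda a conta colapsaria. Para contornar isso, pretendo usar a descri\c{c}\~ao expl\'icita dos coeficientes $a_{i,I}$ que definem os hiperplanos de $L_r$ em \eqref{eq:Lr}, combinada com as rela\c{c}\~oes de Pl\"ucker que cortam $\G_r$, reduzindo a exist\^encia de um ponto em $L_r\cap\G_r$ a um sistema que admite apenas a solu\c{c}\~ao trivial. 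Tal argumento \'e, em esp\'irito, an\'alogo ao empregado no Lema~\ref{lema-centroproj1} para o centro dual da variedade de Veronese.
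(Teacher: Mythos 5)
Sua proposta est\'a correta e segue essencialmente o mesmo caminho da tese: fatorar $\phi_r=\pi_r\circ\psi_r$ pela Grassmanianna (diagrama~\eqref{eq-fatoracao}), invocar a birracionalidade de $\psi_r$ (Teorema~\ref{teo-Semple}) e a disjun\c{c}\~ao $L_r\cap\G_r=\emptyset$ (Lema~\ref{intersecao-vazia}), e concluir que $\deg(\phi_r)=\deg(\pi_r)=\deg\G_r=\frac{1}{r+1}\binom{2r}{r}>1$ para $r\geq 2$ --- que \'e exatamente o primeiro argumento dado na prova do Corol\'ario~\ref{thm3}. Sua observa\c{c}\~ao de que n\~ao se precisa da for\c{c}a total do Teorema~\ref{thm2.9} (a coincid\^encia de \emph{todos} os graus projetivos) tamb\'em est\'a de acordo com o texto, que apresenta esse argumento direto antes da alternativa via o \'ultimo grau projetivo.
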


Interpretando em termos do grau do mapa gradiente:
o fato de que $\phi_r$ é dominante nos diz que
$\deg(\phi_r)>0$; vale que $\deg(\phi_1)=1$; e a 
Conjectura~\ref{conj-Maral-Aron} afirma que $\deg(\phi_r)>1$ para $r\geq 2$.

\medskip
Mostraremos adiante que esta conjectura é verdadeira.
Nossa estratégia consiste em determinar
$\deg(\phi_r)$ para todo $r$. Por sua vez isto
seguirá como consequência de um resultado ainda mais compreensivo (Teorema~\ref{thm2.9}),
surpreendente em um primeiro momento: para qualquer $r$, \emph{todos} 
os graus projetivos do mapa $\phi_r$
e do mapa $\psi_r$ são os mesmos! 
O passo fundamental,
assunto da próxima seção,
consiste do 
feito de que a projeção $\pi_r$ é de fato
um morfismo. Antes, ilustramos nosso método quando $r=2$.

\begin{exem} 
	\label{exemplo-P4}
	Tome $C \subset \P^4$ a quártica racional normal e considere sua secante $\Sec_2 C$,
	dada pelos zeros do polinômio
	\begin{equation*}
	f = \det\Hcal(x) =
	\det \begin{pmatrix}
	x_0 & x_1 & x_2 \\
	x_1 & x_2 & x_3 \\
	x_2 & x_3 & x_4
	\end{pmatrix}.
	\end{equation*}
	Para simplificar o cálculo das derivadas parciais aplicamos a regra 
	da cadeia à composição
	\(
	\A^5\stackrel{\Hcal}{\longrightarrow} \A^9 \stackrel{\det}{\longrightarrow} \C
	\)
	obtendo
	\[
	f_{x_k} = \sum_{i+j=k} m_{ij}
	\]
	onde 
	$m_{ij}$ é $(-1)^{i+j}$ vezes o  $(i,j)$-menor of $\Hcal(x)$.
	Assim:
	\[
	f_{x_0} = 
	\begin{vmatrix} x_2 & x_3\\x_3 & x_4\end{vmatrix},
	\quad
	f_{x_1} = -2
	\begin{vmatrix} x_1 & x_3\\x_2 & x_4\end{vmatrix},
	\quad
	f_{x_2} = 2
	\begin{vmatrix} x_1 & x_2\\x_2 & x_3\end{vmatrix} +
	\begin{vmatrix} x_0 & x_2\\x_2 & x_4\end{vmatrix}, 
	\]
	\[
	f_{x_3} = -2
	\begin{vmatrix} x_0 & x_2\\x_1 & x_3\end{vmatrix},
	\quad
	f_{x_4} = 
	\begin{vmatrix} x_0 & x_1\\x_1 & x_2\end{vmatrix}.
	\]
	Da inclusão em \eqref{eq-inclusao}, cada derivada parcial é uma combinação linear dos menores maximais 
	$p_{ij}$ ($1\leq i<j \leq 4$) da matriz
	\[
	\begin{pmatrix}
	x_0 & x_1 & x_2 & x_3 \\
	x_1 & x_2 & x_3 & x_4
	\end{pmatrix}
	\]
	a saber
	\[
	f_{x_0} = p_{34},
	\quad
	f_{x_1} = -2p_{24},
	\quad
	f_{x_2} = 3p_{23}+p_{14},
	\]
	\[
	f_{x_3} = -2p_{13},
	\quad
	f_{x_4} = p_{12}.
	\]
	Os hiperplanos correspondentes em $\P^5$ são independentes e logo o subespaço $L_2\subset\P^5$ dado pela interseção destes hiperplanos  é um ponto, a saber $q=(0: 0: -3: 1: 0: 0)$. Recorde que
	a Grassmanianna $\G_2=\G(1,3)$ das retas de $\P^3$ é dada pela equação
	\[
	x_{12}x_{34}-x_{13}x_{24}+x_{14}x_{23}=0
	\]
	e portanto $q\not\in \G_2$. Assim, na fatoração 
	 \eqref{eq-fatoracao} do mapa gradiente 
	\begin{equation}
	\xymatrix{ 
		& \G_2 \ar@{->}[dr]^{\pi_2} \\ \P^4\ar@{-->}[ru]^{\psi_2} \ar@{-->}[rr]^{\phi_2}& & \mathbb{P}^4 } 
	\end{equation}
	a projeção
	$\pi_2=\pi_q|_{\G_2}\colon\G_2\to\P^4$ é de fato um morfismo e daí
	$\deg(\pi_2)=\deg\G_2=2$. Finalmente, pelo Teorema~\ref{teo-Semple} o mapa $\psi_2$ é birracional sobre $\G_2$ e portanto
	\[
	\deg(\phi_2) = \deg (\psi_2)\cdot\deg(\pi_2) = 1\cdot 2=2
	\]
	e, em particular, $\phi_2$ não é birracional.
	\hfill{$\Box$}
\end{exem}

\section{Lema sobre o centro de projeção}

Mostramos agora que o  subespaço linear $L_r\subset\P^N$ não intersecta a Grassmanniana $\G_r = \G(r-1,r+1)$. 
Por simplicidade assumimos $r\geq 2$.
Ponha $N={\binom{r+2}{r}-1}$. 

Tomamos em $\P^N$ coordenadas $x_I$ indexadas
por $r$-sequências ordenadas $I=(i_1<i_2<\dotsb<i_r)$ de inteiros  
entre $1$ e $r+2$. Em seguida, estendemos para $r$-sequências
$J$ quaisquer: se $J$ é uma permutação de $I$, então
tomamos $x_J:=\pm x_I$ com o devido sinal; e se $J$ possui elementos repetidos,
fazemos $x_J:=0$. Com estas convenções, as equações
que definem o ideal da Grassmanniana $\G_r\hookrightarrow\P^N$ 
(as \emph{relações de Plücker}) são (\cite[Theorem~1.3, p.\thinspace{}94]{GKZ}):
\begin{equation}
\label{eq:G1}
\sum_{a=1}^{r+1} (-1)^a x_{i_1\dots i_{r-1}j_a} x_{j_1\dots \hat{j_a} \dots j_{r+1}}=0
\end{equation}
para quaisquer duas sequências 
$1\leq i_1 < \dots < i_{r-1}\leq r+2$ e 
$1\leq j_1 < \dots < j_{r+1}\leq r+2$.

Estas expressões podem ser dramaticamente simplificadas 
indexando-se as coordenadas de uma outra forma:
tomando-se o complementar. Dados $i,j\in\{1,\dotsc,r+2\}$ com $i<j$,
definimos
\[
q_{ij}:=x_I \qquad\qquad \text{onde } I=\{i_1<i_2<\dotsb<i_r\}
=\{1,2,\dotsc,r+2\}\setminus\{i,j\}.
\]
Com esta escolha as relações de Plücker para $\G_r$
são simplesmente
\begin{equation}
\label{eqgrass1}
q_{ij}q_{kl}-q_{ik}q_{jl}+q_{il}q_{jk} = 0, 
\qquad\qquad
\text{onde $1\leq i < j < k < l \leq r+2$.}
\end{equation}
e não é surpresa que estas sejam as equações da 
Grassmanianna de retas de $\P^{r+1}$.

Como vimos na discussão em \eqref{eq-lineares}, as equações do centro de projeção
são dadas pelas expressões das derivadas parciais
da equação de $\Sec_r C$ em termos
dos menores que definem o ideal de $\Sec_{r-1} C$. Felizmente
o (árduo) cálculo explícito destas expressões 
foi realizado por M. Mostafazadehfard em sua tese
(\cite[Lemma~3.3.4]{Mo14}) e encontram-se também em \cite[(11)]{MS}; com a nossa 
escolha de índices, as equações para $L_r$ são
\begin{equation}
\label{eq:Lr}
  \sum_{\substack{i+j=p\\ 1\leq i<j}} (j-i)q_{ij} = 0, \qquad\qquad  p=3,4,\dotsc,2r+3.
\end{equation}
Estamos em posição de provar o ponto crucial da nossa construção:
o centro de projeção não intersecta a Grassmanianna.

\begin{lem}
\label{intersecao-vazia}
$L_r \cap \G_r = \emptyset$. Como consequência, o mapa $\pi_r$ em \eqref{eq-fatoracao} é um morfismo.
\end{lem}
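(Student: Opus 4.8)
The plan is to prove that no nonzero point $(q_{ij})\in\P^N$ can satisfy simultaneously the Plücker relations \eqref{eqgrass1} defining $\G_r$ and the linear equations \eqref{eq:Lr} cutting out $L_r$; since a point of $\P^N$ is by definition nonzero, this gives $L_r\cap\G_r=\emptyset$. The strategy mirrors that of Lemma~\ref{lema-centroproj1}: I would organize the coordinates by their \emph{anti-diagonal} index $p=i+j$, which runs over $3,4,\dots,2r+3$, and argue by induction on $p$ that every coordinate $q_{ij}$ with $i+j=p$ vanishes. The induction starts trivially, because the lowest anti-diagonals $p=3$ and $p=4$ each contain a single coordinate ($q_{12}$ and $q_{13}$), whose defining equation in \eqref{eq:Lr} has strictly positive coefficient $j-i$ and hence forces it to be zero.

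For the inductive step, suppose all coordinates on anti-diagonals below $p$ vanish. The key observation is that the Plücker relations restricted to the current anti-diagonal collapse to monomial identities. Given two distinct index pairs $(i_1,p-i_1)$ and $(i_2,p-i_2)$ on anti-diagonal $p$ with $i_1<i_2$, I set $a=i_1$, $b=i_2$, $c=p-i_2$, $d=p-i_1$, so that $a<b<c<d$ and $a+d=b+c=p$. Evaluating \eqref{eqgrass1} for these four indices, the products $q_{ab}q_{cd}$ and $q_{ac}q_{bd}$ drop out, because $a+b<p$ and $a+c<p$ (since $b<d$ and $c<d$) force $q_{ab}=q_{ac}=0$ by the inductive hypothesis; what remains is $q_{ad}q_{bc}=0$. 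Thus any two distinct coordinates on anti-diagonal $p$ have zero product, so at most one of them can be nonzero.

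It then remains to eliminate this last possible survivor, say $q_{i_0,p-i_0}$. The equation from \eqref{eq:Lr} on anti-diagonal $p$ reduces to $(p-2i_0)\,q_{i_0,p-i_0}=0$, and since $i_0<p-i_0$ the weight $p-2i_0=(p-i_0)-i_0$ is strictly positive; hence $q_{i_0,p-i_0}=0$ as well. This closes the induction and yields $L_r\cap\G_r=\emptyset$. The stated consequence is then immediate: the projection $\pi_{L_r}\colon\P^N\dashrightarrow\P^{2r}$ is undefined exactly along its center $L_r$, so its restriction $\pi_r=\pi_{L_r}|_{\G_r}$ appearing in \eqref{eq-fatoracao} has indeterminacy locus $L_r\cap\G_r=\emptyset$ and is therefore a morphism.

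The only genuine obstacle is the bookkeeping in the inductive step: one must check that for every pair of distinct coordinates on a fixed anti-diagonal the chosen indices really satisfy $a<b<c<d$ with $a+d=b+c=p$, and that the two vanishing products sit on \emph{strictly lower} anti-diagonals, so that exactly one Plücker term survives and it is genuinely a product of two current-anti-diagonal coordinates. Once this is verified, everything else reduces to the routine positivity observation that all weights $j-i$ in \eqref{eq:Lr} are positive.
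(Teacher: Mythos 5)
Your proof is correct and follows essentially the same route as the paper's: induction on the anti-diagonal index $p$, using the Pl\"ucker relations \eqref{eqgrass1} together with the inductive hypothesis to reduce each relation to the vanishing of the product of two distinct coordinates on the current anti-diagonal, and then the strict positivity of the weights $j-i$ in \eqref{eq:Lr} to eliminate the last possible nonzero coordinate. The only cosmetic difference is the index bookkeeping (your $a<b<c<d$ with $a+d=b+c=p$ versus the paper's $i<k<l<j$) and that the paper packages the final step as a general remark about sequences with pairwise-zero products satisfying one linear relation with nonzero coefficients.
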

\begin{proof}
Suponha que exista um ponto $(q_{ij}) \in \P^N$ na interseção entre
$L_r$ e $\G_r$.
Mostraremos que cada uma das coordenadas $q_{ij}$ deve ser nula
e logo tal ponto não existe.
Argumentaremos por indução na soma $p$ dos índices das coordenadas.
 
Para $p=3,4$, segue diretamente de \eqref{eq:Lr}
que $q_{12} = q_{13} = 0$.
Tomemos $p>4$ e suponhamos que $q_{ij}=0$ sempre que $i+j<p$. 
Considere o conjunto dos pares cuja soma das entradas é $p$:
\[
P = \{(i, j)\mid  1 \leq i < j \text{ e } 
i+j=p\}.
\]
Repare que $P$ tem pelo menos dois elementos. Afirmamos que
\begin{equation}
\label{eq:ij.kl}
q_{ij}\cdot q_{kl}=0  \qquad\qquad \text{para quaisquer\ } (i,j)\neq(k,l)\text{ em } P.
\end{equation}
De fato, tome pares $(i,j)\neq(k,l)$ em $P$. Como $i+j=k+l=p$,
segue-se que $i, j, k, l$ 
são distintos entre si. Permutando os pares se necessário,
podemos supor $i<k$ e portanto $i<k<l<j$.
Considere a equação correspondente em 
(\ref{eqgrass1}):
\begin{equation*}
q_{ik}q_{lj}-q_{il}q_{kj}+q_{ij}q_{kl} = 0.
\end{equation*}
Como $i+k<i+j=p$ e $i+l<k+l=p$, segue da hipótese de indução que 
$q_{ik}=q_{il}=0$ e daí, pela equação acima,  deduzimos 
$q_{ij}q_{kl} = 0$, o que prova a afirmação em \eqref{eq:ij.kl}.

\smallskip
Como nosso ponto também pertence ao centro de projeção $L_r$,
suas coordenadas satisfazem as equações 
\eqref{eq:Lr}; temos portanto um sistema
\begin{equation*}
\begin{cases}
q_{ij}q_{kl}=0, \qquad \forall (i, j)\neq(k, l)\in P\\
 \sum_{(i,j)\in P} (j-i)q_{ij}=0 
\end{cases}
\end{equation*}
donde decorre $q_{ij}=0$ para todo $(i,j)\in P$:
basta observar que dadas duas sequências 
$t_1,\dotsc, t_s$ e $a_1, \dotsc, a_s$,
 de números complexos tais que $a_i\neq 0$ para todo $i$, 
as relações
\[
\begin{cases}
t_it_j=0, \qquad\text{para todo $i\neq j$}\\
\sum_{i=1}^s a_it_i=0 
\end{cases}
\]
implicam em $t_1 = \dotsb = t_s = 0$.
\end{proof}
De imediato  obtemos um resultado de interesse:

\begin{cor}
	\label{cor-grad-dominante}
	Para todo $r\geq 1$, o mapa gradiente $\phi_r$ é dominante.
\end{cor}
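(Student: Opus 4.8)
O plano \'e reduzir a domin\^ancia de $\phi_r$ \`a de $\pi_r$ atrav\'es da fatora\c{c}\~ao~\eqref{eq-fatoracao}. Como $\phi_r=\pi_r\circ\psi_r$ e, pelo Teorema~\ref{teo-Semple}, o mapa $\psi_r$ \'e birracional (logo dominante), basta mostrar que $\pi_r\colon\G_r\dashrightarrow\P^{2r}$ \'e dominante. O Lema~\ref{intersecao-vazia} fornece $L_r\cap\G_r=\emptyset$, de modo que $\pi_r=\pi_{L_r}|_{\G_r}$ \'e, de imediato, um morfismo definido em toda a Grassmanniana.

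A observa\c{c}\~ao central \'e que uma proje\c{c}\~ao linear restrita a uma subvariedade disjunta do seu centro \'e finita sobre a imagem. Mais precisamente: se $X\subset\P^N$ \'e fechada e $L\subset\P^N$ \'e um subespa\c{c}o linear com $X\cap L=\emptyset$, ent\~ao $\pi_L|_X$ \'e um morfismo finito. De fato, a fibra de $\pi_L$ sobre um ponto geral \'e (o complementar de $L$ em) um subespa\c{c}o linear $M\supset L$ com $\dim M=\dim L+1$, e a fibra correspondente de $\pi_L|_X$ \'e $X\cap M$; ora, $X\cap M$ \'e uma subvariedade projetiva de $M\cong\P^{\dim L+1}$ disjunta do hiperplano $L\subset M$, e como toda variedade projetiva de dimens\~ao positiva intersecta qualquer hiperplano, segue que $X\cap M$ \'e finita. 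Aplicando isto a $X=\G_r$ e $L=L_r$, conclu\'imos que $\pi_r$ \'e finita sobre sua imagem.

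Como morfismos finitos preservam dimens\~ao, teremos $\dim\pi_r(\G_r)=\dim\G_r$. Resta ent\~ao um c\'alculo de dimens\~oes: $\G_r=\G(r-1,r+1)$ tem dimens\~ao $\big((r-1)+1\big)\big((r+1)-(r-1)\big)=2r$, que coincide com $\dim\P^{2r}$. Sendo $\pi_r(\G_r)$ fechada (pois $\G_r$ \'e completa), irredut\'ivel e de dimens\~ao $2r$ dentro de $\P^{2r}$, devemos ter $\pi_r(\G_r)=\P^{2r}$; logo $\pi_r$ \'e sobrejetora, em particular dominante, e portanto $\phi_r=\pi_r\circ\psi_r$ \'e dominante. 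Para $r=1$ nada h\'a a fazer, pois $\phi_1$ \'e um isomorfismo linear. O \'unico ponto delicado \'e a finitude da proje\c{c}\~ao a partir de um centro disjunto do seu dom\'inio, j\'a garantida pelo Lema~\ref{intersecao-vazia}; uma vez estabelecida, a igualdade $\dim\G_r=\dim\P^{2r}=2r$ for\c{c}a a sobrejetividade e o restante \'e formal.
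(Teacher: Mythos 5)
Sua demonstração está correta, e o esqueleto é o mesmo do texto (fatoração~\eqref{eq-fatoracao}, birracionalidade de $\psi_r$ via o Teorema~\ref{teo-Semple}, e o Lema~\ref{intersecao-vazia}), mas o passo-chave é genuinamente diferente. O texto argumenta diretamente que \emph{toda} fibra de $\pi_r$ é não-vazia: pela Observação~\ref{obs-dimLr}, o fecho de $\pi_{L_r}^{-1}(p)$ é um subespaço linear de codimensão $2r$ em $\P^N$, e como $\dim\G_r=2r$, o teorema da dimensão para interseções em $\P^N$ garante $\G_r\cap\pi_{L_r}^{-1}(p)\neq\emptyset$; a disjunção $\G_r\cap L_r=\emptyset$ assegura que essa interseção está de fato na fibra, donde $\pi_r$ é sobrejetora. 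Você, em vez disso, usa a disjunção do centro para concluir que $\pi_r$ tem fibras \emph{finitas} (cada fibra é $\G_r\cap M$ com $M\cong\P^{\dim L_r+1}$ evitando o hiperplano $L_r\subset M$), e daí que $\pi_r$ preserva dimensão, forçando $\pi_r(\G_r)=\P^{2r}$ por ser fechada, irredutível e de dimensão $2r$. Curiosamente, os dois argumentos usam a mesma ferramenta (variedade projetiva de dimensão positiva intersecta todo subespaço linear de codimensão complementar) em direções opostas: o texto para obter fibras não-vazias, você para obter fibras finitas. O argumento do texto é mais curto e dá a sobrejetividade de uma vez; o seu dá uma informação mais forte — $\pi_r$ é um morfismo finito e sobrejetor — que, aliás, é exatamente o que sustenta o cálculo $\deg(\phi_r)=\deg\G_r$ no Corolário~\ref{thm3}. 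Um reparo menor: a rigor você estabelece que as fibras são finitas (quase-finitude); a finitude como morfismo segue porque $\G_r$ é completa e $\pi_r$ é própria, mas para preservar dimensão a quase-finitude já basta, então não há lacuna.
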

\begin{proof}
Dado um ponto $p\in \P^{2r}$ qualquer, segue da Observação~\ref{obs-dimLr} que a pré-imagem
$\pi_{L_r}^{-1}(p)$ tem codimensão $2r$. Como a Grassmanianna  $\G_r$ tem dimensão $2r$ e é disjunta do centro de projeção $L_r$ pelo Lema~\ref{intersecao-vazia}, vem que a fibra $\pi_r^{-1}(p)$ é não-vazia. O resultado segue então da
fatoração~\eqref{eq-fatoracao} do mapa gradiente e do
Teorema~\ref{teo-Semple}.
\end{proof}

\section{Graus projetivos}

Estamos prontos para o resultado principal deste capítulo. Depois da nossa longa preparação, a demonstração é curta.

\begin{thm}
	\label{thm2.9}
	Os graus projetivos dos mapas $\phi_r$ e $\psi_r$ coincidem.
\end{thm}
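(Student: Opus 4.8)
The plan is to read off both families of projective degrees as intersection numbers on a single model dominating $\P^{2r}$, and then to observe that the two relevant pullback classes literally coincide. Concretely, I would first resolve the indeterminacy of $\psi_r$: blowing up $\P^{2r}$ along the base scheme of $\psi_r$ (supported on $\Sec_{r-1}C$) produces a birational morphism $\beta\colon W\to\P^{2r}$ together with a morphism $\wt\psi\colon W\to\G_r\subseteq\P^N$ satisfying $\psi_r\circ\beta=\wt\psi$. The point of Lemma~\ref{intersecao-vazia} is that $\pi_r$ is an honest morphism, so the composite $\wt\phi:=\pi_r\circ\wt\psi\colon W\to\P^{2r}$ is again a morphism; by the factorization~\eqref{eq-fatoracao} it satisfies $\phi_r\circ\beta=\wt\phi$, so the very same $W$ also resolves $\phi_r$. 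Since $\beta$ is birational, $(\beta,\wt\psi)$ and $(\beta,\wt\phi)$ map $W$ birationally onto the graphs of $\psi_r$ and $\phi_r$, and hence Definition~\ref{def-graus-projetivos} gives, writing $h=\beta^*\Ocal_{\P^{2r}}(1)$,
\[
d_i(\psi_r)=\int_W \big(\wt\psi^*\Ocal_{\P^N}(1)\big)^i\cdot h^{\,2r-i},
\qquad
d_i(\phi_r)=\int_W \big(\wt\phi^*\Ocal_{\P^{2r}}(1)\big)^i\cdot h^{\,2r-i},
\]
for $i=0,\dots,2r$.

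The heart of the matter is then the single identity $\wt\phi^*\Ocal_{\P^{2r}}(1)=\wt\psi^*\Ocal_{\P^N}(1)$ in $\operatorname{Pic}(W)$. This is where the geometry enters: $\pi_r=\pi_{L_r}|_{\G_r}$ is the restriction of a \emph{linear} projection, whose defining equations are linear forms on $\P^N$, so $\pi_{L_r}^*\Ocal_{\P^{2r}}(1)=\Ocal_{\P^N}(1)$ away from the centre $L_r$; and by Lemma~\ref{intersecao-vazia} we have $L_r\cap\G_r=\emptyset$, so this identity restricts to $\pi_r^*\Ocal_{\P^{2r}}(1)=\Ocal_{\G_r}(1)=\Ocal_{\P^N}(1)|_{\G_r}$ on all of $\G_r$. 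Pulling back along $\wt\psi$, which factors through $\G_r$, yields
\[
\wt\phi^*\Ocal_{\P^{2r}}(1)=\wt\psi^*\pi_r^*\Ocal_{\P^{2r}}(1)=\wt\psi^*\Ocal_{\G_r}(1)=\wt\psi^*\Ocal_{\P^N}(1),
\]
and substituting this into the two integrals above gives $d_i(\phi_r)=d_i(\psi_r)$ for every $i$, which is the assertion.

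I expect the only real obstacle to be the bookkeeping in the first paragraph: one must check that blowing up the base scheme of $\psi_r$ simultaneously resolves $\phi_r$ (this is exactly what Lemma~\ref{intersecao-vazia} buys us, since it makes $\pi_r$ a morphism), and that the projective degrees of Definition~\ref{def-graus-projetivos} are correctly computed on $W$ rather than on the minimal resolution of each map — which is legitimate because $\beta$ is birational, so intersection numbers of pulled-back line bundles on $W$ agree with those on the graph. Note that the birationality of $\psi_r$ (Theorem~\ref{teo-Semple}) is \emph{not} needed for the equality of the degrees themselves; it enters only afterwards, when one uses $d_i(\psi_r)=\deg\overline{\psi_r(\P^i)}$ to extract the closed formulas. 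Once this framework is in place the argument is genuinely short, exactly as the text promises, since the crucial linearity identity requires no transversality or excess-intersection analysis.
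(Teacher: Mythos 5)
Your proof is correct, and it runs on the same two geometric inputs as the paper's — the factorization \eqref{eq-fatoracao} and Lema~\ref{intersecao-vazia} — but the concluding step is genuinely different. The paper stays at the level of preimages of linear subspaces: for a general $\P^{2r-j}\subset\P^{2r}$ it writes $\phi_r^{-1}(\P^{2r-j})=\psi_r^{-1}(\P^{N-j}\cap\G_r)$, where $\P^{N-j}=\pi_{L_r}^{-1}(\P^{2r-j})$ is a codimension-$j$ linear space (via Observa\c{c}\~ao~\ref{obs-dimLr}) that \emph{contains} $L_r$, hence is not general in $\P^N$; it must then argue, via Bertini (or Kleiman transversality, per the remark following the proof), that such special linear sections still compute $d_j(\psi_r)$ — the one delicate point of that argument. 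You replace this general-position step by an intersection-theoretic one: both families of degrees become intersection numbers on a common resolution $W$, and everything reduces to the Picard-group identity $\pi_r^*\Ocal_{\P^{2r}}(1)=\Ocal_{\P^N}(1)|_{\G_r}$, which is immediate because the linear forms defining $\pi_{L_r}$ have no common zero on $\G_r$, i.e., exactly Lema~\ref{intersecao-vazia}. What your route buys: no transversality or genericity discussion whatsoever, since intersection numbers of pulled-back classes on a fixed $W$ require no choice of representatives; and, as you correctly note, it makes visible that Teorema~\ref{teo-Semple} is not needed for the equality of degrees — the paper invokes it to identify the closure of the image of $\psi_r$ with $\G_r$, but your argument only needs the image to lie \emph{inside} $\G_r$, away from $L_r$. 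What the paper's version buys is brevity and the comfort of never leaving $\P^{2r}$ and $\P^N$. Your bookkeeping is also sound: the blow-up of the base scheme resolves $\psi_r$; the composite $\pi_r\circ\wt\psi$ is a morphism agreeing with $\phi_r\circ\beta$ on a dense open set, so $W$ resolves $\phi_r$ as well; and since $\beta$ has degree one, $(\beta,\wt\phi)$ maps $W$ birationally onto the graph of $\phi_r$, so the projection formula lets you read the degrees off $W$.
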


\begin{proof} Como observamos após a Definição~\ref{def-graus-projetivos}, o $j$-ésimo grau projetivo 
$d_j(\phi_r)$ é obtido tomando-se o  grau da
pré-imagem de um  $\P^{2r-j}\subset\P^{2r}$ geral via $\phi_r$. Analogamente, obtemos $d_j(\psi_r)$ 
tomando o grau de $\psi_r^{-1}(\P^{N-j}\cap\G_r)$,
uma vez que, pelo Teorema~\ref{teo-Semple}, a Grassmanianna $\G_r$ é o fecho da imagem deste mapa.

Tome um subespaço linear $\P^{2r-j}\subset\P^{2r}$ qualquer. 
Da Observação~\ref{obs-dimLr} vem que a pré-imagem pela projeção $\pi_{L_r}$ tem codimensão $j$ em $\P^N$. Como a grassmanianna $\G_r$ é disjunta do centro de projeção $L_r$ pelo Lema~\ref{intersecao-vazia}, a pré-imagem pelo mapa $\pi_r$ é da forma 
$\pi_r^{-1}(\P^{2r-j})=\P^{N-j}\cap \G_r$. Por outro lado, da fatoração 
em \eqref{eq-fatoracao}
\[
\phi_r^{-1}(\P^{2r-j}) = 
(\pi_r\circ\psi_r)^{-1}(\P^{2r-j}) = 
\psi_r^{-1}(\pi_r^{-1}(\P^{2r-j})) = 
\psi_r^{-1}(\P^{N-j}\cap\G_r).
\]
Agora, se tomamos $\P^{2r-j}$ geral, segue de sucessivas aplicações do Teorema de Bertini 
que a interseção $\P^{N-j}\cap\G_r$ é transversal, uma vez que $\pi_r$ não possui pontos de base. 
Isto é suficiente para concluir que os graus projetivos dos mapas $\phi_r$ e $\psi_r$ são os mesmos.
\end{proof}

\begin{rmk}
Na demonstração do Teorema~\ref{thm2.9} há uma alternativa para a utilização do Teorema de Bertini. O Teorema de Transversalidade de Kleiman \cite[Corollary~4]{Kle74} trata, sob certa hipóteses, sobre a transversalidade da interseção de dois subesquemas de um esquema algébrico integral. Fixe $j \in \{0,\dotsc,2r\}$. Considere a variedade ambiente
$\P^N$ e os subesquemas $\G_r$ e $Y$ um elemento qualquer de
\begin{align*}
\Lcal_j & :=
 \{\text{espaços lineares de codimensão $j$ em $\P^N$ contendo $L_r$}\} \\
& = \{\text{fecho das pré-imagens por $\pi_{L_r}$ dos espaços lineares de codimensão $j$ em $\P^{2r}$}\}.
\end{align*}
Note que o grupo $G:= \{g\in \text{Aut}(\P^N);\  g(L_r)=L_r\}$ age em $\Lcal_j$. Mais ainda, como $G$ age transitivamente em $\P^N$, 
 segue do Teorema de Transversalidade que para um elemento geral $g\in G$, a interseção $(gY)\cap \G_r$ é transversal.
 \hfill{$\Box$}
\end{rmk}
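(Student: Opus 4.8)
The plan is to supply, as an alternative to Bertini in the proof of Theorem~\ref{thm2.9}, Kleiman's transversality \cite[Corollary~4]{Kle74} as the source of the needed fact: for a general $Y\in\Lcal_j$ the scheme $Y\cap\G_r$ is reduced of pure dimension $2r-j$. The key idea is to run Kleiman not on $\P^N$ itself but on the open orbit $U:=\P^N\setminus L_r$, and then pass from ``general translate'' to ``general $Y$'' via the transitivity of $G$ on $\Lcal_j$.

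First I would record the group theory. Write $\P^N=\P(V)$ and let $\wh{L}_r\subset V$ be the cone over $L_r$. The stabilizer $G=\{g\in\mathrm{Aut}(\P^N):g(L_r)=L_r\}$ is the stabilizer of a linear subspace, hence a maximal parabolic subgroup of $\mathrm{PGL}_{N+1}$ and in particular connected. Its action on $\P^N$ has exactly the two orbits $L_r$ and $U$, and it is transitive on the open orbit $U$; this is what the remark's phrase ``$G$ acts transitively on $\P^N$'' should be read to mean. Separately, identifying $\Lcal_j$ through $\pi_{L_r}$ with the Grassmannian of codimension-$j$ linear subspaces of $\P^{2r}=\P(V/\wh{L}_r)$, the action of $G$ on $\Lcal_j$ factors through the natural surjection $G\twoheadrightarrow\mathrm{Aut}(\P^{2r})$ coming from the quotient $V\to V/\wh{L}_r$; since $\mathrm{Aut}(\P^{2r})$ acts transitively on that Grassmannian, so does $G$, and $\Lcal_j$ is a single $G$-orbit.

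The heart of the argument is the application on $U$. Here I would invoke Lemma~\ref{intersecao-vazia}: since $L_r\cap\G_r=\emptyset$, the Grassmannian $\G_r$ lies entirely inside $U$, and therefore so does every intersection $Y\cap\G_r$. Thus I may regard $\G_r\hookrightarrow U$ and $Y\cap U\hookrightarrow U$ as smooth subvarieties of the homogeneous space $U$ — smooth because $\G_r$ is a smooth Grassmannian and $Y\cap U$ is open in a linear space. Over $\C$, Kleiman's theorem then provides a dense open set of $g\in G$ for which $g(Y\cap U)$ meets $\G_r$ transversally, the intersection being empty or of the expected pure dimension
\[
\dim\G_r+\dim(Y\cap U)-\dim U=2r+(N-j)-N=2r-j.
\]
Because $g$ preserves $U$ and $\G_r\subset U$, this intersection is exactly $gY\cap\G_r$; hence $gY\cap\G_r$ is reduced of dimension $2r-j$, i.e. transversal.

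Finally I would convert this into a statement about a general member of $\Lcal_j$. Transversality is an open condition on $\Lcal_j$, and by the transitivity of $G$ the translates $\{gY\}_{g\in G}$ of a fixed $Y$ already exhaust $\Lcal_j$, with the ``general $g$'' locus mapping onto a dense open subset; combining this with the previous step yields transversality of $Y\cap\G_r$ for general $Y\in\Lcal_j$, which is precisely the input the proof of Theorem~\ref{thm2.9} drew from Bertini. The main obstacle — really the only subtle point — is that $G$ is \emph{not} transitive on all of $\P^N$ (both $Y$ and its subspace $L_r$ are $G$-invariant), so Kleiman cannot be applied to the pair $(\G_r,Y)$ inside $\P^N$ directly; the remedy is to restrict the whole configuration to the open orbit $U$, which is legitimate exactly because Lemma~\ref{intersecao-vazia} confines all intersection points to $U$. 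The remaining verifications are routine: connectedness of $G$ (for Kleiman's hypothesis), characteristic zero (to upgrade ``dimensionally proper'' to ``transversal and reduced''), and the smoothness of $\G_r$ and of $Y$.
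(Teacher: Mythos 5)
Your proposal is correct and follows the same Kleiman-transversality route that the remark sketches, but it does more than reproduce it: it repairs a genuine imprecision in the remark's wording. As written, the remark asserts that $G=\{g\in\mathrm{Aut}(\P^N):\ g(L_r)=L_r\}$ acts transitively on $\P^N$, which is literally false --- $L_r$ is $G$-invariant, so the orbits are exactly $L_r$ and $U=\P^N\setminus L_r$ --- and Kleiman's Corollary~4 requires the ambient variety to be homogeneous. Your fix, restricting the whole configuration to the open orbit $U$, is precisely the right one, and it is legitimate for exactly the reason you give: Lemma~\ref{intersecao-vazia} guarantees $\G_r\subset U$, so $gY\cap\G_r=g(Y\cap U)\cap\G_r$ and nothing is lost by discarding $L_r$. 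Your verification of the remaining hypotheses (connectedness of the parabolic stabilizer $G$, smoothness of $\G_r$ and of the open piece $Y\cap U$ of the linear space $Y$, characteristic zero) is sound, as is the expected-dimension count $2r+(N-j)-N=2r-j$ and the surjection $G\twoheadrightarrow\mathrm{Aut}(\P^{2r})$ induced by $V\to V/\wh{L}_r$, which gives transitivity of $G$ on $\Lcal_j$. The final step --- converting ``transversal for general $g\in G$'' into ``transversal for a general member $Y\in\Lcal_j$'', which is what the proof of Theorem~\ref{thm2.9} actually consumes --- is likewise left implicit in the remark, and your argument via the orbit map $g\mapsto gY$ (the image of a dense open set is constructible and dense, hence contains a dense open set) closes it correctly.
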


Apresentamos agora algumas consequências importantes do Teorema~\ref{thm2.9}.

\begin{cor}\label{thm3}
	Sejam $C\subset \P^{2r}$ a curva racional normal e $\phi_r\colon \P^{2r} \dashrightarrow \P^{2r}$ o mapa gradiente da hipersuperfície $\Sec_r C$. Então:
	\[
	\deg(\phi_r) = \deg\G_r = \frac{1}{r+1}\binom{2r}{r}.
	\]
	Em particular $\phi_r$ não é birracional para $r\geq2$ e por conseguinte a Conjectura~\ref{conj-Maral-Aron} é verdadeira.
\end{cor}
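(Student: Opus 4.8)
The plan is to combine the factorization $\phi_r=\pi_r\circ\psi_r$ of \eqref{eq-fatoracao} with the two structural facts already in hand, namely Theorem~\ref{thm2.9} and Theorem~\ref{teo-Semple}, and then reduce everything to the classical degree of a Grassmannian. First I would observe that, since $\phi_r$ is dominant by Corollary~\ref{cor-grad-dominante}, its topological degree $\deg(\phi_r)$ coincides with its top projective degree $d_{2r}(\phi_r)$: as remarked after Definition~\ref{def-graus-projetivos}, for a dominant self-map of $\P^{2r}$ the image has degree $1$, so $d_{2r}$ counts exactly the points of a general fibre.

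Next I would invoke Theorem~\ref{thm2.9}, which gives $d_j(\phi_r)=d_j(\psi_r)$ for all $j$ and in particular $d_{2r}(\phi_r)=d_{2r}(\psi_r)$. To evaluate $d_{2r}(\psi_r)$ I would use Theorem~\ref{teo-Semple}: the map $\psi_r\colon\P^{2r}\dashrightarrow\G_r$ is birational onto $\G_r=\G(r-1,r+1)$, so its general fibre is a single reduced point and $\overline{\Image(\psi_r)}=\G_r$. By the second remark after Definition~\ref{def-graus-projetivos} this yields $d_{2r}(\psi_r)=1\cdot\deg\G_r=\deg\G_r$, and therefore $\deg(\phi_r)=\deg\G_r$.

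The only remaining computation is the Pl\"ucker degree of $\G_r$. By duality $\G(r-1,r+1)\cong\G(1,r+1)$ is the Grassmannian of lines in $\P^{r+1}$, whose degree is a classical Schubert-calculus number, namely the count of lines meeting $2r$ general linear subspaces of complementary dimension, and its standard evaluation gives $\deg\G_r=\frac{1}{r+1}\binom{2r}{r}$, the $r$-th Catalan number. I expect this final step to be the only one demanding real attention, though it is entirely classical; all the genuine difficulty has already been absorbed into Theorem~\ref{thm2.9} and the disjointness Lemma~\ref{intersecao-vazia} underlying it.

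To conclude the conjecture, I would note that the Catalan numbers $2,5,14,\dots$ satisfy $\frac{1}{r+1}\binom{2r}{r}>1$ for every $r\geq 2$; hence $\deg(\phi_r)>1$ and $\phi_r$ is not birational, which is exactly Conjecture~\ref{conj-Maral-Aron}.
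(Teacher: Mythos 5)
Your proof is correct and follows essentially the same route as the paper: the paper's own (alternative) argument likewise equates $\deg(\phi_r)=d_{2r}(\phi_r)$ with $d_{2r}(\psi_r)=\deg\G_r$ via Theorem~\ref{thm2.9} and the birationality of $\psi_r$ (Theorem~\ref{teo-Semple}), then cites the classical Catalan-number degree of $\G(1,r+1)$. The paper also records a second, equivalent phrasing via multiplicativity $\deg(\phi_r)=\deg(\psi_r)\cdot\deg(\pi_r)$ using that $\pi_r$ is a finite morphism, but this is the same mechanism in different clothing.
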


\begin{proof} Argumentando como  no final do Exemplo~\ref{exemplo-P4} obtemos 
\[
\deg(\phi_r) = \deg(\pi_r) = \deg\G_r = \deg(\G(1,r+1))
\]
cujo grau é exatamente $\frac{1}{r+1}\binom{2r}{r}$.
\smallskip

Alternativamente, recorde que o último grau projetivo
é o produto do grau do mapa com o grau da sua imagem.
Assim, temos que $d_{2r}(\phi_r)=\deg(\phi_r)\cdot 1$ pois
o mapa gradiente é dominante e também que $d_{2r}(\psi_r)=\deg(\psi_r)\cdot\deg\G_r=\deg\G_r$, pois pelo Teorema~\ref{teo-Semple} o mapa $\psi_r$ é birracional. O resultado agora segue do Teorema~\ref{thm2.9}.
%
\end{proof}

\begin{cor}\label{cor-thm2.9}
A classe de Segre do lugar singular da hipersuperfície $\Sec_r C$ coincide com a classe de Segre da secante $\Sec_{r-1} C$. De maneira precisa, 
\[
s(\sing(\Sec_rC),\P^{2r}) = s(\Sec_{r-1}C,\P^{2r})
\quad\in A_*\P^{2r}.
\]
\end{cor}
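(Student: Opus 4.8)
The plan is to realize both schemes as base loci of rational maps whose generators share a common degree, and then compare their Segre classes by reading them off from the projective degrees via Proposition~\ref{Aluffisegre}. On one side, $\sing(\Sec_r C)$ is exactly the base scheme of the gradient map $\phi_r$, cut out by the Jacobian ideal $J_r=(f_{x_0},\dots,f_{x_{2r}})$; since $f=\det\Hcal_{r+1,r+1}$ has degree $r+1$, every partial $f_{x_i}$ is homogeneous of degree $r$. On the other side, $\Sec_{r-1}C$ is the base scheme of $\psi_r$, cut out by the ideal $I_r$ of maximal minors of $\Hcal_{r,r+2}$, and each such minor is likewise of degree $r$. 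The decisive point is that the two generating systems live in one and the same degree $r$.

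First I would apply Proposition~\ref{Aluffisegre}. For a rational map defined on $\P^{2r}$ by an ideal generated in a single degree $r$, that proposition expresses the Segre class of the base scheme in $A_*\P^{2r}$ purely in terms of $r$ and the projective degrees $d_0,\dots,d_{2r}$; explicitly, writing $s(\,\cdot\,,\P^{2r})=\sum_l s_l h^l$, formula~\eqref{eq-segre-djs} gives
\[
s_l=-\sum_{i+j=l}(-1)^{j}\binom{l}{i}\,d_i\,r^{j},\qquad l=1,\dots,2r.
\]
I would emphasise that neither the dimension of the target nor the number of generators enters this expression: only the source $\P^{2r}$, the common degree $r$, and the ordered list $d_0,\dots,d_{2r}$ play a role. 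Hence the Segre class of the base scheme of such a map is completely determined by those data.

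The remaining ingredient is Theorem~\ref{thm2.9}, which asserts that $d_i(\phi_r)=d_i(\psi_r)$ for every $i=0,\dots,2r$. Since $\phi_r$ and $\psi_r$ share the common generating degree $r$ and, by the theorem, have identical projective degrees, plugging the same data into~\eqref{eq-segre-djs} yields identical coefficients $s_l$. Recalling that the base scheme of $\phi_r$ is $\sing(\Sec_r C)$ while that of $\psi_r$ is $\Sec_{r-1}C$, I would then conclude
\[
s(\sing(\Sec_rC),\P^{2r})=s(\Sec_{r-1}C,\P^{2r})\quad\in A_*\P^{2r}.
\]

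I do not expect a genuine obstacle at this stage: all the difficulty has already been concentrated in Theorem~\ref{thm2.9}, after which the corollary is essentially a bookkeeping consequence of Proposition~\ref{Aluffisegre}. The only thing that must be verified independently is that the two defining systems are generated in the same degree, which is immediate from $\deg f=r+1$ and from the shape of $\Hcal_{r,r+2}$. What makes the conclusion striking is precisely that $\sing(\Sec_r C)$ is in general non-reduced (Proposition~\ref{props_rnc2}(c)), whereas $\Sec_{r-1}C$ is reduced, so the two subschemes of $\P^{2r}$ genuinely differ as schemes even though their Segre classes agree.
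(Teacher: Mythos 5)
Your proposal is correct and follows essentially the same route as the paper: identify $\sing(\Sec_r C)$ and $\Sec_{r-1}C$ as the base schemes of $\phi_r$ and $\psi_r$, invoke Proposition~\ref{Aluffisegre} to read the Segre class off the projective degrees, and conclude from Theorem~\ref{thm2.9}. Your explicit check that both generating systems live in the same degree $r$ (so that formula~\eqref{eq-segre-djs} is fed identical data) is a point the paper leaves implicit, and it is indeed the hypothesis that makes the comparison legitimate.
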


\begin{proof}
Vimos que $\sing(\Sec_rC)$ e $\Sec_{r-1}C$ são, respectivamente, os lugares de base dos mapas $\phi_r$ e $\psi_r$. Tendo em mente que os coeficientes das classes
de Segre do lugar de base de um mapa racional
são dados pelos graus projetivos  (Proposição~\ref{Aluffisegre}), o corolário decorre
imediatamente do Teorema~\ref{thm2.9}.
\end{proof}

Isto é surpreendente uma vez que $\Sec_{r-1}C$ é reduzida mas em geral o lugar singular de $\Sec_r C$ não é. Isto
nos será bastante útil na próxima seção.

\smallskip

Por fim, uma fórmula para a característica de Euler topológica de uma seção hiperplana genérica de $\Sec_r C$.

\begin{cor}\label{euler_genericsection_sec}
Para um hiperplano genérico $H\subset \P^{2r}$, temos:
\[
\euler((\Sec_r C)\cap H) = \dfrac{1}{r+1}\binom{2r}{r} - 1+2r .
\]
\end{cor}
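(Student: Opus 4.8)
The plan is to deduce the formula from three facts already in hand: the degree computation $\deg(\phi_r)=\frac{1}{r+1}\binom{2r}{r}$ of Corollary~\ref{thm3}, the Euler characteristic $\euler(\Sec_r C)=2r$ furnished by Theorem~\ref{teo-Euler_Char_Sec_k} (which applies to the hypersurface case $k=r$, $n=2r$, since then $2k-1=2r-1<2r=n$), and the Dimca--Papadima formula recorded in Remark~\ref{obs-DimcaPapadima}.

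First I would apply Remark~\ref{obs-DimcaPapadima} to the hypersurface $X=\Sec_r C\subset\P^{2r}$, whose gradient map is precisely $\phi_r$. As $n=2r$ is even we have $(-1)^n=1$, so for a generic hyperplane $H\subset\P^{2r}$ the formula reads
\[
\deg(\phi_r)=1-\euler\bigl(\Sec_r C\setminus(\Sec_r C\cap H)\bigr).
\]
Next I would invoke the additivity of the topological Euler characteristic for the decomposition of $\Sec_r C$ into the open part $\Sec_r C\setminus(\Sec_r C\cap H)$ and the closed part $\Sec_r C\cap H$ (equivalently, take degrees in the inclusion--exclusion property of $\csm$ used in the proof of Theorem~\ref{thm1}), obtaining
\[
\euler\bigl(\Sec_r C\setminus(\Sec_r C\cap H)\bigr)=\euler(\Sec_r C)-\euler(\Sec_r C\cap H).
\]
Substituting into the previous identity and solving for the quantity of interest gives
\[
\euler(\Sec_r C\cap H)=\deg(\phi_r)-1+\euler(\Sec_r C).
\]

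Finally I would plug in $\deg(\phi_r)=\frac{1}{r+1}\binom{2r}{r}$ and $\euler(\Sec_r C)=2r$ to reach the asserted value $\frac{1}{r+1}\binom{2r}{r}-1+2r$. There is no genuine obstacle here: the argument is a short bookkeeping step once the ingredients are assembled, and the only points demanding a moment of care are checking the numerical hypothesis $2r-1<2r$ of Theorem~\ref{teo-Euler_Char_Sec_k} and the genericity of $H$ required both there and in Remark~\ref{obs-DimcaPapadima}. As an alternative route that bypasses the Dimca--Papadima formula, one may instead combine Theorem~\ref{thm1} with Claim~\ref{af1} from the proof of Theorem~\ref{thm2}, which yields $\int\csm(\Sec_r C)-\int\csm(\Sec_r C\cap H)=1-(-1)^{2r}d_{2r}=1-\deg(\phi_r)$; since the degree of $\csm$ is the Euler characteristic, this recovers exactly the same identity and hence the same conclusion.
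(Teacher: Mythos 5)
Your proposal is correct and follows essentially the same route as the paper: the paper's proof likewise combines the Dimca--Papadima formula of Remark~\ref{obs-DimcaPapadima} (with $(-1)^{2r}=1$ and additivity of $\euler$ absorbed into it) with $\deg(\phi_r)=\frac{1}{r+1}\binom{2r}{r}$ from Corollary~\ref{thm3} and $\euler(\Sec_r C)=2r$ from Theorem~\ref{teo-Euler_Char_Sec_k}, then solves for $\euler((\Sec_r C)\cap H)$. Your care with the hypothesis $2r-1<2r$ and the genericity of $H$, as well as the alternative via Claim~\ref{af1}, are consistent with (and implicit in) the paper's argument.
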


\begin{proof}
Segue da combinação do Teorema~\ref{teo-Euler_Char_Sec_k}, do Corolário~\ref{thm3} e da fórmula de Dimca e Papadima 
na Observação~\ref{obs-DimcaPapadima},
que relaciona o grau do mapa gradiente de $\Sec_r C$ com a sua característica de Euler e a de uma seção hiperplana genérica:
\begin{align*}
\frac{1}{r+1}\binom{2r}{r} & = (-1)^{2r}\left(1 - \euler(\Sec_r C) + \euler((\Sec_r C) \cap H)\right) \\
&= 1 - 2r + \euler((\Sec_r C) \cap H).
\end{align*}
\end{proof}

\subsection{Algumas fórmulas}
\label{algumas-formulas}

Sejam $d_0(\phi_r),\dotsc,d_{2r}(\phi_r)$ os graus projetivos do mapa gradiente $\phi_r\colon \P^{2r} \dashrightarrow \P^{2r}$. 
Finalizamos este capítulo com uma relação de fórmulas para
alguns destes graus. Em princípio estas
fórmulas são calculadas via seções lineares do lugar singular da hipersuperfície
secante; porém via o Corolário~\ref{cor-thm2.9}
reduzimos o estudo para seções lineares
da secante $\Sec_{r-1}C\subset\P^{2r}$, que é reduzida e de dimensão menor, o que 
torna o cálculo possível.

\begin{prop}
\label{prop-graus}
As seguintes fórmulas são válidas:
\begin{enumerate}[{\rm(a)}]
\item $d_0(\phi_r)=1$, $d_1(\phi_r) = r$, $d_2(\phi_r)=r^2$.
\item $d_3(\phi_r) = \frac{1}{6}r(r-1)(5r+2)$.
\item $d_4(\phi_r) = \frac{1}{12}r(r-1)(7r^2-5r-6)$.
\end{enumerate}
\end{prop}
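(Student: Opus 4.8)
O plano é reduzir tudo ao mapa $\psi_r$, cujo lugar de base é a secante \emph{reduzida} $\Sec_{r-1}C$, de codimensão~$3$ em $\P^{2r}$ (Proposição~\ref{props_rnc2}). Pelo Teorema~\ref{thm2.9} vale $d_i(\phi_r)=d_i(\psi_r)$ para todo $i$, de modo que basta calcular os graus projetivos de $\psi_r$. Como os menores maximais de $\Hcal_{r,r+2}$ que definem $\psi_r$ têm grau comum $r$, aplicamos a Proposição~\ref{Aluffisegre}: escrevendo $s(\Sec_{r-1}C,\P^{2r})=\sum_j s_j h^j$, a relação \eqref{eq-djs-segre} dá
\[
d_k=r^k-\sum_{j\geq 1}\binom{k}{j}s_j\,r^{k-j}.
\]
Sendo $\Sec_{r-1}C$ de codimensão~$3$, temos $s_0=s_1=s_2=0$ e $s_3=\deg(\Sec_{r-1}C)=\binom{r+2}{3}$ (Corolário~\ref{corol_props_rnc2}). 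Substituindo, saem de imediato $d_0=1$, $d_1=r$, $d_2=r^2$ e $d_3=r^3-\binom{r+2}{3}$; um cálculo direto reescreve este último como $\tfrac{1}{6}r(r-1)(5r+2)$, provando (a) e (b).

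O passo delicado é (c): o grau $d_4$ depende do coeficiente $s_4$ da classe de Segre, que carrega informação geométrica de segunda ordem e não apenas o grau. Aqui uso a propriedade~(4) enunciada após a Definição~\ref{def-graus-projetivos}: $d_4(\psi_r)=d_4\bigl(\psi_r|_{\P^4}\bigr)$ para um $\P^4\subset\P^{2r}$ geral. Ao restringir a esse $\P^4$, o lugar de base passa a ser a curva $Y_r=(\Sec_{r-1}C)\cap\P^4$, que é \emph{não-singular} pelo Teorema de Bertini (cf.\ Exemplo~\ref{ex_seccional_genus}). É exatamente a redução a uma curva reduzida e suave — possibilitada pelo Corolário~\ref{cor-thm2.9}, que identifica a classe de Segre de $\sing(\Sec_rC)$ com a de $\Sec_{r-1}C$ — que torna o cálculo viável.

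Sendo $Y_r$ suave, sua classe de Segre em $\P^4$ calcula-se pela sequência normal $s(Y_r,\P^4)=c(TY_r)\,c(T\P^4|_{Y_r})^{-1}\cap[Y_r]$: escrevendo $e=\deg Y_r$ e $g=g(Y_r)$, e usando $\int_{Y_r}c_1(TY_r)=2-2g$, obtém-se
\[
s(Y_r,\P^4)=e\,h^3+\bigl((2-2g)-5e\bigr)h^4\quad\in A_*\P^4.
\]
Aplicando novamente \eqref{eq-djs-segre}, agora em $\P^4$ e com grau comum $r$, resulta
\[
d_4=r^4-4er+\bigl(2g-2+5e\bigr).
\]

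Resta inserir $e$ e $g$. Como o grau se preserva sob seção linear geral, $e=\deg(\Sec_{r-1}C)=\binom{r+2}{3}$; e o gênero $g=g(Y_r)=\tfrac{1}{24}(r-1)(r-2)(3r^2+11r+12)$ foi obtido no Exemplo~\ref{ex_seccional_genus}. Substituindo estes valores e simplificando (reunindo tudo sobre denominador~$12$), chega-se a $d_4=\tfrac{1}{12}r(r-1)(7r^2-5r-6)$, como afirmado. O principal obstáculo é precisamente o cálculo de $g(Y_r)$: sem ele o coeficiente $s_4$ permaneceria inacessível, e é a combinação do Teorema~\ref{thm2.9} com a fórmula do gênero em $\P^4$ que converte um problema sobre um esquema singular não-reduzido num cálculo elementar sobre uma curva lisa.
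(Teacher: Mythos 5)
Sua prova está correta e segue essencialmente o mesmo caminho do texto: reduzir a $\psi_r$ via o Teorema~\ref{thm2.9}, explorar que o lugar de base $\Sec_{r-1}C$ é reduzido de codimensão $3$, e, para $d_4$, cortar por um $\P^4$ geral para obter a curva suave $Y_r$, cuja classe de Segre se calcula por adjunção usando o gênero do Exemplo~\ref{ex_seccional_genus}. A única diferença (imaterial) é que para $d_0,\dotsc,d_3$ você lê os coeficientes de $s(\Sec_{r-1}C,\P^{2r})$ globalmente via a Proposição~\ref{Aluffisegre}, enquanto o texto restringe a um $\P^i$ geral ($i\leq 3$) e usa que a restrição é um morfismo, resp.\ tem lugar de base finito e reduzido.
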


\begin{proof}
%
Para calcular os graus projetivos do mapa $\phi_r$, segue do Teorema~\ref{thm2.9} que podemos usar o mapa $\psi_r$; mais ainda, como
 observamos depois da Definição~\ref{def-graus-projetivos}, basta 
 tomar a restrição a subespaços lineares genéricos. Em resumo,
\[
d_i(\phi_r) = d_i(\psi_r) = d_i(\psi_r |_{\P^i})
\]
onde $\P^i \subset \P^{2r}$ é genérico de dimensão $i$.

\smallskip

(a) Recorde que o mapa $\psi_r$ tem como lugar de base a
secante $\Sec_{r-1}C$, de codimensão 3 no ambiente $\P^{2r}$. Logo, para
$i=0, 1, 2$, a restrição a um $\P^i$ geral é um morfismo e portanto
\[
d_0(\psi_r) = 1, \quad d_1(\psi_r) = r \quad \text{e} \quad d_2(\psi_r)=r^2.
\]

\smallskip

Quando $i\geq 3$, calculamos a classe de Segre do lugar de base do mapa $\psi_r|_{\P^i}$, 
a saber $(\Sec_{r-1}C)\cap \P^i$, e utilizamos
a Proposição~\ref{Aluffisegre}. Observe que,
como vimos na Proposição~\ref{props_rnc2}, a secante
$\Sec_{r-1}C$ tem grau $\binom{r+2}{3}$.
\smallskip

(b) Para $i=3$,
$(\Sec_{r-1}C)\cap \P^3$ é um conjunto de $\binom{r+2}{3}$ pontos, cada um com multiplicidade $1$, 
uma vez que o lugar singular de $\Sec_{r-1}C$ tem suporte em $\Sec_{r-2}C$ e portanto vive em codimensão $5$ em $\P^{2r}$, pela Proposição~\ref{props_rnc2}.
Logo, $s((\Sec_{r-1}C)\cap \P^3,\P^3) = \binom{r+2}{3} [\P^0]$, e de \eqref{eq-djs-segre} obtemos
\[
d_3(\psi_r) = r^3 - \binom{r+2}{3} = \dfrac{1}{6}r(r-1)(5r+2).
\]

\smallskip

(c) Tratamos agora o caso $i=4$, mais elaborado. Consideramos a curva 
\[
Y_r := (\Sec_{r-1}C)\cap \P^4,
\]
o lugar de base da restrição $\psi_r|_{\P^4}$.
Como o lugar singular 
de $\Sec_{r-1} C$ vive em codimensão 5 
em $\P^{2r}$, segue do Teorema de Bertini que esta curva é suave;
calculamos seu gênero no Exemplo~\ref{ex_seccional_genus} a partir da série de Hilbert de $\Sec_{r-1} C$:
\[
g(Y_r)=\frac{1}{24}(r-1)(r-2)(3r^2+11r+12).
\]
Como $Y_r$ é suave, sua classe de Segre é dada pelo inverso da classe de Chern
do normal $N_{Y_r/\P^4}$, que por sua vez pode ser calculada 
via a sequência exata (a fórmula de adjunção)
\[
0 \to TY_r \to T\P^4|_{Y_r} \to N_{Y_r/\P^4} \to 0\quad .
\]
O resultado é:
\begin{align*}
s(Y_r,\P^4) & =c(N_{Y_r/\P^4})^{-1} \\
& = \left((2-2g(Y_r))h^4+\binom{r+2}{3}h^3\right)((1+h)^5)^{-1} \\
& = \left(2-2g(Y_r)-5\binom{r+2}{3}\right)h^4+\binom{r+2}{3}h^3 \quad \in A_*\P^4.
\end{align*}
Portanto, das fórmulas em \eqref{eq-djs-segre},
\begin{align*}
d_4(\psi_r) & = r^4-4rs_3(Y_r)-s_4(Y_r) \\
& = r^4 -4r\binom{r+2}{3}-2 + 2g(Y_r)+5\binom{r+2}{3} \\
& = r^4 +(5-4r)\binom{r+2}{3}+2g(Y_r)-2\\
& = \frac{1}{12}r(r-1)(7r^2-5r-6).
\end{align*}
\end{proof}

%
%
%


\chapter{Conjectura}
\label{cap-conjecturas}
Como no capítulo anterior, seja $\phi_r\colon \P^{2r}\dashrightarrow\P^{2r}$
o mapa gradiente associado à hipersuperfície 
$\Sec_r C \subset\P^{2r}$, a $r$-secante
de uma curva
racional normal $C\subset \P^{2r}$. 
Buscamos expressões para os graus projetivos de todos estes mapas.

\smallskip
Em todo este capítulo utilizaremos a notação seguinte. 

Para $i=0,1,\dotsc,2r,$ indicamos por $d_i(\phi_r)$ o $i$-ésimo grau projetivo do mapa $\phi_r$;
em outras palavras, 
\[
d_i(\phi_r)=\deg(\phi_r|_{\P^i})
\]
é o grau topológico 
da restrição do mapa a um $\P^i$ geral. Em particular, $d_{2r}(\phi_r)=\deg(\phi_r)$.

Denotamos por $U_r=\P^{2r}\setminus \Sec_r C$ o \emph{complementar 
da hipersuperfície $r$-secante}. Para $i=0,1,\dotsc,2r$, denote por 
$c_i(U_r)$ o coeficiente de $[\P^i]$ na classe $\csm({U_r})$, ou seja,
\[
\csm(U_r) = c_0(U_r)[\P^0] + c_1(U_r)[\P^1] + \dotsb + c_{2r}(U_r)[\P^{2r}] \qquad \in A_*\P^{2r}.
\] 
Assim o grau da classe é $c_0(U_r)$, que é portanto a 
característica de Euler topológica do aberto $U_r$.

Para $k\geq 0$, denotamos por $\cat_k=\frac{1}{k+1}\binom{2k}{k}$ os números de Catalan.
Recorde nossa convenção: para inteiros $a,b$, vale $\binom{a}{b}=0$ sempre que $0<a<b$ ou $b<0$. 
Isso nos permite desconsiderar índices em somas infinitas
envolvendo binomiais, o que simplifica as fórmulas e a argumentação.

\section{Conjectura}

Começamos revisitando o Teorema~\ref{Aluffi2.1} de P. Aluffi,
que relaciona os graus projetivos do mapa gradiente $\phi_r$ e a classe $\csm(\Sec_rC)$. 
Utilizando o príncipio de inclusão-exclusão e o fato de que $\csm(\P^n)=c(T\P^n)\cap[\P^n]= (1+h)^{n+1}\in A_*\P^n$, segue da fórmula ali apresentada uma expressão para a classe de 
Schwartz-MacPherson do aberto $U_r$:
\begin{equation}
\label{eq-aluffi-paper}
\csm({U_r})= \sum_{j=0}^{2r} d_j(\phi_r)(-h)^j(1+h)^{2r-j} \qquad \in A_*\P^{2r}.
\end{equation}
Em particular, lendo o coeficiente de $[\P^0]=h^{2r}$:
\begin{equation}
\label{eq-c_0}
c_0(U_r) = \sum_{j=0}^{2r} (-1)^jd_j(\phi_r) = d_0(\phi_r) - d_1(\phi_r) + \dotsb + d_{2r}(\phi_r)
\end{equation}
e portanto a característica de Euler topológica de $U_r$ 
é a soma alternada
dos graus projetivos. O caso geral é similar: os $c_i$'s são dados por uma soma alternada
dos graus projetivos com certos pesos, que dependem da dimensão do espaço ambiente.
De fato, expandindo \eqref{eq-aluffi-paper} obtemos 
\begin{equation}
\label{eq-aluffi}
c_i(U_r) = \sum_{j=0}^{2r-i} (-1)^j \textstyle \binom{2r-j}{i} d_j(\phi_r) \qquad.
\end{equation}
Em outras palavras, para cada $r\geq 0$ fixado, temos uma relação linear
$\vec{c}=A_r\vec{d}$, onde $A_r$ é uma matriz invertível
cujas entradas, frisamos, dependem de $r$:
\begin{equation}
\label{eq-d2c}
\begin{pmatrix}
 c_0 \\c_1 \\ c_2 \\ \vdots \\ c_{2r-1} \\ c_{2r}\\
\end{pmatrix}
=
\begin{pmatrix}
 1 & -1 & \cdots  & 1 & -1 & 1 \\
 \binom{2r}{1} & -\binom{2r-1}{1} & \cdots &\binom{2}{1} & -\binom{1}{1} &  \\
 \binom{2r}{2} & -\binom{2r-1}{2} & \cdots & \binom{2}{2} &   &  \\
 \vdots & & \iddots \\
 \binom{2r}{2r-1} & - \binom{2r-1}{2r-1} \\
 \binom{2r}{2r}
\end{pmatrix}
\begin{pmatrix}
 d_0 \\d_1 \\ d_2 \\ \vdots \\ d_{2r-1} \\ d_{2r}\\
\end{pmatrix}.
\end{equation}
Reciprocamente,
\begin{equation}
 \label{ci-di}
 d_i(\phi_r) = \sum_{j=2r-i}^{2r}
 \textstyle
 (-1)^j\binom{j}{2r-i}c_j(U_r)\qquad
\end{equation}
e em particular o grau topológico do mapa gradiente é a soma 
alternada dos $c_i$'s:
\begin{equation}
\label{eq-d2r}
 d_{2r}(\phi_r) = c_0(U_r) - c_1(U_r) + \dotsb - c_{2r-1}(U_r) + c_{2r}(U_r).
\end{equation}

\bigskip

Na Proposição~\ref{prop-graus} calculamos 
explicitamente as funções $d_i(\phi_r)$ para
$i\leq 4$.
Em cada um destes casos
vimos que $d_i(\phi_r)$
é polinomial em $r$, de grau $i$.


E sobre as funções $c_i(U_r)$, o que podemos dizer? 
Vejamos o caso em que $i=0$: no Teorema~\ref{teo-Euler_Char_Sec_k}
  calculamos a característica de Euler de $\Sec_r C$ via pontos fixos 
 de uma ação de $\C^*$, obtendo $2r$. Logo:
 \[
 	c_0(U_r) = \euler(\P^{2r}\setminus\Sec_rC) = 
	\euler(\P^{2r})-\euler(\Sec_r C) = 
	2r+1-2r=1.
\]
Ou seja, a função $c_0(U_r)$ é de fato constante. 

Se $i>0$, temos fórmulas para os coeficientes
$c_i(U_r)$ quando $i\geq 2r-4$:  por \eqref{eq-aluffi} estes se reduzem a calcular $d_i(\phi_r)$ para $i\leq 4$ e logo podemos aplicar a Proposição~\ref{prop-graus}; o resultado é
\begin{equation}
\label{eq-ci's}
\begin{split}
c_{2r}(U_r)=1, \qquad
c_{2r-1}(U_r)=r, \qquad
c_{2r-2}(U_r)=r^2, \qquad
\\
c_{2r-3}(U_r)=\frac{1}{2}r^2(r-1) \qquad\text{e}\qquad
c_{2r-4}(U_r)=\frac{1}{4}r^2(r-1)^2.
\end{split}
\end{equation}
Não temos fórmulas para valores pequenos de $i$.

Entretanto quando a dimensão do espaço ambiente é pequena podemos calcular estes coeficientes efetivamente, com o auxílio de um computador. Com os recursos que dispomos, 
conseguimos 
fazê-lo apenas para $r\leq 5$; veja o código para 
o \Mdois{} no Apêndice~\ref{subsecao-r<=5}. Reproduzimos os resultados obtidos para 
as classes $\csm$ na Tabela~\ref{tab0}.
\begin{table}[ht]
\begin{center}
\small
\begin{tabular}{c||l|l|l|l|l|l|l|l|l|l|l}
$r$ & $c_0$ & $c_1$ & $c_2$ & $c_3$ & $c_4$ & $c_5$ & $c_6$ & $c_7$ & $c_8$ & $c_9$ & $c_{10}$\\ 
\hline 
\hline 
0 & 1 & & & & & & & & & & \\
\hline
1 & 1 & 1 & 1 &   &   &   &  & & & &\\ 
\hline 
2 & 1 & 2 & 4 & 2 & 1 &   & & & & & \\ 
\hline 
3 & 1 & 3 & 9 & 9 & 9 & 3 & 1 & & & &\\
\hline
4 & 1 & 4 & 16 & 24 & 36 & 24 & 16 & 4 & 1 & & \\
\hline
5 & 1 & 5 & 25 & 50 & 100 & 100 & 100 & 50 & 25 & 5 & 1\\ 
\end{tabular} 
\label{tab0}
\caption{Coeficientes $c_i(U_r)$ calculados via \Mdois.}
\end{center}
\end{table}

Analisando a tabela, somos levados a acreditar que $c_1(U_r)=r$ e $c_2(U_r)=r^2$, novamente polinômios
de graus $1$ e $2$, respectivamente. 

Mais ainda, suponha por um momento que
$c_3(U_r)$ seja polinomial em $r$, de grau $3$: como já conhecemos
seus valores para $r=0,1,2,3$ (a saber $0,0,2,9$, respectivamente)
podemos interpolar, obtendo
\begin{equation}
\label{palpite-c_3}
c_3(U_r)=\frac{1}{2}r^2(r-1)
\end{equation}
que seria então nosso palpite. Este polinômio avaliado em $r=4$ e $r=5$
resulta em $24$ e $50$, em concordância com os valores da Tabela~\ref{tab0}. 
Algo similar acontece com $c_4(U_r)$. Outro aspecto notável 
é a simetria. 

\medskip
Todas estas considerações nos levam à seguinte:

\newcommand{\tc}{c}
\newcommand{\td}{d}

\begin{conjectura}
 \label{conj-A}
Para cada $i\geq 0$ fixado tem-se:
\begin{enumerate}[{\rm(a)}]
\item A função $c_i(U_r)$ é polinomial em $r$, de grau $\leq i$.
\item A função $d_i(\phi_r)$ é polinomial em $r$, de grau $\leq i$.
\end{enumerate}
\end{conjectura}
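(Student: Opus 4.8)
O plano \'e reduzir ambas as afirma\c{c}\~oes a uma \'unica propriedade de polinomialidade da classe de Segre de $\Sec_{r-1}C$, explorando a simetria palindr\^omica dos coeficientes $c_i(U_r)$ que se observa na Tabela~\ref{tab0}.

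\textbf{Ligando (a) e (b).} As rela\c{c}\~oes \eqref{eq-aluffi} e \eqref{ci-di} exprimem $\vec c=A_r\vec d$ com $A_r$ invert\'ivel, por\'em dependente de $r$ e de tamanho crescente; assim n\~ao se pode transportar ingenuamente a polinomialidade de uma fam\'ilia para a outra. O fato estrutural decisivo \'e a simetria $c_i(U_r)=c_{2r-i}(U_r)$. Admitida esta, ao avaliar \eqref{eq-aluffi} no \'indice $2r-i$ obt\'em-se
\[
c_i(U_r)=c_{2r-i}(U_r)=\sum_{j=0}^{i}(-1)^j\binom{2r-j}{i-j}d_j(\phi_r),
\]
uma soma que envolve apenas os $i+1$ \emph{primeiros} graus projetivos; dualmente, $d_i(\phi_r)=\sum_{k=0}^{i}(-1)^k\binom{2r-k}{i-k}c_k(U_r)$. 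Como $\binom{2r-j}{i-j}$ \'e um polin\^omio em $r$ de grau $i-j$, cada parcela tem grau $\leq i$; logo, assumida a polinomialidade de grau $\leq j$ para os \'indices $j<i$, as afirma\c{c}\~oes (a) e (b) para o \'indice $i$ tornam-se equivalentes, e o caso base $c_0=d_0=1$ inicia uma indu\c{c}\~ao em $i$ --- desde que se forne\c{c}a, para cada $i$, um dado externo que fixe uma das duas fam\'ilias.

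\textbf{Reduzindo (b) \`a classe de Segre.} Pelo Teorema~\ref{thm2.9} tem-se $d_i(\phi_r)=d_i(\psi_r)$, e o lugar de base de $\psi_r$ \'e a secante \emph{reduzida} $\Sec_{r-1}C$, de codimens\~ao $3$. Escrevendo $s(\Sec_{r-1}C,\P^{2r})=\sum_j s_j h^j$, a Proposi\c{c}\~ao~\ref{Aluffisegre} fornece a rela\c{c}\~ao limpa
\[
d_k(\psi_r)=r^k-\sum_{j=1}^{k}\binom{k}{j}s_j\,r^{k-j}
\]
(observe que $\binom{k}{j}=0$ para $j>k$, de modo que s\'o $s_1,\dotsc,s_k$ interv\^em). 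Portanto (b) \'e \emph{equivalente} \`a seguinte afirma\c{c}\~ao: para cada $j$ fixado, o coeficiente $s_j(\Sec_{r-1}C,\P^{2r})$ \'e polinomial em $r$ de grau $\leq j$. Os casos $j\leq 4$ j\'a foram estabelecidos na Proposi\c{c}\~ao~\ref{prop-graus} (via o grau $\binom{r+2}{3}$ e o g\^enero $g(Y_r)$), o que fornece o dado externo exigido e o in\'icio da indu\c{c}\~ao para os \'indices pequenos.

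\textbf{O n\'ucleo e os obst\'aculos.} Restam dois ingredientes genuinamente dif\'iceis. Primeiro, a polinomialidade das classes de Segre: como $\Sec_{r-1}C$ \'e aritmeticamente Cohen--Macaulay com s\'erie de Hilbert conhecida (Corol\'ario~\ref{cor-hilb-secrnc}) e coincide com a se\c{c}\~ao linear $L\cap\Sec_{r-1}V$ da variedade determinantal sim\'etrica $\Sec_{r-1}V$, $V=\nu_2(\P^r)$, eu atacaria $s(\Sec_{r-1}C)$ ou pela resolu\c{c}\~ao do tipo Eagon--Northcott do ideal dos menores maximais de Hankel --- calculando a classe de Segre a partir do blow-up ao longo do lugar determinantal --- ou transportando f\'ormulas para classes de Segre de variedades determinantais atrav\'es daquela se\c{c}\~ao linear. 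Segundo, a pr\'opria simetria $c_i(U_r)=c_{2r-i}(U_r)$, que \emph{n\~ao} decorre de autodualidade projetiva, pois $(\Sec_r C)^*\cong\nu_2(\P^r)$ \'e a variedade de Veronese e n\~ao $\Sec_r C$; eu a buscaria a partir da involu\c{c}\~ao $\mathcal{I}$ da Observa\c{c}\~ao~\ref{CSMsecao}, que relaciona os $c_i$ \`as caracter\'isticas de Euler de se\c{c}\~oes lineares gerais, reduzindo a simetria a uma reciprocidade entre os n\'umeros $\euler(\Sec_r C\cap\P^{2r-s})$. O obst\'aculo principal \'e exatamente a m\'axima de Aluffi de que \emph{os blow-ups s\~ao dif\'iceis}: controlar $s(\Sec_{r-1}C)$ uniformemente em $r$ \'e o cerne da quest\~ao. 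A sombra combinat\'oria dessa dificuldade \'e o surgimento inesperado dos polin\^omios de Kazhdan--Lusztig de $K_{1,1,i}$ nas fun\c{c}\~oes geradoras $p_i(x)=\sum_r d_i(r)x^r$, o que sugere uma rota alternativa, igualmente promissora, por contagem de caminhos de Dyck.
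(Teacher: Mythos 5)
Sua proposta n\~ao fecha a demonstra\c{c}\~ao --- e conv\'em registrar que o enunciado em quest\~ao \'e uma \emph{conjectura} que a pr\'opria tese deixa em aberto: o texto apenas a reformula (Teorema~\ref{teo-equivalencias}, Proposi\c{c}\~ao~\ref{conj-A'}) e re\'une evid\^encias (Se\c{c}\~ao~\ref{secao-evidencias}). O que voc\^e faz \'e transform\'a-la em duas afirma\c{c}\~oes que permanecem sem prova: (1) a simetria $c_i(U_r)=c_{2r-i}(U_r)$; (2) a polinomialidade em $r$, de grau $\leq j$, de cada coeficiente de Segre $s_j(\Sec_{r-1}C,\P^{2r})$. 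As passagens que voc\^e efetivamente executa est\~ao corretas: admitida a simetria, as rela\c{c}\~oes \eqref{eq-aluffi} e \eqref{ci-di} avaliadas no \'indice $2r-i$ envolvem apenas $d_0(\phi_r),\dotsc,d_i(\phi_r)$, com pesos $\binom{2r-j}{i-j}$ que s\~ao polin\^omios de grau $i-j$ em $r$, donde (a) e (b) tornam-se equivalentes; e o Teorema~\ref{thm2.9} combinado com a Proposi\c{c}\~ao~\ref{Aluffisegre} traduz (b) exatamente na afirma\c{c}\~ao (2). Mas isso \'e uma equival\^encia, n\~ao uma redu\c{c}\~ao a algo mais trat\'avel: (2) \'e t\~ao forte quanto a parte (b) da conjectura, e para (1) voc\^e n\~ao oferece argumento algum --- voc\^e mesmo observa que a simetria n\~ao decorre de autodualidade (a dual \'e a Veronese, Proposi\c{c}\~ao~\ref{prop-projveronese}) e apenas sugere busc\'a-la via a involu\c{c}\~ao da Observa\c{c}\~ao~\ref{CSMsecao}. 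Os dois ``ingredientes genuinamente dif\'iceis'' que voc\^e isola carregam, juntos, todo o conte\'udo da conjectura: nada foi demonstrado.

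A compara\c{c}\~ao com a tese \'e instrutiva. Sua primeira redu\c{c}\~ao redescobre, no essencial, a Proposi\c{c}\~ao~\ref{conj-A'}, segundo a qual a Conjectura~\ref{conj-A} equivale a (a') simetria mais (b') polinomialidade de $d_i(\phi_r)$ \emph{sem} cota de grau; ali a cota $\deg d_i(\phi_r)\leq i$ sai da log-concavidade dos graus projetivos (teorema do \'indice de Hodge), expediente mais limpo que o seu controle indutivo de graus e que voc\^e poderia tomar emprestado para dispensar a hip\'otese sobre os \'indices menores. A tese, por\'em, tamb\'em p\'ara nesse ponto: ela n\~ao prova nem a simetria nem a polinomialidade, limitando-se a caracterizar as sequ\^encias conjecturais por um algoritmo (Teorema~\ref{algoritmo}), a obter f\'ormulas fechadas para elas (Teorema~\ref{teo-formulas}) e a verificar os casos $i\leq 4$ (Proposi\c{c}\~ao~\ref{prop-graus}) e $r\leq 5$. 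O que falta --- controlar $s(\Sec_{r-1}C,\P^{2r})$ uniformemente em $r$, ou, equivalentemente, provar a simetria dos $c_i(U_r)$ --- \'e precisamente onde a conjectura vive, e para isso sua proposta oferece apenas dire\c{c}\~oes de ataque (resolu\c{c}\~oes de Eagon--Northcott, a involu\c{c}\~ao $\mathcal{I}$, combinat\'oria de caminhos de Dyck), n\~ao demonstra\c{c}\~oes.
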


Uma vez feita a hipótese, é natural explorar.
Que consequências decorrem da conjectura?
A resposta, em sua formulação última, é que a sua validade implica em 
\emph{fórmulas explícitas} para as funções  $c_i(U_r)$ e $d_i(\phi_r)$. Isto, concordemos, não é nada claro aqui. Os mais apressados as encontrarão
no Teorema~\ref{teo-equivalencias}. E
destas fórmulas decorrem certas propriedades qualitativas
interessantes para as classes $\csm(U_r)$ e $\csm(\Sec_r C)$;
consulte o Corolário~\ref{cor-conjectura}.

Finalmente, a Conjectura~\ref{conj-A} pode ser reformulada de uma maneira 
ainda mais simples; veja a Proposição~\ref{conj-A'}.

\section{Algoritmo}
\label{secao-algoritmo}

Mostraremos que
se estamos dispostos a assumir a validade da Conjectura~\ref{conj-A},
então os graus projetivos do mapa gradiente $\phi_r$ e a classe 
$\csm(U_r)$ podem ser efetivamente calculados para \emph{todo}
$r$.
Os ingre\-dientes-chave são:
o cálculo do grau topológico do mapa gradiente,
$d_{2r}(\phi_r)=\cat_r$, realizado no Corolário~\ref{thm3} e as relações \eqref{eq-aluffi} de Aluffi.

O resultado abaixo é de caráter numérico,
sem conexão alguma
com a geometria. Para enfatizar isso, mudamos um pouco a notação:
$\tc_i(r)$ e $\td_i(r)$ denotarão os números inteiros produzidos,
que a priori não possuem relação com os invariantes geométricos
$c_i(U_r)$ e $d_i(\phi_r)$ que desejamos determinar.

\begin{teo}[Algoritmo]
\label{algoritmo}
Existe um único par de sequências duplas $(\tc_i(r))$ e 
$(\td_i(r))$ com $i,r\geq 0$ que satisfaz as condições seguintes:
\begin{enumerate}[{\rm(a)}]
 \item $\td_i(r)=0$ para todo $i\geq 2r+1$.
 \item $\td_{2r}(r)=\cat_r$.
 \item Fixado $i$, a função $\tc_i(r)$ é polinomial em $r$, de grau $\leq i$.
 \item Fixado $i$, a função $\td_i(r)$ é polinomial em $r$, de grau $\leq i$.
 \item Valem as relações de Aluffi para $\tc_i(r)$ e $\td_i(r)$, isto é
 \begin{equation}
 \label{eq-cd}
 \tc_i(r) = \sum_{j=0}^{2r-i} (-1)^j \textstyle \binom{2r-j}{i} \td_j(r) \qquad.
 \end{equation}
\end{enumerate}
Mais ainda, estas sequências são efetivamente calculáveis.
\end{teo}
\begin{proof} 
 Olhamos para $\tc=(\tc_i(r))$ e $\td=(\td_i(r))$ como matrizes infinitas, 
 onde $r$ indexa as linhas e $i$ indexa as colunas. Por (a),
 apenas as primeiras $2r+1$ entradas da linha $r$ na matriz $\td$ 
 são possivelmente não-nulas e por (e) o mesmo vale para
 a matriz $\tc$.

 Procedemos por indução em $r$. Descrevemos a seguir como obter uma nova coluna em ambas as matrizes a partir das colunas anteriores. A coluna $r$ (e também toda a linha $r$) é gerada no passo $r$.
 
 \smallskip
 \underline{Passo 0}: para $r=0$, segue de (b) que $\td_0(0)=1$. 
 De (d) segue 
 que $\td_0(r)$ é constante e logo $\td_0(r)=1$ para todo $r$. 
 De (e) vem que $\tc_0(0)=1$ e daí segue por (c) que $\tc_0(r)=1$
 para todo $r$. Assim, em ambas as matrizes, todas as 
 entradas na primeira coluna são iguais a $1$. 
 
 \smallskip
  \underline{Passo $r$}:  neste passo 
já conhecemos todas as entradas das $r$ primeiras colunas. 
Em particular, conhecemos os valores de 
$\tc_i(r)$ e $\td_i(r)$ para $i=0, \dotsc, r-1$.
Decorre daí que também conhecemos 
$\tc_{r+1}(r),\dotsc,\tc_{2r}(r)$, pois por $\eqref{eq-cd}$
estes valores dependem apenas dos 
já conhecidos 
$\td_0(r),\dotsc,\td_{r-1}(r)$.

Assim, na linha $r$ da matriz $\tc$, 
já conhecemos todos os valores de $\tc_i(r)$ para todo $i$,
exceto quando $i=r$, na diagonal. Entretanto a relação dada em (e)
entre os $c_i$'s e $d_i$'s é invertível para $r$ fixado:
\begin{equation}
\label{eq-dc}
d_i(r) = \sum_{j=2r-i}^{2r}
\textstyle
(-1)^j\binom{j}{2r-i}c_j(r) \qquad .
\end{equation}
Em particular, para $i=2r$, 
\begin{equation}
\label{eq-cr}
\tc_0(r) - \tc_1(r) + \dotsb + (-1)^r \tc_r(r) +\dotsb - \tc_{2r-1}(r) + \tc_{2r}(r) = \td_{2r}(r)
\end{equation}
e por (b) temos $d_{2r}(r)=\cat_r$, obtendo assim $\tc_r(r)$. 
Conhecemos portanto $\tc_0(r),\dotsc,\tc_{2r}(r)$
e, como já observamos, todas as outras entradas na linha $r$ da matriz $\tc$
são nulas. Em suma, conhecemos toda a linha $r$ na matriz $\tc$.
Usando as equações \eqref{eq-dc} oriundas de (e), 
obtemos também a linha $r$ da matriz $\td$.
Finalmente, vem de (c) e (d) que $\tc_r(\cdot)$ e $\td_r(\cdot)$ são polinomiais
de grau $\leq r$; como agora conhecemos os $r+1$ valores $\tc_r(0),\dotsc,\tc_r(r)$ e $\td_r(0),\dotsc,d_r(r)$ na coluna $r$, podemos interpolar, obtendo
assim toda a coluna $r$ em ambas as matrizes, o que termina esse passo.
\end{proof}

É um algoritmo muito simples. Oferecemos uma 
implementação no Apêndice~\ref{secao-alg1}.
\smallskip

\begin{exemplo} 
\label{ex-alg}
Acompanhemos o funcionamento do algoritmo no começo do passo 3. 
Neste ponto já conhecemos as 3 primeiras linhas e colunas das matrizes 
$c_i(r)$ e $d_i(r)$. Em particular, já conhecemos
\[
c_0(r)=1, c_1(r)=r, c_2(r)=r^2 
\qquad\text{e}\qquad
d_0(r)=1, d_1(r)=r, d_2(r)=r^2.
\]
Por (b), conhecemos também $d_6(3)=\frac{1}{4}\binom{6}{3}=5$.
As entradas desconhecidas estão indicadas simbolicamente,
e, por simplicidade, omitimos 
o índice da linha.
\begin{center}
\begin{tabular}{c||l|l|l|l|l|l|l}
$r$ & $c_0$ & $c_1$ & $c_2$ & $c_3$ & $c_4$ & $c_5$ & $c_6$ \\ 
\hline 
\hline
0 & 1 &   &   &   &   &   &   \\ 
\hline 
1 & 1 & 1 & 1 &   &   &   &   \\ 
\hline 
2 & 1 & 2 & 4 & 2 & 1 &   &   \\ 
\hline 
3 & 1 & 3 & 9 & \verm{$c_3$} & \verm{$c_4$} & \verm{$c_5$} & \verm{$c_6$} \\ 
\end{tabular} 
\qquad\quad
\begin{tabular}{c||l|l|l|l|l|l|l}
$r$ & $d_0$ & $d_1$ & $d_2$ & $d_3$ & $d_4$ & $d_5$ & $d_6$ \\ 
\hline 
\hline
0 & 1 &   &   &   &   &   &   \\ 
\hline 
1 & 1 & 1 & 1 &   &   &   &   \\ 
\hline 
2 & 1 & 2 & 4 & 4 & 2 &   &   \\ 
\hline 
3 & 1 & 3 & 9 & \verm{$d_3$} & \verm{$d_4$} & \verm{$d_5$} & 5 \\ 
\end{tabular} 
\end{center}
Usando \eqref{eq-cd}, obtemos $c_4=9, c_5=3$ e $c_6=1$
pois estes só dependem de $d_0,d_1$ e $d_2$; daí, usando 
\eqref{eq-cr}, vem que 
\[
 1-3+9-c_3+9-3+1 = 5
\]
e portanto $c_3=9$. Aplicando \eqref{eq-dc} obtemos
$d_3=17$, $d_4=21$, $d_5=15$. Finalmente, como
a coluna $3$ é polinomial de grau $\leq 3$ e
conhecemos $4$ valores, interpolamos 
e obtemos 
\[
c_3(r)=\frac{1}{2}r^2(r-1)
\quad\text{ e } \quad
d_3(r)=\frac{1}{6}r(r-1)(5r+2).
\]
Incidentalmente, observe que
$c_3(r)$ coincide com o nosso palpite para $c_3(U_r)$ em \eqref{palpite-c_3}
e que $d_3(r)$ coincide com
$d_3(\phi_r)$ calculado na Proposição~\ref{prop-graus}.
Temos agora todas
as entradas das 4 primeiras linhas e colunas:
\begin{center}
\small
\begin{tabular}{c||l|l|l|l|l|l|l|l|l}
$r$ & $c_0$ & $c_1$ & $c_2$ & $c_3$ & $c_4$ & $c_5$ & $c_6$ & $c_7$ & $c_8$ \\ 
\hline 
\hline
0 & 1 &   &   &   &   &   &  & & \\ 
\hline 
1 & 1 & 1 & 1 &   &   &   &  & & \\ 
\hline 
2 & 1 & 2 & 4 & 2 & 1 &   & & &  \\ 
\hline 
3 & 1 & 3 & 9 & 9 & 9 & 3 & 1 & & \\
\hline
4 & 1 & 4 & 16 & 24 & \verm{$c_4$} & \verm{$c_5$} & \verm{$c_6$} & \verm{$c_7$} & \verm{$c_8$} \\  
\end{tabular} 
\quad
\begin{tabular}{c||l|l|l|l|l|l|l|l|l}
$r$ & $d_0$ & $d_1$ & $d_2$ & $d_3$ & $d_4$ & $d_5$ & $d_6$ & $d_7$ & $d_8$ \\ 
\hline 
\hline
0 & 1 &   &   &   &   &   & & & \\ 
\hline 
1 & 1 & 1 & 1 &   &   &   & & & \\ 
\hline 
2 & 1 & 2 & 4 & 2 & 1 &   & &  & \\ 
\hline 
3 & 1 & 3 & 9 & 17 & 21 & 15 & 5 & & \\ 
\hline
4 & 1 & 4 & 16 & 44 & \verm{$d_4$} & \verm{$d_5$} & \verm{$d_6$} & \verm{$d_7$} & 14 \\ 
\end{tabular} 
\end{center}
e estamos prontos para prosseguir para o próximo passo.
\hfill{$\Box$}
\end{exemplo}

Evidentemente as condições (a)--(e) do Teorema~\ref{algoritmo} foram
designadas tendo em vista o cálculo efetivo 
dos invariantes $c_i(U_r)$ e $d_i(\phi_r)$
oriundos das secantes $\Sec_r C\subset\P^{2r}$.
Deixamos registrado que 
se nossa conjectura vale, então o 
algoritmo no Teorema~\ref{algoritmo} de fato 
produz esses invariantes.

\begin{prop}
\label{prop-num=geom}
Se vale a Conjectura~\ref{conj-A}, então $c_i(U_r)=c_i(r)$
e $d_i(\phi_r)=d_i(r)$ 
para todo $i,r\geq 0$.
\end{prop}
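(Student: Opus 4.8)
The plan is to show that the \emph{geometric} pair of double sequences $\bigl(c_i(U_r)\bigr)$ and $\bigl(d_i(\phi_r)\bigr)$ satisfies each of the five defining conditions (a)--(e) of the algorithm in Theorem~\ref{algoritmo}. Once this is done, the uniqueness clause asserted there forces the geometric pair to coincide with the algorithmically produced pair $\bigl(c_i(r)\bigr),\bigl(d_i(r)\bigr)$, which is exactly the claim $c_i(U_r)=c_i(r)$ and $d_i(\phi_r)=d_i(r)$ for all $i,r\ge 0$.

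First I would verify conditions (a) and (b), which hold unconditionally. Condition (a), that $d_i(\phi_r)=0$ for $i\ge 2r+1$, follows from the dominance of $\phi_r\colon\P^{2r}\dashrightarrow\P^{2r}$ (Corollary~\ref{cor-grad-dominante}): by property~(3) recorded after Definition~\ref{def-graus-projetivos}, the $i$-th projective degree vanishes once $i>\dim\overline{\Image(\phi_r)}=2r$. Condition (b), that $d_{2r}(\phi_r)=\cat_r$, is precisely Corollary~\ref{thm3}: since $\phi_r$ is dominant, its image is all of $\P^{2r}$, of degree $1$, so property~(2) after the same definition gives $d_{2r}(\phi_r)=\deg(\phi_r)\cdot 1=\frac{1}{r+1}\binom{2r}{r}=\cat_r$.

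Next I would dispatch the remaining three conditions. Conditions (c) and (d)---that for each fixed $i$ the functions $c_i(U_r)$ and $d_i(\phi_r)$ are polynomial in $r$ of degree $\le i$---are exactly parts (a) and (b) of Conjecture~\ref{conj-A}, which is the standing hypothesis of the proposition and is therefore available. Condition (e), the Aluffi relations $c_i(U_r)=\sum_{j=0}^{2r-i}(-1)^j\binom{2r-j}{i}\,d_j(\phi_r)$, is exactly the identity~\eqref{eq-aluffi} obtained by expanding Aluffi's formula (Theorem~\ref{Aluffi2.1}) applied to the complement $U_r$. With all five conditions confirmed for the geometric sequences, the uniqueness in Theorem~\ref{algoritmo} closes the argument.

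Since the whole proof is a matching of the geometric invariants against the five axioms of the algorithm, there is no single heavy computation; the step demanding the most care is ensuring the two \emph{unconditional} geometric inputs are legitimately in place---namely that condition (a) is justified (dominance of $\phi_r$, so that no projective degree survives beyond index $2r$) and that condition (b) correctly invokes the already-established value $d_{2r}(\phi_r)=\cat_r$. Granting the polynomiality hypothesis of Conjecture~\ref{conj-A} for conditions (c)--(d) and the relation~\eqref{eq-aluffi} for condition (e), uniqueness does the rest.
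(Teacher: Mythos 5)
Your proposal is correct and follows essentially the same route as the paper's own proof: verify that the geometric pair $\bigl(c_i(U_r)\bigr),\bigl(d_i(\phi_r)\bigr)$ satisfies conditions (a)--(e) of Theorem~\ref{algoritmo} — with (c),(d) being precisely Conjecture~\ref{conj-A}, (b) being Corollary~\ref{thm3}, and (e) the Aluffi relations~\eqref{eq-aluffi} — and then invoke uniqueness. The only cosmetic difference is that for condition (a) you route through dominance of $\phi_r$ and property (3) after Definition~\ref{def-graus-projetivos}, whereas the paper simply observes that the source being $\P^{2r}$ makes $d_i(\phi_r)=0$ for $i>2r$ immediate; both are fine.
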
 
\begin{proof}
Pela unicidade, 
basta mostrar que as sequências $(c_i(U_r))$ e 
$(d_i(\phi_r))$ satisfazem as hipóteses (a)--(e) do 
Teorema~\ref{algoritmo}, o que faremos a seguir.

Como $\Sec_r C\subset\P^{2r}$, 
é claro que $d_i(\phi_r)=0$
sempre que $i>2r$, cumprindo (a); 
a condição (b) vem do Corolário~\ref{thm3}; 
assumir a Conjectura~\ref{conj-A} é exatamente afirmar que (c) e (d) valem;
por fim, a condição (e) vem das relações~\eqref{eq-aluffi} de Aluffi
entre os graus projetivos do mapa gradiente
e os coeficientes da classe $\csm$ do complementar 
da hipersuperfície.
\end{proof}

\begin{rmk}
Nas condições (c) e (d) do Teorema~\ref{algoritmo}
exigimos que $c_i(r)$ e $d_i(r)$ sejam polinomiais em $r$ para cada $i$ fixado.
E por (e) estas sequências estão relacionadas por uma matriz invertível. É natural perguntar: não bastaria
exigir que apenas uma delas seja polinomial
e deduzir que a outra também o seja como
consequência de (e)?

A resposta é \underline{não}.
Para um exemplo, tome $d_i(r)=r^i$. Fixado $i$,
a sequência $c_i(r)$ obtida via as equações \eqref{eq-cd} é {exponencial} em
$r$, já para $i=0$:
\begin{equation*}
c_0(r) = \sum_{j=0}^{2r}(-1)^j\binom{2r-j}{0}r^j = r^{2r}-r^{2r-1}+\dotsb-r+1.
\end{equation*}
\end{rmk}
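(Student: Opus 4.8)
The plan is to settle the question in the negative by exhibiting an explicit sequence that satisfies the polynomiality hypothesis on one side of \eqref{eq-cd} but fails it on the other. I would take $d_i(r)=r^i$, which for each fixed $i$ is a polynomial in $r$ of degree exactly $i$, so condition (d) of Theorem~\ref{algoritmo} is met; the goal is then to show that the induced $c_i(r)$ cannot all be polynomial, so that (e) alone does not propagate polynomiality from the $d$'s to the $c$'s.

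First I would specialize the inversion relation \eqref{eq-cd} to $i=0$. Because $\binom{2r-j}{0}=1$ for every $j\in\{0,\dots,2r\}$, the relation degenerates into the alternating finite geometric sum
\[
c_0(r)=\sum_{j=0}^{2r}(-1)^j\binom{2r-j}{0}\,d_j(r)=\sum_{j=0}^{2r}(-r)^j=\frac{1+r^{2r+1}}{1+r},
\]
which is exactly the expression $r^{2r}-r^{2r-1}+\dotsb-r+1$ displayed above. I would then rule out polynomiality by a crude growth estimate: from $c_0(r)=\frac{1+r^{2r+1}}{1+r}\geq \frac{r^{2r}}{2}$ (valid for $r\geq 1$, since $r+1\leq 2r$) one gets $c_0(r)/r^d\to\infty$ for every fixed $d$, so no single polynomial in $r$ can agree with $c_0(r)$ for all $r$. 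This contradicts the putative analogue of condition (c), proving that the answer to the question is \textbf{no}.

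The crux, and the only point worth stressing, is conceptual rather than computational: the transform \eqref{eq-cd} is not a fixed linear map but a family of maps whose number of terms grows with $r$, so the degree-in-$r$ of the output is unbounded even when each input $d_j(r)$ is a fixed polynomial. This is precisely the mechanism that can destroy polynomiality in passing from $(d_i)$ to $(c_i)$, and it explains why Theorem~\ref{algoritmo} must impose both (c) and (d) as independent conditions rather than deducing either one from the other via (e).
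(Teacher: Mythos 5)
Your proposal is correct and takes essentially the same route as the paper: the identical counterexample $d_i(r)=r^i$, specialized in \eqref{eq-cd} at $i=0$, giving $c_0(r)=\sum_{j=0}^{2r}(-r)^j=r^{2r}-r^{2r-1}+\dotsb-r+1$. Your only additions are the closed form $\frac{1+r^{2r+1}}{1+r}$ and the growth estimate $c_0(r)\geq r^{2r}/2$ making the non-polynomiality explicit, details the paper's remark leaves implicit.
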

\vspace{-1.1cm}
\hfill{$\Box$}
\ \\

Executando o algoritmo calculamos tantos valores para as sequências
quanto desejarmos. Os números produzidos, \emph{per si}, 
não dizem muita coisa. 
Em outras palavras, gostaríamos de responder: existem \emph{fórmulas} 
que codificam
os resultados do algoritmo?
A resposta é afirmativa, como veremos
no Teorema~\ref{teo-formulas}. O caminho não é curto, 
mas tem surpresas agradáveis.

\section{Funções geradoras}
\label{secao-funcaogeradora}

\begin{quote}
\emph{In combinatorics it is a standard procedure to associate with
a sequence of numbers $a_0, a_1, a_2, \dots$ the generating function $\sum a_it^i$.}

\hfill{
\begin{small}
H. Matsumura, Commutative ring theory.
\end{small}
}
\end{quote}

\begin{quote}
\emph{A generating function is a device somewhat similar to a bag. Instead of carrying many little objects detachedly, which could be embarrassing, we put them all in a bag, and then we have only one object to carry, the bag.}

\hfill{
\begin{small}
George P\'{o}lya, Mathematics and plausible reasoning.
\end{small}
}
\end{quote}
\medskip

Em toda esta seção denotaremos por 
$(c_i(r))$ e $(d_i(r))$ as sequências produzidas pelo algoritmo descrito no Teorema~\ref{algoritmo}. 

Os resultados para $r\leq 7$ encontram-se
no Apêndice, Seção~\ref{algoritmo-resultados}. Reproduzimos
os valores de $d_i(r)$ na Tabela~\ref{tab1}. Cabe
notar que estes resultados coincidem com os graus projetivos
$d_i(\phi_r)$ que fomos capazes de calcular 
diretamente via \Mdois{}, a saber $r\leq 5$,
ou seja, até $\P^{10}$.
Para $r\geq 6$ os valores apresentados são conjecturais.
Nosso objetivo é encontrar fórmulas para $c_i(r)$ e $d_i(r)$.
\smallskip

\begin{table}[ht]
\begin{center}
\begin{footnotesize}
\begin{tabular}{l||l|l|l|l|l|l|l|l|l|l|l|l|l|l|l}
$r$ & $d_0$ & $d_1$ & $d_2$ & $d_3$ & $d_4$ & $d_5$ & $d_6$ & $d_7$ & $d_8$ & $d_9$ & $d_{10}$ & $d_{11}$ & $d_{12}$  & $d_{13}$ & $d_{14}$  \\
\hline
\hline
0  & 1 &&&&&&&&&&&& && \\
1  & 1 & 1 & 1 &&&&&&&&&& &&\\
2  & 1 & 2 & 4 & 4 & 2 &&&&&&&& &&\\
3  & 1 & 3 & 9 & 17 & 21 & 15 & 5  &&&&&& &&\\
4  & 1 & 4 & 16 & 44 & 86 & 116 & 104 & 56 & 14 &&&& &&\\
5  & 1 & 5 & 25 & 90 & 240 & 472 & 680 & 700 & 490 & 210 & 42 && &&\\
\hline
6  & \verm{1} & \verm{6} & \verm{36} & \verm{160} & \verm{540} & \verm{1392} & \verm{2752} & \verm{4152} & \verm{4710} & \verm{3900} & \verm{2232} & \verm{792} & \verm{132} &&\\
7 & \verm{ 1} & \verm{ 7} & \verm{ 49} & \verm{ 259} & \verm{ 1057} & \verm{ 3367} & \verm{ 8449} & \verm{ 16753} & \verm{ 26173} & \verm{ 31899} & \verm{29757} & \verm{ 20559} & \verm{ 9933} & \verm{ 3003} & \verm{ 429} \\
\end{tabular}
\end{footnotesize}
\end{center}
\caption{Graus projetivos (via Conjectura~\ref{conj-A} para $r\geq6$) do mapa $\phi_r\colon\P^{2r}\dashrightarrow\P^{2r}$.}
\label{tab1}
\end{table}%

Nossa estratégia consiste em buscar a função geradora para cada
função $d_i(r)$, ou seja, para cada
uma das colunas da Tabela~\ref{tab1}. Concretamente, fixado $i\geq 0$, consideramos a
série formal
\[
p_i(x) :=\sum_{r=0}^{\infty} d_i(r)x^r = d_i(0) + d_i(1)x + d_i(2)x^2 + \dotsb
\qquad \in \Z[[x]].
\]
Por exemplo, $p_3(x) = 4x^2+ 17x^3+44x^4+90x^5+160x^6+\dotsb$. Geometricamente,
a interpretação é que se a Conjectura~\ref{conj-A} é válida, então
$p_i(x)$ enumera os $i$-ésimos graus projetivos de $\phi_r$, obtidos tomando a restrição 
a um $\P^i$ geral em $\P^{2r}$ com $i$ fixado, deixando $r$ variar.

\smallskip

Quando $d_i(r)$ é polinomial em $r$ de grau $\leq i$, a sua função geradora 
admite uma representação por uma função racional da forma $P(x)/(1-x)^{i+1}$,
onde $P(x)$ é um polinômio de grau $\leq i$. Isto é padrão e apresentamos
uma prova sucinta no Lema~\ref{lema-A1}. A partir daí fica fácil obter fórmulas
para as funções geradoras.

\begin{exemplo} 
\label{ex-A1}
Considere o polinômio $d_3(r)=\frac{1}{6}r(r-1)(5r+2)$ obtido no 
Exemplo~\ref{ex-alg}. Sua função geradora é 
$p_3(x)=4x^2+17x^3+44x^4+\dotsb$, que pode então ser escrita como 
\[
p_3(x)=P(x)/(1-x)^4
\]
com $P(x)=a_0+a_1x+a_2x^2+a_3x^3$. Expandindo a igualdade $(1-x)^4 p_3(x) = P(x)$:
\[
(1-4x+6x^2-4x^3+x^4)(4x^2+17x^3+\dotsb) = a_0+a_1x+a_2x^2+a_3x^3
\]
deduzimos $a_0=a_1=0, a_2=4$ e $a_3=1$. Portanto,
\[
p_3(x) = \frac{x^2}{(1-x)^4}\cdot({x+4})
\]
Esta é a função racional que aparece na Tabela~\ref{tab2} abaixo.
\hfill{$\Box$}
\end{exemplo}

\newcommand{\tq}{\tilde{q}}

Listamos algumas das funções polinomiais $d_i(r)$ 
produzidas pelo Algoritmo~\ref{algoritmo} na primeira coluna
da Tabela~\ref{tab2}. Na segunda coluna
aparecem as funções racionais asso\-cia\-das.
A inusitada fatoração se justifica: primeiro, das considerações acima, 
segue-se que $(1-x)^{i+1}p_i(x)$ é um polinômio, de grau $\leq i$; mais ainda,
$x^{[(i+1)/2]}$ divide este polinômio, uma vez que $d_i(r)=0$ sempre que $i\geq 2r+1$,
pela condição (a) do Teorema~\ref{algoritmo}. Logo, para todo $i\geq 0$, temos que
\begin{equation}
\label{eq-qn}
q_i(x) := \frac{(1-x)^{i+1}p_i(x)}{x^{[(i+1)/2]}} 
\end{equation}
é um polinômio. Seu grau é $\leq [i/2]$, uma vez que $i=[i/2]+[(i+1)/2]$. 
Na Tabela~\ref{tab2},
estes são os polinômios que aparecem à direita. Por exemplo, 
$q_4(x)=x^2+11x+2$.

\newcommand{\vvv}[3]{ \frac{{#1}}{(1-x)^{#2}}\cdot{#3}  }
\begin{table}[thp]
\begin{center}
\begin{tabular}{l||l|l}
$i$ & $d_i(r)$ & $p_i(x)$ \\
\hline
\hline
0 & $1$
             &   $ \vvv{1}{ }   {\verm{1}} $\\
\hline
1 & $r$ 
             &   $\vvv{x}{2}   {\verm{1}} $\\
\hline
2 & $r^2$ 
             &   $\vvv{x}{3}   {(\verm{1}x+\verm{1})}$\\
\hline
3 & $ \frac{1}{6} r (r-1) (5 r+2) $
             &   $\vvv{x^2}{4} {(\verm{1}x+\verm{4})}$\\
\hline
4 & $\frac{1}{12} r (r-1) (7 r^{2}-5 r-6) $
             &    $\textstyle\vvv{x^2}{5} {(\verm{1}x^2+\verm{11}x+\verm{2})}$\\
\hline
5 & $\frac{1}{60}  r (r-1) (r-2) (21 r^{2}-7 r-18) $
             &   $\vvv{x^3}{6} {(\verm{1}x^2+\verm{26}x+\verm{15})}$\\
\hline
6 & $\frac{1}{60}  r (r-1) (r-2) (11 r^{3}-27 r^{2}-8 r+20) $
             &   $\vvv{x^3}{7} {(\verm{1}x^3+\verm{57}x^2+\verm{69}x+\verm{5})}$\\
\hline
\end{tabular}
\end{center}
\caption{Funções geradoras $p_i(x)=\sum_r d_i(r)x^r$ e os polinômios $q_i(x)$.}
\label{tab2}
\end{table}%

\medskip
Um cidadão sensato perguntaria pelo motivo de toda essa ginástica.

\subsection{Conexões inesperadas}
\label{subsecao-inesperadas}

Calculamos os polinômios $q_n(x)$ para diversos valores de $n$.
Observamos então que os seus coeficientes formam uma sequência
bem conhecida em combinatória, a saber \emph{o número de caminhos de Dyck de semicomprimento
$n$ com $k$ subidas longas}, a qual denotaremos por $T(n,k)$. 
Isto não era esperado.
Você está convidado a comparar os primeiros termos desta sequência,
exibidas na Tabela~\ref{tab-T}, com os coeficientes dos polinômios 
$q_n(x)$ que obtivemos na Tabela~\ref{tab2}. A coincidência persiste para todo $n\leq 20$.

Para outras informações sobre a sequência $T(n,k)$
(referências,
outras identidades
e interpretações do ponto de vista da combinatória)
recomendamos \cite[A091156]{OEIS},
no famoso \emph{On-Line Encyclopaedia of Integer Sequences}.
Indicamos também \cite{STT}.
Apresentemos uma fórmula: tem-se $T(n,0)=1$ e
\begin{equation}
\label{eq-Tnk}
 T(n,k)=\frac{1}{n+1} \binom{n+1}{k}
  \sum_{j=2k}^{n} \binom{j-k-1}{k-1} \binom{n+1-k}{n-j}
\end{equation}
para $0\leq n$ e $0< k\leq [n/2]$. 

\begin{table}[btp]
\begin{center}
\begin{tabular}{c|lllll}
$n\diagdown{}k$ & 0 & 1 & 2 & 3 \\
\hline
\hline
{0} & \verm{1}\\
{1} & \verm{1}\\
{2} & \verm{1} & \verm{1} \\
{3} & \verm{1} & \verm{4}\\
{4} & \verm{1} & \verm{11} & \verm{2}\\
{5} & \verm{1} & \verm{26} & \verm{15}\\
{6} & \verm{1} & \verm{57} & \verm{69} & \verm{5}\\
\end{tabular}
\end{center}
\caption{Termos iniciais da sequência $T(n,k)$.}
\label{tab-T}
\end{table}%

%


\medskip
Há uma outra conexão inesperada, mais interessante. K. Gedeon em
um artigo recente (\cite{Gedeon}) mostrou
que os termos da sequência $T(n,k)$ também aparecem como
coeficientes
de outros polinômios, a saber os polinômios de \emph{Kazhdan-Lusztig} de 
(matróides associados a) grafos completos tripartidos $K_{1,1,n}$%
.
Traduzindo para o nosso contexto, se $P_{M_n}(x)$ são os polinômios obtidos
por Gedeon, estes \emph{deveriam} coincidir com os nossos
polinômios $q_n(x)$, desde que tomemos o cuidado de inverter a ordem dos coeficientes;
ou seja,
deveríamos ter
\begin{equation}
\label{eq-gedeon}
P_{M_n}(x) = x^{[n/2]}q_n(1/x).
\end{equation}
Enfatizamos que até aqui esta igualdade foi verificada 
apenas para pequenos valores de $n$, mas é 
tentador imaginar que isso ocorra sempre.
Seguirá do Teorema~\ref{teo-formulas} que isso de fato acontece!
Tal coincidência pede por uma explicação conceitual
(de algum modo há uma
conexão entre secantes de curvas racionais normais e grafos 
completos tripartidos) mas no momento
não temos nenhuma para oferecer.

Em resumo, nosso objetivo é  \emph{demonstrar} que 
os coefi\-cien\-tes dos polinômios $q_n(x)$ 
em \eqref{eq-qn}, produzidos pelo algoritmo do
Teorema~\ref{algoritmo}, são dados pela sequência
$T(n,k)$ para todo $n\geq 0$. 
 
\subsection{Função geradora para a sequência $(d_i(r))$}

Reformulamos o problema de maneira adequada.
Consideramos a função geradora, agora em duas variáveis,
\begin{equation}
\label{eq-defg}
 g(x,y) := \sum_{i,r\geq 0} d_i(r)x^ry^i = \sum_{i\geq 0} p_i(x)y^i
 \qquad \in \Z[[x,y]]
\end{equation}
para a sequência $(d_i(r))$ produzida pelo algoritmo do Teorema~\ref{algoritmo}.

Nossa estratégia: uma fórmula para a função geradora
da sequência $T(n,k)$ é conhecida;
somos levados a uma expressão-candidata para $g(x,y)$; 
mostramos
que nosso candidato funciona.
\medskip

Mãos à obra. Seja
\[
w(t,u) := \sum_{n=0}^\infty \sum_{k=0}^{[n/2]} T(n,k)t^k u^n.
\]
a função geradora da sequência $T(n,k)$.
Então de \cite[p.\thinspace{}9]{Wang} (veja também \cite[Section~1]{STT})
\begin{align}
\label{eq-Wang}
w(t,u)  & =   \frac{1-\sqrt{1-4u(1-u+ut)}} {2u(1-u+ut)}\\
\medskip\tag*{}
& =  1 + u + (1+t)u^2 + (1+4t)u^3 + (1+11t+2t^2)u^4 +  (1+26t+15t^2)u^5 + \dotsb 
\end{align}
Repare que os coeficientes dos $u$'s 
são os mesmos que aparecem nos numeradores do lado direto da Tabela~\ref{tab2}, exceto pelo grau e pela ordem,
que está invertida.
Precisamos portanto ajustá-los.
Mediante a mudança de variáveis
\(
t\mapsto 1/x, \   u \mapsto xy
\)
obtemos
\begin{align*}
 w(1/x,xy) &  =  \frac{1-\sqrt{1-4xy(1+y(1-x))}} {2xy(1+y(1-x))} \\
\medskip
& = 
1 + xy + x(x+1)y^2   +  x^2(x+4)y^3  + x^2 (x^2+11x+2)y^4 \\
& \hspace{4.15cm} + x^3(x^2+26x+15)y^5 + \dotsb.
\end{align*}
Perfazendo $y\mapsto y/(1-x)$ e multiplicando por $1/(1-x)$ obtemos
\begin{align*}
\frac{1}{1-x}\;w\Big(\frac{1}{x},\frac{xy}{1-x}\Big)
 = 
\frac{1}{1-x}  + \frac{x}{(1-x)^2}y  + \frac{x(x+1)}{(1-x)^3}y^2 & + \frac{x^2(x+4)}{(1-x)^4}y^3 \\
& +\frac{x^2(x^2+11x+2)}{(1-x)^5} y^4 +\dotsb 
\end{align*}
ou seja, os coeficientes de $y^i$ são as expressões do lado direito da Tabela~\ref{tab2}.
Em outras palavras,
\emph{conjecturalmente},
\[
\frac{1}{1-x}\;w\Big(\frac{1}{x},\frac{xy}{1-x}\Big)
=
\sum_{i=0}^\infty p_i(x)y^i = 
g(x,y)
 .
\]
Substituindo em \eqref{eq-Wang} e simplificando,
o resumo do que fizemos é: demonstrar que 
os coefi\-cien\-tes dos polinômios $q_n(x)$ 
em \eqref{eq-qn} são dados
pela sequência
$T(n,k)$ é \emph{equivalente} a demonstrar que
\begin{equation}
\label{gf2}
g(x,y)  = \frac{1-\sqrt{\displaystyle1-\frac{4xy(1+y)}{1-x}}}  {2xy(1+y)}\quad .
\end{equation}
Expandimos a expressão \eqref{gf2} em série
de potências. Não é difícil. Começamos com a bem
conhecida expansão da função geradora dos números de 
Catalan 
\[
\frac{1}{2z}(1-\sqrt{1-4z}) = 
\sum _{k=0}^{\infty }\cat_k z^{k} \quad .
\]
Substituindo $z\mapsto xy(1+y)/(1-x)$ e  dividindo por
$1-x$, obtemos \eqref{gf2}; portanto demonstrar~\eqref{gf2}
é equivalente a mostrar que
\[
g(x,y) = \sum _{k=0}^{\infty } \frac{\cat_k x^k y^k (1+y)^k}{(1-x)^{k+1}}  \quad .
\]
Das identidades
\[
\frac{1}{(1-x)^{k+1}} = \sum_{r=k}^{\infty} \binom{r}{k} x^{r-k} 
\qquad \text{e}\qquad
(1+y)^k=\sum_{\ell=0}^k \binom{k}{\ell} y^\ell
\]
o que queremos provar é
\begin{equation*}
 \label{gf3}
g(x,y) = \sum_{k=0}^\infty\sum_{r=k}^\infty\sum_{\ell=0}^{k} 
\binom{k}{\ell}\binom{r}{k} \cat_k \;x^{r} y^{k+\ell}  \quad .
\end{equation*}
Recorde que $d_j(r)$ é o coeficiente de $x^ry^j$ em $g(x,y)$
e que convencionamos $\binom{a}{b}=0$ sempre que $b<0$ ou $a<b$.
Lendo o coeficiente correspondente, (isto é, fazendo $k+\ell=j$), 
concluímos que \eqref{gf2} é equivalente a
\begin{equation}
\label{gf4}
d_j(r) = \sum_{k=0}^{j} \binom{k}{j-k}\binom{r}{k} \cat_k
\end{equation}
o que demonstraremos 
no Teorema~\ref{teo-formulas} que de fato vale! Entretanto
ainda temos mais uma etapa a percorrer.

\subsection{Função geradora para a sequência $(c_i(r))$}

Consideramos agora a função geradora 
\begin{equation*}
f(x,y) := \sum_{i,r\geq0} c_i(r)x^ry^i
 \qquad \in \Z[[x,y]]
\end{equation*}
associada à sequência $(c_i(r))$ produzida pelo algoritmo no 
Teorema~\ref{algoritmo}.
Como as sequências $(c_i(r))$ e $(d_i(r))$ estão conectadas
pelas relações lineares \eqref{eq-cd}, que são invertíveis, 
podemos obter a função geradora $f(x,y)$ a partir de $g(x,y)$
e vice-versa. A manipulação é elementar:
\begin{align*}
f(x,y)  &= \sum_r \sum_i c_i(r)x^ry^i \\
&=\sum_r\sum_i\sum_j (-1)^j \textstyle\binom{2r-j}{i}d_j(r)x^ry^i \\
&= \sum_r \sum_j (-1)^jd_j(r)\left[ \sum_i \textstyle\binom{2r-j}{i}y^i\right]x^r \\
&= \sum_r \sum_j d_j(r) (-1)^j(1+y)^{2r-j} x^r.
\end{align*}
Faça $u=-1-y$, $X=u^2x$ e $Y=u^{-1}$. Então:
\begin{align*}
f(x,y)  &= \sum_r \sum_j d_j(r) u^{2r-j} x^r \\
&= \sum_r \sum_j d_j(r) (u^2x)^ru^{-j} \\
&= \sum_r \sum_j d_j(r) X^rY^j \\
&= g(X,Y)
\end{align*}
Assim a relação procurada entre as funções geradoras é 
\begin{equation}
\label{eq-fg}
f(x,y)=g(x(1+y)^2,-(1+y)^{-1}).
\end{equation}
Havíamos obtido uma expressão conjectural para $g(x,y)$ 
em \eqref{gf2}; perfazendo a substituição em \eqref{eq-fg} e simplificando,
vem que a fórmula para a função geradora $g(x,y)$ em \eqref{gf2} é válida
\emph{se, e somente se,} 
\begin{equation}
\label{eq-f1}
f(x,y) = 
\frac{1-\sqrt{1-\frac{-4xy}{1-x(1+y)^2}}}{-2xy}\ \ .
\end{equation}
Recorde que $c_i(r)$ é o coeficiente de $x^ry^i$ em $f(x,y)$.
Expandindo como na passagem de \eqref{gf2} para
\eqref{gf4}, deduzimos que assumir a igualdade 
em \eqref{eq-f1} é equivalente a afirmar
\begin{equation}
\label{eq-f2}
c_i(r) =
\sum_{k=0}^{r} (-1)^k\binom{2r-2k}{i-k} \binom{r}{k}\cat_k.
\end{equation}

\begin{obs}
\label{obs47}
Para uso futuro, destacamos uma consequência importante do malabarismo
que acabamos de realizar: se $d_i(r)=\sum_{k=0}^{i} \binom{k}{i-k}\binom{r}{k} \cat_k
$ (ou seja, se vale \eqref{gf4}), então decorre das
relações \eqref{eq-cd} que $c_i(r)$ é dado pela
fórmula em \eqref{eq-f2}.
\hfill{$\Box$}
\end{obs}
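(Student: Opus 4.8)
The plan is to substitute the hypothesis \eqref{gf4} directly into the linear relations \eqref{eq-cd} and to collapse the resulting double sum. Writing $d_j(r)=\sum_{k}\binom{k}{j-k}\binom{r}{k}\cat_k$ and inserting this into $c_i(r)=\sum_{j=0}^{2r-i}(-1)^j\binom{2r-j}{i}d_j(r)$, I would interchange the order of summation so that $k$ becomes the outer index. This isolates the factor $\binom{r}{k}\cat_k$, which already has the shape demanded by the target formula \eqref{eq-f2}, and concentrates the entire content of the statement in a single inner sum over $j$.

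The key step is to evaluate that inner sum in closed form. After the reindexing $j=k+m$ (so that $\binom{k}{j-k}=\binom{k}{m}$ forces $0\le m\le k$), the inner sum becomes
\[
(-1)^k\sum_{m=0}^{k}(-1)^m\binom{k}{m}\binom{2r-k-m}{i},
\]
and everything reduces to the binomial identity
\[
\sum_{m=0}^{k}(-1)^m\binom{k}{m}\binom{N-m}{i}=\binom{N-k}{i-k},
\]
applied with $N=2r-k$. I would prove this identity by coefficient extraction: reading $\binom{N-m}{i}$ as $[x^i](1+x)^{N-m}$ and summing the resulting series gives $(1+x)^{N}\bigl(1-(1+x)^{-1}\bigr)^{k}=(1+x)^{N-k}x^{k}$, whose coefficient of $x^i$ is $\binom{N-k}{i-k}$. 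With $N=2r-k$ this equals $\binom{2r-2k}{i-k}$, the inner sum collapses to $(-1)^k\binom{2r-2k}{i-k}$, and we land on exactly \eqref{eq-f2}.

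Alternatively, and more in the spirit of the surrounding discussion, one may argue entirely with generating functions. The relation \eqref{eq-fg}, namely $f(x,y)=g\bigl(x(1+y)^2,-(1+y)^{-1}\bigr)$, was derived from the relations \eqref{eq-cd} alone and is therefore unconditional. Granting \eqref{gf4} is equivalent to asserting the closed form \eqref{gf2} for $g(x,y)$; substituting that form into \eqref{eq-fg} and simplifying yields \eqref{eq-f1}, and reading off the coefficient of $x^r y^i$ produces \eqref{eq-f2}. This is precisely the chain of equivalences already assembled in this section, traversed in the single direction $\eqref{gf4}\Rightarrow\eqref{eq-f2}$.

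The main obstacle is mild in either approach. In the combinatorial route it is the boundary bookkeeping: when I extend the inner sum to the full range $0\le m\le k$ in order to apply the identity, I must check that the convention $\binom{a}{b}=0$ makes the extra terms vanish. They do, since for $2r-i<j\le 2r$ one has $\binom{2r-j}{i}=0$, while the range $j>2r$ is never reached because $\binom{r}{k}=0$ forces $k\le r$, hence $j\le 2k\le 2r$. In the generating-function route the only subtlety is justifying that the formal substitution $-(1+y)^{-1}$ into the second argument of $g$ is well defined in $\Z[[x,y]]$ and reproduces \eqref{eq-f1} after simplification. Both checks are routine, so no genuine difficulty arises; the statement is, as the text indicates, a direct consequence of the manipulations already performed.
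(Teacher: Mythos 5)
Your proposal is correct, and your first argument takes a genuinely different route from the one the paper relies on. The paper gives no separate proof of this observation: it is presented as a by-product of the generating-function chain just assembled, namely that \eqref{eq-fg} follows unconditionally from \eqref{eq-cd}, that \eqref{gf4} is equivalent to the closed form \eqref{gf2} for $g(x,y)$, and that pushing \eqref{gf2} through \eqref{eq-fg} gives \eqref{eq-f1}, whose coefficient of $x^ry^i$ is \eqref{eq-f2} --- which is precisely your second, ``alternative'' route. Your first route bypasses generating functions entirely: substitute \eqref{gf4} into \eqref{eq-cd}, swap the order of summation, reindex $j=k+m$, and collapse the inner sum by the identity
\[
\sum_{m=0}^{k}(-1)^m\binom{k}{m}\binom{N-m}{i}=\binom{N-k}{i-k},
\qquad N=2r-k,
\]
proved by extracting the coefficient of $x^i$ from $x^k(1+x)^{N-k}$. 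I checked the details and they hold: the boundary bookkeeping is right (for $2r-i<j\le 2r$ the factor $\binom{2r-j}{i}$ vanishes by the paper's convention, and $j>2r$ never occurs because $\binom{r}{k}\binom{k}{m}\neq 0$ forces $j=k+m\le 2k\le 2r$), and the coefficient extraction is legitimate since every upper index satisfies $N-m\ge 2r-2k\ge 0$. As for what each approach buys: your direct computation is self-contained, works coefficient-by-coefficient with finite sums only, and thereby sidesteps the one real subtlety of the formal route --- the substitution $Y=-(1+y)^{-1}$ has nonzero constant term, so $g(X,Y)$ is only meaningful because $d_j(r)=0$ for $j>2r$ makes each coefficient of $x^ry^i$ a finite sum (a fact that \eqref{gf4} does imply, by the same counting as above); this deserves to be stated explicitly rather than dismissed as routine. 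The paper's route, in exchange, yields the two-sided equivalence \eqref{gf4}$\,\Leftrightarrow\,$\eqref{eq-f2} rather than a single implication, and keeps the observation embedded in the chapter's chain of equivalent reformulations (Proposi\c{c}\~ao~\ref{prop-resumo}), which is what the subsequent results exploit.
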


\subsection{Resumo}
Listamos as fórmulas, todas equivalentes entre si, que obtivemos até aqui.
\begin{prop} 
\label{prop-resumo}
Mantenha a notação desta seção. São equivalentes:
\begin{enumerate}[{\rm(i)}]
\item Para todo $n\geq 0$, os coeficientes dos polinômios $q_n(x)$ 
são dados pelos termos da sequência $T(n,k)$.
\item 
$g(x,y) =\Big(1-\sqrt{\displaystyle1-\frac{4xy(1+y)}{1-x}}\Big)  (2xy(1+y))^{-1}$.
\item
 $d_i(r) = \sum_{k=0}^{i} \binom{k}{i-k}\binom{r}{k} \cat_k$.
 \item
$f(x,y) = 
\Big({1-\sqrt{1-\frac{-4xy}{1-x(1+y)^2}}}\Big)(-2xy)^{-1}$.
\item 
$c_i(r)= \sum_{k=0}^i (-1)^k\binom{2r-2k}{i-k} \binom{r}{k}\cat_k$.\ \hfill{$\Box$}
\end{enumerate}
\end{prop}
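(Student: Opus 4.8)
The plan is to observe that the long discussion preceding this proposition has, in effect, already carried out every step of the proof: each passage there is a reversible manipulation of formal power series, so all that remains is to organize these manipulations into a closed cycle of biconditionals. Concretely, I would establish the chain $(\mathrm{i})\Leftrightarrow(\mathrm{ii})\Leftrightarrow(\mathrm{iii})$ together with $(\mathrm{ii})\Leftrightarrow(\mathrm{iv})\Leftrightarrow(\mathrm{v})$, reading each derivation in both directions.

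First I would treat $(\mathrm{i})\Leftrightarrow(\mathrm{ii})$. Starting from the known closed form \eqref{eq-Wang} for $w(t,u)=\sum_{n,k}T(n,k)t^ku^n$, I apply the invertible substitutions $t\mapsto 1/x$, $u\mapsto xy$, then $y\mapsto y/(1-x)$, and finally multiply by $1/(1-x)$. Since each of these is a bijection on the appropriate ring of formal power series, the coefficient of $y^i$ in the resulting series equals $p_i(x)=x^{[(i+1)/2]}q_i(x)/(1-x)^{i+1}$ by \eqref{eq-qn} exactly when the coefficients of every $q_n(x)$ are the numbers $T(n,k)$; hence $(\mathrm{i})$ holds if and only if $g(x,y)$ agrees with the closed form \eqref{gf2}, which is $(\mathrm{ii})$. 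For $(\mathrm{ii})\Leftrightarrow(\mathrm{iii})$ I would expand the right-hand side of \eqref{gf2}: substituting $z\mapsto xy(1+y)/(1-x)$ into the Catalan generating function $\tfrac{1}{2z}\big(1-\sqrt{1-4z}\big)=\sum_{k}\cat_k z^k$ and dividing by $1-x$, then using $(1-x)^{-(k+1)}=\sum_{r}\binom{r}{k}x^{r-k}$ and $(1+y)^k=\sum_{\ell}\binom{k}{\ell}y^\ell$, the coefficient of $x^ry^j$ is read off directly. The identity $(\mathrm{ii})$ is therefore equivalent, coefficient by coefficient, to the formula \eqref{gf4}, i.e. $(\mathrm{iii})$, where the convention $\binom{a}{b}=0$ for $b<0$ or $a<b$ makes the otherwise infinite sums terminate.

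For the second arc I would use the relation \eqref{eq-fg}, namely $f(x,y)=g\big(x(1+y)^2,\,-(1+y)^{-1}\big)$, which follows from the invertibility of the Aluffi relations \eqref{eq-cd}. Substituting the closed form \eqref{gf2} for $g$ into \eqref{eq-fg} and simplifying shows that $(\mathrm{ii})$ holds if and only if $f$ equals the closed form \eqref{eq-f1}, which is $(\mathrm{iv})$. Finally, expanding \eqref{eq-f1} in exactly the manner used in the passage from \eqref{gf2} to $(\mathrm{iii})$ and extracting the coefficient of $x^ry^i$ yields the equivalence of $(\mathrm{iv})$ with the formula \eqref{eq-f2}, i.e. $(\mathrm{v})$. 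Taken together these give the closed cycle of equivalences and the proposition follows.

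The main obstacle I anticipate is bookkeeping rather than conceptual. I must confirm that each change of variables is a genuine automorphism (or at least a well-defined injection) of the relevant formal power series ring, so that matching two closed forms is truly equivalent to matching all of their coefficients; and I must handle the substitutions $u=-1-y$, $t=1/x$ with care, since they introduce negative powers and require working in a suitable localization before denominators are cleared. Once these formal manipulations are justified, no further geometry is needed: the equivalences are purely combinatorial identities linking the two generating functions $f(x,y)$ and $g(x,y)$.
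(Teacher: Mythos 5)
Your proposal is correct and follows essentially the same route as the paper: Proposition~\ref{prop-resumo} is stated there as a summary (\emph{Resumo}) of the preceding section, whose reversible generating-function manipulations --- the substitutions linking $w(t,u)$ to $g(x,y)$, the Catalan expansion yielding \eqref{gf4}, and the change of variables \eqref{eq-fg} connecting $f$ and $g$ --- form exactly the cycle of biconditionals $(\mathrm{i})\Leftrightarrow(\mathrm{ii})\Leftrightarrow(\mathrm{iii})$ and $(\mathrm{ii})\Leftrightarrow(\mathrm{iv})\Leftrightarrow(\mathrm{v})$ that you describe. Your added attention to the invertibility of the substitutions and to the negative powers introduced by $t\mapsto 1/x$ and $u\mapsto -1-y$ only makes explicit what the paper leaves implicit.
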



\section{Fórmulas}

Estamos prontos para a conclusão desta aventura. Vamos provar que
as fórmulas apresentadas na Proposição~\ref{prop-resumo} são válidas.
De fato, visando explicitar uma consequência importante
da Conjectura~\ref{conj-A}, a saber, a positividade 
dos coeficientes das classes de Schwartz-MacPherson 
$\csm(\Sec_r C) \in A_*\P^{2r}$ (Corolário~\ref{cor-conjectura}),
faremos um esforço extra e apresentaremos uma fórmula ainda mais
simples para as funções $c_i(r)$.

Recorde nossa
notação $\cat_k = \frac{1}{k+1}\binom{2k}{k}$ 
para os números de Catalan. O teorema a seguir é o resultado principal 
deste capítulo.

\begin{teo}
\label{teo-formulas}
As sequências $(c_i(r))$ e $(d_i(r))$ produzidas
pelo algoritmo no Teorema~\ref{algoritmo} são
dadas pelas fórmulas
\begin{equation}\label{eq-formula1}
c_i(r)=\binom{r}{\left[\frac{i}{2}\right]}
\binom{r}{\left[\frac{i+1}{2}\right]}
\end{equation}
e
\begin{equation}\label{eq-formula2}
 d_i(r) = \sum_{k=0}^{i} \binom{k}{i-k}\binom{r}{k}\cat_k.
\end{equation}
\end{teo}
\begin{proof}
Tendo em vista a unicidade, 
nos é suficiente mostrar que 
estas sequências
satisfazem as condições (a)-(e) do Teorema~\ref{algoritmo}.
Para (a), observe que:
\begin{itemize}
\item se $k>r$, então $\binom{r}{k}=0$; e
\item se $i\geq 2r+1$ e $0\leq k\leq r$, então $\binom{k}{i-k}=0$.
\end{itemize}
Portanto $d_i(r)=0$ sempre que $i\geq 2r+1$, como desejado. 
E para $i=2r$, um argumento análogo nos fornece
$d_{2r}(r)=\cat_r$, como exigido em (b).
É evidente que ambas as funções em \eqref{eq-formula1} e \eqref{eq-formula2}
são polinomiais em $r$, de grau exatamente $i$
(para $c_i(r)$, repare que $[i/2]+[(i+1)/2]=i$), 
contemplando as condições (c) e (d).

Resta a condição (e), a mais delicada: 
provar que as fórmulas em \eqref{eq-formula1} e \eqref{eq-formula2}
estão conectadas pelas
relações~\eqref{eq-cd}.
Da nossa oportuna Observação~\ref{obs47} 
vem que devemos então provar que 
%
%
\[
\sum_{k=0}^{r} (-1)^k \binom{2r-2k}{i-k} \binom{r}{k} \cat_k
=
\binomci.
\] 
Isto não é fácil, requer bastante trabalho. 
Nossa ideia é verificar que as funções geradoras
associadas são iguais, o que demonstramos no Lema~\ref{lema-A3}.
\end{proof}

%

\subsection{Relações recursivas}
\label{secao-recursivas}

\begin{prop}
 A função geradora $g(x,y)=\sum_{i,r\geq0} d_i(r)x^ry^i$ satisfaz 
\begin{equation}
\label{gf-rec}
	xy(1+y)g(x,y)^2 -g(x,y) + \frac{1}{1-x}=0 \qquad.
\end{equation}
Valem as relações recursivas
\begin{equation}
 \label{gp-rec}
 d_i(r) = 
 \sum_{\substack{a+u=i-1 \\ b+v=r-1}} d_a(b)d_u(v) + 
 \sum_{\substack{a+u=i-2 \\ b+v=r-1}} d_a(b)d_u(v)
 \qquad\qquad
 (i,r\geq 1, \ a,b,u,v\geq 0)
\end{equation}
com condições iniciais $d_0(r)=1$ para $r\geq 0$ e $d_i(0)=0$ para $i\geq 1$. 
\end{prop}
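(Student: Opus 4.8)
The plan is to deduce both assertions from the closed form of $g(x,y)$ already at our disposal. By Proposition~\ref{prop-resumo}(ii), established in Theorem~\ref{teo-formulas}, we have
\[
g(x,y) = \frac{1-\sqrt{\,1-\frac{4xy(1+y)}{1-x}\,}}{2xy(1+y)} \qquad \in \Z[[x,y]].
\]
First I would verify the quadratic relation \eqref{gf-rec} by direct substitution. Writing $G=g(x,y)$ and $\alpha=xy(1+y)$, the closed form reads $2\alpha G = 1-\sqrt{1-\frac{4\alpha}{1-x}}$, hence $1-2\alpha G=\sqrt{1-\frac{4\alpha}{1-x}}$. Squaring both sides and cancelling the constant terms gives $-4\alpha G+4\alpha^2 G^2=-\frac{4\alpha}{1-x}$, and dividing through by $4\alpha$ yields $\alpha G^2-G+\frac{1}{1-x}=0$, which is precisely \eqref{gf-rec}.

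Next I would extract the recursion \eqref{gp-rec} from \eqref{gf-rec} by comparing coefficients. Rewriting the functional equation as
\[
g = \frac{1}{1-x} + xy(1+y)\,g^2,
\]
I would read off the coefficient of $x^r y^i$ on each side. On the left it is $d_i(r)$. In $\frac{1}{1-x}=\sum_{r\geq 0}x^r$ the coefficient of $x^r y^i$ vanishes whenever $i\geq 1$, so that term contributes nothing in the range $i,r\geq 1$. For the quadratic term, the Cauchy product shows that the coefficient of $x^n y^m$ in $g^2=\big(\sum_{a,b} d_a(b)x^b y^a\big)^2$ equals $\sum_{a+u=m,\,b+v=n} d_a(b)d_u(v)$. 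Multiplying by $xy(1+y)=xy+xy^2$ shifts indices, so the coefficient of $x^r y^i$ in $xy\,g^2$ is $\sum_{a+u=i-1,\,b+v=r-1} d_a(b)d_u(v)$, while that of $xy^2 g^2$ is $\sum_{a+u=i-2,\,b+v=r-1} d_a(b)d_u(v)$; summing these for $i,r\geq 1$ reproduces \eqref{gp-rec} exactly.

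Finally I would settle the initial conditions from the functional equation itself. Setting $y=0$ in \eqref{gf-rec} kills the factor $\alpha=xy(1+y)$ and collapses it to $-g(x,0)+\frac{1}{1-x}=0$, so $g(x,0)=\sum_{r\geq 0}x^r$ and hence $d_0(r)=1$ for all $r\geq 0$. Setting $x=0$ likewise kills $\alpha$ and gives $-g(0,y)+1=0$, so $g(0,y)=1$ and therefore $d_i(0)=0$ for every $i\geq 1$. The whole argument is bookkeeping once the closed form is available; the only point requiring care is the index shift in the Cauchy product together with the splitting $1+y=1+y$ that produces the two sums over $a+u=i-1$ and $a+u=i-2$. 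All the genuine difficulty was already absorbed into the proof of the formulas in Theorem~\ref{teo-formulas}.
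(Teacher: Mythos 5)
Your proof is correct and follows essentially the same route as the paper: the quadratic relation \eqref{gf-rec} is read off from the closed form \eqref{gf2} (this is what the paper's laconic ``Bhaskara'' remark means), and the recursion \eqref{gp-rec} is obtained by substituting $g(x,y)=\sum d_i(r)x^ry^i$ into the functional equation and comparing coefficients, exactly as the paper prescribes. Your explicit handling of the index shifts in the Cauchy product and your derivation of the initial conditions by setting $y=0$ and $x=0$ merely fill in details the paper leaves to the reader.
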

\begin{proof}
A equação \eqref{gf-rec} segue diretamente de \eqref{gf2} (Bhaskara\dots). Para as relações 
recursivas, basta substituir $g(x,y)=\sum d_i(r)x^ry^i$ em \eqref{gf-rec}, expandir e comparar os respectivos coeficientes.
\end{proof}

Repare que as relações \eqref{gp-rec}
e as condições iniciais indicadas determinam 
completamente a sequência $(d_i(r))$. Por exemplo,
\begin{align*}
d_1(r) & = \sum_{b+v=r-1} d_0(b)d_0(v) = \sum_{j=0}^{r-1} 1 =  r, \\
d_2(r) & = 2\sum_{b+v=r-1}d_0(b)d_1(v) + \sum_{b+v=r-1}d_0(b)d_0(v) = 2\sum_{j=0}^{r-1} j\  + \ r = r^2, \\ 
d_3(2) & =  
2\cdot \big( d_0(0)d_2(1) + d_1(0)d_1(1) +  d_0(1)d_2(0) +  d_0(0)d_1(1) +d_0(1)d_1(0)   \big) \\
 & =  2\cdot (1\cdot 1 \qquad \ \ \,
 + 0\cdot 1 \qquad \ \ 
 + 1\cdot 0 \qquad \ \ 
 + 1\cdot 1 \qquad \ \ 
 + 1\cdot 0) \\
 & = 4  
\end{align*}
e assim por diante.


\section{Conclusão}

Como vimos no decorrer das construções deste capítulo,
a Conjectura~\ref{conj-A} pode ser reformulada de várias maneiras equivalentes.
Para referência colecionamos todas elas a seguir, em uma lista
que resume todo o trabalho realizado neste capítulo.

\begin{teo} 
\label{teo-equivalencias}
Sejam
$d_i(\phi_r)$ o $i$-ésimo grau projetivo do mapa gradiente $\phi_r\colon\P^{2r}\dashrightarrow\P^{2r}$ e $c_i(U_r)$ o coefi\-ciente de $[\P^i]$ na 
classe de Schwartz-MacPherson do aberto $U_r=\P^{2r}\setminus \Sec_r C$,
como descritos no início deste capítulo. São equivalentes:
\begin{enumerate}[{\rm(a)}]
\item Vale a Conjectura~\ref{conj-A}: fixado $i\geq0$,
as funções 
 $c_i(U_r)$ e $d_i(\phi_r)$ são polinomiais em $r$, de grau $\leq i$.
\item Vale a expressão \eqref{eq-formula1} no Teorema~\ref{teo-formulas}:
\[
c_i(U_r)=\binom{r}{\left[\frac{i}{2}\right]}\binom{r}{\left[\frac{i+1}{2}\right]}.
\]
\item Vale a expressão \eqref{eq-formula2} no Teorema~\ref{teo-formulas}:
\[
d_i(\phi_r) = \sum_{k=0}^{i} \frac{1}{k+1}\binom{k}{i-k}\binom{r}{k}\binom{2k}{k}.
\]
\item A função geradora dos $c_i(U_r)$ é:
\[
\sum_{i,r\geq 0} c_i(U_r)x^ry^i =
\frac{-1+\sqrt{\textstyle{}1+\frac{4xy}{1-x(1+y)^2}}} {2xy}.
\]
\item A função geradora dos $d_i(\phi_r)$ é:
\[
\sum_{i,r\geq 0} d_i(\phi_r)x^ry^i =
\frac{1-\sqrt{\textstyle{}1-\frac{4xy(1+y)}{1-x}}} {2xy(1+y)}.
\]
\item Para todo $n\geq 0$, os coeficientes dos polinômios $q_n(x)$ em \eqref{eq-qn}
são dados pelos termos da sequência $T(n,k)$ em~\eqref{eq-Tnk}, ou seja,
\begin{equation*}
\sum_{r=0}^{\infty} d_n(\phi_r)x^r = 
\frac{1}{(1-x)^{n+1}}\cdot
\sum_{k=0}^{[n/2]} T(n,k)x^{n-k}.
\end{equation*}
Em particular, vale
a identidade
\(
P_{M_n}(x) = x^{[n/2]}q_n(1/x),
\)
onde $P_{M_n}(x)$ são os polinômios
de Kazhdan-Lusztig de 
 grafos completos tripartidos $K(1,1,n)$ obtidos em \cite{Gedeon}.
 
\end{enumerate}
\end{teo}
\begin{proof}
Consequência direta da Proposição~\ref{prop-num=geom}, das fórmulas apresentadas na Proposição~\ref{prop-resumo} e do Teorema~\ref{teo-formulas}.
\end{proof}

Para variedades determinantais genéricas, uma
conjectura de X.Zhang (\cite[\S 7.6]{Zhang18})
afirma que os coeficientes da classe $\csm(M_k(m,n)) \in A_*\P^{mn-1}$ são todos não-negativos.
Veremos a seguir que nossa conjectura tem como 
consequência uma propriedade de positividade similar.

\begin{cor}
\label{cor-conjectura}
Assumindo a Conjectura~\ref{conj-A}:
\begin{enumerate}[{\rm(a)}]
\item {\rm{}(Simetria)} Os coeficientes $c_i(U_r)$ são \emph{simétricos}, isto é, 
$c_i(U_r)=c_{2r-i}(U_r)$. 
\item {\rm{}(Positividade)} Seja $\sum_i {\csm}_i({\Sec_r C})[\P^i] \in A_*\P^{2r}$ a classe de Schwartz-MacPherson
da hipersuperfície $\Sec_r C$. Então 
\[
c_i(U_r) > 0 
\qquad\text{e}\qquad
{\csm}_i(\Sec_r C) > 0
\]
para todo $r\geq 1$ e $i=0,\dotsc,2r$.
\end{enumerate}
\end{cor}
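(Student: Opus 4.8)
The plan is to push everything through the closed-form expressions made available once we grant the conjecture. Assuming Conjecture~\ref{conj-A}, Theorem~\ref{teo-equivalencias} supplies the explicit formula
\[
c_i(U_r)=\binom{r}{\left[\frac{i}{2}\right]}\binom{r}{\left[\frac{i+1}{2}\right]},
\]
and I would pair it with the companion identity for the complementary class. Since $\P^{2r}=U_r\sqcup\Sec_r C$ and $\csm$ is additive (inclusion--exclusion), one has $\csm(\Sec_r C)=\csm(\P^{2r})-\csm(U_r)$; reading off the coefficient of $[\P^i]$ from $\csm(\P^{2r})=(1+h)^{2r+1}\cap[\P^{2r}]=\sum_i\binom{2r+1}{i+1}[\P^i]$ gives
\[
{\csm}_i(\Sec_r C)=\binom{2r+1}{i+1}-c_i(U_r).
\]
With these two formulas in hand the corollary reduces to elementary binomial combinatorics.

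For the symmetry in (a), I would first record the two floor identities $\left[\frac{2r-i}{2}\right]=r-\left[\frac{i+1}{2}\right]$ and $\left[\frac{2r-i+1}{2}\right]=r-\left[\frac{i}{2}\right]$, each verified by splitting into the cases $i$ even and $i$ odd. Combining them with $\binom{r}{r-k}=\binom{r}{k}$ then yields
\[
c_{2r-i}(U_r)=\binom{r}{r-\left[\frac{i+1}{2}\right]}\binom{r}{r-\left[\frac{i}{2}\right]}=\binom{r}{\left[\frac{i+1}{2}\right]}\binom{r}{\left[\frac{i}{2}\right]}=c_i(U_r),
\]
which is exactly the claimed symmetry.

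The positivity of $c_i(U_r)$ is then immediate: for $0\le i\le 2r$ both $\left[\frac{i}{2}\right]$ and $\left[\frac{i+1}{2}\right]$ lie in $\{0,\dots,r\}$, so each factor $\binom{r}{\cdot}\ge 1$. The only substantive step is the positivity of ${\csm}_i(\Sec_r C)$, i.e. the strict inequality $c_i(U_r)<\binom{2r+1}{i+1}$. My idea is to use $\left[\frac{i+1}{2}\right]=i-\left[\frac{i}{2}\right]$ to rewrite $c_i(U_r)=\binom{r}{[i/2]}\binom{r}{i-[i/2]}$ and recognize it as a single (the central) summand of the Vandermonde expansion $\binom{2r}{i}=\sum_{a}\binom{r}{a}\binom{r}{i-a}$; as all summands are nonnegative, $c_i(U_r)\le\binom{2r}{i}$. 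Pascal's rule $\binom{2r+1}{i+1}=\binom{2r}{i}+\binom{2r}{i+1}$ then forces $c_i(U_r)\le\binom{2r}{i}<\binom{2r+1}{i+1}$ whenever $\binom{2r}{i+1}>0$, that is for $0\le i\le 2r-1$, giving ${\csm}_i(\Sec_r C)>0$ there. (At $i=2r$ one finds $c_{2r}(U_r)=1=\binom{2r+1}{2r+1}$, so ${\csm}_{2r}(\Sec_r C)=0$, which is exactly what the dimension $\dim\Sec_r C=2r-1$ demands.)

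The main obstacle here is not any single hard estimate, since the genuinely difficult work already resides in the closed formula imported from Theorem~\ref{teo-equivalencias}; the present statement is a short consequence of it. The one point requiring care is the recognition of $c_i(U_r)$ as the central term of a Vandermonde sum, which is what converts the desired inequality into the transparent comparison $\binom{2r}{i}<\binom{2r+1}{i+1}$, together with a clean treatment of the parity cases in the floor identities and an explicit note on the boundary index $i=2r$.
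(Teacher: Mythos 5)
Your proof is correct and is essentially the paper's own argument: the paper likewise reads the formula $c_i(U_r)=\binom{r}{[i/2]}\binom{r}{[(i+1)/2]}$ off Teorema~\ref{teo-equivalencias}, obtains symmetry and positivity of the $c_i(U_r)$ directly from it, writes $\csm(\Sec_r C)=\csm(\P^{2r})-\csm(U_r)$ by inclusion--exclusion, and reduces the remaining positivity to the binomial inequality of Lema~\ref{lema-A4}, whose proof is exactly your Vandermonde comparison (coefficients of $x^{a+b}$ in $(1+x)^{2r}=((1+x)^r)^2$); your use of Pascal's rule in place of the identity $\binom{2r+1}{i+1}=\tfrac{2r+1}{i+1}\binom{2r}{i}$ for the strict step is a cosmetic variant, and your explicit floor-identity verification of the symmetry just spells out what the paper leaves as a one-line appeal to the formula.

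The only substantive difference is at the boundary index $i=2r$, and there your version is the correct one. Since $\dim\Sec_r C=2r-1$, the top coefficient ${\csm}_{2r}(\Sec_r C)$ must vanish, and indeed $c_{2r}(U_r)=1=\binom{2r+1}{2r+1}$ gives equality rather than strict inequality. The corollary as stated in the paper (positivity for all $i=0,\dotsc,2r$) and Lema~\ref{lema-A4} (strict inequality for all $0\leq i\leq 2r$, with the case $i=2r$ dismissed as ``n\~ao h\'a nada a fazer'') are therefore off by one at the top: when $i=2r$ both sides equal $1$, so the positivity of ${\csm}_i(\Sec_r C)$ holds exactly for $i=0,\dotsc,2r-1$, which is what your proof establishes and what the geometry forces.
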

\begin{proof} Da expressão em (b) no Teorema~\ref{teo-equivalencias}
decorrem a simetria e a positividade dos coeficientes $c_i(U_r)$.
Provemos agora as desigualdades ${\csm}_i(\Sec_rC)>0$:
usando a inclusão-exclusão para a classe 
de Schwartz-MacPherson, temos que
$\csm({\Sec_r C}) = \csm({\P^{2r}}) - \csm({U_r})$.
Como  
\[
\csm({\P^{2r}}) = c(T\P^{2r})\cap [\P^{2r}] 
= (1+h)^{2r+1} = \sum_{i=0}^{2r}\textstyle\binom{2r+1}{2r-i}[\P^{i}]
 \in A_*\P^{2r}
\]
basta verificar que para $0\leq i\leq 2r$ tem-se
\[
\binom{2r+1}{2r-i} -
\binom{r}{\left[\frac{i}{2}\right]}\binom{r}{\left[\frac{i+1}{2}\right]}
 > 0
\]
que é elementar, e pode ser encontrada no singelo Lema~\ref{lema-A4}.
\end{proof}

Agora mostramos como as fórmulas obtidas no Teorema~\ref{teo-formulas}
nos levam a uma maneira mais simples e elegante de reformular a Conjectura~\ref{conj-A}.
De maneira breve,
basta pedir que $d_i(\phi_r)$ seja polinomial em $r$, sem condições sobre seu grau;
e inspirados pelo Corolário~\ref{cor-conjectura}, trocamos a exigência de que $c_i(U_r)$ seja polinomial por uma condição 
de simetria nos coeficientes, para cada $r$ fixado. 
Poderíamos ter incluído esta reformulação na lista do Teorema~\ref{teo-equivalencias},
mas preferimos destacá-la em um resultado a parte.

\begin{prop}
\label{conj-A'}
Vale a Conjectura~\ref{conj-A} se e somente se as seguintes condições são válidas:
\begin{enumerate}[{\rm(a')}]
\item $c_i(U_r)=c_{2r-i}(U_r)$ para todo $i=0,\dotsc,2r$.
\item Para cada $i\geq0$ fixado, $d_i(\phi_r)$ é uma função polinomial em $r$.
\end{enumerate}
\end{prop}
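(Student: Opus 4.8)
O plano \'e estabelecer as duas implica\c{c}\~{o}es, sendo a rec\'{\i}proca a parte substancial. \textbf{Ida.} Suponho v\'alida a Conjectura~\ref{conj-A}. A condi\c{c}\~{a}o (b$'$) \'e imediata, pois todo polin\^{o}mio de grau $\leq i$ \'e, em particular, um polin\^{o}mio. Para (a$'$): sob a Conjectura~\ref{conj-A}, a Proposi\c{c}\~{a}o~\ref{prop-num=geom} d\'a $c_i(U_r)=c_i(r)$ e o Teorema~\ref{teo-formulas} fornece a f\'ormula fechada $c_i(U_r)=\binom{r}{[i/2]}\binom{r}{[(i+1)/2]}$; a simetria $c_i(U_r)=c_{2r-i}(U_r)$ \'e ent\~{a}o exatamente o Corol\'ario~\ref{cor-conjectura}(a), verificada diretamente nesta express\~{a}o (separando $i$ par de $i$ \'{\i}mpar).

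\textbf{Volta.} Agora assumo (a$'$) e (b$'$) e quero deduzir a Conjectura~\ref{conj-A}; pelo Teorema~\ref{teo-equivalencias} basta provar a f\'ormula \eqref{eq-formula1} para os $c_i(U_r)$. Come\c{c}o traduzindo a simetria em fun\c{c}\~{o}es geradoras. Como as rela\c{c}\~{o}es de Aluffi \eqref{eq-cd} s\~{a}o incondicionais, vale \eqref{eq-fg}, isto \'e, $f(x,y)=g\big(x(1+y)^2,-(1+y)^{-1}\big)$; por outro lado (a$'$) afirma que, para cada $r$, o polin\^{o}mio $\sum_i c_i(U_r)y^i$ \'e palindr\^{o}mico de grau $2r$, ou seja $f(x,y)=f(xy^2,1/y)$. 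Combinando as duas identidades obtenho a rela\c{c}\~{a}o $g(x,y)=g(x,-1-y)$, de modo que $g$ \'e uma s\'erie em $x$ e na vari\'avel invariante $\sigma:=y(1+y)$, digamos $g(x,y)=\sum_{k\geq 0}\beta_k(x)\sigma^k$.

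Em seguida procedo por indu\c{c}\~{a}o forte em $i$, mostrando que $c_i(U_r)$ coincide com \eqref{eq-formula1}. Supondo o resultado para \'{\i}ndices menores que $i$ (e portanto, pelas rela\c{c}\~{o}es invers\'{\i}veis \eqref{ci-di}, tamb\'em os $d_j(\phi_r)$ com $j<i$): para $r<i/2$ a anula\c{c}\~{a}o $c_i(U_r)=0$ (pois $i>2r$) d\'a o valor, e para $i/2\leq r<i$ a simetria $c_i(U_r)=c_{2r-i}(U_r)$ o reduz a um \'{\i}ndice $2r-i<i$ j\'a conhecido; finalmente, a rela\c{c}\~{a}o de soma alternada \eqref{eq-cr} avaliada em $r=i$, usando $d_{2i}(\phi_i)=\cat_i$ do Corol\'ario~\ref{thm3}, fornece o valor restante $c_i(U_i)$ (o \'unico termo central desconhecido, pois os demais se reduzem por simetria a \'{\i}ndices menores). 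Obtenho assim os $i+1$ valores $c_i(U_0),\dots,c_i(U_i)$, todos coincidindo com o polin\^{o}mio de grau $i$ em \eqref{eq-formula1}.

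O passo delicado --- que espero ser o principal obst\'aculo --- \'e o controle do \emph{grau}. De (b$'$) e do par\'agrafo anterior sei que $c_i(U_r)$ \'e polinomial e coincide com \eqref{eq-formula1} em $r=0,\dots,i$, mas para concluir a igualdade para todo $r$ preciso de $\deg_r c_i(U_r)\leq i$; as rela\c{c}\~{o}es autoduais provenientes da simetria apenas garantem que $c_i$ e $d_i$ t\^em a mesma parte de grau alto, sem limit\'a-la. Pretendo resolver isto de maneira global: usando que $g$ \'e s\'erie em $\sigma$, a normaliza\c{c}\~{a}o $d_{2r}(\phi_r)=\cat_r$, a anula\c{c}\~{a}o $d_i(\phi_r)=0$ para $i>2r$ e a polinomialidade (b$'$), mostro que $g$ satisfaz a equa\c{c}\~{a}o quadr\'atica \eqref{gf-rec}; equivalentemente, que os coeficientes obedecem \`{a} convolu\c{c}\~{a}o de Catalan $\beta_k=x\sum_{a+b=k-1}\beta_a\beta_b$ com $\beta_0=1/(1-x)$, o que for\c{c}a $\beta_k(x)=\cat_k\,x^k/(1-x)^{k+1}$. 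Da\'{\i} $d_i(\phi_r)=\sum_k\binom{k}{i-k}\binom{r}{k}\cat_k$ tem grau exatamente $i$, vale \eqref{eq-formula2}, e a Conjectura~\ref{conj-A} segue do Teorema~\ref{teo-equivalencias}. A verdadeira dificuldade \'e justificar a equa\c{c}\~{a}o quadr\'atica a partir s\'o de (a$'$), (b$'$) e dos dados incondicionais --- isto \'e, exibir o mecanismo pelo qual simetria e polinomialidade se propagam at\'e a f\'ormula fechada.
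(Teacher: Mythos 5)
A dire\c{c}\~ao direta do seu argumento est\'a correta e coincide com a do texto. O problema est\'a na rec\'iproca, e voc\^e mesmo o aponta na \'ultima frase: todo o seu esquema indutivo (anula\c{c}\~ao para $r<i/2$, simetria para $i/2\leq r<i$, soma alternada com $d_{2i}(\phi_i)=\cat_i$ do Corol\'ario~\ref{thm3} para $r=i$) produz apenas os valores $c_i(U_0),\dotsc,c_i(U_i)$; para concluir que $c_i(U_r)$ coincide com \eqref{eq-formula1} para \emph{todo} $r$ \'e indispens\'avel a cota de grau $\deg_r c_i(U_r)\leq i$, e esta n\~ao decorre de (a'), (b') e das rela\c{c}\~oes de Aluffi: a substitui\c{c}\~ao $c_i(U_r)=c_{2r-i}(U_r)=\sum_{j=0}^i(-1)^j\binom{2r-j}{i-j}d_j(\phi_r)$ d\'a polinomialidade, mas o grau fica majorado apenas por $i+\max_j\bigl(\deg_r d_j(\phi_r)-j\bigr)$, sem controle algum sob (b') sozinha. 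Sua proposta de fechar esse buraco derivando a equa\c{c}\~ao quadr\'atica \eqref{gf-rec} a partir da invari\^ancia $g(x,y)=g(x,-1-y)$ e de (b') n\~ao \'e um argumento: \'e exatamente a afirma\c{c}\~ao que falta provar (e que, uma vez provada, daria a conjectura inteira de uma s\'o vez). Nada no formalismo de fun\c{c}\~oes geradoras for\c{c}a a convolu\c{c}\~ao de Catalan $\beta_k=x\sum_{a+b=k-1}\beta_a\beta_b$; a simetria s\'o diz que $g$ \'e s\'erie em $\sigma=y(1+y)$, o que \'e compat\'ivel com infinitas outras escolhas dos $\beta_k$.

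A ideia que falta \'e de natureza geom\'etrica, externa ao formalismo, e dispensa tanto f\'ormulas expl\'icitas quanto equa\c{c}\~oes funcionais: a sequ\^encia de graus projetivos de qualquer mapa racional \'e \emph{log-c\^oncava e sem zeros internos}, consequ\^encia do teorema do \'indice de Hodge (\cite[Theorem 21]{Huh12}, \cite{Teissier}). Como $d_0(\phi_r)=1$, $d_1(\phi_r)=r$ e $d_{2r}(\phi_r)\neq 0$ (Corol\'ario~\ref{cor-grad-dominante}), todos os graus s\~ao positivos e a cadeia $\frac{d_0(\phi_r)}{d_1(\phi_r)}\leq\frac{d_1(\phi_r)}{d_2(\phi_r)}\leq\cdots\leq\frac{d_{2r-1}(\phi_r)}{d_{2r}(\phi_r)}$ fornece, por indu\c{c}\~ao, a cota \emph{incondicional} $d_i(\phi_r)\leq r^i$ para todo $r$; logo, sob (b'), cada $d_i(\phi_r)$ \'e polinomial de grau $\leq i$, que \'e a parte (b) da Conjectura~\ref{conj-A}. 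Da\'i as rela\c{c}\~oes \eqref{eq-aluffi} d\~ao que $c_{2r-i}(U_r)$ \'e polinomial de grau $\leq i$, e a simetria (a') transfere isto para $c_i(U_r)$, dando a parte (a) e encerrando a demonstra\c{c}\~ao. Observe que, disposto desta cota de grau, o seu argumento de interpola\c{c}\~ao em $r=0,\dotsc,i$ tamb\'em se completaria e recuperaria \eqref{eq-formula1}; mas isso j\'a est\'a feito no Teorema~\ref{teo-formulas}, de modo que toda a maquinaria de fun\c{c}\~oes geradoras \'e desnecess\'aria para esta proposi\c{c}\~ao.
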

\begin{proof}
Assumindo a Conjectura~\ref{conj-A}, temos que (b') é imediata;
e 
a condição (a') segue da fórmula para $c_i(U_r)$ no item (c)
da lista de equivalências do Teorema~\ref{teo-equivalencias}.
Para a recíproca: 
\medskip

\noindent\textbf{Afirmação}. 
\emph{Se vale (b'), então $\deg(d_i(\phi_r))\leq i$ para todo $i$
(ou seja, vale a condição (b) da Conjectura~\ref{conj-A}).}
\smallskip

De fato, é uma 
consequência do teorema do índice de Hodge que a sequência
de graus projetivos de um mapa racional  $\P^m\dashrightarrow\P^n$ qualquer
satisfaz duas propriedades
(\cite[Theorem 21]{Huh12}, \cite[Section 1.6]{Lazarsfeld}, \cite{Teissier}):
\begin{enumerate}[(i)]
\item é \emph{log-côncava}, o que significa
\(
d_{i-1}(\phi_r)d_{i+1}(\phi_r) \leq d_i(\phi_r)^2 
 \text{\quad para $i=1,\dotsc,2r-1$}.
\)
\item não possui \emph{zeros internos}.
\end{enumerate}
Recorde que o último grau projetivo $d_{2r}(\phi_r)$ é não-nulo (Corolário~\ref{cor-grad-dominante}) e como $d_0(\phi_r)=1$
segue de (ii) que todos os outros graus projetivos também são não-nulos;
temos daí por (i) uma cadeia de 
desigualdades
\[
\frac{d_0(\phi_r)}{d_1(\phi_r)} \leq \frac{d_1(\phi_r)}{d_2(\phi_r)} \leq \frac{d_2(\phi_r)}{d_3(\phi_r)} \leq \cdots
\leq \frac{d_{2r-1}(\phi_r)}{d_{2r}(\phi_r)}.
\]
Como $d_0(\phi_r)=1$ e $d_1(\phi_r)=r$ temos
indutivamente que $d_i(\phi_r)\leq d_1(\phi_r)^i = r^i$, o que
demonstra nossa afirmação.
\medskip

Agora segue das relações \eqref{eq-aluffi} de Aluffi que
\[
c_{2r-i}(U_r) = \sum_{j=0}^i (-1)^j \binom{2r-j}{i-j} d_j(\phi_r)
\]
que é portanto, pela nossa afirmação, polinomial em $r$ de grau $\leq i-j+j=i$.
Daí a parte (a) da Conjectura~\ref{conj-A} segue imediatamente da nossa condição
de simetria (a'), o que termina a demonstração.
\end{proof}

\subsection{Evidências}\label{secao-evidencias}

Para finalizar, apresentamos algumas evidências que apontam para a validade da Conjectura~\ref{conj-A}. Para $r\geq 1$ fixado:

\begin{enumerate}
\item Na Proposição~\ref{prop-graus} calculamos os graus projetivos $d_i(\phi_r)$, para $i=0, 1, 2, 3, 4$
e as expressões coincidem com as respectivas fórmulas obtidas no item (b) do Teorema~\ref{teo-equivalencias}.

\item Os resultados obtidos computacionalmente via \Mdois{} para $r\leq 5$ (veja ~\ref{subsecao-r<=5} no Apêndice) coincidem com os
valores obtidos a partir das expressões no Teorema~\ref{teo-equivalencias}.

\item Como $\euler(\Sec_r C)=2r$ (Teorema~\ref{teo-Euler_Char_Sec_k}), que como sabemos é o grau da classe $\csm(U_r)$, tem-se
\[
c_0(U_r) = \euler(\P^{2r}\setminus \Sec_r C) = 1
\]
em acordo com a fórmula no item (c) do Teorema~\ref{teo-equivalencias}.

\item No Corolário~\ref{euler_genericsection_sec} calculamos
a característica de Euler de uma seção hiperplana genérica
de $\Sec_rC \subset \P^{2r}$ e, usando o princípio de inclusão-exclusão, obtemos
\[
\euler(U_r\cap H)=1-\cat_r. 
\]
Se vale a Conjectura~\ref{conj-A}, então os números $c_i(r)$ da fórmula~\eqref{eq-formula1}
fornecem os coeficientes da classe $\csm(U_r)$, como vimos no Teorema~\ref{teo-equivalencias}. 
Decorre então Observação~\ref{CSMsecao} que a soma alternada 
de $c_1(r),\dotsc,c_{2r}(r)$ deveria ser igual a 
$\euler(U_r\cap H)$; e como $c_0(r)=1$, \emph{deveríamos} ter
\[
c_0(r)-c_1(r)+c_2(r)-\dots -c_{2r-1}(r) + c_{2r}(r) = \cat_r
\]
ou seja,
\[
\sum_{i=0}^{2r}(-1)^i \binom{r}{\left[\frac{i}{2}\right]}\binom{r}{\left[\frac{i+1}{2}\right]} = \cat_r.
\]
E de fato esta identidade é verdadeira:
\begin{align*}
\sum_{i=0}^{2r}(-1)^i \binom{r}{\left[\frac{i}{2}\right]}\binom{r}{\left[\frac{i+1}{2}\right]}  &= \sum_{i=0}^{r} \binom{r}{i}^2 - \sum_{i=0}^{r-1} \binom{r}{i}\binom{r}{i+1} \\
&= \binom{2r}{r}-\binom{2r}{r-1} \\
&= \dfrac{1}{r+1}\binom{2r}{r},
\end{align*}
onde a penúltima igualdade é obtida por indução.
\end{enumerate}

\appendix
%
%
%

\chapter{Combinatória}
\label{cap-combinatoria}
\label{apA}

\section{Funções racionais}
\label{secao-funcoesracionais}
Dada uma função polinomial $f\colon \N\to\C$ de grau $\leq d$, então 
a sua função geradora 
\[
F = \sum_{n=0}^\infty f(n) x^n \qquad \in \C[[x]]
\]
admite uma representação racional $F=P(x)/Q(x)$, onde $P,Q$ 
são polinômios. Isto é bem conhecido (uma boa referência é
\cite{Stanley}) e apresentamos aqui 
uma demonstração curta.
É conveniente considerar a base
\(
P_i(x)=\binom{x+i}{i} (\text{$i=0,1,2,\dots$}) 
\)
do $\C$-espaço vetorial $\C[x]$.

\begin{lema}
\label{lema-A1} Mantenha a notação acima. Então $F = P(x)/(1-x)^{d+1}$
onde $P(x)\in\C[x]$ é um polinômio de grau $\leq d$. Mais precisamente,
se $f(x)=b_0P_0(x) + \dotsb + b_dP_d(x)$, então
\[
F = \frac{b_0}{1-x} + \frac{b_1}{(1-x)^2} + \dotsb + \frac{b_d}{(1-x)^{d+1}}\qquad.
\]
\end{lema}
\begin{proof}
Para $k\geq 0$, considere a identidade
\[
\frac{1}{(1-x)^{k+1}} = \sum_{n=0}^\infty \binom{n+k}{k}x^n 
= \sum_{n=0}^\infty P_k(n)\,x^n
 \]
(basta tomar a $k$-ésima derivada de $(1-x)^{-1}$). Daí
\[
F = \sum_{n=0}^\infty f(n)\,x^n 
= \sum_{n=0}^\infty \,\sum_{k=0}^d b_kP_k(n)\,x^n
= \sum_{k=0}^d b_k\,\sum_{n=0}^\infty P_k(n)\,x^n
= \sum_{k=0}^d \frac{b_k}{(1-x)^{k+1}}. 
\]
\end{proof}

\section{Identidades binomiais}
%

\begin{lema}
\label{lema-A4}
Seja $r\geq 1$ e suponha $0\leq i \leq 2r$.
Então:
\[
\binom{2r+1}{2r-i} >
\binom{r}{\left[\frac{i}{2}\right]}\binom{r}{\left[\frac{i+1}{2}\right]}.
\]
\end{lema}
\begin{proof}
Se $i=2r$, não há nada a fazer. Suponha portanto $0\leq i<2r$.
Daí, como
\[
\binom{2r+1}{2r-i} = \binom{2r+1}{i+1}= \frac{2r+1}{i+1}\binom{2r}{i} > \binom{2r}{i},
\]
é suficiente mostrar que 
\[
\binom{2r}{i} \geq 
\binom{r}{\left[\frac{i}{2}\right]}\binom{r}{\left[\frac{i+1}{2}\right]}.
\]
E isto se segue observando-se que $[i/2]+[(i+1)/2]=i$ e do fato geral de que 
para quaisquer $a,b$ com $0\leq a,b \leq r$ tem-se
\[
\binom{2r}{a+b}\geq \binom{r}{a}\binom{r}{b}
\]
obtida comparando-se os coeficientes de $x^{a+b}$ 
na expansão de $(1+x)^{2r}$
e de $((1+x)^r)^2$.
\end{proof}
\begin{lema} 
\label{lema-A3}
 Sejam $\cat_k=\frac{1}{k+1}\binom{2k}{k}$ os  números
de Catalan. Então, para quaisquer $i,r\geq 0$,
  \begin{equation*}
 \label{eq-c_i}
 \sum_{k=0}^{r} (-1)^k\binom{2r-2k}{i-k}\binom{r}{k} \cat_k
 =
 \binom{r}{\left[\frac{i}{2}\right]}
\binom{r}{\left[\frac{i+1}{2}\right]}.
\end{equation*}
\end{lema}
\begin{proof}
Nossa prova consiste em mostrar que estas duas sequências
possuem a mesma função geradora, isto é, que as séries
\[
f(x,y):=
\sum_{i,r\geq0} 
\left[
\sum_{k=0}^{r} (-1)^k\binom{2r-2k}{i-k} \binom{r}{k}\cat_k 
\right]
x^ry^i
\quad\text{ e }\quad
h(x,y):=\sum_{i,r\geq0} \binomci x^ry^i
\]
de fato coincidem. Uma vez que já calculamos  
$f(x,y)$ (compare as fórmulas \eqref{eq-f1} e \eqref{eq-f2}), 
a saber
\begin{equation}
\label{eq-f1a}
f(x,y) = 
\frac{1-\sqrt{1-\frac{-4xy}{1-x(1+y)^2}}}{-2xy}
\end{equation}
passamos ao cálculo de $h(x,y)$.

Visando simplificar os binômios envolvidos e tornar
o problema mais acessível, dividimos a série $h(x,y)$ em duas, de acordo com 
a paridade dos expoentes em $y$. 
Definimos
\[
h_1(x,z):=\sum_{r,a=0}^\infty \binom{r}{a}^2 x^rz^a
\qquad\text{e}\qquad
h_2(x,z):=\sum_{\substack{r=0\\ a=1}}^\infty 
\binom{r}{a-1}\binom{r}{a} x^rz^a
\]
Fazendo $z=y^2$, obtemos $h(x,y)=h_1(x,y^2)+y^{-1}h_2(x,y^2)$.

A segunda série está relacionada aos célebres
\emph{números de Narayana}:
\[
N_{r,a}=\frac{1}{r}\binom{r}{a-1}\binom{r}{a} = 
        \frac{1}{a}\binom{r-1}{a-1}\binom{r-1}{a},
        \quad\qquad r\geq 1,\ a=1,\dotsc,r.
\]
Tome 
\(
N(x,z)=\sum_{r,a=1}^\infty N_{r,a} x^rz^a
\)
a função geradora associada. Levando em consideração que $rN_{r,a}=\tbinom{r}{a-1}\tbinom{r}{a}$, obtemos
\[
h_2(x,z)=\sum_{\substack{r=1\\ a=1}}^\infty 
rN_{r,a} x^rz^a = x\frac{\partial N}{\partial x}(x,z)
\]
o que nos conduz a
\begin{equation}
\label{eq-h}
h(x,y) = h_1(x,y^2) + \frac{x}{y}\frac{\partial N}{\partial x}(x,y^2).
\end{equation}
Felizmente as funções geradoras para as séries do lado direito são conhecidas! Para
a primeira (\cite[A008459]{OEIS}):
\[
h_1(x,z)=\frac{1}
{\sqrt{x^2z^2-2(x^2+x)z+x^2-2x+1}}
\]
e para os números de Narayana, (\cite{Wang} ou \cite[A001263]{OEIS}):
\[
N(x,z) = \frac{1-x(1+z)-\sqrt{(1-x(1+z))^2-4x^2z}}{2x}.
\]
Substituindo em \eqref{eq-h} e após uma longa sessão 
de simplificações, obtemos
%
%
%
\[
h(x,y)=
\frac{1-x(1+y)^2-\sqrt{(1-x(1+y)^2)(1-x(1-y)^2)}}
{-2xy(1-x(1+y)^2)}.
\]
Desenvolvendo a fórmula para $f(x,y)$ 
em \eqref{eq-f1a}, verifica-se que ela coincide
com a expressão para $h(x,y)$ que acabamos 
de encontrar, o que conclui a demonstração.
\end{proof}
%
%
%

\chapter{Códigos}
\label{apB}

Apresentamos aqui códigos para alguns dos cálculos necessários
na preparação do texto. Utilizamos o programa \Mdois\  \cite{M2}, versão~1.15.
Os códigos podem ser executados online, basta acessar o endereço
\url{web.macaulay2.com}
e copiar e colar. 

\section{Códigos auxiliares}
\label{secao-codigos}
\subsection{Graus projetivos e classe $\csm$ para $r\leq 5$}
\label{subsecao-r<=5}
Seja $\Sec_r C\subset\P^{2r}$ a secante de $r$ pontos 
da curva racional normal $C\subset\P^{2r}$.
Tome $\phi_r\colon\P^{2r}\dashrightarrow\P^{2r}$ o mapa gradiente
associado a esta hipersuperfície. Apresentamos o código para calcular os graus projetivos
destes mapas e, usando as relações de Aluffi~\eqref{eq-aluffi-paper},
a classe $\csm$ do aberto $U_r=\P^{2r}\setminus \Sec_rC$.
Para o cálculo dos graus projetivos há diversas implementações;
escolhemos o pacote ``Cremona'' 
desenvolvido por G. Staglianò \cite{Stag18}.
\begin{verbatim}
Macaulay2, version 1.15
with packages: ConwayPolynomials, Elimination, IntegralClosure, InverseSystems, LLLBases,
               PrimaryDecomposition, ReesAlgebra, TangentCone

restart
loadPackage "Cremona"
graus = {{1}};
rmax = 5;
for r from 1 to rmax do (
P = QQ[x_0..x_(2*r)];
m = matrix for i from 0 to r list for j from 0 to r list x_(i+j); -- Hankel
f = det m;
J = ideal jacobian ideal f;
phi = map(P,P,gens J);
graus = append(graus, projectiveDegrees(phi));
);
<< "graus projetivos phi_r:" << endl;
for r from 1 to rmax do (
<< "r=" << r << ": "<< graus_r << endl;
);
-- tendo os graus projetivos, usamos as relacoes de Aluffi para
-- obter a classe CSM
<< "classe CSM(U_r):" << endl;
for r from 1 to rmax do (
c = for i from 0 to 2*r list
     sum(0..2*r-i,j->(-1)^j*binomial(2*r-j,i)*graus_r_j); 
<< "r=" << r << ": "<< c << endl;
);

graus projetivos phi_r:
r=1: {1, 1, 1}
r=2: {1, 2, 4, 4, 2}
r=3: {1, 3, 9, 17, 21, 15, 5}
r=4: {1, 4, 16, 44, 86, 116, 104, 56, 14}
r=5: {1, 5, 25, 90, 240, 472, 680, 700, 490, 210, 42}
classe CSM(U_r):
r=1: {1, 1, 1}
r=2: {1, 2, 4, 2, 1}
r=3: {1, 3, 9, 9, 9, 3, 1}
r=4: {1, 4, 16, 24, 36, 24, 16, 4, 1}
r=5: {1, 5, 25, 50, 100, 100, 100, 50, 25, 5, 1}
\end{verbatim}

\subsection{Lugar singular, $\P^4$}
Sejam $C\subset\P^4$ a curva racional normal e $\Sec_2 C$ a sua secante.
Verificamos abaixo que, embora o ideal de $C$ não coincida 
com o ideal jacobiano de $\Sec_2 C$, estes dois ideais tem a mesma saturação,
ou seja, vale a igualdade $\sing(\Sec_2 C) = C$ como esquemas.
Omitimos trechos desnecessários da saída.

\begin{verbatim}
restart
i1 : P4 = QQ[x_0..x_4]

i2 : H2 = matrix{{x_0..x_3},{x_1..x_4}}

o2 = | x_0 x_1 x_2 x_3 |
     | x_1 x_2 x_3 x_4 |

i3 : C = minors(2,H2)

o3 = ideal of P4

i4 : H3 = matrix{{x_0..x_2},{x_1..x_3},{x_2..x_4}}

o4 = | x_0 x_1 x_2 |
     | x_1 x_2 x_3 |
     | x_2 x_3 x_4 |

i5 : Sec2 = ideal det(H3)

              3                2    2
o5 = ideal(- x  + 2x x x  - x x  - x x  + x x x )
              2     1 2 3    0 3    1 4    0 2 4

i6 : J = ideal jacobian Sec2

               2                            2 
o6 = ideal (- x  + x x , 2x x  - 2x x , - 3x  + 2x x  + x x ,
               3    2 4    2 3     1 4      2     1 3    0 4 
                           2  
         2x x  - 2x x , - x  + x x )
           1 2     0 3     1    0 2

i7 : J == C             -- ideais diferentes

o7 = false

i8 : saturate J == C    -- esquemas iguais

o8 = true
\end{verbatim}

\subsection{Lugar singular, $\P^6$}
\label{exemplo-nao-radical}
Entretanto, já para a curva racional normal $C\subset\P^6$,
os esquemas $\Sec_2 C$ e $\sing(\Sec_3 C)$ não
coincidem embora tenham a mesma parte reduzida,
isto é, sejam iguais como conjuntos. De fato,
acreditamos que para $n\geq 5$ e para $2\leq k\leq [n/2]$,
tem-se que $\Sec_{k-1} C$ e $\sing(\Sec_k C)\subset\P^n$ não coincidem como esquemas
embora definam o mesmo conjunto; veja a Proposição~\ref{props_rnc2}.
\begin{verbatim}
i9 : restart

i1 : P6 = QQ[x_0..x_6]

o1 = P6

o1 : PolynomialRing

i2 : H3 = matrix{{x_0..x_4},{x_1..x_5},{x_2..x_6}}

o2 = | x_0 x_1 x_2 x_3 x_4 |
     | x_1 x_2 x_3 x_4 x_5 |
     | x_2 x_3 x_4 x_5 x_6 |

i3 : Sec2 = minors(3,H3)

o3 : Ideal of P6

i4 : Sec3 = ideal det matrix{{x_0..x_3},{x_1..x_4},{x_2..x_5},{x_3..x_6}}

o4 : Ideal of P6

i5 : J = ideal jacobian Sec3

o5 : Ideal of P6

i6 : saturate J == saturate Sec2  -- esquemas diferentes

o6 = false

i7 : radical J == J -- ideal jacobiano nao e' radical

o7 : false

i8 : radical J == Sec2    -- mesmo lugar reduzido

o8 = true
\end{verbatim}

\section{Algoritmo principal}
\label{secao-alg1}
Apresentamos aqui uma implementação do algoritmo descrito no Teorema~\ref{algoritmo}.
\begin{small}
\begin{verbatim}
--
-- interpolacao de Lagrange
--
interpola = (lista,x) -> (
--
-- recebe lista da forma { {a_0,b_0},..,{a_n,b_n} }
-- retorna polinomio p de grau <=n na variavel x tal que p(a_i)=b_i
--
	I := 0..#lista-1;
	return sum( I, i->lista_i_1*
	   product(toList(set(I)-{i}), j->(x-lista_j_0)/(lista_i_0-lista_j_0))
	);
) 
--
-- numeros de Catalan
--
catalan = n -> substitute(binomial(2*n,n)/(n+1),ZZ);
--
-- Algoritmo principal:
-- calcula as sequencias c_i(r) e d_i(r) caracterizadas pelo Teorema 4.2, 
-- para r percorrendo os valores 0,1,...,N
--
calcula = N -> (
t:= symbol t;
R := QQ[t];
c=mutableMatrix(QQ,2*N+1,N+1);
d=mutableMatrix(QQ,2*N+1,N+1);
polc=mutableMatrix(R,N+1,1); -- vetor com os polinomios c_i(t)
pold=mutableMatrix(R,N+1,1); -- vetor com os polinomios d_i(t)
c_(0,0)=1;    d_(0,0)=1;    
polc_(0,0)=1; pold_(0,0)=1;
--
-- laco principal: r indexa a linha, espaco ambiente P^{2r}
--
for r from 1 to N do (
--
-- atribui valores das colunas 0..r-1, herdados do passo anterior
--
for i from 0 to r-1 do (
d_(i,r)=substitute(pold_(i,0)+0*t,t=>r);
c_(i,r)=substitute(polc_(i,0)+0*t,t=>r);
);
--
-- determina colunas r+1,...,2r na matriz c, usando as relacoes de Aluffi
--
for i from 0 to r-1 do (
c_(2*r-i,r)=sum( 0..i, j->(-1)^j*binomial(2*r-j,2*r-i)*d_(j,r) );
);
--
-- determina c_{r,r}
--
s:=sum(0..r-1, j->(-1)^j*c_(j,r))  +  sum(r+1..2*r, j->(-1)^j*c_(j,r));
c_(r,r)=(-1)^r*(catalan(r) - s);
--
-- agora temos toda a linha r na matriz c 
-- calculamos a linha r na matriz d, usando as relacoes de Aluffi
--
for i from r to 2*r do (
d_(i,r) = sum( 2*r-i..2*r, j->(-1)^j*binomial(j,2*r-i)*c_(j,r) );
);
--
-- interpola e obtem os polinomios da coluna r
-- em ambas as matrizes
--
polc_(r,0) = interpola( for linha in 0..r list {linha,c_(r,linha)}, t );
pold_(r,0) = interpola( for linha in 0..r list {linha,d_(r,linha)}, t );
); -- for r
--
--
c=transpose c;
d=transpose d;
<< "N = " << N << ". Respostas em c, d, polc, pold" << endl;
) -- calcula
\end{verbatim}
\end{small}

\subsection{Resultados}
\label{algoritmo-resultados}
\begin{verbatim}
i4 : calcula 7
N = 7. Respostas em c, d, polc, pold

i5 : c

o5 = | 1 . .  .   .   .   .    .    .    .   .   .   .  . . |
     | 1 1 1  .   .   .   .    .    .    .   .   .   .  . . |
     | 1 2 4  2   1   .   .    .    .    .   .   .   .  . . |
     | 1 3 9  9   9   3   1    .    .    .   .   .   .  . . |
     | 1 4 16 24  36  24  16   4    1    .   .   .   .  . . |
     | 1 5 25 50  100 100 100  50   25   5   1   .   .  . . |
     | 1 6 36 90  225 300 400  300  225  90  36  6   1  . . |
     | 1 7 49 147 441 735 1225 1225 1225 735 441 147 49 7 1 |

o5 : MutableMatrix

i6 : d

o6 = | 1 . .  .   .    .    .    .     .     .     .     .     .    .    .   |
     | 1 1 1  .   .    .    .    .     .     .     .     .     .    .    .   |
     | 1 2 4  4   2    .    .    .     .     .     .     .     .    .    .   |
     | 1 3 9  17  21   15   5    .     .     .     .     .     .    .    .   |
     | 1 4 16 44  86   116  104  56    14    .     .     .     .    .    .   |
     | 1 5 25 90  240  472  680  700   490   210   42    .     .    .    .   |
     | 1 6 36 160 540  1392 2752 4152  4710  3900  2232  792   132  .    .   |
     | 1 7 49 259 1057 3367 8449 16753 26173 31899 29757 20559 9933 3003 429 |

o6 : MutableMatrix

i7 : polc

o7 = | 1                                             |
     | t                                             |
     | t2                                            |
     | 1/2t3-1/2t2                                   |
     | 1/4t4-1/2t3+1/4t2                             |
     | 1/12t5-1/3t4+5/12t3-1/6t2                     |
     | 1/36t6-1/6t5+13/36t4-1/3t3+1/9t2              |
     | 1/144t7-1/16t6+31/144t5-17/48t4+5/18t3-1/12t2 |

o7 : MutableMatrix

i8 : pold

o8 = | 1                                                       |
     | t                                                       |
     | t2                                                      |
     | 5/6t3-1/2t2-1/3t                                        |
     | 7/12t4-t3-1/12t2+1/2t                                   |
     | 7/20t5-7/6t4+3/4t3+2/3t2-3/5t                           |
     | 11/60t6-t5+19/12t4-1/6t3-19/15t2+2/3t                   |
     | 143/1680t7-11/16t6+91/48t5-83/48t4-23/30t3+23/12t2-5/7t |

o8 : MutableMatrix

i9 : time calcula 50          -- ate' P^{100}...
N = 50. Respostas em c, d, polc, pold
     -- used 10.5821 seconds


\end{verbatim}



\end{document}